\documentclass[11pt,letterpaper,reqno]{amsart}
\usepackage{luatex85}
\usepackage{preamble}
\pdfoutput=1

\title[Deformations of triangulated categories with $t$-structures]{Deformations of triangulated categories with $t$-structures via derived injectives}

\author{F. Genovese}

\author{W. Lowen} 
\address[Wendy Lowen]{Universiteit Antwerpen, Departement Wiskunde, Middelheimcampus,
Middelheimlaan 1,
2020 Antwerp, Belgium}
\email{wendy.lowen@uantwerpen.be}

\author{J. Symons}
\address[Julie Symons]{Universiteit Antwerpen, Departement Wiskunde, Middelheimcampus,
Middelheimlaan 1,
2020 Antwerp, Belgium}
\email{julie.symons@uantwerpen.be}

\author{M. Van den Bergh}

\thanks{
This project has received funding from the European Research Council (ERC) under the European Union’s Horizon 2020 research and innovation programme (grant agreement No. 817762). The third-named author holds a PhD fellowship from the Research Foundation - Flanders (FWO)
}

\makeatletter
\@namedef{subjclassname@2020}{
  \textup{2020} Mathematics Subject Classification}
\makeatother

\subjclass[2020]{18G80 (Primary), 16E45, 13D10, 18G25 (Secondary)}
\keywords{}

\makeindex 
\begin{document}

\begin{abstract}
This paper provides the final ingredient in the development of the deformation theory of pretriangulated dg-categories endowed with a nice $t$-structure, which was initiated by the authors and is modeled after the previously developed 
deformation theory of abelian categories. We show how to extend a $t$-structure on a pretriangulated dg-category to its dg-derived category so that the Yoneda embedding becomes $t$-exact. We construct several equivalences between deformation problems; in particular, we prove a deformation equivalence between the bounded $t$-deformations of a bounded $t$-dg-category on the one hand, and dg-deformations of the dg-category of derived injective ind-dg-objects on the other hand. Since this latter dg-category is cohomologically concentrated in nonpositive degrees, we do not encounter curvature.
\end{abstract}

\maketitle
\vspace{-0,5cm}

\tableofcontents

\vspace{-0.5cm}
\section*{Conventions}
We fix a morphism of commutative dg-rings $\theta: R \to S$ strictly concentrated in nonpositive degrees with kernel $K$ that satisfies the following conditions:
\begin{enumerate}
    \item $\theta$ is strictly surjective;
    \item $R$ and $S$ are \ti{homotopically coherent}\index{homotopically coherent}, meaning that $H^{0}(R)$ and $H^{0}(S)$ are coherent commutative rings and that for all $i \in \mb{Z}$,
    \begin{align*}
        H^{i}(R) \in \modu(H^{0}(R)), \\
        H^{i}(S) \in \modu(H^{0}(S)),
    \end{align*}
    where $\modu(-)$ stands for the category of finitely presented modules;
    \item $S$ and $K$ are finitely presented in $\mc{Z}^{0}(\dgm(R))$. In particular, they are \ti{homotopically finitely presented}\index{homotopically finitely presented} as dg-$R$-modules, meaning that for all $i\in \mb{Z}$,
    $$H^{i}(S), H^{i}(K) \in \modu(H^{0}(R));$$
    \item $K$ is nilpotent of order $n > 0$.
\end{enumerate}

\begin{remark*}
    This agrees with the setup of \cite{GLVdB2}[\S 5.3.1] with the exception of (3), which is necessary for \S \ref{pardefset6} and \Cref{appendix - Universe}. If we take $S = k$ to be a field of characteristic zero, it also agrees with the setup of \cite{CalaqueGrivaux}[\S 1.1] if one works up to homotopy equivalence (one has to strictify to ensure for example the strict surjectivity of (1)).
\end{remark*}

The morphism $\theta$ will be the base change morphism over which we will deform.

\ver

Concerning notation, we write $\DGCat(R)$ for the category of small dg-$R$-categories, with morphisms the dg-$R$-functors. It has a model structure where the weak equivalences are the quasi-equivalences, i.e.\ the dg-functors~$F$ inducing quasi-isomorphisms between internal homs and so that $H^{0}(F)$ is essentially surjective, see \cite{BMR}[\S 3.2]. We denote its homotopy category by $\Hqe(R)$\index{$\Hqe(R)$}. Every small dg-$R$-category $\A$ then has a cofibrant replacement $Q(\A) \to \A$\index{$\Hqe(R)$!cofibrant replacement $Q(\A)$}, which in particular has 
$R$-h-projective $\operatorname{Hom}$-sets. Omitting the base dg-ring, one has the category $\DGCat$ of small dg-categories with as homotopy category $\Hqe$.

We denote the category of dg-$R$-modules by $\dgm(R)$ and, given an $R$-linear dg-category $\A$, we write $\dgm_{R}(\A)$ or $\dgm(\A)$\index{$\dgm(\A)$} for the dg-$R$-category of (right) dg-$\A$-modules, i.e. contravariant dg-functors $\A \to \dgm(R)$. Left dg-$\A$-modules are covariant dg-functors $\A \to \dgm(R)$. Dg-module will always mean right dg-module. We will use $\sim$ to denote quasi-isomorphisms.

\ver

To take care of set-theoretical problems that might arise in category theory, we work with (ZFCU), meaning the Zermelo-Fraenkel axioms of set theory (ZF), the axiom of choice (C) and the universe axiom (U). We moreover fix a universe $\mc{U}$ that contains $\mb{N}$. For more background on universes, $\mc{U}$-sets and $\mc{U}$-(small) categories, we refer to \cite{AbelianDeformations}[\S 2.1]. For the notion of $\mc{U}$-(small) dg-category, we refer to \Cref{appendix - Universe}. Note that these size restrictions are necessary to ensure that for example the homotopy ind-dg-completion has the expected properties.

\section*{Introduction} 

\paragraph{\ti{I. Motivation}} 
The present paper is the final one in a series that began with \cite{GLVdB1}, followed by \cite{GLVdB2}. As mentioned in the introduction of \cite{GLVdB1}, `derived deformations' were introduced to provide a deformation-theoretic interpretation of the higher Hochschild cohomology groups $\{ \tn{HH}^{n}(\mb{A}) \}_{n \geq 2}$ of an abelian category $\mb{A}$ that is linear over a field $k$, seeing as for $n \geq 3$ these groups no longer allow an interpretation in terms of (flat) abelian deformations of $\mb{A}$. 

This goal is met in this paper, as we show that $\tn{HH}^{n}(\mb{A})$ for $n \geq 3$ corresponds to a certain novel type of deformations of $\D_{\dg}^{b}(\mb{A})$ along the dg-$k$-algebra morphism $\theta_{2-n}: k[\epsilon]/(\epsilon^{2}) \to k$ where $\vert \epsilon \vert = 2-n$, see \Cref{appendix - Hochschild}. To obtain this interpretation, we require an equivalence of deformations analogous to the one in the abelian setup \cite{AbelianDeformations},
\begin{equation} \label{eq - thm ab}
    \Def_{\mb{A}}^{\tn{ab}}(\gamma) \cong \Def_{\Inj(\Ind(\mb{A}))}^{\tn{lin}}(\gamma),
\end{equation}
where flat abelian deformations of an $S'$-linear abelian category $\mb{A}$ along a suitable ring morphism $\gamma: R' \to S'$ appear on the left-hand side, and linear deformations of the $S'$-linear category of injectives of $\Ind(\mb{A})$ along $\gamma$ appear on the right-hand side. We prove the following `derived' version,
\begin{thm*}
    Let $\A$ be a strongly pretriangulated $S$-linear dg-category with a bounded $t$-structure and $\theta: R \to S$ be a suitable morphism of dg-rings (see Conventions). Then there is an equivalence
    \begin{equation} \label{eq - thm}
        \Def_{\A}^{t,b}(\theta) \cong \Def_{\DGInj(\Ind^{\dg,Q,+}(\A))}^{\dg}(\theta)
    \end{equation}
    that is natural in $\theta$.
\end{thm*}
On the left-hand side, we now have so-called bounded \ti{$t$-deformations} (\S \ref{pardefset3}) of $\A$ along a suitable dg-morphism $\theta: R \to S$, and on the right-hand side, we have dg-deformations of the dg-category of derived injectives of the \ti{homotopy ind-dg-completion} $\Ind^{\dg,Q,+}(\A)$ endowed with the \ti{induced $t$-structure} (\S \ref{subsection - homotopy ind-dg-completion}-\ref{subsection - induced t-structure}) along $\theta$. This deformation equivalence is the main result of this paper (see \Cref{cor - recap defequiv}), and it is proven by composing several equivalences of deformation pseudofunctors (\S \ref{pardefset5}-\ref{pardefset7}). Since $\Ind^{\dg,Q,+}(\A) \cong \hproj^{+}(\A)$ (\S \ref{subsection - homotopy ind-dg-completion}), we can depict the result as:
\[\begin{tikzcd}
    {\Def^{t,\tn{+lcG}}_{\hproj^{+}(\A)}} & {\Def^{t,\DGInj}_{\hproj^{+}(\A)}} & {\Def^{\dg,\DGInj}_{\DGInj(\hproj^{+}(\A))}} \\
	{\Def^{t,b}_{\A}} && {\Def_{\DGInj(\hproj^{+}(\A))}^{\dg}.}
	\arrow["\cong", from=1-1, to=1-2]
    \arrow["{\ref{cor - equivalence t-def t-locally coherent}}"', from=1-1, to=1-2]
	\arrow["{\textcolor{white}{n} \cong}"', from=2-1, to=1-1]
    \arrow["{\ref{thm - equivalent deformation pseudofunctors bounded} \textcolor{white}{n}}", from=2-1, to=1-1]
	\arrow["\cong"', from=1-3, to=1-2]
    \arrow["{\ref{diagramdef}}", from=1-3, to=1-2]
	\arrow["{\cong \textcolor{white}{n}}"', from=1-3, to=2-3]
    \arrow["{\textcolor{white}{n} \ref{diagramdef}}", from=1-3, to=2-3]
\end{tikzcd}\]

As a consequence, the Hochschild complex $\Cc(\A)$ governs the deformation theory of the $t$-dg-category $\A$. In particular, we have the following result (see \Cref{cor - HHn bounded tdg}).

\begin{corollary*}
    Let $\A$ be an essentially small strongly pretriangulated $k$-linear dg-category with a bounded $t$-structure. For $n \geq 2$,
    \begin{equation*} 
        \tn{HH}^{n}_{\dg}(\A) \cong \begin{Bmatrix} \tn{bounded $t$-deformations} \\ \tn{of $\A$ along } \theta_{2-n} \end{Bmatrix}/ \sim.
    \end{equation*}
\end{corollary*}

Note that this is a highly nontrivial result, as in general the Hochschild cohomology of a dg-category is known to describe curved $A_{\infty}$-deformations, a phenomenon known as the curvature problem \cite{KellerWendy}, \cite{KellerWendyNicolas}, \cite{LVdBCurv}.
However, thanks to the existence of a $B_{\infty}$-quasi-isomorphism between the Hochschild complexes (see \Cref{thm - HH A DGInjInddgA agree}),
$$\Cc(\A) \cong \Cc(\DGInj(\Ind^{\dg,Q,+}(\A))),$$
the corollary follows easily from the theorem above.
Since the dg-category of derived injectives is concentrated in nonpositive degrees, we effectively sidestepped curvature.

In future work, we will investigate the relation to a different and somewhat orthogonal approach to curvature in the absence of $t$-structures, which makes use of alternative derived categories and was initiated in \cite{LehmannLowen}, \cite{Lehmann}.\\


\paragraph{\ti{II. Background}} `Derived deformations' originate from \cite{GLVdB2}. There, the authors introduced deformations of pretriangulated dg-categories with a $t$-structure. These are the \ti{$t$-deformations} and they are defined as lifts under the right adjoint to the restriction of scalars-functor for $\theta: R\rightarrow S$, which preserves the $t$-structure. Given a $t$-dg-category, one may consider its full dg-subcategory of  \textit{derived injectives}, which will be recalled in \S \ref{pardefset2} (in the derived category of an abelian category $\mb{A}$ endowed with the standard $t$-structure, the derived injectives are just the usual injectives in $\mb{A}$, concentrated in degree zero). If there are enough derived injectives, they can be used to reconstruct the whole $t$-dg-category if it belongs to $\Hqe^{t+}_{\Pi}$, i.e. if it has a left bounded, non-degenerate $t$-structure that is closed under countable products, as was shown in \cite{GLVdB1}. We can deform the subcategory on the derived injectives as a dg-category using the natural notion of \ti{dg-deformation}, now using the left adjoint to the restriction of scalars.

Using the reconstruction theorems from \cite{GLVdB1}, the authors of \cite{GLVdB2} were able to show that dg-deformations of dg-categories of derived injectives induce $t$-deformations of the $t$-dg-categories associated to them. In this paper, we aim to show the converse to ultimately prove \eqref{eq - thm}. \\

\paragraph{\ti{III. Outline}} Our approach is inspired by the abelian story of \cite{AbelianDeformations}. In loc.\ cit.\ an equivalence of deformation pseudofunctors is constructed between flat abelian deformations of a small abelian category (which in general does not have enough injectives) and flat abelian deformations of its ind-completion, a locally coherent Grothendieck category that has enough injectives. In our setup, the role of the small category is taken on by an essentially small bounded strongly pretriangulated $t$-dg-category (which in general does not have enough derived injectives). For the big category, we need a $t$-dg-analogue of a locally coherent Grothendieck category. This is the focus of the first chapter.

\ver

\paragraph{\tb{Chapter 1}} We start by treating filtered weighted colimits and enriched ind-completions in \S \ref{subsection - weighted colim}-\ref{subsection - enriched ind-completions}. We then specialize to the cosmos of dg-$R$-modules and filtered homotopy dg-colimits in \S \ref{subsection - homotopy ind-dg-completion} in order to construct a dg-enhancement of the derived category of a pretriangulated dg-category in terms of filtered homotopy dg-colimits of representable objects, see \Cref{thm - homotopy dg completion closed}. If the pretriangulated dg-category one starts out from is strongly pretriangulated, this can be viewed as the \ti{homotopy ind-dg-completion} since it satisfies the appropriate universal property. It is also the dg-category that will serve as the `big category' above, i.e. the $t$-dg-analogue of the ind-completion in the aforementioned abelian setting. This requires endowing the dg-derived category of a pretriangulated dg-category with a $t$-structure that is compatible with the original one. To this end, we treat some preliminaries on $t$-structures in \S \ref{subsection - t-structures}, after which we extend the $t$-structure from a strongly pretriangulated dg-category to its homotopy ind-dg-completion or dg-derived category in \S \ref{subsection - induced t-structure}, see \Cref{thm - t-structure inddg}. The \ti{induced $t$-structure} makes the Yoneda embedding $t$-exact and its heart is the abelian ind-completion of the original heart. However, non-degeneracy is not preserved, see \Cref{ex - degenerate induced t-structure}. Finally, we come to the definition of a \ti{left bounded locally coherent Grothendieck $t$-dg-category} in \S \ref{subsection - t-lc} together with an equivalence of categories -- \Cref{thm - categorical hfpb inddg} --, which is what we set out to do. To show that left bounded locally coherent Grothendieck $t$-(dg-)categories have enough derived injectives, we need a variation to Brown representability's theorem in the left bounded setup, where the coaisle has small coproducts. This is the content of \Cref{appendix - Brown}.

\ver
With the machinery from the first chapter available, we go on to prove the deformation equivalences that will combine into \eqref{eq - thm} in the second chapter.
\ver

\paragraph{\tb{Chapter 2}} We start by setting the scene in \S \ref{pardefset1}-\ref{pardefset3}: we introduce the ($t$-)dg-categories we will be considering, as well as the corresponding notions of deformations, and recall the necessary results of \cite{GLVdB1} and \cite{GLVdB2}. Next, we complete the deformation equivalence between $t$-deformations in $\Hqe^{t+}_{\Pi}$ of \cite{GLVdB1}[Theorem 1.4] and dg-deformations of the associated dg-categories of derived injectives in \S \ref{pardefset4}-\ref{pardefset5}. We then show that such a $t$-deformation preserves left bounded locally coherent Grothendieck $t$-dg-categories in \S \ref{pardefset6}. To move to this setting, we use the results of Chapter 1 (in particular \Cref{thm - categorical hfpb inddg}) to prove a deformation equivalence between bounded $t$-deformations of essentially small strongly pretriangulated bounded $t$-dg-categories and left bounded locally coherent Grothendieck $t$-deformations of their dg-derived categories in \S \ref{pardefset7}. This provides the final piece needed to prove \eqref{eq - thm}.


\section{Extending \texorpdfstring{$t$}{t}-structures to the derived category} \label{section - extend t-structures}

The aim in this chapter is to extend the $t$-structure on a pretriangulated dg-category $\A$ to its derived category $\D(\A)$ so that the embedding becomes $t$-exact (\S \ref{subsection - induced t-structure}). This we will achieve by introducing a dg-enhancement of the derived category $\D(\A)$ in which the objects are filtered homotopy dg-colimits of representable objects (\S \ref{subsection - homotopy ind-dg-completion}). These filtered homotopy dg-colimits are filtered weighted colimits where the weight is a resolution of the constant diagram, as will be recalled in \S\ref{subsection - weighted colim} and \S\ref{subsection - homotopy ind-dg-completion}. Since the filtered homotopy dg-colimit is quasi-isomorphic to the filtered dg-colimit, which uses the constant diagram, we start by studying filtered dg-colimits with constant weights in \S\ref{subsection - enriched ind-completions}. This will then prove useful for \S\ref{subsection - homotopy ind-dg-completion}.

After extending the $t$-structure in \S \ref{subsection - induced t-structure}, we ask whether non-degeneracy is preserved. This proves the case when the $t$-structure on $\A$ is bounded. The functor $\A \mapsto \D_{\dg}(\A)$ then forms an equivalence of categories with the left bounded locally coherent Grothendieck $t$-dg-categories, introduced in \S \ref{subsection - t-lc}.


\subsection{Enriched category theory and weighted (co)limits} \label{subsection - weighted colim}

For background on enriched category theory, we refer to \cite{Kelly}. Below, we will recall weighted (co)limits following \cite[\S 2-3]{Kelly}, which are introduced there under the term indexed (co)limits. In enriched category theory, ordinary (co)limits defined in terms of the representability of (co)cones do not suffice, so the wider notion of weighted (co)limit was introduced.

Let $\V$ be a \ti{cosmos}\index{cosmos}, i.e. a bicomplete closed symmetric monoidal category. 

\begin{definition} \label{def - weighted colim}
    Let $\A$ be a $\V$-category, $\mc{I}$ be a small $\V$-category and $F: \mc{I} \to \A$, $W: \mc{I} \to \V$, $W': \mc{I}^{\op} \to \V$ be $\V$-functors. We call $W$ and $W'$ the \ti{weights}\index{weight}. 
    \begin{itemize}
        \item[-] If the $\V$-functor 
    $\A^{\op} \to \V: A \mapsto \V-\Cat(\mc{I},\V)(W,\A(A,F(-)))$
    allows a representation
    $$\A(A,\varprojlim{}^{W} F) \cong \V-\Cat(\mc{I},\V)(W,\A(A,F(-)))$$
    with counit
    $\lambda: W \to \A(\varprojlim{}^{W} F,F(-))$,
    then we call $(\varprojlim{}^{W} F, \lambda)$ the \ti{weighted limit of $F$ with weight $W$}\index{weighted (co)limit}. 
        \item[-] If the $\V$-functor 
    $\A \to \V: A \mapsto \V-\Cat(\mc{I}^{\op},\V)(W',\A(F(-),A))$
    allows a representation
    $$\A(\colim{}^{W'} F, A) \cong \V-\Cat(\mc{I},\V)(W',\A(F(-),A)),$$
    with unit
    $\lambda': W' \to \A(F(-),\colim{}^{W'} F)$,
    then we call $(\colim{}^{W'} F, \lambda')$ the \ti{weighted colimit of $F$ with weight $W'$}. 
    \end{itemize}
\end{definition}

\begin{remark}
    Observe that for a small $\V$-category $\mc{I}$, there is no way of defining a canonical, constant weight $W: \mc{I} \to \V$ that maps everything to the monoidal unit $I$. This is the weight we would want to plug into \Cref{def - weighted colim} to obtain ordinary (co)limits and it is precisely why weighted (co)limits are needed in the enriched setup. If the index category $\mc{I}$ is of a particular form however, this can be done, as we will observe in \Cref{ex - constant diagram}.
\end{remark}

Next, we mention some special cases of weighted limits and colimits.

\begin{example}
    Let $\mc{I} = \V\{*\}$ be the unit $\V$-category, with one object $*$ and as inner hom the monoidal unit $I_{\V}$. Then we can consider the $\V$-functors $W: \V\{*\} \to \V$ and $F: \V\{*\} \to \A$ that are fully determined by the image of $*$, so by objects $V \in \V$ and $A \in \A$ respectively, in the sense that on the level of morphisms they are given by the identity,
    \begin{align*}
        \mc{Z}^{0}(\V(I_{\V},\V(V,V))) \cong \mc{Z}^{0}(\V(V,V)), \\
        \mc{Z}^{0}(\V(I_{\V},\A(A,A))) \cong \mc{Z}^{0}(\A(A,A)).
    \end{align*}
    We therefore write $c_{V}$ and $c_{A}$ instead of $W$ and $F$.
    
    \begin{enumerate}
        \item The \ti{cotensor product} or \ti{internal hom} of $V$ and $A$ is defined as
        $$[V,A] := \varprojlim{}^{c_{V}} c_{A}.$$

        \item The \ti{tensor product} of $V$ and $A$ is defined as
        $$V \otimes A := \colim{}^{c_{V}} c_{A}.$$
    \end{enumerate}
\end{example}

\begin{definition}
    A $\V$-category $\A$ is \ti{cotensored} or said to \ti{admit cotensor products} (resp. \ti{tensored} and \ti{admitting tensor products}) if $[V,A]$ (resp. $V \otimes A$) exists for all $V \in \V, A \in \A$.\index{weighted (co)limit!(co)tensored}
\end{definition}

\begin{example}
Let $F$ be a $\V$-functor $\B^{\op} \otimes \B \to \A$ for some $\V$-category $\B$ and consider the weight $W = \homom_{\B}: \B^{\op} \otimes \B \to \V$.
    \begin{enumerate}
        \item The \ti{end of $F$} is defined as
        $$\int_{B \in \B} F(B,B) := \varprojlim{}^{\homom_{\B}}F.$$\index{weighted (co)limit!(co)end}
        \item The \ti{coend of $F$} is defined as
        $$\int^{B \in \B} F(B,B) := \colim{}^{\homom_{\B}} F.$$
    \end{enumerate}
\end{example}

One can show that if a $\V$-category is cotensored (resp. tensored), all weighted colimits (resp. weighted limits) with that target category exist and can be computed using ends and cotensors (resp. coends and tensors), see \cite{Kelly}[(3.69),(3.70)].

\begin{proposition} \label{prop - tensor inner}
    Let $W: \mc{I} \to \V$, $W': \mc{I}^{\op} \to \V$ and $F: \mc{I} \to \A$ be $\V$-functors and $\mc{I}$ be a small $\V$-category.
    \begin{enumerate}
        \item If $\A$ is cotensored, then $$\varprojlim{}^{W} F \cong \int_{i \in \mc{I}} [W(i),F(i)].$$
        We will also use the notation $[W, F]$ and call this the \ti{inner hom}.\index{weighted (co)limit!inner hom and tensor product}
        \item If $\A$ is tensored, then $$\colim{}^{W'} F \cong \int^{i \in \mc{I}^{\op}} W'(i) \otimes F(i).$$ We will also use the notation $W' \otimes_{\mc{I}} F$ and call this the \ti{tensor product}.
    \end{enumerate}
\end{proposition}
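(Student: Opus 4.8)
The plan is to establish each formula by the enriched Yoneda lemma: I would show that the displayed end (resp.\ coend) represents precisely the $\V$-functor that $\varprojlim{}^{W} F$ (resp.\ $\colim{}^{W'} F$) is required to represent by \Cref{def - weighted colim}, whence the asserted isomorphism.

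For (1), I fix $A \in \A$ and compute the $\V$-object $\A\bigl(A, \int_{i \in \mc{I}} [W(i),F(i)]\bigr)$ in three steps. First, since the representable $\V$-functor $\A(A,-)\colon \A \to \V$ preserves weighted limits and an end is a weighted limit (with weight $\homom_{\mc{I}}$), it commutes with the end:
$$\A\Bigl(A, \int_{i \in \mc{I}} [W(i),F(i)]\Bigr) \cong \int_{i \in \mc{I}} \A\bigl(A, [W(i),F(i)]\bigr).$$
Second, I apply the defining cotensor adjunction $\A(A,[V,B]) \cong \V(V,\A(A,B))$, which is $\V$-natural in every variable, to rewrite the integrand as $\V\bigl(W(i),\A(A,F(i))\bigr)$. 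Third, I invoke the standard end-formula for the hom-object of a $\V$-functor category, $\V-\Cat(\mc{I},\V)(G,H) \cong \int_{i \in \mc{I}} \V(G(i),H(i))$, with $G = W$ and $H = \A(A,F(-))$. Composing these $\V$-natural isomorphisms identifies $\A\bigl(A,\int_{i \in \mc{I}}[W(i),F(i)]\bigr)$ with $\V-\Cat(\mc{I},\V)(W,\A(A,F(-)))$ naturally in $A$, and enriched Yoneda then yields $\varprojlim{}^{W} F \cong \int_{i \in \mc{I}} [W(i),F(i)]$.

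Part (2) is formally dual. Fixing $A$, I apply the contravariant representable $\A(-,A)\colon \A^{\op} \to \V$, which carries weighted colimits to weighted limits and hence sends the coend to an end, obtaining $\A\bigl(\int^{i \in \mc{I}^{\op}} W'(i) \otimes F(i), A\bigr) \cong \int_{i \in \mc{I}^{\op}} \A\bigl(W'(i) \otimes F(i), A\bigr)$. The tensor--hom adjunction $\A(V \otimes B, A) \cong \V(V,\A(B,A))$ turns the integrand into $\V\bigl(W'(i),\A(F(i),A)\bigr)$, and the same end-formula identifies the outcome with $\V-\Cat(\mc{I}^{\op},\V)(W',\A(F(-),A))$, which is exactly the functor represented by $\colim{}^{W'} F$.

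The chain of adjunction isomorphisms is routine; the two points that need care are, first, that the representable commutes with the end (resp.\ converts the coend into an end) -- this continuity of the hom holds because representable $\V$-functors preserve whatever weighted limits exist -- and, second, that the end (resp.\ coend) on the right actually exists, which is where the hypothesis that $\A$ is cotensored (resp.\ tensored), together with the bicompleteness of the cosmos $\V$, enters; equivalently, the computation shows that each side exists as soon as the other does and that they then agree. One must also keep the isomorphisms $\V$-natural in $A$ throughout so that the concluding Yoneda step applies. Both ingredients are standard, forming the content of \cite{Kelly}[(3.69),(3.70)].
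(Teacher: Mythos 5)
Your argument is correct and is precisely the standard representability-plus-Yoneda computation that the paper delegates to \cite{Kelly}[(3.69),(3.70)] (the paper gives no independent proof). You also correctly flag the one subtlety — that cotensoredness alone gives "either side exists if the other does" rather than unconditional existence of the end — which matches Kelly's formulation.
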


\begin{examples}
    \begin{enumerate}
        \item The cosmos $\V$ itself is both tensored and cotensored, where the tensor product is the monoidal product and the cotensor product is the internal hom.

        \item Let $\V = \dgm(R)$ for a dg-ring $R$ and let $\A$ be a dg-$R$-category. Then the dg-category $\dgm(\A)$ of dg-$\A$-modules is both tensored and cotensored, see \eqref{eq - dgmA tensor} and \eqref{eq - dgmA cotensor}. In particular, it is bicomplete.
    \end{enumerate}
\end{examples}

\subsection{Enriched ind-completions} \label{subsection - enriched ind-completions}

Let $\V$ be a cosmos so that the functor corepresented by the monoidal unit, $\V(I_{\V},-)$, preserves filtered constantly weighted colimits. Let $\A$ be a $\V$-category. We start by fixing terminology.

\begin{definition}
    A \ti{diagram in $\A$} is a $\V$-functor $X: \mc{I} \to \A$ from a small $\V$-category $\mc{I}$ to $\A$. $I$ is \ti{filtered} if it is the free $\V$-category $\V\{P\}$ generated by a filtered poset $(P,\preceq)$. An \ti{ind-$\V$-object in $\A$} (resp. \ti{pro-$\V$-object in $\A$}) is then a covariant (resp. contravariant) $\V$-functor $\V\{P\} \to \A$.\index{diagram} \index{diagram!ind-$\V$-object} \index{filtered $\V$-(co)limit!pro-$\V$-object}
\end{definition}

\begin{example} \label{ex - constant diagram}
    Given a poset $(P,\preceq)$, we can consider the \ti{constant diagram}\index{diagram!constant diagram} $$c_{P}: \V\{P\} \to \V,$$ which is the $\V$-functor corresponding to the functor that maps everything to the monoidal unit $I_{\V}$ under the adjunction $\V(-): \Cat \leftrightharpoons \V-\Cat: U(-)$ between the free and forgetful functor.
    Since $P^{\op}$ remains a poset, we can also consider the constant diagram $c_{P^{\op}}: \V\{P\}^{\op} \to \V$, where we identified $\V\{P^{\op}\} \cong \V\{P\}^{\op}$.
\end{example}

These constant diagrams will serve as our weights in this subsection. In this case, weighted colimits are just ordinary colimits.

\begin{definition} \label{def - filtered enriched colimit}
    Consider an ind-$\V$-object $X: \V\{P\} \to \A$ and a pro-$\V$-object $X': \V\{P\} \to \A$ in $\A$. If they exist, we call 
     \begin{align*}
         \varprojlim{}^{c_{P^{\op}}} X', \\
         \tn{resp. \textcolor{white}{l}} \colim{}^{c_{P^{\op}}} X,
     \end{align*}
     the \ti{filtered $\V$-limit of $X'$}\index{filtered $\V$-(co)limit} (resp. \ti{filtered $\V$-colimit of $X$}). We say that $\A$ is \emph{closed under filtered $\V$-colimits} (resp. \emph{closed under filtered $\V$-limits}) if these exist for all ind-$\V$-objects in $\A$ (resp. pro-$\V$-objects in $\A$). \index{filtered $\V$-(co)limit!closed under filtered $\V$-(co)limits}
\end{definition}

\begin{remark} \label{remark - yoneda filtered colimit exists}
    We can always compose a diagram $X: \mc{I} \to \A$ with the enriched Yoneda embedding $\Y$ so that it becomes a diagram in $\V^{\A^{\op}}$, which is bicomplete. The weighted (co)limit then always exists. Note that if $X$ is an ind-$\V$-object in $\A$ and its filtered $\V$-colimit exists, it is the object representing
    $$\varprojlim{}^{c_{P^{\op}}} \V^{\A^{\op}}(X,-)$$
    because Yoneda reflects limits. Similarly, if $X$ is a pro-$\V$-object in $\A$ and its filtered $\V$-limit exists, it is the object representing
    $$\varprojlim{}^{c_{P^{\tn{op}}}} \V^{\A^{\op}}(-,X).$$
\end{remark}

\begin{definition}
    Let $\A$ be an essentially small $\V$-category. We define the \ti{$\V$-category $\Ind\Ob^{\V}(\A)$ of ind-$\V$-objects in $\A$} as the data: \index{filtered $\V$-(co)limit!ind-$\V$-object!$\Ind\Ob^{\V}(\A)$}
    \begin{itemize}
        \item The objects are the ind-$\V$-objects in $\A$;
        \item For any two objects $X:\V\{P_{X}\} \to \A, Z:\V\{P_{Z}\} \to \A$, \begin{align*}
            \Ind\Ob^{\V}(\A)(X, Z) :&= \V^{\A^{\op}}( \colim{}^{c_{P_{X}^{\op}}} \Y 
        \circ X, \colim{}^{c_{P_{Z}^{\op}}} \Y \circ Z ) \\
        &\cong \varprojlim_{i \in P_{X}} \colim_{j \in P_{Z}} \A(X_{i},Z_{j}),
        \end{align*}
        where we abbreviated $X(i)$ and $Z(j)$ to $X_{i}$ and $Z_{j}$ respectively.
    \end{itemize}
\end{definition}

There are natural fully faithful $\V$-functors,
\[\begin{tikzcd}[row sep = small]
	{\iota_{\A}:\A} & {\Ind\Ob^{\V}(\A):A} & {[c_{A}: \V\{*\} \longrightarrow \A: * \longmapsto A]}
	\arrow[from=1-1, to=1-2]
	\arrow[maps to, from=1-2, to=1-3]
\end{tikzcd}\]
which is moreover injective on objects, and\index{ind-$\V$-completion!$\Ll_{\A}$}
\[\begin{tikzcd}[row sep = small]
	{\Ll_{\A}:\Ind\Ob^{\V}(\A)} & {\V^{\A^{\op}}: X} & {\Ll_{\A}(X) := \colim {}^{c_{P_{X}^{\op}}} \Y \circ X.}
	\arrow[from=1-1, to=1-2]
	\arrow[maps to, from=1-2, to=1-3]
\end{tikzcd}\] 

\begin{definition}
    Let $\A$ be an essentially small $\V$-category. The \ti{ind-$\V$-completion of $\A$}, denoted by $\Ind^{\V}(\A)$, is the essential image of $\Ll_{\A}$.\index{ind-$\V$-completion}\index{ind-$\V$-completion!$\Ind^{\V}(\A)$}
\end{definition}


By definition, the ind-$\V$-completion of $\A$ is a full $\V$-subcategory of $\V^{\A^{\op}}$ and $\Ind\Ob^{\V}(\A) \cong \Ind^{\V}(\A)$.
The following theorem justifies its name.

\begin{thm} \label{thm - enriched completion closed}
    $\Ind^{\V}(\A)$ has all filtered $\V$-colimits.
    \begin{proof} 
        We have to show that the filtered $\V$-colimit of any ind-$\V$-object in $\Ind^{\V}(\A)$ lies in the essential image of $\Ll_{\A}$. So consider such an ind-$\V$-object,
        \[\begin{tikzcd}[row sep=small]
	{X: \V\{P\}} & {\Ind^{\V}(\A):i} & {\Ll_{\A}(X_{i}: \V\{P_{i}\} \to \A)} \\
	& {\textcolor{white}{ \Ind^{\V}(\A):} j} & {\Ll_{\A}(X_{j}: \V\{P_{j}\} \to \A).}
	\arrow[from=1-1, to=1-2]
	\arrow[maps to, from=1-2, to=1-3]
	\arrow[maps to, from=2-2, to=2-3]
	\arrow["\preceq"', shift left=10, from=1-2, to=2-2]
	\arrow["{X(i\preceq j)}", from=1-3, to=2-3]
\end{tikzcd}\]
        The idea is to define a filtered poset $\ol{P}$ that combines all the posets $\{ P_{i}, i \in P \}$ and then distill from the ind-$\V$-objects $X_{i}$ a new ind-$\V$-object $\ol{X}: \V\{\ol{P}\} \to \A$. 
        
        We define $\ol{P}$ as the set $\bigcup_{i \in P} \{i\} \times P_{i}$ and endow it with the relation
        \begin{align} \label{eq - new poset V} & \tn{ } (i,t_{i}) \preceq (j,t_{j}) \nonumber \\
        \Leftrightarrow &\begin{cases} (i = j) \wedge (t_{i} \preceq t_{j}) \\ 
        (i \prec j) \wedge (\exists \psi^{i}_{j}: X_{i,t_{i}} \to X_{j,t_{j}} \tn{ s.t. }X(i \prec j) \circ \mu_{i,t_{i}} = \mu_{j,t_{j}} \circ \Y(\psi^{i}_{j})), \end{cases}
        \end{align}
        where for each $i \in P$, $\mu_{i}$ is the dinatural transformation between $\homom_{\V\{P_{i}\}}(-,-)$ and $\dgm(\A)(c_{P_{i}}(-) \otimes \Y \circ X_{i}(-),\Ll_{\A}(X_{i}))$ in $\V^{\V\{P_{i}\}^{\op} \otimes \V\{P_{i}\}}$ that accompanies the weighted colimit (which is a coend). As $c_{P_{i}}$ is constant, this amounts to a $\V$-natural transformation $\mu_{i}$ between $\homom_{\V\{P_{i}\}}$ and $\dgm(\A)(\Y \circ X_{i}(-),\Ll_{\A}(X_{i}))$ in $\V^{\V\{P_{i}\}}$, where the first $\V$-functor maps each $t_{i}$ to $\V\{P_{i}\}(t_{i},t_{i}) = I_{\V}$, the monoidal unit. Consequently, $\mu_{i}$ translates to a family of maps in $\V^{\A^{\op}}$ $$\{ \mu_{i,t_{i}}: \Y(X_{i,t_{i}}) \to \Ll_{\A}(X) \}_{t_{i} \in P_{i}}$$
        so that $\mu_{j,t_{j}} \circ \Y(X_{i}(t_{i} \prec t_{j})) = \mu_{i,t_{i}}$ for all $t_{i} \prec t_{j}$ in $P_{i}$. The condition on $\psi^{i}_{j}$ is then the commutativity of the following diagram,
        \[\begin{tikzcd}[row sep = small]
	{\Y(X_{i,t_{i}})} & {\Ll_{\A}(X_{i})} \\
	{\Y(X_{j,t_{j}})} & {\Ll_{\A}(X_{j}).}
	\arrow["{\Y(\psi^{i}_{j})}"', dashed, from=1-1, to=2-1]
	\arrow["{\mu_{j,t_{j}}}"', from=2-1, to=2-2]
	\arrow["{\mu_{i,t_{i}}}", from=1-1, to=1-2]
	\arrow["{X(i \prec j)}", from=1-2, to=2-2]
\end{tikzcd}\]
        One can check that \eqref{eq - new poset V} defines a partial order on $\ol{P}$. To see that $(\ol{P},\preceq)$ is filtered, consider elements $(i,t_{i})$ and $(j,t_{j})$. We distinguish three cases:
        \begin{enumerate}
            \item $\ul{i = j}$: Then we can use the filteredness of $P_{i}$ to find an element $t_{k} \in P_{i}$ so that $t_{i} \preceq t_{k}$ and $t_{j} \preceq t_{k}$. Then $(i,t_{i}) \preceq (i,t_{k})$ and $(j,t_{j}) \preceq (i,t_{k})$.
            
            \item $\ul{(i \prec j) \vee (j \prec i)}$: We treat the case of $i \prec j$. Consider the map 
            \[\begin{tikzcd}[row sep = small]
	{\Y(X_{i,t_{i}})} & {\Ll_{\A}(X_{i})} \\
	& {\Ll_{\A}(X_{j}).}
	\arrow["{\mu_{i,t_{i}}}", from=1-1, to=1-2]
	\arrow["{X(i \prec j)}", from=1-2, to=2-2]
\end{tikzcd}\]
            This is an element of
            \begin{align*}
                (\V^{\A^{\op}})_{0}(\Y(X_{i,t_{i}}), \Ll_{\A}(X_{j})) &= \V(I_{\V},\V^{\A^{\op}}(\Y(X_{i,t_{i}}), \Ll_{\A}(X_{j}))) \\
                &\cong \V(I_{\V},\Ll_{\A}(X_{j})(X_{i,t_{i}})) \\
                &= \V(I_{\V},(\colim{}^{c_{P_{j}^{\op}}} \Y \circ X_{j})(X_{i,t_{i}})) \\
                &\cong \V(I_{\V},\colim{}^{c_{P_{j}^{\op}}} \A(X_{i,t_{i}},X_{j}(-))) \\
                &\cong \colim{}_{t_{j} \in P_{j}} \A_{0}(X_{i,t_{i}},X_{j,t_{j}}),
            \end{align*}
            by our assumption on $\V(I_{\V},-)$. So we can use the description of filtered colimits in $\Set$. Consequently, $X(i \prec j) \circ \mu_{i,t_{i}}$ corresponds to (the equivalence class of) a map $\psi^{i}_{j} \in \A(X_{i,t_{i}}, X_{j,t_{j}^{i}})$, which makes the diagram below commute.
            \[\begin{tikzcd}[row sep = small]
	{\Y(X_{i,t_{i}})} & {\Ll_{\A}(X_{i})} \\
	{\Y(X_{j,t_{j}^{i}})} & {\Ll_{\A}(X_{j}).}
	\arrow["{\mu_{i,t_{i}}}", from=1-1, to=1-2]
	\arrow["{X(i \prec j)}", from=1-2, to=2-2]
	\arrow["{\mu_{j,t_{j}^{i}}}"', from=2-1, to=2-2]
	\arrow["{\Y(\psi^{i}_{j_{i}})}"', from=1-1, to=2-1]
\end{tikzcd}\]
            Due to $P_{j}$ being filtered, there exists some $t' \in P_{j}$ so that $t_{j}^{i} \preceq t'$ and $t_{j} \preceq t'$. The following diagram then commutes
            \[\begin{tikzcd}[row sep = small]
	{\Y(X_{i,t_{i}})} & {\Ll_{\A}(X_{i})} \\
	{\Y(X_{j,t_{j}^{i}})} & {\Ll_{\A}(X_{j}).} \\
	{\Y(X_{j,t'})}
	\arrow["{\mu_{i,t_{i}}}", from=1-1, to=1-2]
	\arrow["{X(i \prec j)}", from=1-2, to=2-2]
	\arrow["{\mu_{j,t_{j}^{i}}}"', from=2-1, to=2-2]
	\arrow["{\Y(\psi^{i}_{j})}"', from=1-1, to=2-1]
	\arrow["{\Y(X_{j}(t_{j}^{i}\preceq t'))}"', from=2-1, to=3-1]
	\arrow["{\mu_{j,t'}}"', from=3-1, to=2-2]
\end{tikzcd}\]
            Hence $(j,t_{j}) \preceq (j,t')$ and $(i,t_{i}) \preceq (j,t')$ via $X_{j}(t^{i}_{j} \preceq t') \circ \psi^{i}_{j_{i}}$.
            
            \item \underline{Otherwise}: Since $P$ is filtered, we can find some $k \in P$ so that $i \preceq k$ and $j \preceq k$. As in the second case, we can then find maps $\psi^{i}_{k_{i}}$ and $\psi^{j}_{k_{j}}$ that make the following diagrams commute:
            \[\begin{tikzcd}[row sep = small]
	{\Y(X_{i,t_{i}})} & {\Ll_{\A}(X_{i})} && {\Y(X_{j,t_{j}})} & {\Ll_{\A}(X_{j})} \\
	{\Y(X_{k,t_{k}^{i}})} & {\Ll_{\A}(X_{k})} && {\Y(X_{k,t_{k}^{j}})} & {\Ll_{\A}(X_{k}).}
	\arrow["{\mu_{i,t_{i}}}", from=1-1, to=1-2]
	\arrow["{X(i \prec k)}", from=1-2, to=2-2]
	\arrow["{\mu_{k,t_{k}^{i}}}"', from=2-1, to=2-2]
	\arrow["{\Y(\psi^{i}_{k_{i}})}"', from=1-1, to=2-1]
	\arrow["{\mu_{j,t_{j}}}", from=1-4, to=1-5]
	\arrow["{X(j \prec k)}", from=1-5, to=2-5]
	\arrow["{\mu_{k,t_{k}^{j}}}"', from=2-4, to=2-5]
	\arrow["{\Y(\psi^{j}_{k_{j}})}"', from=1-4, to=2-4]
\end{tikzcd}\]
            Now let $t_{k} \in P_{k}$ so that $t_{k}^{i} \preceq t_{k}$ and $t_{k}^{j} \preceq t_{k}$. Then we can conclude from the commutativity of the diagrams below that $(i,t_{i}) \preceq (k,t_{k})$ and $(j,t_{j}) \preceq (k,t_{k})$.
            \[\begin{tikzcd}[row sep = small]
	{\Y(X_{i,t_{i}})} & {\Ll_{\A}(X_{i})} && {\Y(X_{j,t_{j}})} & {\Ll_{\A}(X_{j})} \\
	{\Y(X_{k,t_{k}^{i}})} & {\Ll_{\A}(X_{k})} && {\Y(X_{k,t_{k}^{j}})} & {\Ll_{\A}(X_{k})} \\
	{\Y(X_{k,t_{k}})} &&& {\Y(X_{k,t_{k}})}
	\arrow["{\mu_{i,t_{i}}}", from=1-1, to=1-2]
	\arrow["{X(i \prec k)}", from=1-2, to=2-2]
	\arrow["{\mu_{k,t_{k}^{i}}}"', from=2-1, to=2-2]
	\arrow["{\Y(\psi^{i}_{k_{i}})}"', from=1-1, to=2-1]
	\arrow["{\mu_{j,t_{j}}}", from=1-4, to=1-5]
	\arrow["{X(j \prec k)}", from=1-5, to=2-5]
	\arrow["{\mu_{k,t_{k}^{j}}}"', from=2-4, to=2-5]
	\arrow["{\Y(\psi^{j}_{k_{j}})}"', from=1-4, to=2-4]
	\arrow["{\mu_{t,t_{k}}}"', from=3-1, to=2-2]
	\arrow["{\mu_{t,t_{k}}}"', from=3-4, to=2-5]
	\arrow["{\Y(X_{k}(t^{i}_{k} \preceq t_{k}))}"', from=2-1, to=3-1]
	\arrow["{\Y(X_{k}(t^{j}_{k} \preceq t_{k}))}"', from=2-4, to=3-4]
\end{tikzcd}\]
            
        \end{enumerate}
        So we constructed a filtered poset $\ol{P}$ and can now define the $\V$-functor
        \[\begin{tikzcd}[row sep = small, ampersand replacement=\&]
	{\ol{X}: \V\{\ol{P}\}} \& {\A:(i,t_{i})} \& {X_{i,t_{i}}} \\
	\& {\textcolor{white}{\A:}(j,t_{j})} \& {X_{j,t_{j}},}
	\arrow[from=1-1, to=1-2]
	\arrow["\preceq"', shift left=5, from=1-2, to=2-2]
	\arrow[maps to, from=1-2, to=1-3]
	\arrow[maps to, from=2-2, to=2-3]
	\arrow["{\quad \begin{cases} X_{i}(t_{i} \leq t_{j}), &i=j \\ \psi^{i}_{j}, &i \prec j \end{cases}}", from=1-3, to=2-3]
\end{tikzcd}\]
        where we abbreviated $X_{i}(t_{i})$ to $X_{i,t_{i}}$ and used $\V(-): \Cat \leftrightharpoons \V-\Cat: U(-)$. Using the natural maps $\Ll_{\A}(X_{i}) \to \colim^{c_{P^{\op}}} X$ and $\Y \circ \ol{X}(i,t_{i}) \to \Ll_{\A}(\ol{X})$, we conclude that the filtered $\V$-colimit of $X$ is isomorphic to $\Ll_{\A}(\ol{X})$ and thus contained in the essential image of $\Ll_{\A}(-)$.
    \end{proof}
\end{thm}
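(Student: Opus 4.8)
The plan is to show that the filtered $\V$-colimit of an arbitrary ind-$\V$-object in $\Ind^{\V}(\A)$, which exists in the bicomplete category $\V^{\A^{\op}}$ by \Cref{remark - yoneda filtered colimit exists}, already lies in the essential image of $\Ll_{\A}$. Concretely, given an ind-$\V$-object $X\colon \V\{P\}\to\Ind^{\V}(\A)$ with each value $\Ll_{\A}(X_i\colon\V\{P_i\}\to\A)$, I would assemble the family $\{P_i\}_{i\in P}$ into a single filtered poset $\ol{P}$ and construct a new ind-$\V$-object $\ol{X}\colon\V\{\ol{P}\}\to\A$ whose image under $\Ll_{\A}$ is this colimit. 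This is the enriched analogue of the Grothendieck construction that rewrites a filtered colimit of filtered colimits as a single filtered colimit of representables.

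First I would take $\ol{P}=\bigcup_{i\in P}\{i\}\times P_i$ as underlying set and equip it with the relation recorded in \eqref{eq - new poset V}: within a fixed index $i$ the order is inherited from $P_i$, while across $i\prec j$ one declares $(i,t_i)\preceq(j,t_j)$ precisely when there is a morphism $\psi^i_j\colon X_{i,t_i}\to X_{j,t_j}$ in $\A$ witnessing compatibility of the structure maps with the transition $X(i\prec j)$. Here the $\mu_{i,t_i}\colon\Y(X_{i,t_i})\to\Ll_{\A}(X_i)$ are the components of the dinatural transformation accompanying the coend presentation of $\Ll_{\A}(X_i)$. I would then check that this relation is genuinely a partial order, the only non-formal point being transitivity across indices, which requires composing witnessing maps and realigning the relevant stages of $P_j$ using its filteredness.

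The crux is establishing that $(\ol{P},\preceq)$ is filtered, which I would handle by a three-case analysis on a pair $(i,t_i),(j,t_j)$: the case $i=j$ reduces at once to filteredness of $P_i$; the comparable case $i\prec j$ (symmetrically $j\prec i$) is where the hypothesis on $\V$ enters; and the incomparable case is reduced to the comparable one after choosing an upper bound $k$ of $i,j$ in $P$. The essential input is the assumption that $\V(I_{\V},-)$ preserves filtered constantly weighted colimits: it identifies
\[
(\V^{\A^{\op}})_0\bigl(\Y(X_{i,t_i}),\Ll_{\A}(X_j)\bigr)\cong\colim_{t_j\in P_j}\A_0(X_{i,t_i},X_{j,t_j}),
\]
so that the morphism $X(i\prec j)\circ\mu_{i,t_i}$ into the colimit factors, via the $\Set$-level description of filtered colimits, through some finite stage $X_{j,t_j^i}$ and thereby produces the required witness $\psi^i_j$. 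Absorbing the ambiguity in the representative — two representatives agree only after passing to a later stage — is what forces a second use of filteredness of $P_j$, and is the main technical obstacle to keep clean.

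Finally, with $\ol{P}$ filtered I would define $\ol{X}\colon\V\{\ol{P}\}\to\A$ on objects by $(i,t_i)\mapsto X_{i,t_i}$ and on generating comparisons by the internal transition maps $X_i(t_i\preceq t_j)$ when $i=j$ and by the witnesses $\psi^i_j$ when $i\prec j$, transporting this ordinary functor along the free--forgetful adjunction $\V(-)\colon\Cat\leftrightharpoons\V\text{-}\Cat\colon U(-)$ to an honest $\V$-functor. The canonical maps $\Ll_{\A}(X_i)\to\colim{}^{c_{P^{\op}}}X$ and $\Y\circ\ol{X}(i,t_i)\to\Ll_{\A}(\ol{X})$ then exhibit both $\colim{}^{c_{P^{\op}}}X$ and $\Ll_{\A}(\ol{X})$ as colimits of the same diagram of representables $\Y(X_{i,t_i})$; comparing the two universal properties through these canonical maps yields an isomorphism $\colim{}^{c_{P^{\op}}}X\cong\Ll_{\A}(\ol{X})$, placing the colimit in the essential image of $\Ll_{\A}$, as desired.
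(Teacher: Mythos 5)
Your proposal follows the same strategy as the paper's proof: the same Grothendieck-style assembly of the posets $P_i$ into $\ol{P}$ via the witnessing maps $\psi^i_j$, the same three-case filteredness argument hinging on $\V(I_{\V},-)$ preserving filtered constantly weighted colimits to extract a representative at a finite stage, and the same conclusion by comparing the two colimit presentations of the diagram of representables. The argument is correct and matches the paper's in all essentials.
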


As in the non-enriched case, the ind-$\V$-completion satisfies a universal property.

\begin{corollary} \label{cor - universal prop enriched}
    For any $\V$-functor $F: \A \to \B$ where $\A$ is essentially small and $\B$ has all filtered $\V$-colimits, there exists a unique $\V$-functor $\Ind^{\V}(F): \Ind^{\V}(\A) \to \B$ that makes the diagram below commute:
    \begin{equation} \label{eq - universal prop hodg} \begin{tikzcd}[row sep=small]
	\A & {\B.} \\
	{\Ind^{\V}(\A)}
	\arrow["F", from=1-1, to=1-2]
	\arrow["{\Ind^{\V}(F)}"', pos=0.3, shift right=2, dashed, from=2-1, to=1-2]
	\arrow["{\Y}"', hook, from=1-1, to=2-1]
\end{tikzcd}\end{equation}
    \begin{proof}
        One can check that the following defines a $\V$-functor,
        \[\begin{tikzcd}[row sep = small]
	{\Ind^{\V}(F):\Ind^{\V}(\A)} & {\B: \Ll_{\A}(X)} & {\colim{}^{c_{P_{X}^{\op}}} F \circ X} 
	\arrow[from=1-1, to=1-2]
	\arrow[maps to, from=1-2, to=1-3]
\end{tikzcd}\]
    and that this is the unique one to make diagram \eqref{eq - universal prop hodg} commute.
    \end{proof}
\end{corollary}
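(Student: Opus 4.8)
The plan is to define $\Ind^{\V}(F)$ by the displayed formula and then check, in order, that it is an unambiguous assignment on objects, that it extends to a $\V$-functor, that it makes the triangle commute, and that it is the unique such extension. I would first define the functor on $\Ind\Ob^{\V}(\A)$, where an object is genuinely an ind-$\V$-object $X:\V\{P_{X}\}\to\A$, and then transport along the equivalence $\Ind\Ob^{\V}(\A)\cong\Ind^{\V}(\A)$; this removes any question of independence of the chosen representative. On objects I set $\Ind^{\V}(F)(\Ll_{\A}(X)):=\colim{}^{c_{P_{X}^{\op}}}F\circ X$, which exists because $F\circ X$ is an ind-$\V$-object in $\B$ and $\B$ is closed under filtered $\V$-colimits by hypothesis.

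The heart of the argument is the action on hom-objects. Using the enriched Yoneda isomorphism in $\V^{\A^{\op}}$ and the pointwise computation of presheaf colimits one has $\Ind^{\V}(\A)(\Ll_{\A}(X),\Ll_{\A}(Z))\cong\varprojlim_{i}\colim_{j}\A(X_{i},Z_{j})$, while the universal property of the colimit in the first variable gives $\B(\colim{}^{c_{P_{X}^{\op}}}F\circ X,\colim{}^{c_{P_{Z}^{\op}}}F\circ Z)\cong\varprojlim_{i}\B(F(X_{i}),\colim{}^{c_{P_{Z}^{\op}}}F\circ Z)$. For each $i,j$ I would form the composite $\A(X_{i},Z_{j})\xrightarrow{F}\B(F(X_{i}),F(Z_{j}))\to\B(F(X_{i}),\colim{}^{c_{P_{Z}^{\op}}}F\circ Z)$, the second map being induced by the colimit coprojection $F(Z_{j})\to\colim{}^{c_{P_{Z}^{\op}}}F\circ Z$. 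These are compatible with the transition maps in $j$ (by the cocone condition) and with those in $i$, so by the universal properties of $\colim_{j}$ in $\V$ and of $\varprojlim_{i}$ they assemble into the required map on hom-objects. I want to stress that the construction deliberately routes through the coprojections rather than through any exchange of $\B(F(X_{i}),-)$ with a filtered colimit, since $\B(F(X_{i}),-)$ is \emph{not} assumed to preserve filtered $\V$-colimits.

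Next I would verify $\V$-functoriality. Compatibility with units is immediate from $F$ preserving identities. Compatibility with composition is the step I expect to be the main obstacle: both composition in $\Ind^{\V}(\A)$ and in $\B$ are governed by the interplay of the $\varprojlim_{i}$-$\colim_{j}$ structure with the colimit cocones, so one must trace a morphism $\Ll_{\A}(X)\to\Ll_{\A}(Z)\to\Ll_{\A}(W)$ through the coprojections and apply the relevant universal properties at each stage, the crucial point being the compatibility of the coprojections $F(Z_{j})\to\colim{}^{c_{P_{Z}^{\op}}}F\circ Z$ with the image under $\B(F(X_{i}),-)$ of a morphism $\colim{}^{c_{P_{Z}^{\op}}}F\circ Z\to\colim{}^{c_{P_{W}^{\op}}}F\circ W$. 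Triangle commutativity is then a direct check: for $A\in\A$ one has $\Y(A)=\Ll_{\A}(c_{A})$ with $c_{A}:\V\{*\}\to\A$, and the filtered $\V$-colimit over the one-point poset is the identity, so $\Ind^{\V}(F)(\Y(A))=F(A)$, and likewise on hom-objects.

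Finally, for uniqueness I would observe that by \Cref{thm - enriched completion closed} every object $\Ll_{\A}(X)=\colim{}^{c_{P_{X}^{\op}}}\Y\circ X$ is the filtered $\V$-colimit in $\Ind^{\V}(\A)$ of the representables $\Y(X_{i})$, and that the functor just built preserves these filtered $\V$-colimits by construction. Hence any $\V$-functor $G$ with $G\circ\Y=F$ that preserves filtered $\V$-colimits satisfies $G(\Ll_{\A}(X))=\colim{}^{c_{P_{X}^{\op}}}G\circ\Y\circ X=\colim{}^{c_{P_{X}^{\op}}}F\circ X$, and the same preservation argument pins down its action on morphisms, so $G=\Ind^{\V}(F)$. (This is the standard uniqueness for ind-completions, namely among the filtered-$\V$-colimit-preserving extensions.)
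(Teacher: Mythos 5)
Your construction is the same as the paper's: the proof in the text consists precisely of the object-level formula $\Ll_{\A}(X) \mapsto \colim{}^{c_{P_{X}^{\op}}} F \circ X$ followed by ``one can check'', and you supply the checks (the hom-level map assembled from the coprojections and the $\varprojlim_{i}$--$\colim_{j}$ description of the hom-objects, $\V$-functoriality, and the restriction to representables). The one place you diverge from the literal statement is uniqueness: you prove, correctly, that $\Ind^{\V}(F)$ is the unique extension \emph{among filtered-$\V$-colimit-preserving $\V$-functors}, whereas the corollary as written asserts uniqueness among all $\V$-functors making the triangle commute. The stronger claim fails already for $\V = \Set$ (the ultrafilter endofunctor of $\Set \simeq \Ind(\mathrm{FinSet})$ restricts to the inclusion of finite sets yet is not the identity), so your formulation is the correct reading of the statement, and your flagging of it is more careful than the paper's terse ``this is the unique one''.
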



\subsection{An enhancement of the derived category via filtered homotopy dg-colimits or the homotopy ind-dg-completion} \label{subsection - homotopy ind-dg-completion}

We now take the cosmos $\V$ to be $\dgm(R)$. Note that $\mc{Z}^{0}(-)$ indeed preserves filtered constantly weighted colimits so that the results of \S\ref{subsection - enriched ind-completions} apply. In what follows, we will often abbreviate $\V = \dgm(R)$ to $R$, for example by writing $R\{P\}$ instead of $\dgm(R)\{P\}$. We will also use the term \ti{ind-dg-object} (resp. \ti{pro-dg-object}) instead of ind-$\dgm(R)$-object (resp. proj-$\dgm(R)$-object) and \ti{filtered dg-(co)limit} instead of filtered $\dgm(R)$-(co)limit. \index{filtered homotopy dg-(co)limit!ind-dg-object} \index{filtered homotopy dg-(co)limit!pro-dg-object}

We start by recalling filtered homotopy dg-(co)limits. These are filtered weighted (co)limits that are stable under quasi-isomorphisms between the diagrams. 
The idea is simple: as was recalled in \S\ref{subsection - weighted colim}, a weighted dg-(co)limit is a tensor product (resp. inner hom), see \Cref{prop - tensor inner}. Hence, we simply resolve one of the two components, usually the weight. So consider a dg-category $\A$. For a filtered poset $(P,\preceq)$, let 
\begin{align*}
    Q(c_{P^{\op}}) &\to c_{P^{\op}}, \\
    Q(c_{P}) &\to c_{P},
\end{align*}
be fixed h-projective\footnote{That we need a h-projective resolution instead of a h-flat resolution, will become clear from \Cref{thm - homotopy dg completion closed}.} resolutions of the constant diagrams (cf. \Cref{ex - constant diagram}), e.g. the bar resolution\footnote{To obtain the bar resolution of a dg-module $M: \A \to \dgm(R)$, one tensors $M$ with the bar-resolution of $\A$, which is an $\A-\A$-dg-bimodule, over $\A$. See for example \cite{KellerWendy}[Proof of Proposition 2.2] for the bar resolution of a dg-category $\A$ as an $\A-\A$-dg-bimodule.}.\index{bar resolution}

\begin{definition} \label{def - filtered homotopy dg-colimit} 
     Consider an ind-dg-object $X: R\{P\} \to \A$ and a pro-dg-object $X': R\{P\} \to \A$ in $\A$. If they exist, we call
     \begin{align*}
         \varprojlim{}^{Q(c_{P^{\op}})} X', \\
         \tn{resp. \textcolor{white}{l}} \colim{}^{Q(c_{P^{\op}})} X,
     \end{align*}
     the \ti{filtered homotopy dg-limit of $X'$}\index{filtered homotopy dg-(co)limit}\index{filtered homotopy dg-(co)limit!closed under filtered homotopy dg-(co)limits} (resp. \ti{filtered homotopy dg-colimit of $X$}) \cite{FrancescoPrivate}.
    We say that $\A$ is \emph{closed under filtered homotopy dg-colimits} (resp. \emph{closed under filtered homotopy dg-limits}) if these exist for all ind-dg-objects in $\A$ (resp. pro-dg-objects in $\A$).
\end{definition}

\begin{remark} \label{remark - filtered homotopy dg-colimits}
    Analogous to \Cref{remark - yoneda filtered colimit exists}, we can compose a diagram $X: \mc{I} \to \A$ with the dg-Yoneda embedding $\Y$ so that it becomes a diagram in $\dgm(\A)$. The filtered homotopy (dg-)colimit then always exists. Note that if $X$ is an ind-dg-object in $\A$ and its filtered homotopy dg-colimit exists, it is the object representing
    $$\varprojlim{}^{Q(c_{P^{\op}})} \dgm(\A)(X,-)$$
    because $\Y$ reflects limits. Similarly, if $X$ is a pro-dg-object in $\A$ and its filtered homotopy dg-limit exists, it is the object representing
    $$\varprojlim{}^{Q(c_{P^{\tn{op}}})} \dgm(\A)(-,X).$$
\end{remark}


Note that, up to homotopy equivalence, the filtered homotopy dg-(co)limits of \Cref{def - filtered homotopy dg-colimit} are independent of the choice of h-projective resolution. In order to compare homotopy colimits with the corresponding ordinary weighted colimits, we make use of the following observations:

\begin{lemma}\label{lemmahflat}
     The constant weight $c_{P^{\op}}$ is h-flat in $\dgm(R\{P\}^{\op})$. 
\end{lemma}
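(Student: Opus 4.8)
The plan is to reduce the h-flatness of $c_{P^{\op}}$ to the exactness of filtered colimits of dg-$R$-modules. Recall that an object of $\dgm(R\{P\}^{\op})$ is h-flat exactly when tensoring with it preserves acyclicity (equivalently, quasi-isomorphisms). So I would fix a covariant diagram $M\colon R\{P\} \to \dgm(R)$ and analyse the dg-$R$-module $c_{P^{\op}} \otimes_{R\{P\}} M$, aiming to show it is acyclic whenever each $M(i)$ is.

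First I would make the tensor product explicit. By \Cref{prop - tensor inner}(2) it is the coend
$$c_{P^{\op}} \otimes_{R\{P\}} M \;\cong\; \int^{i \in R\{P\}^{\op}} c_{P^{\op}}(i) \otimes M(i),$$
and since $c_{P^{\op}}$ is the constant weight at the monoidal unit $I_{\V} = R$ (cf. \Cref{ex - constant diagram}) and $R\{P\}$ is the free $\V$-category on the poset $P$, the coend collapses to the ordinary colimit of $M$ over $P$, i.e.
$$c_{P^{\op}} \otimes_{R\{P\}} M \;\cong\; \colim_{i \in P} M(i).$$
As $(P,\preceq)$ is filtered, this is a filtered colimit. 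Now I would invoke that cohomology commutes with filtered colimits of dg-$R$-modules: if each $M(i)$ is acyclic, then
$$H^{n}\big(\colim_{i \in P} M(i)\big) \;\cong\; \colim_{i \in P} H^{n}(M(i)) \;=\; 0$$
for all $n$, so $c_{P^{\op}} \otimes_{R\{P\}} M$ is acyclic. This is precisely h-flatness of $c_{P^{\op}}$.

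The two ingredients — the coend computation and the exactness of filtered colimits of complexes — are both routine; the essential use of the hypotheses is the filteredness of $P$, without which the colimit functor $\colim_{P}$ would not be exact. I expect the only genuine care to be bookkeeping the variance conventions, so that the weighted tensor $c_{P^{\op}} \otimes_{R\{P\}} (-)$ is correctly identified with the covariant filtered colimit over $P$ (and not over $P^{\op}$), and that ``acyclic'' is read objectwise for modules over $R\{P\}$.
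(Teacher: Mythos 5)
Your argument is correct and coincides with the paper's proof: both identify $c_{P^{\op}} \otimes_{R\{P\}} N$ with the ordinary filtered colimit $\colim^{c_{P^{\op}}} N$ over $P$ and then conclude from the exactness of filtered colimits in $\mc{Z}^{0}(\dgm(R))$ that acyclicity is preserved. The extra care you flag about variance and the coend collapsing is exactly the (routine) content the paper leaves implicit.
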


\begin{proof}
    Consider an acyclic dg-$R\{P\}^{\op}$-module $N$. Then $$c_{P^{\op}} \otimes_{R\{P\}} N = \colim{}^{c_{P^{\op}}} N.$$
            Since filtered colimits are exact in $\mc{Z}^{0}(\dgm(R))$, we have that
            \begin{equation*}
                H^{k}(c_{P^{\op}} \otimes_{R\{P\}} N) \cong \colim_{i \in R\{P\}} H^{k}(N_{i}) = 0. \qedhere
            \end{equation*}
            \end{proof}


\begin{corollary} \label{lemma - independency hocolim}
    Consider an ind-dg-object $X: R\{P\} \to \A$.
    Given a resolution $Q(c_{P^{\op}}) \to c_{P^{\op}}$, we have a quasi-isomorphism $$Q(c_{P^{\op}}) \otimes_{R\{P\}} X \sim c_{P^{\op}} \otimes_{R\{P\}} X,$$ if the weighted colimits exist.
\end{corollary}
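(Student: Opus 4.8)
The plan is to reduce the statement to a computation with dg-$R\{P\}$-modules and then feed in \Cref{lemmahflat}. By \Cref{prop - tensor inner}, the two sides are the weighted colimits $\colim^{c_{P^{\op}}} X$ and $\colim^{Q(c_{P^{\op}})} X$, and the resolution $\phi\colon Q(c_{P^{\op}}) \to c_{P^{\op}}$ induces, by functoriality of $-\otimes_{R\{P\}} X$ in the weight, the comparison morphism whose quasi-isomorphy we must prove. First I would move the whole problem into $\dgm(\A)$: by \Cref{remark - filtered homotopy dg-colimits} (and its ordinary analogue \Cref{remark - yoneda filtered colimit exists}), whenever these colimits exist in $\A$ they are computed, under the dg-Yoneda embedding $\Y$, as the corresponding weighted colimits of $\Y \circ X$ in $\dgm(\A)$; since $\Y$ is fully faithful and reflects quasi-isomorphisms, it suffices to show that
$$Q(c_{P^{\op}}) \otimes_{R\{P\}} (\Y \circ X) \longrightarrow c_{P^{\op}} \otimes_{R\{P\}} (\Y \circ X)$$
is a quasi-isomorphism in $\dgm(\A)$.

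Next I would exploit that in $\dgm(\A)$ both weighted colimits exist and, together with quasi-isomorphisms, are detected objectwise. Concretely, for each object $a \in \A$ the evaluation $M \mapsto M(a)$ preserves colimits and sends $\Y(X_{i})$ to $\A(a, X_{i})$, so evaluating the displayed map at $a$ yields
$$Q(c_{P^{\op}}) \otimes_{R\{P\}} N_{a} \longrightarrow c_{P^{\op}} \otimes_{R\{P\}} N_{a},$$
where $N_{a} := \A(a, X_{-})\colon R\{P\} \to \dgm(R)$ is an honest left dg-$R\{P\}$-module. As a morphism of dg-$\A$-modules is a quasi-isomorphism exactly when it is one after every such evaluation, the problem becomes entirely internal to $\dgm(R)$: for an arbitrary left dg-$R\{P\}$-module $N$, tensoring $\phi$ with $N$ over $R\{P\}$ must be a quasi-isomorphism.

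The heart of the argument is a two-out-of-three comparison resting on h-flatness. Both weights are h-flat: $Q(c_{P^{\op}})$ is h-projective by construction, hence h-flat, while $c_{P^{\op}}$ is h-flat by \Cref{lemmahflat}. Choosing an h-flat (for instance h-projective) resolution $p\colon P \xrightarrow{\sim} N$, I would tensor the quasi-isomorphism $\phi$ with the quasi-isomorphism $p$ over $R\{P\}$, producing a commutative square. Its top edge $Q(c_{P^{\op}}) \otimes_{R\{P\}} P \to c_{P^{\op}} \otimes_{R\{P\}} P$ is a quasi-isomorphism because $P$ is h-flat; its two vertical edges are quasi-isomorphisms because $Q(c_{P^{\op}})$ and $c_{P^{\op}}$ are h-flat and hence preserve the quasi-isomorphism $p$; by two-out-of-three the bottom edge, which is the map we want, is a quasi-isomorphism.

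I do not anticipate a genuine obstacle: once \Cref{lemmahflat} supplies h-flatness of the unresolved weight $c_{P^{\op}}$, the conclusion is formal. The points demanding care are bookkeeping ones — reducing the $\A$-valued colimit to $\dgm(R)$ through $\dgm(\A)$ (using that evaluation, unlike a representable $\A(a,-)$, preserves colimits), keeping the left/right module variances consistent across each tensor factor, and noting that it is precisely h-flatness, rather than h-projectivity which $c_{P^{\op}}$ need not enjoy, that makes the tensor argument run.
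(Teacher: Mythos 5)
Your proof is correct and follows essentially the same route as the paper: the paper also picks an h-flat resolution $Q(X)\to X$ of the diagram and concludes via the chain $Q(c_{P^{\op}})\otimes X \sim Q(c_{P^{\op}})\otimes Q(X) \sim c_{P^{\op}}\otimes Q(X) \sim c_{P^{\op}}\otimes X$, using exactly the two h-flatness inputs you identify (h-projectivity of $Q(c_{P^{\op}})$ and \Cref{lemmahflat} for $c_{P^{\op}}$); your commutative square with two-out-of-three is the same argument in a different arrangement. The only difference is that you spell out the reduction through $\dgm(\A)$ and objectwise evaluation, which the paper leaves implicit.
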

\begin{proof}
            For a h-flat resolution $Q(X) \to X$, we have
            \begin{align*}
                Q(c_{P^{\op}}) \otimes_{R\{P\}} X &\sim Q(c_{P^{\op}}) \otimes_{R\{P\}} Q(X) \\
                &\sim c_{P^{\op}} \otimes_{R\{P\}} Q(X) \\
                &\sim c_{P^{\op}} \otimes_{R\{P\}} X. \qedhere
            \end{align*}
    \end{proof}

Let $\A$ be an essentially small dg-category. As before, we can consider the dg-category $\Ind\Ob^{\dg}(\A)$ of ind-dg-objects in $\A$\index{filtered homotopy dg-(co)limit!ind-dg-object!$\Ind\Ob^{\dg}(\A)$}, which came with a natural fully faithful dg-functor $\Ll_{\A}$ that mapped an ind-dg-object $X$ in $\A$ to the filtered dg-colimit of $X$. Similarly, we have the following dg-functor\index{homotopy ind-dg-completion!$\Ll_{\A}^{Q}$}
\[\begin{tikzcd}[row sep = small]
	{\Ll_{\A}^{Q}:\Ind\Ob^{\dg}(\A)} & {\dgm(\A): X} & {\Ll_{\A}^{Q}(X) := \colim {}^{Q(c_{P_{X}^{\op}})} \Y \circ X,}
	\arrow[from=1-1, to=1-2]
	\arrow[maps to, from=1-2, to=1-3]
\end{tikzcd}\] 
where the resolution $Q(c_{P_{X}^{\op}})$ is the bar resolution. Note that it is no longer fully faithful since 
\begin{equation} \label{eq - LAQ ff}
    \dgm(\A)(\Ll_{\A}^{Q}(X), \Ll_{\A}^{Q}(X')) \cong \varprojlim { }^{Q(c_{P_{X}^{\op}})} \colim { }^{Q(c_{P_{X'}^{\op}})} \A(X_{i},X'_{j}).
\end{equation}


Motivated by \Cref{thm - homotopy dg completion closed} and \Cref{cor - universal prop inddgho}, we restrict to strongly pretriangulated dg-categories in the following definition.

\begin{definition}
    The \ti{homotopy ind-dg-completion} of an essentially small strongly pretriangulated dg-category $\A$, denoted by $\Ind^{\dg,Q}(\A)$, is the essential image of $\Ll_{\A}^{Q}$.\index{homotopy ind-dg-completion}\index{homotopy ind-dg-completion!$\Ind^{\dg,Q}(\A)$}
\end{definition}

\begin{remark} \label{remark - inddg and inddgho}
    $\Ind^{\dg,Q}(\A)$ is quasi-equivalent to the quasi-essential image of $\Ll_{\A}^{Q}$ and, by \Cref{lemma - independency hocolim} (1), the latter contains the filtered homotopy dg-colimits for all possible resolutions of the weight. In fact, it is $\hproj(\A)$, see \Cref{thm - homotopy dg completion closed}.
\end{remark}

We again have an embedding of $\A$ into $\Ind^{\dg,Q}(\A)$ given by Yoneda, which makes the following diagram commute:
        \begin{equation} \label{eq - cdiagram embedding LAQ} \begin{tikzcd}[row sep = small]
	\A & {\Ind\Ob^{\dg}(\A)} & {\Ind^{\dg,Q}(\A) \subseteq \dgm(\A).}
	\arrow["{\iota_{A}}", hook, from=1-1, to=1-2]
	\arrow["\Ll_{\A}^{Q}", from=1-2, to=1-3]
	\arrow["\Y"', hook, curve={height=12pt}, from=1-1, to=1-3]
\end{tikzcd}\end{equation}
This requires remarking that if $P$ is a singleton (or more general discrete), then $c_{P}$ is already h-projective\footnote{For $P = \coprod_{S} \{*\}$ and $N: R\{P\} \to \dgm(R)$ acyclic, $\dgm(R\{P\}^{\op})(c_{P},N) \cong \coprod_{i \in S} N_{i}$ is again acyclic because filtered colimits are exact in $\mc{Z}^{0}(\dgm(R))$.} as a dg-$R\{P\}^{\op}$-module. As desired, we have the following result.

\begin{thm} \label{thm - homotopy dg completion closed}
    Let $\A$ be an essentially small strongly pretriangulated dg-category. The following hold:
    \begin{enumerate}
        \item We have that $\Ind^{\dg,Q}(\A) \subseteq \hproj(\A)$;
        \item The embedding $\Ind^{\dg,Q}(\A) \hookrightarrow \hproj(\A)$ is a quasi-equivalence;
        \item $\Ind^{\dg,Q}(\A)$ is pretriangulated;
          \item $\Ind^{\dg,Q}(\A)$ is closed under filtered homotopy dg-colimits up to homotopy equivalence and $\hproj(\A)$ is strictly closed under them.
    \end{enumerate}
    \begin{proof}
    For (1), we start by remarking that for $P = \{\ast\}$ and the constant diagram $c_A: R\{P\} \rightarrow \A:\ast \mapsto A$, the weight $c_P$ is h-projective and we have a homotopy equivalence
    $\Y(A) = \Ll_{\A}(c_{A}) \sim \Ll_{\A}^{Q}(c_{A})$. 
    Next, we show (for (2) and (4)) that a filtered homotopy dg-colimit of a diagram $X: R\{P\} \to \hproj(\A)$ remains h-projective. Consider hereto an acyclic dg-$\A$-module $N$. Then $$\dgm(\A)(\colim{}^{Q(c_{P^{\op}})} X, N) \cong \varprojlim{}^{Q(c_{P^{\op}})} \dgm(\A)(X_{i},N).$$ So we found a filtered weighted limit of acyclics. Using the inner hom-description of \Cref{prop - tensor inner}, this weighted limit rewrites as
        \begin{align*}
            \varprojlim{}^{Q(c_{P^{\op}})} \dgm(\A)(X_{i},N) = \dgm(R\{P\}^{\op})(Q(c_{P^{\op}}),\dgm(\A)(X_{i},N)).
        \end{align*}
        Since $Q(c_{P^{\op}})$ is h-projective, this is again acyclic, rendering $\colim{}^{Q(c_{P^{\op}})} X$ h-projective. Consequently, we have the embedding $$\Ind^{\dg,Q}(\A) \hookrightarrow \hproj(\A).$$
        There remains to be shown that this is quasi-essentially surjective to conclude (2). So let $M \in \hproj(\A)$. Since $\hproj(\A)$ is quasi-equivalent to $\SF(\A)$, the dg-category of semi-free dg-$\A$-modules, $M$ is homotopy equivalent to a semi-free dg-$\A$-module. So we may just assume that $M$ is semi-free. Then there is a filtration
        $$0 \subseteq F_{0} \subseteq F_{1} \subseteq ... \subseteq M$$
        of dg-$\A$-modules so that $M = \bigcup_{i \geq 0} F_{i}$ and $F_{i}/F_{i-1} \cong \coprod_{j \in \Omega_{i}} \Y(A_{j})[k_{j}]$ for all $i \geq 0$ (we set $F_{-1} = 0$). We will prove by induction on $i$ that each $F_{i}$ is a filtered colimit of perfect dg-$\A$-modules. 
        \begin{itemize}
            \item $i = 0$. Since $F_{0} = \coprod_{j \in \Omega_{0}} \Y(A_{j})[k_{j}]$ and coproducts are filtered colimits (the poset consists of the finite subsets of $\Omega_{0}$), it is already a filtered colimit of perfect dg-$\A$-modules.

            \item $i-1 \Rightarrow i$. Consider the short exact sequence
            \[\begin{tikzcd}[row sep = small]
	{F_{i-1}} & {F_{i}} & {F_{i}/F_{i-1}.}
	\arrow["\pi", two heads, from=1-2, to=1-3]
	\arrow["\gamma", hook, from=1-1, to=1-2]
\end{tikzcd}\]
            Recall that $F_{i-1} = \coprod_{l \in \Omega_{i-1}} \Y(B_{l})[h_{l}]$ and $F_{i}/F_{i-1}=\coprod_{j \in \Omega_{i}} \Y(A_{j})[k_{j}]$. Fix some $j \in \Omega_{i}$ and consider the closed, degree 0 monomorphism
\[\begin{tikzcd}[row sep = small]
	{ F_{i-1}} & {F_{i}} & {F_{i}/F_{i-1}} \\
	&& {\Y(A_{j})[k_{j}].}
	\arrow["\pi", two heads, from=1-2, to=1-3]
	\arrow["\gamma", hook, from=1-1, to=1-2]
	\arrow["f"', hook, from=2-3, to=1-3]
\end{tikzcd}\]
            By Yoneda, $f \in \mc{Z}^{-k_{j}}F_{i}/F_{i-1}(A_{j})$. Since $\pi_{A_{j}}$ is epi and of degree zero, we find a preimage $\tilde{f} \in F_{i}^{-k_{j}}(A_{j})$, which corresponds to another monomorphism of degree zero:
\[\begin{tikzcd}[row sep = small]
	{ F_{i-1}} & {F_{i}} & {F_{i}/F_{i-1}} \\
	&& {\Y(A_{j})[k_{j}].}
	\arrow["\pi", two heads, from=1-2, to=1-3]
	\arrow["\gamma", hook, from=1-1, to=1-2]
	\arrow["f"', hook, from=2-3, to=1-3]
	\arrow["{\tilde{f}}", hook, from=2-3, to=1-2]
\end{tikzcd}\]
            Note that $\tilde{f}$ will in general not be closed, i.e. $d_{F_{i}} \circ \tilde{f} - \tilde{f} \circ d_{\Y(A_{j})[k_{j}]}$ might differ from zero. Still, $\pi$ and $f$ are closed so that
            \begin{align*}
                \pi \circ (d_{F_{i}} \circ \tilde{f} - \tilde{f} \circ d_{\Y(A_{j})[k_{j}]}) &= d_{F_{i}} \circ \pi \circ \tilde{f} - \pi \circ \tilde{f} \circ d_{\Y(A_{j})[k_{j}]} \\
                &= d_{F_{i}} \circ f - f \circ d_{\Y(A_{j})[k_{j}]} \\
                &= 0.
            \end{align*}
            We conclude that $d_{F_{i}(A_{j})}(\tilde{f}) \in \Kernel(\pi)$. By exactness and the fact that $\gamma$ is of degree zero, we then find an element $x \in F_{i-1}^{-k_{j}+1}(A_{j})$ so that $\gamma(x) = d_{F_{i}(A_{j})}(\tilde{f})$. Since $\gamma$ is mono and
            $$\gamma_{A_{j}}(d_{F_{i-1}}(x)) = d_{F_{i}}(\gamma_{A_{j}}(x)) = d_{F_{i}}^{2}(\tilde{f}) = 0,$$ $x$ lies in $\mc{Z}^{-k_{j}+1}F_{i-1}(A_{j})$. It then corresponds to a closed morphism $\Y(A_{j})[k_{j}] \to F_{i-1}$ of degree $1$ so that $\gamma \circ x = d_{F_{i}} \circ f - f \circ d_{\Y(A_{j})[k_{j}]}$. As $F_{i-1} = \coprod_{l \in \Omega_{i-1}} \Y(A_{l})[k_{l}]$, we then find some finite subset $W_{j} \subseteq \Omega_{i-1}$ so that $x$ factors over $\coprod_{l \in W_{j}} \Y(B_{l})[h_{l}]$. This is depicted in the following commutative diagram,
            \[\begin{tikzcd}[row sep = small]
	{ F_{i-1}} & {F_{i}} & {F_{i}/F_{i-1}} \\
	{\coprod_{l \in W_{j}} \Y(B_{l})[h_{l}]} && {\Y(A_{j})[k_{j}].} \\
	& {\Y(A_{j})[k_{j}]}
	\arrow["\pi", two heads, from=1-2, to=1-3]
	\arrow["\gamma", hook, from=1-1, to=1-2]
	\arrow["f"', hook, from=2-3, to=1-3]
	\arrow["{\tilde{f}}", hook, from=2-3, to=1-2]
	\arrow["{d_{F_{i}} \circ \tilde{f} - \tilde{f} \circ d_{\Y(A_{j})[k_{j}]}}"'{pos=0.2}, from=3-2, to=1-2]
	\arrow["\mu", hook', from=2-1, to=1-1]
	\arrow["x", from=3-2, to=2-1]
\end{tikzcd}\]
            We now define the dg-$\A$-module $M_{j}$ as
            $$\left( \Y(A_{j})[k_{j}] \oplus \coprod_{l \in W_{j}} \Y(B_{l})[h_{l}],  \begin{pmatrix} d_{\Y(A_{j})[k_{j}]} & 0 \\ x & d_{\coprod_{l \in W_{j}} \Y(B_{l})[h_{l}]} \end{pmatrix} \right).$$
            The degree zero monomorphisms $\tilde{f}$ and $\gamma \circ \mu$ induce a degree zero monomorphism $M_{j} \hookrightarrow F_{i}$ and we claim that this is a closed map. Indeed, we have that
            \begin{align*}
                &(\tilde{f}, \gamma \circ \mu) \circ \begin{pmatrix} d_{\Y(A_{j})[k_{j}]} & 0 \\ x & d_{\coprod_{l \in W_{j}} \Y(B_{l})[h_{l}]} \end{pmatrix} \\
                = \tn{ }&(\tilde{f} \circ d_{\Y(A_{j})[k_{j}]} + \gamma \circ \mu \circ x, \gamma \circ d_{\coprod_{l \in W_{j}} \Y(B_{l})[h_{l}]} ) \\
                = \tn{ }&(d_{F_{i}} \circ \tilde{f}, d_{F_{i}} \circ \gamma).
            \end{align*}
            So we conclude that each $M_{j}$ is a dg-submodule of $F_{i}$ and by construction, it is perfect. Given a finite set $S \subseteq \Omega_{i}$, we then define $M_{S} := \coprod_{j \in S} M_{j}$, which is again a perfect dg-submodule of $F_{i}$. This yields a monomorphism from the filtered colimit $$\xi: \colim_{S \subseteq^{\tn{fin}} \Omega_{i}} M_{S} \hookrightarrow F_{i}.$$
            We also define for $l \in \Omega_{i-1}$ the perfect dg-submodule $N_{l} := \Y(B_{l})[h_{l}]$ and for each finite set $T \subseteq \Omega_{i-1}$, $N_{T} := \coprod_{l \in T} N_{l}$. This yields another monomorphism $$\chi: F_{i-1} = \colim_{T \subseteq^{\tn{fin}} \Omega_{i-1}} N_{T} \hookrightarrow F_{i}.$$
            The monomorphisms $\xi$ and $\chi$ induce a monomorphism
            $$\colim_{T \subseteq^{\tn{fin}} \Omega_{i-1}} N_{T} \oplus \colim_{S \subseteq^{\tn{fin}} \Omega_{i}} M_{S} \hookrightarrow F_{i}.$$
            Since filtered colimits commute with finite limits, the source is a filtered colimit of objects in $\Perf(\A)$. The map is moreover an epimorphism, because $F_{i} \cong F_{i-1} \oplus F_{i}/F_{i-1}$ as graded $\A^{\#}$-modules. We conclude that $F_{i}$ is a filtered colimit of perfect dg-$\A$-modules.
        \end{itemize}

        So each $F_{i}$ is a filtered colimit of perfect dg-$\A$-modules and $M$ is a filtered colimit of the $F_{i}$'s. From the proof of \Cref{thm - enriched completion closed} for $\V = \dgm(R)$ then follows that $M$ is a filtered colimit of perfect dg-$\A$-modules. Since perfect dg-$\A$-modules are finite extensions of finite coproducts of shifts of representables and $\A$ is strongly pretriangulated, we can pull all these operations through Yoneda and find that each perfect dg-$\A$-modules is isomorphic to a representable dg-$\A$-module (i.e. we totalise as in \cite{BondalKapranov}). Consequently, $M$ is isomorphic to a filtered colimit of representables, say $\colim{}^{c_{P^{\op}}} \Y \circ X$. By \Cref{lemma - independency hocolim} (2), there is a quasi-isomorphism
        \begin{equation*} 
            \colim{}^{c_{P^{\op}}} \Y \circ X \sim \colim{}^{Q(c_{P^{\op}})} \Y \circ X = \Ll_{\A}^{Q}(X).
        \end{equation*}
        So we found a quasi-isomorphism $M \to \Ll_{\A}^{Q}(X)$. Since both dg-$\A$-modules are h-projective, this is a homotopy equivalence. We conclude that any h-projective dg-$\A$-module is homotopy equivalent to an object in the essential image of $\Ll_{\A}^{Q}(-)$. Finally, (3) and the first part of (4) follow from (2) and the fact that $\hproj(\A)$ is strongly pretriangulated.
    \end{proof}
\end{thm}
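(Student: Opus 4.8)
The plan is to establish the four assertions in the order given, arranging things so that (3) and (4) fall out of (1) and (2); the two genuinely substantial points are the h-projectivity of filtered homotopy dg-colimits (for (1)) and quasi-essential surjectivity (for (2)).

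For (1), I would first record that every representable $\Y(A)$ is h-projective, and then show that filtered homotopy dg-colimits preserve h-projectivity. Concretely, for a diagram $X: R\{P\} \to \hproj(\A)$ and an acyclic dg-$\A$-module $N$, the inner-hom description of \Cref{prop - tensor inner} gives
\[
\dgm(\A)\bigl(\colim{}^{Q(c_{P^{\op}})} X,\, N\bigr) \cong \varprojlim{}^{Q(c_{P^{\op}})} \dgm(\A)(X_{i},N) = \dgm(R\{P\}^{\op})\bigl(Q(c_{P^{\op}}),\, \dgm(\A)(X_{i},N)\bigr).
\]
Each $\dgm(\A)(X_{i},N)$ is acyclic since $X_{i}$ is h-projective, and because $Q(c_{P^{\op}})$ is h-projective the right-hand side is acyclic as well. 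Hence $\colim{}^{Q(c_{P^{\op}})} X$ is h-projective, and specializing to diagrams of representables yields $\Ind^{\dg,Q}(\A) \subseteq \hproj(\A)$.

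The core of the argument is the quasi-essential surjectivity in (2). Using the quasi-equivalence $\hproj(\A) \simeq \SF(\A)$, I would assume $M$ is semi-free, with the standard exhaustive filtration $0 \subseteq F_{0} \subseteq F_{1} \subseteq \cdots$, $M = \bigcup_{i} F_{i}$ and $F_{i}/F_{i-1} \cong \coprod_{j} \Y(A_{j})[k_{j}]$. The key claim, proved by induction on $i$, is that each $F_{i}$ is a filtered colimit of perfect dg-$\A$-modules. The base case is clear, as a coproduct of shifted representables is the filtered colimit of its finite subcoproducts. For the inductive step I would exploit the extension $0 \to F_{i-1} \to F_{i} \to F_{i}/F_{i-1} \to 0$: for each generator $\Y(A_{j})[k_{j}]$ of the quotient, lift its closed monomorphism to a degree-zero map $\tilde{f}$ into $F_{i}$ (closed only up to a defect $d_{F_{i}}\tilde{f} - \tilde{f}\, d$), observe via exactness that this defect lands in $F_{i-1}$ and hence factors through a finite subcoproduct of representables, and then use it to assemble a genuinely closed perfect dg-submodule $M_{j} \hookrightarrow F_{i}$ with a correspondingly twisted differential. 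Taking filtered colimits over finite index sets and combining with a colimit presentation of $F_{i-1}$ realizes $F_{i}$ as a filtered colimit of perfects. I would then glue the $F_{i}$'s into a single filtered colimit by the poset-combination technique of \Cref{thm - enriched completion closed}, presenting $M$ itself as a filtered colimit of perfect dg-$\A$-modules. Since $\A$ is strongly pretriangulated, each perfect module totalizes to a representable (as in \cite{BondalKapranov}), so $M \cong \colim{}^{c_{P^{\op}}} \Y \circ X$ for some ind-dg-object $X$. Finally, \Cref{lemma - independency hocolim} gives a quasi-isomorphism $\colim{}^{c_{P^{\op}}} \Y \circ X \sim \colim{}^{Q(c_{P^{\op}})} \Y \circ X = \Ll_{\A}^{Q}(X)$, which, both modules being h-projective, is a homotopy equivalence; thus $M$ lies in the essential image of $\Ll_{\A}^{Q}$ up to homotopy equivalence, completing (2).

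Assertions (3) and (4) then follow formally. The h-projectivity computation of (1) shows that the strictly existing filtered homotopy dg-colimits in the bicomplete category $\dgm(\A)$ stay in $\hproj(\A)$, so $\hproj(\A)$ is strictly closed under them; the quasi-equivalence of (2) transports this to closure of $\Ind^{\dg,Q}(\A)$ up to homotopy equivalence. Pretriangulatedness of $\Ind^{\dg,Q}(\A)$ is a quasi-equivalence invariant, inherited from the fact that $\hproj(\A)$ is strongly pretriangulated. I expect the main obstacle to be the inductive step of (2): arranging the lifts $\tilde{f}$ and their differential defects so that the submodules $M_{j}$ are simultaneously closed, perfect, and collectively large enough that their filtered colimit together with $F_{i-1}$ recovers all of $F_{i}$, with the bookkeeping between the twisted differential on $M_{j}$ and the graded splitting $F_{i} \cong F_{i-1} \oplus F_{i}/F_{i-1}$ being the delicate point.
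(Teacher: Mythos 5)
Your proposal follows essentially the same route as the paper's proof: h-projectivity of filtered homotopy dg-colimits via the inner-hom description and h-projectivity of $Q(c_{P^{\op}})$, quasi-essential surjectivity by reducing to a semi-free $M$ and inductively exhibiting each $F_{i}$ as a filtered colimit of perfect dg-submodules built from lifts with controlled differential defects, gluing via the poset-combination argument of \Cref{thm - enriched completion closed}, totalising perfects to representables using strong pretriangulatedness, and concluding with the quasi-isomorphism of \Cref{lemma - independency hocolim} upgraded to a homotopy equivalence between h-projectives. The deduction of (3) and (4) from (2) and the strong pretriangulatedness of $\hproj(\A)$ also matches the paper.
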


\begin{remark} \label{remark - strongly pretriang}
    We needed the strongly pretriangulatedness of $\A$ in \Cref{thm - homotopy dg completion closed} in order for the totalisation of perfect dg-modules to be $\mc{Z}^{0}$-functorial. For a general pretriangulated dg-category $\A$, extra homotopies are involved and it is unclear how to deal with them.
\end{remark}

\Cref{thm - homotopy dg completion closed} immediately yields

\begin{corollary} \label{cor - inddgQ compactly generated}
    Let $\A$ be an essentially small strongly pretriangulated dg-category. Then $\Ind^{\dg,Q}(\A)$ is a dg-enhancement of the derived category $\D(\A)$, i.e. there is an equivalence of triangulated categories $$H^{0}(\Ind^{\dg,Q}(\A)) \cong \D(\A),$$ and $H^{0}(\Ind^{\dg,Q}(\A))$ is compactly generated by the Yoneda image of $\A$.
\end{corollary}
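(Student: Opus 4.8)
The plan is to deduce the statement directly from \Cref{thm - homotopy dg completion closed}, which has already carried out the substantive work, together with the standard identification of the derived category of a dg-category with the homotopy category of its h-projective modules. First I would invoke part (2) of \Cref{thm - homotopy dg completion closed}: since the inclusion $\Ind^{\dg,Q}(\A) \hookrightarrow \hproj(\A)$ is a quasi-equivalence, applying $H^{0}(-)$ yields an equivalence of categories $H^{0}(\Ind^{\dg,Q}(\A)) \cong H^{0}(\hproj(\A))$. By part (3), $\Ind^{\dg,Q}(\A)$ is pretriangulated, so both sides carry triangulated structures and this equivalence is one of triangulated categories. I would then recall the standard fact that $H^{0}(\hproj(\A)) \cong \D(\A)$: the localization functor $H^{0}(\hproj(\A)) \to \D(\A)$ is an equivalence because morphisms out of h-projective modules compute morphisms in the derived category and every dg-$\A$-module admits an h-projective resolution. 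Composing the two equivalences gives the asserted triangulated equivalence $H^{0}(\Ind^{\dg,Q}(\A)) \cong \D(\A)$.

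For the second assertion, I would use that the representable modules $\Y(A)$, $A \in \A$, form a set of compact generators of $\D(\A)$. Compactness follows from the dg-Yoneda isomorphism $\D(\A)(\Y(A), M) \cong H^{0}(M(A))$, which commutes with coproducts in the second variable; generation follows because if $\D(\A)(\Y(A)[n], M)$ vanishes for all $A$ and all $n$, then $H^{n}(M(A)) = 0$ for every $A$ and $n$, forcing $M$ to be acyclic and hence zero in $\D(\A)$. The commutativity of diagram \eqref{eq - cdiagram embedding LAQ} shows that the composite $\A \xrightarrow{\iota_{\A}} \Ind\Ob^{\dg}(\A) \xrightarrow{\Ll_{\A}^{Q}} \Ind^{\dg,Q}(\A)$ agrees with the dg-Yoneda embedding $\Y$, so under the equivalence above the Yoneda image of $\A$ in $H^{0}(\Ind^{\dg,Q}(\A))$ corresponds precisely to the representables in $\D(\A)$. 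Hence $H^{0}(\Ind^{\dg,Q}(\A))$ is compactly generated by the Yoneda image of $\A$.

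The corollary therefore requires essentially no new work: the only inputs beyond a direct citation of \Cref{thm - homotopy dg completion closed} are the standard equivalence $H^{0}(\hproj(\A)) \cong \D(\A)$ and the compact generation of $\D(\A)$ by representables, both of which are well-documented in the theory of dg-modules. If there is any subtlety at all, it lies in checking that the triangulated equivalence is compatible with the Yoneda embeddings on the two sides, so that ``compact generators'' transports correctly; but this compatibility is exactly what the commutativity of \eqref{eq - cdiagram embedding LAQ} guarantees, so I do not expect a genuine obstacle here.
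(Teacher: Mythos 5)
Your proposal is correct and follows exactly the route the paper intends: the paper offers no explicit proof, stating only that the corollary ``immediately'' follows from \Cref{thm - homotopy dg completion closed}, and your argument spells out precisely that deduction (the quasi-equivalence with $\hproj(\A)$ from part (2), the standard identification $H^{0}(\hproj(\A)) \cong \D(\A)$, and compact generation by representables via dg-Yoneda). No discrepancies.
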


\begin{remark} \label{remark - qeq models}
    So we found two quasi-equivalent dg-enhancements of the derived category $\D(\A)$ of an essentially small strongly pretriangulated dg-category $\A$,
    $$\Ind^{\dg,Q}(\A) \hookrightarrow \hproj(\A).$$
    Throughout this paper, we will use these quasi-equivalent models interchangeably: the smaller $\Ind^{\dg,Q}(\A)$ is the better choice when defining a functor out of it since objects have a concrete description, but it is less flexible because it is only closed under filtered homotopy dg-colimits up to homotopy equivalence by \Cref{thm - homotopy dg completion closed} (4). That is why the larger dg-category $\hproj(\A)$ is the better choice when mapping into it, because it is actually closed under filtered homotopy dg-colimits, again by \Cref{thm - homotopy dg completion closed} (4).
\end{remark}


Another corollary of \Cref{thm - homotopy dg completion closed} is the universal property.

\begin{corollary} \label{cor - universal prop inddgho}
    Let $\A$ be an essentially small strongly pretriangulated dg-category. For any dg-functor $F: \A \to \B$ where $\B$ has all filtered homotopy colimits of diagrams $F \circ X$ for an ind-dg-object $X$ in $\A$, there exists a unique dg-functor $\Ind^{\dg,Q}(F)$ that makes the diagram below commute,\index{homotopy ind-dg-completion!$\Ind^{\dg,Q}(F)$}
    \begin{equation} \label{eq - universal prop enriched} \begin{tikzcd}[row sep=small]
	\A & {\B.} \\
	{\Ind^{\dg,Q}(\A)}
	\arrow["F", from=1-1, to=1-2]
	\arrow["{\Ind^{\dg,Q}(F)}"', pos=0.3, shift right=2, dashed, from=2-1, to=1-2]
	\arrow["{\Y}"', hook, from=1-1, to=2-1]
\end{tikzcd}\end{equation}
    \begin{proof}
        The following defines a dg-functor,
        \[\begin{tikzcd}[row sep = small]
	{\Ind^{\dg,Q}(F):\Ind^{\dg,Q}(\A)} & {\B: \Ll_{\A}^{Q}(X)} & {\colim{}^{Q(c_{P_{X}^{\op}})} F \circ X.} \\
	& {\textcolor{white}{\B:}\Ll_{\A}^{Q}(X')} & {\colim{}^{Q(c_{P_{X'}^{\op}})} F \circ X',}
	\arrow[from=1-1, to=1-2]
	\arrow[maps to, from=1-2, to=1-3]
	\arrow[from=1-2, to=2-2]
	\arrow[maps to, from=2-2, to=2-3]
	\arrow[shift left=5, from=1-3, to=2-3]
\end{tikzcd}\]
    and it is the unique one to make diagram \eqref{eq - universal prop enriched} commute.
    \end{proof}
\end{corollary}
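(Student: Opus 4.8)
The plan is to establish the universal property of the homotopy ind-dg-completion by leveraging Theorem~\ref{thm - homotopy dg completion closed}, which tells us that every object of $\Ind^{\dg,Q}(\A)$ is (up to homotopy equivalence) of the form $\Ll_{\A}^{Q}(X) = \colim^{Q(c_{P_{X}^{\op}})} \Y \circ X$ for some ind-dg-object $X$ in $\A$. First I would define the candidate functor $\Ind^{\dg,Q}(F)$ on objects by sending $\Ll_{\A}^{Q}(X)$ to the filtered homotopy dg-colimit $\colim^{Q(c_{P_{X}^{\op}})} F \circ X$ in $\B$, which exists precisely by the hypothesis on $\B$. The commutativity of diagram~\eqref{eq - universal prop enriched} is then built into the construction: since $\Y = \Ll_{\A}^{Q} \circ \iota_{\A}$ via \eqref{eq - cdiagram embedding LAQ} (for a singleton $P$, where $c_{P}$ is already h-projective), applying $\Ind^{\dg,Q}(F)$ to $\Y(A)$ returns the homotopy colimit of the constant diagram $F \circ c_{A}$, which is homotopy equivalent to $F(A)$.

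Next I would define the functor on morphisms. Using the Hom-description \eqref{eq - LAQ ff}, a morphism $\Ll_{\A}^{Q}(X) \to \Ll_{\A}^{Q}(X')$ lives in $\varprojlim^{Q(c_{P_{X}^{\op}})} \colim^{Q(c_{P_{X'}^{\op}})} \A(X_{i}, X'_{j})$; applying $F$ levelwise on the hom-objects $\A(X_{i}, X'_{j}) \to \B(F X_{i}, F X'_{j})$ and passing through the same weighted (co)limits produces the required map between the homotopy colimits in $\B$. Verifying that this assignment is compatible with composition and the differential — i.e.\ that it genuinely defines a dg-functor — is a matter of checking that $F$, being a dg-functor, commutes with the structural maps of the bar resolutions defining the weights; since the weight $Q(c_{P^{\op}})$ is fixed independently of the target and the tensor/inner-hom formulas of Proposition~\ref{prop - tensor inner} are natural in the diagram, this functoriality follows formally.

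For uniqueness, I would argue that any dg-functor $G: \Ind^{\dg,Q}(\A) \to \B$ making \eqref{eq - universal prop enriched} commute is forced to agree with $\Ind^{\dg,Q}(F)$. The key point is that $G$ is determined on the representables by $G \circ \Y = F$, and every object $\Ll_{\A}^{Q}(X)$ is a filtered homotopy dg-colimit of such representables. Provided $G$ preserves these filtered homotopy dg-colimits (which is what ``makes the diagram commute'' should be taken to entail in the appropriate enriched sense, paralleling Corollary~\ref{cor - universal prop enriched} for the strict ind-completion), it must send $\Ll_{\A}^{Q}(X) = \colim^{Q(c_{P_{X}^{\op}})} \Y \circ X$ to $\colim^{Q(c_{P_{X}^{\op}})} (G \circ \Y) \circ X = \colim^{Q(c_{P_{X}^{\op}})} F \circ X$, which is exactly the definition of $\Ind^{\dg,Q}(F)$.

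The main obstacle I expect is the subtlety in the word ``unique'' and in the well-definedness of the functor at the dg-level, rather than the colimit-level bookkeeping. Because $\Ll_{\A}^{Q}$ is no longer fully faithful (as \eqref{eq - LAQ ff} emphasizes), and because $\Ind^{\dg,Q}(\A)$ is only closed under filtered homotopy dg-colimits \emph{up to homotopy equivalence} (Theorem~\ref{thm - homotopy dg completion closed}(4)), one must be careful that the object $\colim^{Q(c_{P_{X}^{\op}})} F \circ X$ is independent of the presentation $X$ of a given object of $\Ind^{\dg,Q}(\A)$ only up to the appropriate equivalence, so strict uniqueness should be read as uniqueness in $\Hqe$. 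The cleanest route, mirroring the strict enriched case of Corollary~\ref{cor - universal prop enriched}, is to transport the construction through the quasi-equivalence $\Ind^{\dg,Q}(\A) \hookrightarrow \hproj(\A)$ of Theorem~\ref{thm - homotopy dg completion closed}(2), where the homotopy dg-colimits are \emph{strict}, so that the universal property can first be formulated and verified on $\hproj(\A)$ and then restricted; this sidesteps the ``up to homotopy'' ambiguity and is consistent with the philosophy laid out in Remark~\ref{remark - qeq models}.
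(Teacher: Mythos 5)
Your proposal is correct and follows essentially the same route as the paper, which simply writes down the assignment $\Ll_{\A}^{Q}(X) \mapsto \colim^{Q(c_{P_{X}^{\op}})} F \circ X$ (and the induced maps on hom-complexes) and asserts that it is the unique dg-functor making the triangle commute; your version fills in the morphism-level and functoriality checks that the paper leaves implicit. Your observation that uniqueness should be read as uniqueness among extensions compatible with the filtered homotopy dg-colimit presentation of objects is a fair and careful reading of what the paper's one-line proof tacitly assumes.
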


\begin{corollary} \label{cor - D(F)}
    Let $\A$ and $\B$ be essentially small strongly pretriangulated dg-categories. For any dg-functor $F: \A \to \B$, there exists a unique dg-functor $\D(F)$ that makes the diagram below commute,\index{homotopy ind-dg-completion!$\D(F)$}
    \[\begin{tikzcd}[row sep =small]
	\A & \B \\
	{\Ind^{\dg,Q}(\A)} & {\Ind^{\dg,Q}(\B).}
	\arrow["F", from=1-1, to=1-2]
	\arrow["\Y"', hook, from=1-1, to=2-1]
	\arrow["\Y", hook, from=1-2, to=2-2]
	\arrow["{\D(F)}", from=2-1, to=2-2]
\end{tikzcd}\]
    If we use the quasi-equivalent models $\hproj(\A)$ and $\hproj(\B)$, then $\D(F)$ corresponds to the left adjoint to $F_{*}: \hproj(\B) \to \hproj(\A)$, namely
    $$\hproj(\A) \to \hproj(\B): M \mapsto M \otimes_{\A} \B(-,F(-)).$$
    \begin{proof}
        The first part of the statement follows from \Cref{cor - universal prop inddgho}. For the second part, it suffices to remark that 
        \begin{equation*}
            \A(-,A) \otimes_{\A} \B(-,F(-)) \cong \B(-,F(A)). \qedhere
        \end{equation*}
    \end{proof}
\end{corollary}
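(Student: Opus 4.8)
The plan is to obtain $\D(F)$ from the universal property of the homotopy ind-dg-completion and then identify it with the prescribed left adjoint via a single co-Yoneda computation. For the first assertion I would apply \Cref{cor - universal prop inddgho} to the dg-functor $\Y \circ F \colon \A \to \Ind^{\dg,Q}(\B)$. To invoke the universal property one needs the target to admit the relevant filtered homotopy dg-colimits; this holds because, in the quasi-equivalent model $\hproj(\B)$, the category is strictly closed under filtered homotopy dg-colimits by \Cref{thm - homotopy dg completion closed}(4) (cf. \Cref{remark - qeq models}). The universal property then yields a unique dg-functor $\D(F) := \Ind^{\dg,Q}(\Y \circ F)$ fitting into the square, which settles both existence and uniqueness.

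For the second assertion I would pass to the models $\hproj(\A)$ and $\hproj(\B)$ and write $G \colon M \mapsto M \otimes_{\A} \B(-,F(-))$ for the left adjoint to restriction of scalars $F_{*}$, as in the statement. The only computation required is the co-Yoneda isomorphism
$$\A(-,A) \otimes_{\A} \B(-,F(-)) \cong \B(-,F(A)),$$
naturally in $A$, which exhibits $G \circ \Y = \Y \circ F$; that is, $G$ restricts to $\Y \circ F$ along the Yoneda embedding. In particular $G$ sends representables to representables.

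To conclude that $G = \D(F)$ I would appeal to the uniqueness clause of \Cref{cor - universal prop inddgho}: it suffices to verify that $G$ is a dg-functor making the defining square commute and preserving filtered homotopy dg-colimits. Commutativity is exactly the co-Yoneda isomorphism above. Preservation of filtered homotopy dg-colimits follows since $G$, being a dg-left-adjoint, preserves all weighted colimits, and by \Cref{prop - tensor inner} and \Cref{def - filtered homotopy dg-colimit} filtered homotopy dg-colimits are precisely the weighted colimits with the resolved weight $Q(c_{P^{\op}})$. Consequently $G$ carries filtered homotopy dg-colimits of representables to filtered homotopy dg-colimits of representables, hence restricts to a dg-endofunctor of the relevant essential images, and by uniqueness it coincides with $\D(F)$ under the identification $\Ind^{\dg,Q}(-) \simeq \hproj(-)$.

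The co-Yoneda computation is routine, so the substance of the argument lies in the model bookkeeping: one must ensure that the uniqueness clause of the universal property, phrased for $\Ind^{\dg,Q}(\A)$, transports correctly across the quasi-equivalence of \Cref{remark - qeq models}, and in particular that $G$ preserves filtered homotopy dg-colimits on the nose in $\hproj$ rather than merely up to homotopy equivalence. Verifying that $G$ meets the hypotheses of \Cref{cor - universal prop inddgho} in the $\hproj$ model is therefore the main point to be careful about.
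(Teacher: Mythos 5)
Your proposal is correct and follows the paper's own argument: the paper likewise derives existence and uniqueness of $\D(F)$ from \Cref{cor - universal prop inddgho} and identifies it with $M \mapsto M \otimes_{\A} \B(-,F(-))$ via the single co-Yoneda isomorphism $\A(-,A) \otimes_{\A} \B(-,F(-)) \cong \B(-,F(A))$. Your additional bookkeeping (that the left adjoint preserves weighted, hence filtered homotopy dg-, colimits, so the uniqueness clause applies in the $\hproj$ model) simply makes explicit what the paper leaves as a remark.
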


\begin{remark} \label{remark - not strongly pretriangulated}
     If $\A$ is pretriangulated but not strongly pretriangulated, it is unclear whether the quasi-essential image of $\Ll_{\A}^{Q}(-)$ is $\hproj(\A)$, so we do not want to name it the homotopy ind-dg-completion of $\A$. However, we can always consider the quasi-equivalence $\A \hookrightarrow \pretr(\A)$ as in \cite{BondalKapranov}, where the latter is strongly pretriangulated by \cite{BondalKapranov}[\S 3 Proposition 1]. Then we still have an embedding $\A \hookrightarrow \Ind^{\dg,Q}(\pretr(\A))$, which is all we were after. Since we are only interested in pretriangulated $t$-dg-categories up to quasi-equivalence because their derived deformations agree, this suffices.
\end{remark}

\subsection{About \texorpdfstring{$t$}{t}-structures} \label{subsection - t-structures}

Let $\mc{T}$ be a triangulated category endowed with a $t$-structure $(\mc{T}_{\leq 0},\mc{T}_{\geq 0})$\index{$t$-structure $(\mc{T}_{\leq 0},\mc{T}_{\geq 0})$} with as heart $\mc{T}^{\heartsuit}$\index{$t$-structure $(\mc{T}_{\leq 0},\mc{T}_{\geq 0})$!heart $\mc{T}^{\heartsuit}$}. In this subsection, we will fix notation and mention some properties and results regarding $t$-structures. See \cite{BBD}[\S 1.3] for more background on $t$-structures.

For an integer $i$, we denote the $i$-th cohomology functor induced by the $t$-structure by
$$H^{i}_{t} = [i]\tau_{\leq i}\tau_{\geq i}: \mc{T} \to \mc{T}^{\heartsuit},$$
where $\tau_{\leq i}$ and $\tau_{\geq i}$ are the truncation functors\index{$t$-structure $(\mc{T}_{\leq 0},\mc{T}_{\geq 0})$!truncation functors $\tau_{\leq n}, \tau_{\geq n}$}\index{$t$-structure $(\mc{T}_{\leq 0},\mc{T}_{\geq 0})$!$i$-th cohomology $H^{i}_{t}$}. There are inclusions
\begin{align} \label{eq - inclusions}
    \mc{T}_{\leq n} \subseteq \{ X \in \mc{T} \mid H^{i}_{t}(X) = 0, \forall i > n \}, \\
    \mc{T}_{\geq n} \subseteq \{ X \in \mc{T} \mid H^{i}_{t}(X) = 0, \forall i < n \}. \nonumber
\end{align}
We say that the $t$-structure $(\mc{T}_{\leq 0},\mc{T}_{\geq 0})$ is \ti{left} (resp. \ti{right}) \ti{separated}\index{$t$-structure $(\mc{T}_{\leq 0},\mc{T}_{\geq 0})$!separated, left/right} if 
\begin{align*}
    &\bigcap_{n \geq 0} \mc{T}_{\leq -n} = 0, \\
    \tn{resp.} \quad &\bigcap_{n \geq 0} \mc{T}_{\geq n} = 0.
\end{align*}
We call $(\mc{T}_{\leq 0},\mc{T}_{\geq 0})$ \ti{non-degenerate}\index{$t$-structure $(\mc{T}_{\leq 0},\mc{T}_{\geq 0})$!(non-)degenerate} if it is both left and right separated. In this case, an object $X$ is zero if and only if $H^{i}_{t}(X) = 0$ for all $i \in \mb{Z}$ and the inclusions of \eqref{eq - inclusions} are equalities \cite{BBD}[Proposition 1.3.7].

Given another triangulated category $\mc{T}'$ with a $t$-structure $(\mc{T}'_{\leq 0},\mc{T}'_{\geq 0})$ and a triangulated functor $F: \mc{T} \to \mc{T}'$, we call $F$ \ti{left} (resp. \ti{right}) \ti{$t$-exact}\index{$t$-structure $(\mc{T}_{\leq 0},\mc{T}_{\geq 0})$!$t$-exact, left/right} if 
\begin{align*}
    &F(\mc{T}_{\leq n}) \subseteq \mc{T}'_{\leq n}, \\
    \tn{resp.} \quad &F(\mc{T}_{\geq n}) \subseteq \mc{T}'_{\geq n}.
\end{align*}
If $F$ is both right and left $t$-exact, we call $F$ \ti{$t$-exact}. This is equivalent to $F$ commuting with the truncation functors, see \cite{BBD}[after 1.3.19]. Recall also from \cite{BBD}[Proposition 1.3.17 (iii)] that for an adjoint pair of triangulated functors $(F,G)$, $F$ is left $t$-exact if and only if $G$ is right $t$-exact.

There are the following full subcategories of $\mc{T}$,
\begin{align*}
    \mc{T}^{-} &= \bigcup_{n \geq 0} \mc{T}_{\leq n}, \\
    \mc{T}^{+} &= \bigcup_{n \geq 0} \mc{T}_{\geq -n}, \\
    \mc{T}^{b} &= \mc{T}^{-} \cap \mc{T}^{+}.
\end{align*}
These are strictly full triangulated subcategories of $\mc{T}$ that are closed under direct summands and the $t$-structure on $\mc{T}$ induces $t$-structures on $\mc{T}^{-}, \mc{T}^{+}$ and $\mc{T}^{b}$ so that the inclusions into $\mc{T}$ are $t$-exact. In particular, the hearts of all three $t$-structures coincide. We then call a $t$-structure $(\mc{T}_{\leq 0},\mc{T}_{\geq 0})$ on $\mc{T}$ \ti{left} (resp. \ti{right} and simply) \ti{bounded} if the inclusion $\mc{T}^{+} \hookrightarrow \mc{T}$ (resp. $\mc{T}^{-} \hookrightarrow \mc{T}$ and $\mc{T}^{b} \hookrightarrow \mc{T}$) is an equivalence.\index{$t$-structure $(\mc{T}_{\leq 0},\mc{T}_{\geq 0})$!bounded, left/right}

As illustrated by \S \ref{subsection - homotopy ind-dg-completion}, we will often consider homotopy (co)limits inside pretriangulated dg-categories with a $t$-structure (on the homotopy category). In a triangulated category $\mc{T}$ with countable coproducts, there is already a notion of (sequential) homotopy (co)limit, see \cite{NeemanBook}[\S 1.6]: Given a covariant functor $X: \mathbb{N} \to \mc{T}$ (resp. contravariant functor $X': \mathbb{N} \to \mc{T}$), its \ti{(sequential) homotopy colimit} (resp. \ti{(sequential) homotopy limit})\index{homotopy (co)limit (sequential)}, denoted by $\hocolim_{n \geq 0} X_{n}$ (resp. $\holim_{n \geq 0} X'_{n}$), is defined via the exact triangle
\begin{align} \label{eq - triangle Neeman}
    &\coprod_{n \geq 0} X_{n} \xrightarrow{1 - \tn{shift}} \coprod_{n \geq 0} \rightarrow \hocolim_{n \geq 0} X_{n} \xrightarrow{+} \quad, \\
    \tn{resp.} \quad &\Pi_{n \geq 0} X'_{n} \xrightarrow{1 - \tn{shift}} \Pi_{n \geq 0} X'_{n} \rightarrow \holim_{n \geq 0} X'_{n} \xrightarrow{+} \quad, \nonumber
\end{align}
where $\tn{shift}$ is induced by $\{X(n \leq n+1) \}_{n \geq 0}$ (resp. $\{X'(n \leq n+1) \}_{n \geq 0}$). Note that the homotopy colimit is unique up to non-canonical isomorphism. Let us justify the overlap in the use of the term homotopy colimit:

\begin{proposition} \label{prop - classic hocolim}
    Let $\A$ be an essentially small $R$-linear pretriangulated dg-category. For an ind-dg-object $X: R\{\mb{N}\} \to \hproj(\A)$, there is an isomorphism in $\D(\A)$,
    \begin{equation} \label{eq - classic hocolim}
        \colim{}^{Q(c_{\mb{N}^{\op}})} X \cong \hocolim_{n \geq 0} X_{n}.
    \end{equation}
    \begin{proof}
        Observe that in $\D(\A)$, $$\colim{}^{c_{\mb{N}^{\op}}} X = \Coker(1 - \tn{shift}),$$ where $1 - \tn{shift}$ is the first morphism in the exact triangle \eqref{eq - triangle Neeman} that defines the homotopy colimit of $X$. Using \Cref{lemma - independency hocolim} and the fact that a short exact sequence of dg-$\A$-modules gives rise to an exact triangle in $\D(\A)$, we find that
        \begin{equation*}
            \colim{}^{Q(c_{\mb{N}^{\op}})} X \cong \hocolim_{n \geq 0} X_{n}. \qedhere
        \end{equation*}
    \end{proof}
\end{proposition}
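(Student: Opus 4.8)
The plan is to reduce the homotopy dg-colimit to an ordinary sequential dg-colimit, and then to recognise the latter, via the classical telescope short exact sequence, as Neeman's homotopy colimit.

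First I would apply \Cref{lemma - independency hocolim} with $P = \mb{N}$. Since the relevant filtered colimits exist (inside $\dgm(\A)$), it produces a quasi-isomorphism $\colim{}^{Q(c_{\mb{N}^{\op}})} X \sim \colim{}^{c_{\mb{N}^{\op}}} X$, where I use the tensor-product description of weighted colimits from \Cref{prop - tensor inner} to translate the quasi-isomorphism of tensor products into one of weighted colimits. As quasi-isomorphic dg-$\A$-modules become identified in $\D(\A)$, it then suffices to exhibit an isomorphism $\colim{}^{c_{\mb{N}^{\op}}} X \cong \hocolim_{n \geq 0} X_n$ in $\D(\A)$.

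Next, because $c_{\mb{N}^{\op}}$ is the constant weight, the weighted colimit $\colim{}^{c_{\mb{N}^{\op}}} X$ is simply the ordinary conical sequential colimit of the diagram $X_0 \to X_1 \to \cdots$, formed degreewise in $\dgm(\A)$. The key step is to set up the telescope short exact sequence of dg-$\A$-modules
$$0 \to \coprod_{n \geq 0} X_n \xrightarrow{1 - \tn{shift}} \coprod_{n \geq 0} X_n \to \colim{}^{c_{\mb{N}^{\op}}} X \to 0,$$
where $\tn{shift}$ is assembled from the transition maps $X(n \leq n+1)$. Since coproducts and filtered colimits in $\dgm(\A)$ are computed degreewise in $\dgm(R)$, verifying this reduces to the standard module-theoretic facts that $1 - \tn{shift}$ is a degreewise monomorphism and that its cokernel is the sequential colimit.

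Finally, invoking the fact that a short exact sequence of dg-$\A$-modules induces an exact triangle in $\D(\A)$, the sequence above yields
$$\coprod_{n \geq 0} X_n \xrightarrow{1 - \tn{shift}} \coprod_{n \geq 0} X_n \to \colim{}^{c_{\mb{N}^{\op}}} X \xrightarrow{+}.$$
Noting that a coproduct of h-projectives is again h-projective, so that these coproducts also represent the coproducts in $\D(\A)$, and that the induced shift map coincides with the one in Neeman's construction, this triangle matches the defining triangle \eqref{eq - triangle Neeman} of $\hocolim_{n \geq 0} X_n$. Uniqueness of the third vertex of a triangle then gives $\colim{}^{c_{\mb{N}^{\op}}} X \cong \hocolim_{n \geq 0} X_n$, which combined with the first step proves the claim. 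I expect the main obstacle to be the bookkeeping in the telescope step: checking exactness degreewise (in particular injectivity of $1 - \tn{shift}$) and confirming that the shift map produced by the weighted-colimit formalism is literally the one appearing in \eqref{eq - triangle Neeman}.
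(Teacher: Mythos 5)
Your proposal is correct and follows essentially the same route as the paper's proof: reduce to the constant weight via \Cref{lemma - independency hocolim}, identify $\colim{}^{c_{\mb{N}^{\op}}} X$ as the cokernel of $1-\tn{shift}$ via the telescope short exact sequence, and pass to the exact triangle in $\D(\A)$ to match Neeman's defining triangle. The extra care you take with degreewise exactness and with h-projectivity of the coproducts is exactly the bookkeeping the paper leaves implicit.
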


Recall from \cite{GLVdB1}[Definition 3.8] that a $t$-structure $(\mc{T}_{\leq 0},\mc{T}_{\geq 0})$ is said to be \ti{closed under countable coproducts} (resp. \ti{products}) if the aisle $\mc{T}_{\leq 0}$ (resp. coaisle $\mc{T}_{\geq 0}$) is closed under countable coproducts (resp. products). Note that this entails both the existence in $\T$ and the inclusion into the coaisle (resp. aisle).\index{$t$-structure $(\mc{T}_{\leq 0},\mc{T}_{\geq 0})$!closed under countable (co)products} \index{$t$-structure $(\mc{T}_{\leq 0},\mc{T}_{\geq 0})$!(co)aisle $\T_{\leq 0}$ and $\T_{\geq 0}$}

\begin{remark} \label{remark - closed products}
    The coaisle $\T_{\geq 0}$ is always closed under products if these exist in $\T$ because of the adjunction $\tau_{\geq 0} \dashv \iota_{\geq 0}$ and the fact that right adjoints preserve limits. Similarly, the aisle $\T_{\leq 0}$ is always closed under coproducts if they exist in $\T$.
\end{remark}

If (sequential) homotopy (co)limits exist, it is natural to wonder whether the homotopy (co)limits induced by the truncation functors exist and yield the object itself. Ergo, whether an object in the triangulated category is completely determined by its truncations. We hereto introduce the notions of \ti{left} and \ti{right completeness}.\index{$t$-structure $(\mc{T}_{\leq 0},\mc{T}_{\geq 0})$!complete, left/right}

\begin{definition} \label{def - right complete}
    A triangulated category $\mc{T}$ with a $t$-structure $(\mc{T}_{\leq 0},\mc{T}_{\geq 0})$ is said to be \ti{right complete} if
    \begin{enumerate}
        \item[(1)] the coaisle $\T_{\geq 0}$ is closed under countable coproducts;

        
        \item[(2)] $H^{0}_{t}: \T \to \T^{\heartsuit}$ preserves countable coproducts; 
        
        \item[(3)] for each object $Z \in \T_{\geq 0}$, we have that 
        \begin{equation} \label{eq - right complete}
            \hocolim_{n \geq 0} \tau_{\leq n}Z \cong Z
        \end{equation} in $\T$ via any map arising from (T3) in
        \begin{equation} \label{eq - T3 hocolim} \begin{tikzcd}[row sep=small]
	{\coprod_{n \geq 0} \tau_{\leq n}Z} & {\coprod_{n \geq 0} \tau_{\leq n}Z} & {\hocolim_{n \geq 0} \tau_{\leq n}Z} & {} \\
	0 & Z & Z & {\quad.}
	\arrow["{1-\tn{shift}}", from=1-1, to=1-2]
	\arrow[from=1-1, to=2-1]
	\arrow[from=1-2, to=1-3]
	\arrow[from=1-2, to=2-2]
	\arrow["{+}", from=1-3, to=1-4]
	\arrow[dashed, from=1-3, to=2-3]
	\arrow[from=2-1, to=2-2]
	\arrow[Rightarrow, no head, from=2-2, to=2-3]
	\arrow["{+}", from=2-3, to=2-4]
\end{tikzcd}\end{equation}
    \end{enumerate}
    By using the aisle $\T_{\leq 0}$, the truncation functors $\tau_{\geq -n}$ and sequential homotopy limits instead, we obtain the dual notion of \ti{left completeness}.
\end{definition}

\begin{remark} \label{remark - H0t hocolim to colim AB5}
    \begin{enumerate}
        \item Note that condition (1) in \Cref{def - right complete} implies that all sequential homotopy colimits of diagrams $X: \mb{N} \to \T_{\geq 0}$ exist.
        \item If in \Cref{def - right complete} we moreover have that $\T^{\heartsuit}$ satisfies AB5, then condition (2) implies that $H^{0}_{t}: \T \to \T^{\heartsuit}$ maps sequential homotopy colimits to colimits, see for example \Cite{NeemanBokstedt}[Remark 2.2]. 
    \end{enumerate}
\end{remark}

\begin{remark}
    In the usual definition of right completeness, see for example \Cite{LurieDAGI}[\S 7] and \cite{NeemanNLC}[\S 1], one requests \eqref{eq - right complete} to hold for all $Z \in \T$, so one replaces $\T_{\geq 0}$ by $\T$ in \Cref{def - right complete}. However, since $\tau_{\leq m-1}(\tau_{\leq n}Z)$ is eventually constant, it is clear that the necessary information to recover $Z$ is gathered in $\tau_{\geq m}Z$. In particular, if all countable coproducts in $\T$ exist and its heart $\T^{\heartsuit}$ satisfies AB5, then \eqref{eq - right complete} holds for all of $\T$ as soon as it holds for $\T_{\geq 0}$ as we now show:

    Consider $Z \in \T$ and a map $\varphi: \hocolim_{n \geq 0} \tau_{\leq n}Z \to Z$ arising as in \eqref{eq - T3 hocolim}. Then we have a morphism of exact triangles,
    \[\begin{tikzcd}[column sep = small, row sep = small, font=\small]
	{\tau_{\leq -1}Z} & Z & {\tau_{\geq 0}Z} & { } \\
	{\tau_{\leq -1}(\hocolim_{n \geq 0} \tau_{\leq n}Z)} & {\hocolim_{n \geq 0} \tau_{\leq n} Z} & {\tau_{\geq 0}(\hocolim_{n \geq 0} \tau_{\leq n}Z)} & {\quad.}
	\arrow[from=1-1, to=1-2]
	\arrow[from=1-2, to=1-3]
	\arrow["{+}", from=1-3, to=1-4]
	\arrow["{ }", from=2-1, to=1-1]
	\arrow[from=2-1, to=2-2]
	\arrow["\varphi"', from=2-2, to=1-2]
	\arrow[from=2-2, to=2-3]
	\arrow["{ }", from=2-3, to=1-3]
	\arrow["{+}", from=2-3, to=2-4]
\end{tikzcd}\]
    The leftmost map is an isomorphism because $\tau_{\leq -1}$ commutes with homotopy colimits. That the rightmost map is an isomorphism follows from the fact that $\T$ satisfies \Cref{def - right complete} and \Cref{remark - H0t hocolim to colim AB5} (2). Then $\varphi$ is also an isomorphism.
\end{remark}

There is a relation between right separatedness and right completeness (resp. left separatedness and left completeness) as in \cite{LurieDAGI}[Proposition 1.2.1.19].

\begin{proposition} \label{prop - Lurie} 
    Let $\T$ be a triangulated category with a $t$-structure $(\mc{T}_{\leq 0},\mc{T}_{\geq 0})$. Suppose that $\mc{T}_{\geq 0}$ is closed under countable coproducts, the functor $H^{0}_{t}: \T \to \T^{\heartsuit}$ preserves countable coproducts and AB5 holds in the heart $\T^{\heartsuit}$.
    Then the following are equivalent:
    \begin{enumerate}
        \item $\mc{T}$ is right complete for the $t$-structure;
        \item $\bigcap_{n \geq 0} \mc{T}_{\geq n} = 0$.
    \end{enumerate}
    \begin{proof}
        Assume first that $\mc{T}$ is right complete and consider $X \in \bigcap_{n \geq 0} \mc{T}_{\geq n}$. Then $\tau_{\leq n} X = 0$ for all $n \geq 0$. Since $X \in \T_{\geq 0}$ and using \Cref{remark - H0t hocolim to colim AB5} (1), $$X \cong \hocolim_{n \geq 0} \tau_{\leq n} X = 0.$$

        Conversely, assume that (2) holds. Due to our hypotheses, we only need to show condition (3) in \Cref{def - right complete}. Let $X$ be an arbitrary object in $\mc{T}_{\geq 0}$ and $\varphi$ a map arising from (T3) in
        \[\begin{tikzcd}[row sep = small]
	{\coprod_{n \geq 0} \tau_{\leq n}X} & {\coprod_{n \geq 0} \tau_{\leq n}X} & {\hocolim_{n \geq 0} \tau_{\leq n}X} & {} \\
	0 & X & X & {\quad.}
	\arrow["{1-\tn{shift}}", from=1-1, to=1-2]
	\arrow[from=1-1, to=2-1]
	\arrow["\alpha", from=1-2, to=1-3]
	\arrow[from=1-2, to=2-2]
	\arrow["{+}", from=1-3, to=1-4]
	\arrow["\varphi"', dashed, from=1-3, to=2-3]
	\arrow[from=2-1, to=2-2]
	\arrow[Rightarrow, no head, from=2-2, to=2-3]
	\arrow["{+}", from=2-3, to=2-4]
\end{tikzcd}\]
        Then $\Cone(\varphi) \in \T_{\geq 0}$, so by (2) it suffices to show that $H^{i}_{t}(\varphi)$ is an isomorphism for all $i \geq 0$. Since $H^{i}_{t}$ preserves countable coproducts and AB5 holds in $\T^{\heartsuit}$, $H^{i}_{t}$ maps sequential homotopy colimits to sequential colimits by \Cref{remark - H0t hocolim to colim AB5} (2). We thus find
        \begin{align*}
            H^{i}_{t}(\hocolim_{n \geq 0} \tau_{\leq n } X) &\cong \colim_{n \geq 0} H^{i}_{t}(\tau_{\leq n} X) \\
            &\cong \colim_{n \geq i} H^{i}_{t}(\tau_{\leq n} X) \\
            &\cong \colim_{n \geq i} H^{i}_{t}(X) \\
            &\cong H^{i}_{t}(X). \qedhere
        \end{align*}
    \end{proof}
\end{proposition}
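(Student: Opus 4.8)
The plan is to first observe that the three standing hypotheses are precisely conditions (1) and (2) of \Cref{def - right complete}, so that right completeness is equivalent to its condition (3): for every $Z \in \T_{\geq 0}$, any comparison map $\varphi$ arising as in \eqref{eq - T3 hocolim} is an isomorphism. I would then treat the two implications separately.

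For $(1) \Rightarrow (2)$, I would take $X \in \bigcap_{n \geq 0}\T_{\geq n}$. Since $X \in \T_{\geq n+1}$ forces $\tau_{\leq n}X = 0$, all truncations $\tau_{\leq n}X$ vanish while $X \in \T_{\geq 0}$; condition (3) then gives $X \cong \hocolim_{n \geq 0}\tau_{\leq n}X = \hocolim_{n \geq 0} 0 = 0$, the homotopy colimit of the zero diagram being zero by its defining triangle. This direction is immediate.

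For $(2) \Rightarrow (1)$ only condition (3) remains. Fix $X \in \T_{\geq 0}$ and a map $\varphi: \hocolim_{n \geq 0}\tau_{\leq n}X \to X$ arising from (T3). The key step is to identify $\Cone(\varphi)$. Applying the homotopy colimit construction to the truncation triangles $\tau_{\leq n}X \to X \to \tau_{\geq n+1}X$, and using that the homotopy colimit of the constant diagram on $X$ with identity transition maps is $X$ itself, I would obtain $\Cone(\varphi) \cong \hocolim_{n \geq 0}\tau_{\geq n+1}X$. Two independent facts about this object then finish the proof. First, because $\T_{\geq 0}$ is closed under countable coproducts, $\Cone(\varphi)$ is the cone of the map $1 - \tn{shift}$ between two objects of $\T_{\geq 0}$, whence it lies in $\T_{\geq -1}$ since the coaisle $\T_{\geq -1}$ is closed under extensions; in particular it is bounded below. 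Second, since $H^{i}_{t}$ preserves countable coproducts and $\T^{\heartsuit}$ satisfies AB5, each $H^{i}_{t}$ sends these sequential homotopy colimits to colimits (\Cref{remark - H0t hocolim to colim AB5}), so that $H^{i}_{t}(\Cone(\varphi)) \cong \colim_{n \geq 0} H^{i}_{t}(\tau_{\geq n+1}X) = 0$ for every $i$, the system $(H^{i}_{t}(\tau_{\geq n+1}X))_{n}$ being eventually zero for each fixed $i$. Combining the two: for each $m$ the object $\tau_{\leq m}\Cone(\varphi) \in \T_{\geq -1} \cap \T_{\leq m}$ is bounded with vanishing cohomology, hence zero, so that $\Cone(\varphi) \in \bigcap_{m}\T_{\geq m} = 0$ by $(2)$ and $\varphi$ is an isomorphism.

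The delicate point, and the one I would be most careful about, is that only right separatedness $(2)$ is available, not its left-handed counterpart, so one cannot deduce membership of $\Cone(\varphi)$ in a coaisle from the vanishing of its negative cohomology alone. The remedy is to obtain boundedness below of $\Cone(\varphi)$ structurally, from its description as a cone between coproducts of objects of $\T_{\geq 0}$, and only afterwards to feed in the cohomological vanishing. I would also take care to justify that the homotopy colimit commutes with the formation of cones and that $\hocolim$ of the identity system recovers $X$, which are the two inputs to the identification $\Cone(\varphi) \cong \hocolim_{n}\tau_{\geq n+1}X$.
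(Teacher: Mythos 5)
Your proof is correct, and its skeleton coincides with the paper's: both directions hinge on showing that $\Cone(\varphi)$ is bounded below (via closure of the coaisle under countable coproducts) and has vanishing $t$-cohomology (via $H^{i}_{t}$ commuting with sequential homotopy colimits, using AB5), after which right separatedness kills it. The one genuine divergence is in how the cohomological vanishing is obtained. You identify $\Cone(\varphi) \cong \hocolim_{n}\tau_{\geq n+1}X$ and compute its cohomology as an eventually-zero colimit; this is clean conceptually, but it commits you to the extra verification you yourself flag, namely that sequential homotopy colimits commute with the formation of cones (a $3\times 3$/octahedron argument), and this identification must be carried out for \emph{each} map $\varphi$ arising from (T3), since cones are not functorial. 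The paper sidesteps this entirely: it never identifies the cone, but instead computes $H^{i}_{t}(\hocolim_{n}\tau_{\leq n}X) \cong \colim_{n \geq i} H^{i}_{t}(\tau_{\leq n}X) \cong H^{i}_{t}(X)$ and reads off from the commutativity of the (T3) square that this isomorphism is realized by $H^{i}_{t}(\varphi)$, so that the long exact sequence gives the vanishing directly. Your route buys a more transparent description of the obstruction object at the cost of a standard but nontrivial lemma; the paper's route is shorter and uniform in $\varphi$. On one point you are actually more careful than the paper: you insist that boundedness below of $\Cone(\varphi)$ must be obtained structurally (from the defining triangle of the homotopy colimit) rather than from cohomology vanishing, since only right separatedness is available -- the paper asserts $\Cone(\varphi) \in \T_{\geq 0}$ rather tersely, and your explicit justification via $\T_{\geq -1}$ being closed under extensions is the right way to make that step airtight.
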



Next, we mention some examples of $t$-structures.

\begin{example} \label{ex - standard t-structures}
        Let $\A$ be an essentially small dg-category that is cohomologically concentrated in nonpositive degrees. Then the derived category $\D(\A)$ has a natural non-degenerate $t$-structure,\index{$t$-structure $(\mc{T}_{\leq 0},\mc{T}_{\geq 0})$!standard $t$-structure}
        \begin{align*}
            \D(\A)_{\leq 0} &= \{ X \in \D(\A) \mid H^{i}(X) = 0, \forall i > 0 \}, \\
            \D(\A)_{\geq 0} &= \{ X \in \D(\A) \mid H^{i}(X) = 0, \forall i < 0 \}.
        \end{align*}
        Its heart is equivalent to $\Mod(H^{0}(\A)^{\heartsuit})$, see \cite{GLVdB2}[Proposition 3.1.1], and the truncation functors are given by
        \begin{align} \label{eq - standard truncations}
            (\tau_{\leq n} X)(A) &= \tau_{\leq n}X(A) = \left( \bigoplus_{i < n} X(A)^{i} \oplus \Kernel(d_{X(A)}^{n}), d_{X(A)}\right), \\
            (\tau_{\geq n} X)(A) &= \tau_{\geq n}X(A) = \left( \bigoplus_{i \geq n} X(A)^{i} \oplus \Image(d_{X(A)}^{n-1}), d_{X(A)}\right). \nonumber
        \end{align}
        This $t$-structure is right complete by \Cref{prop - Lurie}.
\end{example}

\begin{example} \label{ex - trivial $t$-structure}
An example of a poorly behaved $t$-structure on a triangulated category $\mc{T}$ is the trivial one whose aisle (or coaisle) is everything, so $(\mc{T},0)$. It is degenerate, since it is not left separated (resp. right separated), and therefore also not left (resp. right) complete, by the same argument as in the proof of \Cref{prop - Lurie}. 
\end{example}

To give a more interesting example of a poorly behaved $t$-structure, we look to compactly generated triangulated categories (cf. \cite{NeemanBook}). An object $X \in \mc{T}$ is called \ti{compact}\index{compact!compact (triangulated)} if $\mc{T}(X,-)$ preserves small coproducts. We write $\T^{c}$ for the full subcategory of $\mc{T}$ consisting of the compact objects. The triangulated category $\mc{T}$ is then said to be \ti{compactly generated}\index{compact!compactly generated} if there exists a set $\mc{S}$ of compact objects in $\mc{T}$ that \ti{generates}\index{generating set} $\T$, i.e. so that for any object $X \in \mc{T}$:
$$\forall S \in \mc{S}: \mc{T}(S,X) = 0 \quad \Rightarrow \quad X = 0.$$
Compact objects on a compactly generated triangulated category give rise to $t$-structures via an aisle-construction, see \cite{Lopez}[Theorem A.1]. Although these \ti{compactly generated $t$-structures}\index{compact!compactly generated $t$-structure} can be described fairly concretely, they are not necessarily non-degenerate, as the following example illustrates.

\begin{example} \label{ex - conmpactly gen non-degenerate}
    Consider the graded field $k[u,u^{-1}]$ with $u$ of non-zero degree. It compactly generates $\D(k[u,u^{-1}])$ and defines a compactly generated $t$-structure via \cite{Lopez}[Theorem A.1], with as aisle $$\mc{U}_{\leq 0} = \overline{\langle k[u,u^{-1}] \rangle}^{(-\infty,0]}.$$ This is the smallest full subcategory of $\D(k[u,u^{-1}])$ containing $\Sigma^{-i}k[u,u^{-1}]$ for all $i \in (-\infty,0]$ that is closed under arbitrary coproducts, direct summands and extensions, see \cite{NeemanFunctors}[\S 1]. The $t$-structure is degenerate since for any integer $m$, $\Sigma^{\vert u \vert m}k[u,u^{-1}]$ is quasi-isomorphic to $k[u,u^{-1}]$. Consequently, $\mc{U}_{\leq -n} = \mc{U}_{\leq 0}$ so that $$\bigcap_{n \geq 0} \mc{U}_{\leq -n} = \mc{U}_{\leq 0} \neq 0.$$ As before, we conclude that the $t$-structure is not left complete.
\end{example}

\textbf{In the presence of a dg-enhancement.} Throughout this paper, we will work with dg-enhancements, meaning pretriangulated dg-categories. Given a pretriangulated dg-category $\A$ so that $H^{0}(\A)$ has a $t$-structure $(H^{0}(\A)_{\leq 0}, H^{0}(\A)_{\geq 1})$, we write $\A_{\leq 0}$ (resp. $\A_{\geq 1}$ and $\A^{\heartsuit}$) for the full dg-subcategories of $\A$ on the objects of $H^{0}(\A)_{\leq 0}$ (resp. $H^{0}(\A)_{\geq 1}$ and $H^{0}(\A)^{\heartsuit}$). We then speak of a pretriangulated dg-category $\A$ with a $t$-structure $(\A_{\leq 0}, \A_{\geq 1})$, also called a \ti{$t$-dg-category}.\index{$t$-dg-category} For such an $\A$, one can look to full dg-subcategories of $\A$ consisting of objects that are small with respect to the cohomology induced by the $t$-structure, see \cite{GLVdB2}[Definition 4.4.4]:

\begin{definition} \label{def - hfp}
    The full dg-subcategory $\hfp(\A)$ of \ti{homotopically finitely presented objects}\index{homotopically finitely presented}\index{homotopically finitely presented!$\hfp(\A)$} of $\A$ is defined as
    \begin{align*}
    \hfp(\A) &= \{ X \in \A \mid \forall n \in \mb{Z}: H^{n}_{t}(X) \in \fp(H^{0}(\A)^{\heartsuit}) \},
\end{align*}
where $\fp(H^{0}(\A)^{\heartsuit})$ denotes the full subcategory of $H^{0}(\A)^{\heartsuit}$ on the finitely presented objects\index{finitely presented object}. We also define\index{homotopically finitely presented! bounded, $\hfp^{b}(\A)$}
\begin{align*}
    \hfp^{b}(\A) &:= \hfp(\A) \cap \A^{b} = \hfp(\A^{b}). 
\end{align*}
\end{definition}

Lastly, we recall a result of \cite{GLVdB2}[\S 3.4] that allows one to transfer $t$-structures to functor categories. Given a pretriangulated dg-category $\A$ with a $t$-structure and a small dg-$R$-category $\mc{I}$ cohomologically concentrated in nonpositive degrees, where one of the two is h-flat, one can endow the dg-category of quasi-functors $\RHom(I,\A)$ with a natural $t$-structure. We hereto briefly recall quasi-functors, but refer to \cite{GLVdB2}[\S 2] for more background. Recall that an $\mc{I}$-$\A$-bimodule $F$ is an $R$-linear dg-functor $\A^{\op} \otimes \mc{I} \to \dgm(R)$, and that it is \ti{right quasi-representable}\index{quasi-functor}\index{quasi-functor!$\hproj^{\rqr}(\mc{I},\A)$}\index{quasi-functor!right quasi-representable}\index{quasi-functor!$\RHom(\mc{I},\A)$} if $$F(-,i) \sim \A(-,\Phi_{F}(i)), \forall i \in \mc{I}.$$
Note that this quasi-isomorphism is not natural. Still, $F$ induces a graded functor $H^{*}(F): H^{*}(\mc{I}) \to H^{*}(\A)$, motivating the slogan that right quasi-representable bimodules are generalised morphisms from $\mc{I}$ to $\A$. We therefore call them \ti{quasi-functors}. We denote the dg-category of $\mc{I}$-$\A$-bimodules by $\dgm(\mc{I},\A)$ and the full dg-subcategory on the right quasi-representable dg-bimodules by $\dgm^{\rqr}(\mc{I},\A)$. Finally, we define the dg-category of \ti{quasi-functors} as
$$\hproj^{\rqr}(\mc{I},\A) := \hproj(\A,\B) \cap \dgm^{\rqr}(\mc{I},\A).$$
This is the internal hom $\RHom(\mc{I},\A)$ in $\Hqe(R)$, see \cite{GLVdB2}[Corollary 2.3.2], and one has that $$\D^{\rqr}(\mc{I},\A) := H^{0}(\hproj^{\rqr}(\mc{I},\A)) = \Hqe(R)(\mc{I},\A).$$ The following result is \cite{GLVdB2}[Theorem 3.4.1],

\begin{thm} \label{thm - t-structure quasi-functors}
    Assume that the base dg-ring $R$ is concentrated in nonpositive degrees. Let $\A$ be a pretriangulated dg-category with a $t$-structure $(\A_{\leq 0}, \A_{\geq 0})$ and let $\mc{I}$ be a small dg-category cohomologically concentrated in nonpositive degrees. Assume that $\A$ or $\mc{I}$ is h-flat. Then the dg-category of quasi-functors $\RHom(\mc{I},\A)$ has a $t$-structure such that
    \begin{align*}
        \RHom(\mc{I},\A)_{\leq 0} &= \RHom(\mc{I},\A_{\leq 0}), \\
        \RHom(\mc{I},\A)_{\geq 0} &= \RHom(\mc{I},\A_{\geq 0}).
    \end{align*}
    The heart of this $t$-structure can be identified with $\Fun(H^{0}(\mc{I}),H^{0}(\A)^{\heartsuit})$. 
\end{thm}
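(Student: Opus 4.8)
The plan is to exhibit the pair $(\RHom(\mc{I},\A_{\leq 0}),\RHom(\mc{I},\A_{\geq 0}))$ as a $t$-structure directly, by checking the defining axioms in the triangulated category $\D^{\rqr}(\mc{I},\A)=H^{0}(\RHom(\mc{I},\A))$. Write $\mc{U}$ and $\mc{V}$ for the essential images of $\RHom(\mc{I},\A_{\leq 0})$ and $\RHom(\mc{I},\A_{\geq 0})$. Closure of $\mc{U}$ under $[1]$ and of $\mc{V}$ under $[-1]$ is inherited pointwise from $\A_{\leq 0}[1]=\A_{\leq -1}\subseteq\A_{\leq 0}$ and $\A_{\geq 0}[-1]=\A_{\geq 1}\subseteq\A_{\geq 0}$, so it remains to establish the orthogonality $\D^{\rqr}(\mc{I},\A)(\mc{U},\mc{V}[-1])=0$ and the existence of truncation triangles $\tau_{\leq 0}F\to F\to\tau_{\geq 1}F$. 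The heart is then $\mc{U}\cap\mc{V}$, which I identify last.

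For the orthogonality, I take quasi-functors $F$ with $\Phi_{F}(i)\in\A_{\leq 0}$ and $G$ with $\Phi_{G}(i)\in\A_{\geq 1}$ for all $i$. Using that $\A$ or $\mc{I}$ is h-flat, I would compute the morphism complex $\RHom(\mc{I},\A)(F,G)$ by a bar resolution over $\mc{I}$, expressing it up to quasi-isomorphism as the product total complex of a Hochschild-type cochain complex whose $n$-th column consists of terms
$$\dgm(R)\big(\mc{I}(i_{0},i_{1})\otimes\cdots\otimes\mc{I}(i_{n-1},i_{n}),\ \A(\Phi_{F}(i_{n}),\Phi_{G}(i_{0}))\big)$$
placed in cohomological degree raised by $n$. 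Since $\Phi_{F}(i_{n})\in\A_{\leq 0}$ and $\Phi_{G}(i_{0})\in\A_{\geq 1}$, the orthogonality of the $t$-structure on $H^{0}(\A)$ forces each $\A(\Phi_{F}(i_{n}),\Phi_{G}(i_{0}))$ to be cohomologically concentrated in strictly positive degrees; and because each $\mc{I}(i_{k-1},i_{k})$ lies in nonpositive degrees, homming out of their tensor product only raises degrees further. Hence every column term sits in strictly positive degree, so $H^{0}\RHom(\mc{I},\A)(F,G)=0$, which is the desired vanishing. This is precisely the step where the hypothesis that $\mc{I}$ is cohomologically concentrated in nonpositive degrees is indispensable.

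For the truncation triangle I would produce, for each $i\in\mc{I}$, the pointwise truncations $\tau_{\leq 0}\Phi_{F}(i)$ and $\tau_{\geq 1}\Phi_{F}(i)$ coming from the $t$-structure on $H^{0}(\A)$, and assemble them into quasi-functors $\tau_{\leq 0}F,\tau_{\geq 1}F$ lying over the pointwise truncation triangles in $\A$. The \emph{main obstacle} is exactly this assembly: truncation is only a functor on $H^{0}(\A)$, not a dg-operation, so one must check that the coherence data of $F$, encoded by the action of the complexes $\mc{I}(i,j)$, survives truncation. Here nonpositivity of $\mc{I}$ re-enters: the obstructions to lifting the naively truncated system to a genuine right quasi-representable bimodule live in positive cohomological degrees of complexes built from $\mc{I}(i,j)$ (in degrees $\leq 0$) and the relevant $\A$-homs, and these vanish for the same degree reason as above. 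Concretely, I would realize $\tau_{\leq 0}F$ as the value at $F$ of the right adjoint to the fully faithful inclusion $\D^{\rqr}(\mc{I},\A_{\leq 0})\hookrightarrow\D^{\rqr}(\mc{I},\A)$, whose existence follows from the pointwise coreflections together with the vanishing just described; the cofiber then lands pointwise in $\A_{\geq 1}$, completing the triangle. Given the triangle and the orthogonality, $(\mc{U},\mc{V})$ is a $t$-structure, and functoriality of the construction upgrades $\tau_{\leq 0},\tau_{\geq 1}$ to truncation functors.

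Finally, the heart $\mc{U}\cap\mc{V}$ consists of those quasi-functors $F$ with $\Phi_{F}(i)\in\A_{\leq 0}\cap\A_{\geq 0}=\A^{\heartsuit}$ for every $i$, i.e. the quasi-functors $\mc{I}\to\A^{\heartsuit}$. Since $\A^{\heartsuit}$ is cohomologically concentrated in degree $0$ and $\mc{I}$ in nonpositive degrees, the same bar-complex bookkeeping shows that only $H^{0}(\mc{I})$ contributes: the negative-degree morphisms of $\mc{I}$ act by zero and there is no higher coherence, so a quasi-functor into the heart is the same datum as an ordinary additive functor $H^{0}(\mc{I})\to H^{0}(\A)^{\heartsuit}$. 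This yields the identification of the heart with $\Fun(H^{0}(\mc{I}),H^{0}(\A)^{\heartsuit})$ and completes the proof.
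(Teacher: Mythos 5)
This theorem is not proved in the paper at all: it is quoted verbatim from \cite{GLVdB2}[Theorem 3.4.1], so the comparison has to be made against that proof as it is reflected in \Cref{remark - truncation quasi-functors}. Your overall strategy (verify $(T1)$--$(T3)$ directly, using the nonpositivity of $\mc{I}$ for degree arguments) is the right one, and two of your three steps are essentially sound. The orthogonality argument is correct: for $X \in \A_{\leq 0}$, $Y \in \A_{\geq 1}$ the complex $\A(X,Y)$ is indeed cohomologically concentrated in degrees $\geq 1$, and after strictifying $\mc{I}$ to nonpositive degrees (possible since $R$ is nonpositive and one of $\A$, $\mc{I}$ is h-flat, which is also what makes the bar complex compute the derived Hom) the column-plus-shift bookkeeping kills $H^{0}$ of the total complex. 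The identification of the heart is likewise fine, resting on the standard facts that $\A^{\heartsuit}$ is cohomologically concentrated in degree $0$ and that quasi-functors from a cohomologically nonpositive $\mc{I}$ into such a category are just functors out of $H^{0}(\mc{I})$.

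The genuine gap is in the truncation triangle, which is the actual content of the theorem. You correctly identify the obstacle --- $\tau_{\leq 0}$ is not a dg-operation on $\A$ --- but you then dispose of it by asserting that ``the obstructions live in positive degrees and vanish'' and that the right adjoint to $\D^{\rqr}(\mc{I},\A_{\leq 0})\hookrightarrow\D^{\rqr}(\mc{I},\A)$ ``exists by the pointwise coreflections together with the vanishing just described.'' Neither claim is established, and the second is essentially equivalent to $(T3)$ itself: for a pair already satisfying $(T1)$ and $(T2)$, producing a right adjoint to the aisle inclusion \emph{is} producing the truncation triangles, so invoking its existence is circular unless you construct it. You never say what the obstruction complex is, nor why a pointwise coreflection (which is only a statement in $H^{0}(\A)$, object by object) assembles against the full coherence data of a right quasi-representable bimodule. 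The proof in \cite{GLVdB2} resolves exactly this point by a specific device recalled in \Cref{remark - truncation quasi-functors}: one first shows $H^{0}(\hproj^{\rqr}(\mc{I},\A)) = \D^{\rqr}(\mc{I},\tau^{\leq 0}\A)$ (this is where the nonpositivity of $\mc{I}$ earns its keep), and on the smartly truncated category $\tau^{\leq 0}\A$ the truncations do lift to honest quasi-functors $\tau_{\leq n}\colon \tau^{\leq 0}\A \to \tau^{\leq 0}(\A_{\leq n})$ adjoint to the inclusions (\cite{GLVdB2}[Proposition 3.3.1]); postcomposition with these then yields the truncation functors on $\RHom(\mc{I},\A)$. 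Your obstruction-theoretic intuition is morally the same mechanism, but as written it replaces the one step that requires a real construction with an assertion, so the proposal does not yet constitute a proof.
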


\begin{remark} \label{remark - truncation quasi-functors}
    Recall from the proof of \cite{GLVdB2}[Theorem 3.4.1] that the homotopy category of $\RHom(\mc{I},\A)$ is $$H^{0}(\RHom(\mc{I},\A)) = H^{0}(\hproj^{\rqr}(\mc{I},\A)) = \D^{\rqr}(\mc{I},\tau^{\leq 0}\A).$$ 
    Here, $\tau^{\leq 0}\A$ is the dg-category with the same objects as $\A$ but with inner homs the smart truncations $\tau_{\leq 0}\A(X,Y)$ of the inner homs $\A(X,Y)$ in $\dgm(R)$, see \cite{GLVdB2}[\S 2.5]. The reason being that the truncation functors of the $t$-structure on $H^{0}(\A)$ do not lift to quasi-functors $\A \to \A_{\leq n}$ and $\A \to \A_{\geq n}$ in general, but instead to quasi-functors
    \begin{align*}
        \tau_{\leq n}&: \tau^{\leq 0}\A \to \tau^{\leq 0}\A_{\leq n}, \\
        \tau_{\geq n}&: \tau^{\leq 0}\A \to \tau^{\leq 0}\A_{\geq n},
    \end{align*}
    that are adjoint to the inclusion dg-functors $\iota_{\leq n}$ and $\iota_{\geq n}$ respectively, see \cite{GLVdB2}[Proposition 3.3.1].\index{$t$-dg-category!truncation quasi-functors $\tau_{\leq n}, \tau_{\geq n}$}
    In some cases, for example when $\A = \dgm(R)$ and the $t$-structure is the standard one (cf. \Cref{ex - standard t-structures}), the truncation functors are dg-functorial on $\tau^{\leq 0}\A$. In that case, \Cref{thm - t-structure quasi-functors} is obvious.
\end{remark}

\subsection{An induced \texorpdfstring{$t$}{t}-structure on the derived category} \label{subsection - induced t-structure}

Below, $\A$ denotes an essentially small strongly pretriangulated dg-category with a $t$-structure $(\A_{\leq 0}, \A_{\geq 1})$. As mentioned before, our aim is to endow the derived category $\D(\A)$ with a $t$-structure so that the Yoneda embedding $H^{0}(\A) \hookrightarrow \D(\A)$ becomes $t$-exact. We first require a technical lemma that allows us to exchange taking the zero-th cohomology with a weighted filtered dg-limit under certain circumstances, seeing as filtered dg-colimits are exact in $\mc{Z}^{0}(\dgm(R))$ but filtered dg-limits are not.

\begin{lemma} \label{lemma - appendix hocolim prop 4.2.3}
    Let $\mc{I}$ be a small dg-$R$-category cohomologically concentrated in nonpositive degrees. Let $X: \mc{I} \to \dgm(R)$ be a contravariant dg-functor so that $H^{k}(X) = 0$ whenever $k <0$. Let $W: \mc{I} \to \dgm(R)$ be a weight so that $H^{k}(W) = 0$ whenever $k > 0$. Then the natural morphism
    \begin{equation} \label{eq - morphism iso}
        H^{0}(\varprojlim{}^{W} X) \to \varprojlim{}^{H^{0}(W)} H^{0}(X)
    \end{equation}
    given by taking the zero-th cohomology, is an isomorphism.
    \begin{proof}
        Note that
        \begin{align*}
            X &\in \RHom(\mc{I},\dgm(R))_{\geq 0}, \\
            W &\in \RHom(\mc{I},\dgm(R))_{\leq 0},
        \end{align*}
        for the $t$-structure of \Cref{thm - t-structure quasi-functors} induced by the standard $t$-structure. Since $\A = \dgm(R)$, \Cref{remark - truncation quasi-functors} applies. Then \eqref{eq - morphism iso} follows from the description of the weighted limit as the inner hom and the adjunctions of the truncation functors.
    \end{proof}
    
\end{lemma}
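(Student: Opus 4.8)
The plan is to read both sides of \eqref{eq - morphism iso} as morphisms in the $t$-dg-category of quasi-functors $\RHom(\mc{I},\dgm(R))$ of \Cref{thm - t-structure quasi-functors}, and then to collapse the left-hand side to the heart using nothing more than the adjunctions satisfied by the truncation functors. First, since $\dgm(R)$ is cotensored, \Cref{prop - tensor inner} identifies the weighted limit with the inner hom, $\varprojlim{}^{W}X\cong [W,X]=\int_{i\in\mc{I}}[W(i),X(i)]$, which is precisely the mapping object between $W$ and $X$ regarded as objects of $\RHom(\mc{I},\dgm(R))$. Writing $\T=H^{0}(\RHom(\mc{I},\dgm(R)))=\D^{\rqr}(\mc{I},\dgm(R))$, taking $H^{0}$ therefore turns the left-hand side into $\T(W,X)$. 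Because $\A=\dgm(R)$ carries the standard $t$-structure, \Cref{remark - truncation quasi-functors} ensures that the truncation functors are genuinely dg-functorial, so the $t$-structure of \Cref{thm - t-structure quasi-functors}, its truncations, and their adjunctions are all available.

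Next I would restate the hypotheses in $t$-structure language. The assumption $H^{k}(X)=0$ for $k<0$ says that $X$ is pointwise coconnective, hence $X\in\RHom(\mc{I},\dgm(R))_{\geq 0}$; dually $H^{k}(W)=0$ for $k>0$ gives $W\in\RHom(\mc{I},\dgm(R))_{\leq 0}$. With a connective source and a coconnective target the reduction to the heart is immediate from the truncation adjunctions $\tau_{\geq 0}\dashv\iota_{\geq 0}$ and $\iota_{\leq 0}\dashv\tau_{\leq 0}$: since $X\in\T_{\geq 0}$ we get $\T(W,X)\cong\T(\tau_{\geq 0}W,X)$, and since $\tau_{\geq 0}W=H^{0}_{t}W\in\T_{\leq 0}$ we get $\T(H^{0}_{t}W,X)\cong\T(H^{0}_{t}W,\tau_{\leq 0}X)$; as $W\in\T_{\leq 0}$ and $X\in\T_{\geq 0}$, both truncations land in the heart, so $\tau_{\leq 0}X=H^{0}_{t}X$ and $\T(W,X)\cong\T^{\heartsuit}(H^{0}_{t}W,H^{0}_{t}X)$. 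This is the crux of the argument: it replaces $H^{0}(\varprojlim{}^{W}X)$, which one cannot compute by commuting $H^{0}$ past the non-exact filtered dg-limit, by a hom in the heart.

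It then remains to identify the heart. By \Cref{thm - t-structure quasi-functors} we have $\T^{\heartsuit}\cong\Fun(H^{0}(\mc{I}),\Mod(H^{0}(R)))$, and since the truncations are computed pointwise by the standard $t$-structure on $\dgm(R)$, the objects $H^{0}_{t}W$ and $H^{0}_{t}X$ are exactly the functors $H^{0}(W)$ and $H^{0}(X)$. A morphism in $\Fun(H^{0}(\mc{I}),\Mod(H^{0}(R)))$ is a natural transformation, and the module of natural transformations $H^{0}(W)\to H^{0}(X)$ is by definition the weighted limit $\varprojlim{}^{H^{0}(W)}H^{0}(X)$ in the heart. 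Composing the isomorphisms yields $H^{0}(\varprojlim{}^{W}X)\cong\varprojlim{}^{H^{0}(W)}H^{0}(X)$.

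The step I expect to be the main obstacle is not any of these individual isomorphisms but checking that their composite is the \emph{natural} comparison map of \eqref{eq - morphism iso}, the one induced by applying $H^{0}$ to the universal cone; this requires tracking the counit $\lambda$ of \Cref{def - weighted colim} through each adjunction. A secondary technical point is to be sure that the inner hom $[W,X]$ already computes the derived mapping object, so that its $H^{0}$ genuinely is $\T(W,X)$ rather than a naive homotopy class of module maps; this is exactly where the h-projective model underlying $\RHom(\mc{I},\dgm(R))=\hproj^{\rqr}(\mc{I},\dgm(R))$ and the dg-functoriality of the truncations from \Cref{remark - truncation quasi-functors} do the work, so that no separate (co)fibrant replacement of $W$ or $X$ is needed.
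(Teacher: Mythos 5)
Your proposal is correct and follows essentially the same route as the paper: place $X$ in the coaisle and $W$ in the aisle of the $t$-structure on $\RHom(\mc{I},\dgm(R))$ from \Cref{thm - t-structure quasi-functors}, read $\varprojlim{}^{W}X$ as the inner hom, and collapse to the heart via the truncation adjunctions (made dg-functorial by \Cref{remark - truncation quasi-functors} since $\A=\dgm(R)$). You merely spell out the steps the paper leaves implicit — the reduction of $\T(W,X)$ to $\T^{\heartsuit}(H^{0}_{t}W,H^{0}_{t}X)$ and the identification of the latter with $\varprojlim{}^{H^{0}(W)}H^{0}(X)$ — and you flag the same compatibility checks the paper glosses over.
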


Using the dg-enhancement of \Cref{cor - inddgQ compactly generated}, we can now endow the derived category of $\A$ with a $t$-structure.

\begin{proposition} \label{prop - pre-t-structure}
    Let $\A$ be a pretriangulated dg-category with a $t$-structure $(\A_{\leq 0},\A_{\geq 1})$ and let $\iota: \A \hookrightarrow \B$ be an embedding of pretriangulated dg-categories. Then the pair of full subcategories 
    \begin{align*}
        H^{0}(\B)_{\geq 1} &:= H^{0}(\A)_{\leq 0}^{\perp}, \\
        H^{0}(\B)_{\leq 0} &:= {}^{\perp}(H^{0}(\A)_{\leq 0}^{\perp}),
    \end{align*}
    defines a \emph{pre-$t$-structure} on $H^{0}(\B)$, in the sense that it satisfies
    \begin{itemize}
        \item[$(T1)$.] $\forall X \in H^{0}(\B)_{\leq 0}, Z \in H^{0}(\B)_{\geq 1}$: $H^{0}(\B)(X,Z) = 0$;
        \item[$(T2)$.] $H^{0}(\B)_{\leq 0}[1] \subseteq H^{0}(\B)_{\leq 0}$ and $H^{0}(\B)_{\geq 1}[-1] \subseteq H^{0}(\B)_{\geq 1}$.
    \end{itemize}
    \begin{proof}
        $(T1)$ follows immediately from the definition of the aisle $H^{0}(\B)_{\leq 0}$ as the left orthogonal to the coaisle $H^{0}(\B)_{\geq 1}$. For $(T2)$, consider first $Z \in H^{0}(\B)_{\geq 1}$ and $A \in H^{0}(\A)_{\leq 0}$. Then 
        \begin{align*}
            H^{0}(\B)(\iota(A),Z[-1]) &\cong H^{0}(\B)(\iota(A[1]),Z) = 0
        \end{align*}
        because $A[1] \in H^{0}(\A)_{\leq 0}$. Consequently, $H^{0}(\B)_{\geq 1}[-1] \subseteq H^{0}(\B)_{\geq 1}$. Then follows that $H^{0}(\B)_{\leq 0}[1] \subseteq H^{0}(\B)_{\leq 0}$ because
        \begin{align*}
            H^{0}(\B)(X[1],Z) &\cong H^{0}(\B)(X,Z[-1]) = 0
        \end{align*}
        for any $X \in H^{0}(\B)_{\leq 0}$ and $Z \in H^{0}(\B)_{\geq 1}$.
    \end{proof}
\end{proposition}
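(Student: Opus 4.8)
The plan is to treat this as a purely formal consequence of the orthogonality definitions of the aisle and coaisle, the only genuine input being the shift-stability of the given aisle $H^{0}(\A)_{\leq 0}$, which is part of the $t$-structure axioms on $\A$. In particular, I do not expect to need any of the dg-enhancement machinery of the preceding sections. First, I would dispose of $(T1)$, which should require no work at all: by construction $H^{0}(\B)_{\leq 0} = {}^{\perp}(H^{0}(\B)_{\geq 1})$ is the left orthogonal of the coaisle, so the vanishing $H^{0}(\B)(X,Z) = 0$ for $X \in H^{0}(\B)_{\leq 0}$ and $Z \in H^{0}(\B)_{\geq 1}$ holds tautologically.

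The substance lies in $(T2)$, and the key decision is to prove the two inclusions in the right order, establishing the coaisle statement $H^{0}(\B)_{\geq 1}[-1] \subseteq H^{0}(\B)_{\geq 1}$ first. For this I would fix $Z \in H^{0}(\B)_{\geq 1}$ and an arbitrary $A \in H^{0}(\A)_{\leq 0}$ and run the shift adjunction together with the fact that $\iota$ is triangulated, hence commutes with $[1]$ on $H^{0}$, to obtain
\[
H^{0}(\B)(\iota(A), Z[-1]) \cong H^{0}(\B)(\iota(A)[1], Z) \cong H^{0}(\B)(\iota(A[1]), Z).
\]
Since the aisle of a $t$-structure satisfies $H^{0}(\A)_{\leq 0}[1] \subseteq H^{0}(\A)_{\leq 0}$, we have $A[1] \in H^{0}(\A)_{\leq 0}$, so the rightmost group vanishes by the very definition of $H^{0}(\B)_{\geq 1} = H^{0}(\A)_{\leq 0}^{\perp}$. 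As $A$ ranges over the aisle of $\A$, this yields $Z[-1] \in H^{0}(\B)_{\geq 1}$.

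I would then bootstrap the aisle inclusion $H^{0}(\B)_{\leq 0}[1] \subseteq H^{0}(\B)_{\leq 0}$ from the coaisle statement just proved. Given $X \in H^{0}(\B)_{\leq 0}$ and any $Z \in H^{0}(\B)_{\geq 1}$, the same shift adjunction gives $H^{0}(\B)(X[1], Z) \cong H^{0}(\B)(X, Z[-1])$, and since $Z[-1]$ again lies in $H^{0}(\B)_{\geq 1}$ this group vanishes because $X \in {}^{\perp}(H^{0}(\B)_{\geq 1})$; hence $X[1] \in H^{0}(\B)_{\leq 0}$. The one point to watch — what I would flag as the only place a careless argument could go wrong rather than a true obstacle — is precisely this dependency: the aisle closure is not self-contained but rests on the coaisle closure, so the latter must come first, and one must explicitly invoke the shift-stability of $H^{0}(\A)_{\leq 0}$ as the sole nontrivial ingredient. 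Beyond that, the statement is a formal manipulation of orthogonal classes and shifts, so I expect no real difficulty.
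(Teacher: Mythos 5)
Your proposal is correct and follows exactly the paper's own argument: $(T1)$ is tautological from the orthogonality definition, the coaisle inclusion is proved first via the shift adjunction and the stability $H^{0}(\A)_{\leq 0}[1] \subseteq H^{0}(\A)_{\leq 0}$, and the aisle inclusion is then deduced from it. No differences worth noting.
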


\begin{thm} \label{thm - t-structure inddg}
    Let $\A$ be an essentially small strongly pretriangulated dg-category with a $t$-structure $(\A_{\leq 0},\A_{\geq 1})$ and consider the embedding into the homotopy ind-dg-completion $\Y: \A \hookrightarrow \Ind^{\dg,Q}(\A)$. Then 
    \begin{enumerate}
        \item the pre-$t$-structure of \Cref{prop - pre-t-structure},
        \begin{align} \label{eq - induced t-structure 1}
                \D(\A)_{\geq 1} &:= H^{0}(\A)_{\leq 0}^{\perp}, \\
        \D(\A)_{\leq 0} &:= {}^{\perp}(H^{0}(\A)_{\leq 0}^{\perp}), \nonumber
    \end{align}
        extends to a $t$-structure on $\D(\A)$, i.e. the pair moreover satisfies
        $(T3)$: $\forall X \in \D(\A)$, there exists an exact triangle
        \[\begin{tikzcd}
	U & X & V & {\quad,}
	\arrow[from=1-1, to=1-2]
	\arrow[from=1-2, to=1-3]
	\arrow["{+}", from=1-3, to=1-4]
\end{tikzcd}\]
        with $U \in \D(\A)_{\leq 0}$ and $V \in \D(\A)_{\geq 1}$;

        \item this $t$-structure makes the Yoneda embedding $\Y: H^{0}(\A) \hookrightarrow \D(\A)$ $t$-exact;

        \item both the aisle $\D(\A)_{\leq 0}$ and coaisle $\D(\A)_{\geq 1}$ are closed under filtered (homotopy) dg-colimits;
        
        \item the truncation functors preserve filtered (homotopy) dg-colimits.
    \end{enumerate}
    \begin{proof}
        Throughout the proof, we will be using \Cref{lemma - independency hocolim} (2) to identify filtered homotopy dg-colimits and filtered dg-colimits.

        We start with $(1)$, so $(T3)$. Consider hereto an arbitrary object $\Ll_{\A}(X)$ in $\D(\A)$ with $X: R\{P\} \to \A$ an ind-dg-object. Since $R$ is strictly concentrated in nonpositive degrees, so is $R\{P\}$. So $X$ goes into $\tau^{\leq 0} \A$, see \Cref{remark - truncation quasi-functors}. We can then postcompose $X: R\{P\} \to \tau^{\leq 0} \A$ with the truncation quasi-functors of \Cref{remark - truncation quasi-functors},
        \begin{align*}
            \tau_{\leq n}: \tau^{\leq 0} \A &\to \tau^{\leq 0} (\A_{\leq n}), \\
            \tau_{\geq n}: \tau^{\leq 0} \A &\to \tau^{\leq 0} (\A_{\geq n}).
        \end{align*}
        This yields quasi-functors
        \begin{align*}
            \tau_{\leq n}X: R\{P\} \to \tau^{\leq 0} (\A_{\leq n}), \\
            \tau_{\geq n}X: R\{P\} \to \tau^{\leq 0} (\A_{\geq n}).
        \end{align*}
        By \Cref{thm - t-structure quasi-functors}, there is an exact triangle in $\D^{\rqr}(R\{P\},\tau^{\leq 0}\A)$,
        \begin{equation} \label{eq - exact triangle A} \begin{tikzcd}[column sep = small]
	{\iota_{\leq 0}\tau_{\leq 0}X} & X & {\iota_{\geq 1}\tau_{\geq 1}X} & {\quad.}
	\arrow[from=1-1, to=1-2]
	\arrow[from=1-2, to=1-3]
	\arrow["{+}", from=1-3, to=1-4]
\end{tikzcd}\end{equation}
        Since $c_{P^{\op}}$ is h-flat by \Cref{lemmahflat}, we have the triangulated functor
        $$c_{P^{\op}} \otimes_{R\{P\}} -: \D^{\rqr}(R\{P\},\tau^{\leq 0}\A) \to \D(\tau^{\leq 0}\A).$$
        So \eqref{eq - exact triangle A} gives rise to an exact triangle in $\D(\A)$ via $\D(\tau^{\leq 0}\A) \hookrightarrow \D(\A)$,
        \begin{equation} \label{eq - exact triangle B} \begin{tikzcd}[column sep = small]
	{c_{P^{\op}} \otimes_{R\{P\}}\iota_{\leq 0}\tau_{\leq 0}X} & {\Ll_{\A}(X)} & {c_{P^{\op}} \otimes_{R\{P\}}\iota_{\geq 1}\tau_{\geq 1}X} & {\quad.}
	\arrow[from=1-1, to=1-2]
	\arrow[from=1-2, to=1-3]
	\arrow["{+}", from=1-3, to=1-4]
\end{tikzcd}\end{equation}
        So, using once again \Cref{lemma - independency hocolim} (2), we see that the truncation functors are given by
        \begin{align} \label{eq - induced truncation functors}
            \tau_{\leq 0} (\Ll_{\A}^{Q}(X)) = \colim{}^{Q(c_{P^{\op}})} \iota_{\leq 0}\tau_{\leq 0}X \in \D(\A)_{\leq 0}, \\
            \tau_{\geq 1} (\Ll_{\A}^{Q}(X)) = \colim{}^{Q(c_{P^{\op}})} \iota_{\geq 1}\tau_{\geq 1}X \in \D(\A)_{\geq 1}, \nonumber
        \end{align}
        with $\iota_{\leq 0}, \tau_{\leq 0}, \iota_{\geq 1}$ and $\tau_{\geq 1}$ the truncation (quasi-)functors on $\RHom(R\{P\},\A)$ of \Cref{thm - t-structure quasi-functors}. We still need to show that \eqref{eq - exact triangle B} satisfies the conditions of $(T3)$, i.e. that
        \begin{align*}
            &(i) \quad \tn{ } \forall U \in \D^{\rqr}(R\{P\},\tau^{\leq 0}(\A_{\leq 0})): \tn{ } \colim{}^{c_{P^{\op}}} U \in \D(\A)_{\leq 0}, \\
            &(ii) \quad \forall V \in \D^{\rqr}(R\{P\},\tau^{\leq 0}(\A_{\geq 1})): \tn{ } \colim{}^{c_{P^{\op}}} V \in \D(\A)_{\geq 1}.
        \end{align*}

                \begin{enumerate}
            \item[$(ii)$] Take $A \in H^{0}(\A)_{\leq 0}$. Then
            \begin{align*}
                \D(\A)(\Y(A),\colim{}^{c_{P^{\op}}} V) &\cong H^{0}\left((\colim{}^{c_{P^{\op}}} V)(A)\right) \\
                &\cong H^{0}\left(\colim{}^{c_{P^{\op}}} (V)(A)\right) \\
                &\cong \colim_{j} H^{0}(V)_{j}(A) \\
                &= 0,
            \end{align*}
            because $\mc{Z}^{0}(\dgm(R))$ is $\tn{AB}5$ and $V \in \D^{\rqr}(R\{P\},\tau^{\leq 0}(\A_{\geq 1}))$.
        
            \item[$(i)$] Take $\Ll_{\A}^{Q}(Z) \in \D(\A)_{\geq 1}$. Then 
            \begin{align} \label{eq - cohomol varprojlim}
                & \quad \D(\A)(\colim{}^{c_{P^{\op}}} U,\Ll_{\A}^{Q}(Z)) \\ \nonumber
                &\cong \D(\A)(\colim{}^{c_{P^{\op}}} Q(U),\Ll_{\A}^{Q}(Z)) \\ \nonumber
                &\cong H^{0}\left(\varprojlim{}^{c_{P^{\op}}} \hproj(\A)(Q(U)_{-},\Ll_{\A}^{Q}(Z)\right).
            \end{align}
            We now wish to bring the filtered dg-limit outside, as we did in $(ii)$. This cannot be done in general since filtered limits are not exact in $\mc{Z}^{0}(\dgm(R))$. In our setting however, we can apply \Cref{lemma - appendix hocolim prop 4.2.3}. Indeed, $I = R\{P\}^{\op}$ is cohomologically concentrated in nonpositive degrees and the the diagram $X$ is $\hproj(\A)(Q(U)_{-},\Ll_{\A}^{Q}(Z))$. So there remains to be checked that it has no negative cohomology. If $k <0$, then 
            \begin{align*}
                &\quad H^{k}\left(\hproj(\A)(Q(U)_{j},\Ll_{\A}^{Q}(Z) \right) \\
                &\cong \D(\A)(Q(U)_{j},\Ll_{\A}^{Q}(Z)[k]) \\
                &\cong \D(\A)(U_{j},\Ll_{\A}^{Q}(Z)[k]) \\
                &= \tn{}0,
            \end{align*}
            by the quasi-isomorphism $Q(U)_{j} \sim U_{j} \sim \A(-,A_{j})$ for some $A_{j} \in \A_{\leq 0}$, together with $(T2)$. We can then continue rewriting \eqref{eq - cohomol varprojlim} as
            \begin{align*}
                & \quad \varprojlim{}^{H^{0}(c_{P}^{\op})} H^{0}\left(\hproj(\A)(Q(U)_{j},\Ll_{\A}^{Q}(Z)\right) \\
                &\cong \varprojlim{}^{H^{0}(c_{P}^{\op})} \D(\A)(Q(U)_{j},\Ll_{\A}^{Q}(Z)) \\
                &\cong \varprojlim{}^{H^{0}(c_{P}^{\op})} \D(\A)(\A(-,A_{j}),\Ll_{\A}^{Q}(Z)) \\
                &= 0.
            \end{align*}
        \end{enumerate}

        Concerning (2), so $t$-exactness of $\Y: H^{0}(\A) \hookrightarrow \D(\A)$: this follows from the commutativity of \eqref{eq - cdiagram embedding LAQ} together with $\iota_{\leq 0}\tau_{\leq 0} c_{A} \sim c_{\tau_{\leq 0} A}$ and $\iota_{\geq 1}\tau_{\geq 1} c_{A} \sim c_{\tau_{\geq 1} A}$. 
        
        For (3), remark first that the aisles are always closed under filtered homotopy dg-colimits since the $\iota_{\leq m}$ are left adjoints, see \Cref{remark - closed products}. That the coaisles are closed under them, follows as in (ii).

        Finally, (4) follows from the description of the truncation functors of \eqref{eq - induced truncation functors}, \Cref{lemma - independency hocolim} (2) and \Cref{thm - enriched completion closed}.
    \end{proof}
\end{thm}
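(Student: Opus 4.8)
The plan is to exploit the description of $\D(\A)$ as filtered homotopy dg-colimits of representables from \Cref{cor - inddgQ compactly generated}, combined with the $t$-structure on quasi-functors furnished by \Cref{thm - t-structure quasi-functors}. The starting observation is that every object of $\D(\A)$ has the form $\Ll_{\A}(X)$ for some ind-dg-object $X: R\{P\} \to \A$, and since $R$ (hence $R\{P\}$) is strictly concentrated in nonpositive degrees, $X$ factors through $\tau^{\leq 0}\A$, so \Cref{remark - truncation quasi-functors} applies. The idea is then to truncate at the level of the diagram rather than of the colimit: postcomposing $X$ with the truncation quasi-functors $\tau_{\leq 0}, \tau_{\geq 1}$ produces a truncation triangle in $\D^{\rqr}(R\{P\},\tau^{\leq 0}\A)$ by \Cref{thm - t-structure quasi-functors}, and applying the exact functor $c_{P^{\op}} \otimes_{R\{P\}} -$ (exact because $c_{P^{\op}}$ is h-flat by \Cref{lemmahflat}) yields a candidate truncation triangle for $\Ll_{\A}(X)$ in $\D(\A)$. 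Throughout, I would use \Cref{lemma - independency hocolim} (2) to identify filtered homotopy dg-colimits with ordinary filtered dg-colimits.

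To establish $(T3)$, I must verify that the two outer terms of this triangle land in the prescribed aisle and coaisle, i.e. that $\colim{}^{c_{P^{\op}}} U \in \D(\A)_{\leq 0}$ for any diagram $U$ valued in $\A_{\leq 0}$ and $\colim{}^{c_{P^{\op}}} V \in \D(\A)_{\geq 1}$ for any diagram $V$ valued in $\A_{\geq 1}$. The coaisle membership is the easy direction: for a generator $\Y(A)$ with $A \in H^{0}(\A)_{\leq 0}$, one computes $\D(\A)(\Y(A),\colim{}^{c_{P^{\op}}} V) \cong H^{0}((\colim{}^{c_{P^{\op}}} V)(A))$ by Yoneda, and since $\mc{Z}^{0}(\dgm(R))$ is AB5 the filtered colimit can be pulled outside the cohomology, reducing to the pointwise vanishing which holds because each $V_{j}$ lies in the coaisle.

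The genuinely difficult direction is aisle membership, where I instead compute $\D(\A)(\colim{}^{c_{P^{\op}}} U, \Ll_{\A}^{Q}(Z))$ for $Z$ in the coaisle. Here Yoneda turns the colimit in the first variable into a filtered \emph{limit}, and filtered limits are not exact in $\mc{Z}^{0}(\dgm(R))$, so one cannot naively interchange $H^{0}$ with the limit. This is the main obstacle, and it is precisely what \Cref{lemma - appendix hocolim prop 4.2.3} is engineered to overcome: after replacing $U$ by an h-projective resolution, the indexing category $R\{P\}^{\op}$ is cohomologically concentrated in nonpositive degrees, so one only needs to check that the diagram $\hproj(\A)(Q(U)_{\bullet}, \Ll_{\A}^{Q}(Z))$ has no negative cohomology. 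That vanishing follows from the quasi-isomorphisms $Q(U)_{j} \sim \A(-,A_{j})$ with $A_{j} \in \A_{\leq 0}$ together with axiom $(T2)$ of the pre-$t$-structure. The lemma then permits pulling the limit outside $H^{0}$, after which each term vanishes because $\D(\A)(\A(-,A_{j}), \Ll_{\A}^{Q}(Z)) = 0$ by definition of the coaisle as the right orthogonal.

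Finally, parts (2)--(4) come out almost for free from this machinery. The $t$-exactness of Yoneda (2) follows from the commutativity of \eqref{eq - cdiagram embedding LAQ} once one checks $\iota_{\leq 0}\tau_{\leq 0} c_{A} \sim c_{\tau_{\leq 0} A}$ and $\iota_{\geq 1}\tau_{\geq 1} c_{A} \sim c_{\tau_{\geq 1} A}$, i.e. that the quasi-functorial truncations agree with the original ones on constant diagrams. For (3), the aisle is automatically closed under filtered colimits since the $\iota_{\leq m}$ are left adjoints (cf. \Cref{remark - closed products}), while the coaisle case simply repeats the AB5 argument of direction $(ii)$. Part (4) is then immediate from the explicit formula \eqref{eq - induced truncation functors} expressing the truncations as filtered homotopy dg-colimits of truncated diagrams, combined with \Cref{lemma - independency hocolim} (2) and the closure result \Cref{thm - enriched completion closed}.
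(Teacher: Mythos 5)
Your proposal is correct and follows essentially the same route as the paper's own proof: truncating at the level of the diagram via the quasi-functor $t$-structure of \Cref{thm - t-structure quasi-functors}, pushing the truncation triangle through $c_{P^{\op}} \otimes_{R\{P\}} -$ using h-flatness, verifying $(T3)$ by the AB5 argument for the coaisle and by \Cref{lemma - appendix hocolim prop 4.2.3} for the aisle, and deducing (2)--(4) from the explicit formula for the truncation functors. No gaps to report.
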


\begin{definition}
    We call the $t$-structure of \eqref{eq - induced t-structure 1} the \ti{induced $t$-structure}.\index{induced $t$-structure}
\end{definition}

Let us look to the heart of the induced $t$-structure.

\begin{proposition} \label{prop - heart inddg}
    If we endow $\D(\A)$ with the induced $t$-structure, then $$\D(\A)^{\heartsuit} = H^{0}(\Ind^{\dg,Q}(A))^{\heartsuit} \cong \Ind(H^{0}(\A)^{\heartsuit})$$
    and $H^{0}_{t} \circ \Y \cong \Y_{\tn{ab}} \circ H^{0}_{t}$. In particular, the heart is Grothendieck abelian. Furthermore, the cohomological functor $H^{0}_{t} = \tau_{\leq 0}\tau_{\geq 0}: \D(\A) \to \Ind(H^{0}(\A)^{\heartsuit})$ sends filtered (homotopy) dg-colimits to filtered colimits.
    \begin{proof}
        Consider an object $\Ll_{\A}^{Q}(X) \in \D(\A)^{\heartsuit}$. By the description of the truncation functors of \eqref{eq - induced truncation functors}, we have that $\Ll_{\A}^{Q}(X) \cong \colim{}^{Q} \iota_{\leq 0}\tau_{\leq 0}X$ and moreover that $\Ll_{\A}^{Q}(X) \cong \colim{}^{Q} H^{0}_{t} \circ X$. We then use \Cref{thm - t-structure quasi-functors} and \cite{GLVdB2}[Remark 3.4.5] to conclude.
    \end{proof}
\end{proposition}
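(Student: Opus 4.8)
The plan is to exploit the explicit formula for the truncation functors obtained in \Cref{thm - t-structure inddg} in order to describe objects of the heart concretely, and then to match filtered homotopy dg-colimits of heart-valued diagrams with the ind-completion of the heart via the $t$-structure on quasi-functors of \Cref{thm - t-structure quasi-functors}.

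First I would observe that, by \eqref{eq - induced truncation functors} together with \Cref{lemma - independency hocolim} (2), the cohomology functor is computed on any object $\Ll_{\A}^{Q}(X)$ (with $X: R\{P\} \to \A$ an ind-dg-object) by $H^{0}_{t}(\Ll_{\A}^{Q}(X)) \cong \colim{}^{Q(c_{P^{\op}})}(H^{0}_{t} \circ X)$, where $H^{0}_{t} \circ X: R\{P\} \to H^{0}(\A)^{\heartsuit}$ is the postcomposition of $X$ with the cohomology quasi-functor $\tau_{\leq 0}\tau_{\geq 0}$ on $\RHom(R\{P\},\A)$; indeed both truncations are given by the colimit formula and commute with the filtered homotopy dg-colimit, hence so does their composite. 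It follows that an object lies in $\D(\A)^{\heartsuit}$ if and only if it is isomorphic to $\colim{}^{Q(c_{P^{\op}})}(H^{0}_{t} \circ X)$, that is, to a filtered homotopy dg-colimit of a diagram landing in the heart of $\A$. In particular, when $X$ itself already lands in $H^{0}(\A)^{\heartsuit}$ one has $H^{0}_{t} \circ X \cong X$, so $\colim{}^{Q(c_{P^{\op}})} X$ is its own cohomology and thus lies in $\D(\A)^{\heartsuit}$.

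Next I would invoke the $t$-structure on $\RHom(R\{P\},\A)$ of \Cref{thm - t-structure quasi-functors}, whose heart is $\Fun(H^{0}(R\{P\}),H^{0}(\A)^{\heartsuit})$, i.e. $P$-shaped diagrams in $H^{0}(\A)^{\heartsuit}$. By the previous paragraph the weighted colimit $\colim{}^{Q(c_{P^{\op}})} = c_{P^{\op}} \otimes_{R\{P\}} -$ restricts to a functor from this heart into $\D(\A)^{\heartsuit}$, and by \cite{GLVdB2}[Remark 3.4.5] this restriction is the ordinary filtered colimit functor, whose essential image is by definition the ind-completion $\Ind(H^{0}(\A)^{\heartsuit})$. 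Ranging over all filtered $P$ and combining with the characterisation of heart objects above identifies $\D(\A)^{\heartsuit} = H^{0}(\Ind^{\dg,Q}(\A))^{\heartsuit}$ with the closure of $\Y(H^{0}(\A)^{\heartsuit})$ under filtered colimits computed exactly as in $\Ind(H^{0}(\A)^{\heartsuit})$, giving the asserted equivalence. The compatibility $H^{0}_{t} \circ \Y \cong \Y_{\tn{ab}} \circ H^{0}_{t}$ is then immediate: $\Y$ is $t$-exact by \Cref{thm - t-structure inddg} (2), so $H^{0}_{t}(\Y(A)) \cong \Y(H^{0}_{t}(A))$, and under the identification above the restriction of $\Y$ to the heart is precisely $\Y_{\tn{ab}}$.

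The remaining assertions follow formally: the heart is Grothendieck abelian because the ind-completion of a small abelian category is a locally finitely presented, in particular Grothendieck, abelian category; and $H^{0}_{t}: \D(\A) \to \Ind(H^{0}(\A)^{\heartsuit})$ sends filtered (homotopy) dg-colimits to filtered colimits by \Cref{thm - t-structure inddg} (4) — the truncation functors, hence their composite $H^{0}_{t}$, preserve filtered homotopy dg-colimits — combined with the fact established above that filtered colimits in $\D(\A)^{\heartsuit}$ are computed by applying $H^{0}_{t}$ to the corresponding filtered homotopy dg-colimit in $\D(\A)$. I expect the main obstacle to be the full faithfulness implicit in the equivalence, namely checking that the $\Hom$-sets in $\D(\A)^{\heartsuit}$ between two filtered colimits $\colim_{i}\Y(A_{i})$ and $\colim_{j}\Y(B_{j})$ reproduce the $\varprojlim_{i}\colim_{j} H^{0}(\A)^{\heartsuit}(A_{i},B_{j})$ that defines $\Ind$; this reduces to showing the $\Y(A)$ with $A \in H^{0}(\A)^{\heartsuit}$ are finitely presented in the heart and to the $\varprojlim$–$\colim$ interchange, governed by \Cref{lemma - appendix hocolim prop 4.2.3} and the AB5 exactness of filtered colimits in $\mc{Z}^{0}(\dgm(R))$. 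Provided \cite{GLVdB2}[Remark 3.4.5] already packages the colimit-on-the-heart computation, this last step becomes a formal application of the universal property of the ind-completion.
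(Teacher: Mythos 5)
Your proposal is correct and follows essentially the same route as the paper: it uses the explicit truncation formula \eqref{eq - induced truncation functors} to exhibit every heart object as a filtered (homotopy) dg-colimit of a diagram valued in $H^{0}(\A)^{\heartsuit}$, and then invokes \Cref{thm - t-structure quasi-functors} together with \cite{GLVdB2}[Remark 3.4.5] to match this with $\Ind(H^{0}(\A)^{\heartsuit})$. The paper's proof is just a terser version of yours; the full-faithfulness issue you flag at the end is precisely what the citation of \cite{GLVdB2}[Remark 3.4.5] is meant to absorb, so your more explicit treatment is consistent with, and if anything more careful than, the published argument.
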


We do not seem to have $t$-exactness in the universal property of \Cref{cor - universal prop inddgho} in general: even if $F$ in \eqref{eq - universal prop enriched} is $t$-exact, it is unclear whether $\Ind^{\dg,Q}(F)$ is, because taking the homotopy ind-dg-completion strictly enlarges the category. However, the slightly weaker \Cref{cor - D(F)} does respect $t$-exactness:

\begin{corollary} \label{cor - D(F) t-exact}
    Let $F: \A \to \B$ be a $t$-exact functor between essentially small strongly pretriangulated $t$-dg-categories. Then $F$ extends uniquely to a $t$-exact dg-functor $\D(F)$ that makes the diagram below commute.
    \[\begin{tikzcd}[row sep = small]
	\A & \B \\
	{\Ind^{\dg,Q}(\A)} & {\Ind^{\dg,Q}(\B).}
	\arrow["F", from=1-1, to=1-2]
	\arrow["\Y"', hook, from=1-1, to=2-1]
	\arrow["\Y", hook, from=1-2, to=2-2]
	\arrow["{\D(F)}", from=2-1, to=2-2]
\end{tikzcd}\]
    \begin{proof}
        By \Cref{cor - D(F)}, we only have to show $t$-exactness of $\D(F)$. Consider hereto $\colim{}^{Q(c_{P}^{\op})} \Y \circ X \in \D(\A)_{\geq 1}$. Then
        $$\colim{}^{Q(c_{P}^{\op})} \Y \circ X \cong \colim{}^{Q(c_{P}^{\op})} \tau_{\geq 1} \circ X.$$
        Since the object on the right may not live in $\Ind^{\dg,Q}(\A)$, but in $\hproj(\A)$, we use the second description of $\D(F)$ from \Cref{cor - D(F)}. We find that
        \begin{align*}
            \D(F)(\colim{}^{Q(c_{P}^{\op})} \Y \circ X) &\cong \D(F)( \colim{}^{Q(c_{P}^{\op})} \tau_{\geq 1} \circ X) \\
            &\cong \colim{}^{Q(c_{P}^{\op})} \tau_{\geq 1} \circ X \otimes_{\A} \B(-,F(-)) \\
            &\cong \colim{}^{Q(c_{P}^{\op})} ((\tau_{\geq 1} \circ X)_{i} \otimes_{\A} \B(-,F(-))) \\
            &\cong \colim{}^{Q(c_{P}^{\op})} \tau_{\geq 1} \circ F \circ X
        \end{align*}
        since by $t$-exactness of $F$,
        \begin{align*}
            (\tau_{\geq 1} \circ X)_{i} \otimes_{\A} \B(-,F(-)) &\cong \A(-,\tau_{\geq 1} X_{i}) \otimes_{\A} \B(-,F(-)) \\
            &\cong \B(-,F(\tau_{\geq 1} X_{i})) \\
            &\cong \B(-,\tau_{\geq 1}(F(X_{i}))) \\
            &\cong (\tau_{\geq 1} \circ F \circ X)_{i}.
        \end{align*}
        The argument for left $t$-exactness is analogous.
    \end{proof}
\end{corollary}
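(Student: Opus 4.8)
The plan is to reduce everything to the three structural facts already in place. By Corollary~\ref{cor - D(F)} the functor $\D(F)$ exists and is unique, so only $t$-exactness remains; moreover, in the $\hproj$-models it is given by the tensor formula $M \mapsto M \otimes_{\A} \B(-,F(-))$, hence commutes with all filtered homotopy dg-colimits (both by this left-adjoint description and by its very construction via the universal property of Corollary~\ref{cor - universal prop inddgho}, which sends $\colim{}^{Q(c_{P^{\op}})} \Y \circ X$ to $\colim{}^{Q(c_{P^{\op}})} F \circ X$). By \Cref{thm - t-structure inddg}(3) both the aisle and the coaisle of the induced $t$-structure are closed under filtered homotopy dg-colimits, and by \Cref{thm - t-structure inddg}(4) the truncation functors preserve them. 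I would verify that $\D(F)$ sends the coaisle into the coaisle, and note that the argument for the aisle is entirely dual.

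First I would take an arbitrary coaisle object, which by \Cref{thm - homotopy dg completion closed} can be written as a filtered homotopy dg-colimit of representables $\colim{}^{Q(c_{P^{\op}})} \Y \circ X$ lying in $\D(\A)_{\geq 1}$. The key preliminary reduction is that, since this object already lies in the coaisle and $\tau_{\geq 1}$ commutes with the homotopy dg-colimit, one has $\colim{}^{Q(c_{P^{\op}})} \Y \circ X \cong \colim{}^{Q(c_{P^{\op}})} \tau_{\geq 1}\circ X$, using $t$-exactness of $\Y$ (\Cref{thm - t-structure inddg}(2)) to identify $\tau_{\geq 1}\Y(X_i)$ with $\Y(\tau_{\geq 1}X_i)$. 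This replaces the possibly arbitrary representing objects $X_i$ by objects $\tau_{\geq 1}X_i$ that genuinely lie in $\A_{\geq 1}$, where the hypothesis on $F$ can be applied termwise.

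Next I would apply $\D(F)$ and commute it past the colimit using the tensor formula. The crucial termwise identity is $\A(-,\tau_{\geq 1}X_i)\otimes_{\A}\B(-,F(-)) \cong \B(-,F(\tau_{\geq 1}X_i)) \cong \B(-,\tau_{\geq 1}F(X_i))$, where the first isomorphism is the coend computation recorded in the proof of Corollary~\ref{cor - D(F)} and the second is exactly $t$-exactness of $F$. This exhibits $\D(F)(\colim{}^{Q(c_{P^{\op}})} \Y \circ X)$ as $\colim{}^{Q(c_{P^{\op}})}\tau_{\geq 1}\circ F\circ X$, a filtered homotopy dg-colimit of representables of objects of $\B_{\geq 1}$, each of which lands in $\D(\B)_{\geq 1}$ since $\Y$ is $t$-exact. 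Closure of the coaisle under filtered homotopy dg-colimits then forces the whole colimit into $\D(\B)_{\geq 1}$, establishing the coaisle case.

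I expect the main obstacle to be bookkeeping around which dg-model an object inhabits: after truncating, $\colim{}^{Q(c_{P^{\op}})}\tau_{\geq 1}\circ X$ need not lie in the smaller $\Ind^{\dg,Q}(\A)$ but only in $\hproj(\A)$, so one cannot use the abstract universal-property description of $\D(F)$ and must instead work with the explicit tensor formula of Corollary~\ref{cor - D(F)}, which is valid on all of $\hproj(\A)$. One must also keep track of the identification between filtered dg-colimits and filtered homotopy dg-colimits supplied by Corollary~\ref{lemma - independency hocolim}(2); once those identifications are installed, each step above becomes formal.
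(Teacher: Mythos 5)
Your proposal follows essentially the same route as the paper's proof: reduce to the tensor-formula description of $\D(F)$ on $\hproj(\A)$, replace $\colim{}^{Q(c_{P^{\op}})}\Y\circ X$ by $\colim{}^{Q(c_{P^{\op}})}\tau_{\geq 1}\circ X$, compute termwise via $\A(-,\tau_{\geq 1}X_i)\otimes_{\A}\B(-,F(-))\cong\B(-,\tau_{\geq 1}F(X_i))$ using $t$-exactness of $F$, and conclude by closure of the coaisle under filtered homotopy dg-colimits. You even identify the same model-bookkeeping subtlety (the truncated colimit living only in $\hproj(\A)$) that the paper flags, so the argument is correct and matches the paper's.
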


Next, we ask what properties of the $t$-structure on $\A$ are inherited by the induced $t$-structure on $\D(\A)$. Here, we run into some trouble. We start with a positive result:

\begin{proposition} \label{prop - properties induced t-structure}
    Let $\A$ be an essentially small strongly pretriangulated dg-category with a bounded or bounded above $t$-structure. Then the induced $t$-structure on $\D^{+}(\A)$ is non-degenerate and right complete.
    \begin{proof}
        Note that for any strongly pretriangulated $t$-dg-category $\mc{C}$,
        \begin{align*}
            \bigcap_{n \geq 0} \D(\mc{C})_{\geq n} &= \bigcap_{n \geq 0} (\mc{C}_{\leq n-1}^{\perp}) = (\bigcup_{n \geq 0} \mc{C}_{\leq n-1})^{\perp} = (\mc{C}^{-})^{\perp}, \\
            \bigcap_{n \geq 0} \D(\mc{C})_{\leq -n} &= \bigcap_{n \geq 0} {}^{\perp}(\mc{C}_{\leq -n}^{\perp}) = {}^{\perp}(\bigcup_{n \geq 0} \mc{C}_{\leq -n}^{\perp}) = {}^{\perp}(\bigcup_{n \geq 0} \D(\mc{C})_{\geq n}) = {}^{\perp}\D^{+}(\mc{C}),
        \end{align*}
        where we actually mean $H^{0}(\mc{C})$ everywhere. Consequently, the $t$-structure is left separated if we consider only the left bounded part $\D^{+}(\mc{A})$. It is clearly also right separated if $\A$ is bounded or bounded above because $\D(\A)$ is compactly generated by $\A \cong \A^{-}$ in that case. Right completeness then follows by \Cref{prop - Lurie}, \Cref{thm - t-structure inddg} (3) and \Cref{prop - heart inddg}.
    \end{proof}
\end{proposition}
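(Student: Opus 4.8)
The plan is to verify non-degeneracy and right completeness separately, relying throughout on the orthogonality description \eqref{eq - induced t-structure 1} of the induced $t$-structure, i.e. $\D(\A)_{\geq 1} = H^0(\A)_{\leq 0}^\perp$ and $\D(\A)_{\leq 0} = {}^\perp(H^0(\A)_{\leq 0}^\perp)$, which lets me commute intersections of (co)aisles with unions of the generating subcategories. First I would compute $\bigcap_{n\geq 0}\D(\mc{C})_{\geq n}$ for an arbitrary essentially small strongly pretriangulated $t$-dg-category $\mc{C}$. Using that $\D(\mc{C})_{\geq n} = (H^0(\mc{C})_{\leq n-1})^\perp$ (obtained by shifting the defining orthogonality) and that right orthogonals turn unions into intersections, I get
\[
    \bigcap_{n\geq 0}\D(\mc{C})_{\geq n} = \Bigl(\bigcup_{n\geq 0}\mc{C}_{\leq n-1}\Bigr)^\perp = (\mc{C}^-)^\perp,
\]
and dually
\[
    \bigcap_{n\geq 0}\D(\mc{C})_{\leq -n} = {}^\perp\Bigl(\bigcup_{n\geq 0}\mc{C}_{\leq -n}^\perp\Bigr) = {}^\perp\D^+(\mc{C}),
\]
where all entries are understood at the level of $H^0(\mc{C})$.

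Next I would read off separatedness. The second display shows that on the left bounded part $\D^+(\mc{A})$ the intersection $\bigcap_{n\geq 0}\D(\mc{A})_{\leq -n}$ is the left orthogonal of all of $\D^+(\mc{A})$, hence vanishes inside $\D^+(\mc{A})$; this gives left separatedness. For right separatedness I would invoke the boundedness hypothesis: when $\A$ is bounded or bounded above, $\A \cong \A^-$ generates, so $\D(\A)$ is compactly generated by (the Yoneda image of) $\A$ via \Cref{cor - inddgQ compactly generated}; then $(\mc{A}^-)^\perp = \mc{A}^\perp = 0$, so $\bigcap_{n\geq 0}\D(\A)_{\geq n}=0$ and the $t$-structure is right separated. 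Combining the two yields non-degeneracy on $\D^+(\A)$.

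Finally, for right completeness I would apply \Cref{prop - Lurie}, whose hypotheses I can now discharge: the coaisle is closed under countable coproducts by \Cref{thm - t-structure inddg} (3), the functor $H^0_t$ preserves (even arbitrary filtered) colimits and the heart is Grothendieck — in particular AB5 — by \Cref{prop - heart inddg}. Right separatedness, established above, is exactly condition (2) of \Cref{prop - Lurie}, so that proposition delivers right completeness. The only genuinely delicate point is the bookkeeping in the first two displays — correctly passing orthogonals through the unions and keeping track of the shift by one between $\D(\mc{C})_{\geq n}$ and $\mc{C}_{\leq n-1}$ — together with making sure the compact generation of \Cref{cor - inddgQ compactly generated} really forces $\mc{A}^\perp = 0$; once these are in place the cited results assemble the statement directly.
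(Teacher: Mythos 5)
Your proposal is correct and follows essentially the same route as the paper's own proof: the same orthogonality computations identifying $\bigcap_{n}\D(\mc{C})_{\geq n}=(\mc{C}^{-})^{\perp}$ and $\bigcap_{n}\D(\mc{C})_{\leq -n}={}^{\perp}\D^{+}(\mc{C})$, the same appeal to compact generation by $\A\cong\A^{-}$ for right separatedness, and the same combination of \Cref{prop - Lurie}, \Cref{thm - t-structure inddg} (3) and \Cref{prop - heart inddg} for right completeness. No gaps.
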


That the induced $t$-structure can be degenerate without the assumptions of \Cref{prop - properties induced t-structure}, is illustrated by the following example\footnote{This agrees with \Cite{Antieau}[Remark below Proposition 2.13] using \Cref{prop - Lurie} and its dual.}.

\begin{example} \label{ex - degenerate induced t-structure}
    Consider the derived category $\D^{+}(k)$ for a field $k$, endowed with the non-degenerate standard $t$-structure of \Cref{ex - standard t-structures}, and enlarge the universe so that $\D^{+}(k)$ is essentially small (one can avoid this by considering $\D^{+}(\Vector^{\tn{fd}}(k))$ instead). Then we claim that the induced $t$-structure on $\D^{+}(\D^{+}(k)) = H^{0}(\Ind^{\dg,Q,+}(\D_{\dg}^{+}(k)))$ is degenerate. 
    Consider hereto the natural map,
    $$\xi: \bigoplus_{n \in \mb{N}} \Y(k)[-n] \longrightarrow \Y(\bigoplus_{n \in \mb{N}}  k[-n]).$$
    From the properties of the induced $t$-structure follows that $\tau_{\leq n}\xi$ is an isomorphism for all $n \geq 0$. Consequently, $\Cone(\xi) \in \bigcap_{n \geq 0} \D^{+}(\D^{+}(k))_{\geq n}$. Since $\Y(\bigoplus_{n \in \mb{N}}  k[-n])$ is compact and $\bigoplus_{n \in \mb{N}} \Y(k)[-n]$ is not -- indeed, consider the natural map $$\bigoplus_{m \geq 0} \homom(\bigoplus_{n \in \mb{N}} \Y(k)[-n],\Y(k)[-m]) \to \homom(\bigoplus_{n \in \mb{N}} \Y(k)[-n],\bigoplus_{m \in \mb{N}} \Y(k)[-m])$$ and observe that the identity is not reached -- $\Cone(\xi)$ is different from zero. Thus, the induced $t$-structure is not right separated and therefore degenerate.
\end{example}

A natural question is whether the standard $t$-structure is induced. Under certain circumstances, this is the case:

\begin{example}
    Let $R$ be a ring of finite global dimension. By the Auslander-Buchsbaum-Serre theorem,
    $$\Perf(R) \cong \D^{b}(R).$$
    Combining this with \cite{Beilinson}[Lemma 1], we find equivalences
    $$\D(R) \cong \D(\Perf(R)) \cong \D(\D^{b}(R))$$
    that are $t$-exact for the standard $t$-structure and the induced one.
    
    A similar result holds on the geometric side. Let $X$ be a separated, noetherian scheme over $\mb{C}$ of finite Krull dimension, and so that $\coh(X)$ has enough locally free sheaves (see \cite{OrlovSing}[\S 1.2]). If $X$ is regular, then its singularity category vanishes, ergo
    $$\Perf(X) \cong \D^{b}(X),$$
    see for example \cite{OrlovSing}[Remark 1.9]. By combining this with \cite{BondalVdB} and \cite{Beilinson}[Lemma 1], we find $t$-exact equivalences
    $$\D(X) \cong \D(\Perf(X)) \cong \D(\D^{b}(X)).$$
\end{example}

\subsection{Left bounded locally coherent Grothendieck \texorpdfstring{$t$}{t}-dg-categories}
\label{subsection - t-lc}

Assume that $\A$ is an essentially small strongly pretriangulated dg-category with a bounded $t$-structure. Then we find ourselves in the setting of \Cref{prop - properties induced t-structure} so that the induced $t$-structure on the homotopy ind-dg-completion $\Ind^{\dg,Q,+}(\A)$ is non-degenerate and right complete. By \Cref{thm - t-structure inddg}, we have a $t$-exact embedding $$\Y: \A \hookrightarrow \Ind^{\dg,Q,+}(\A)$$ given by Yoneda. As a result, we can apply any construction defined in $\Ind^{\dg,Q,+}(\A)$ -- and perhaps not in the smaller dg-category, due to for example the lack of enough (derived) injectives -- to objects of $\A$ by embedding them. To conclude whether the result is again `small', we wish to characterize the quasi-essential image of $\A$ in $\Ind^{\dg,Q,+}(\A)$.

\begin{proposition} \label{prop - hfpb or colim}
    Let $\A$ be an essentially small strongly pretriangulated dg-category with a bounded $t$-structure and $\C$ be an essentially small strongly pretriangulated $t$-dg-category so that $H^{0}(\C)$ is Karoubian. Then
    \begin{enumerate}
        \item $\A \cong \hfp^{b}(\Ind^{\dg,Q,+}(\A))$;
        \item $H^{0}(\A) \cong \D(\A)^{c}$;
        \item $H^{0}(\C) \cong \D(\C)^{c}$.
    \end{enumerate}
    \begin{proof}
        Using \Cref{prop - heart inddg} and \cite{AbelianDeformations}[\S 2.2], which states that $$\fp(\Ind( H^{0}(\A)^{\heartsuit})) \cong H^{0}(\A)^{\heartsuit},$$ we can rewrite $\hfp^{b}(\Ind^{\dg,Q},+(\A))$ as
        \begin{align*}
            \biggl\{ X \in \Ind^{\dg,Q,+}(\A) \bigg| \begin{matrix} \forall n \in \mb{Z}: H^{n}_{t}(X) \in H^{0}(\A)^{\heartsuit}, \\
            \exists M \geq 0: (\tau_{\leq -M}X = 0) \wedge (\tau_{\geq M+1}X = 0) \end{matrix} \biggl\}.
        \end{align*}
        
        \begin{enumerate}  
            \item $\A$ clearly embeds into $\hfp^{b}(\Ind^{\dg,Q,+}(\A))$. Conversely, consider $X$ in $\hfp^{b}(\Ind^{\dg,Q,+}(\A))$ and $M \geq 0$ so that $\tau_{\leq -M}X = 0$ and $\tau_{\geq M+1}X = 0$. We will show via an inductive argument that $X \in \A$:
            \begin{itemize}
                \item[-] If $M = 0$, then $X = 0$ and is certainly contained in $\A$.
                
                \item[-] Let $M > 0$. As $\tau_{\leq -M}X = 0$, we have that $X \cong \tau_{\geq -M+1}X$. Then 
                \begin{equation} \label{eq - (a)}
                    \tau_{\leq -M+1}X \cong H^{-M+1}_{t}(X)[M-1] \in H^{0}(\A).
                \end{equation}
                Since $\tau_{\geq M+1}X = 0$, we also have that $X \cong \tau_{\leq M}X$ so that
                \begin{equation} \label{eq - (b)}
                    \tau_{\geq M}X \cong H^{M}_{t}(X)[-M] \in H^{0}(\A).
                \end{equation}
                If we combine \eqref{eq - (a)} with the exactness of
                \[\begin{tikzcd}
	{\tau_{\leq -M+1}X} & {\tau_{\leq -M+2}X} & {H^{-M+2}_{t}(X)[M-2]} & {\quad,}
	\arrow[from=1-1, to=1-2]
	\arrow[from=1-2, to=1-3]
	\arrow["{+}", from=1-3, to=1-4]
\end{tikzcd}\]
                we find that $\tau_{\leq -M+2}X \in H^{0}(\A)$. We can repeat this argument to eventually conclude that $\tau_{\leq M-1}X \in H^{0}(\A)$. The following exact triangle and \eqref{eq - (b)} then show that $X \in \A$,
                \[\begin{tikzcd}
	{\tau_{\leq M-1}X} & X & {\tau_{\geq M}X} & {\quad.}
	\arrow[from=1-1, to=1-2]
	\arrow[from=1-2, to=1-3]
	\arrow["{+}", from=1-3, to=1-4]
\end{tikzcd}\]
                \end{itemize}

            \item Recall from \Cref{cor - inddgQ compactly generated} that $\D(\A)$ is compactly generated by the Yoneda image of $H^{0}(\A)$. This is equivalent to $H^{0}(\A)$ classically generating $\D(\A)^{c}$ (cf. \cite{NeemanGrothendieckDuality}[Theorem 2.1 and Remark 2.2]), meaning that $\D(\A)^{c}$ is the smallest triangulated subcategory of $\D(\A)$ that contains $H^{0}(\A)$ and is closed under direct summands. It thus suffices to show that $H^{0}(\A)$ is closed under direct summands. Since the truncation functors commute with biproducts and finitely presented objects are closed under direct summands (see for example \cite{AdamekRosicky}[Proposition 1.3]), this is clear from $(1)$. 

            \item We use the same argument as in (2), now using the fact that $H^{0}(\C)$ is Karoubian and therefore closed under direct summands.
            \qedhere
        \end{enumerate}
    \end{proof}
\end{proposition}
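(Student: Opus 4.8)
The plan is to begin by making the right-hand category of (1) concrete. By \Cref{prop - heart inddg} the heart of the induced $t$-structure on $\Ind^{\dg,Q,+}(\A)$ is $\Ind(H^{0}(\A)^{\heartsuit})$, and by \cite{AbelianDeformations}[\S 2.2] the finitely presented objects of this ind-completion are exactly $H^{0}(\A)^{\heartsuit}$. Feeding this into the definition of $\hfp^{b}$, together with the boundedness condition carried by the superscript $b$, rewrites $\hfp^{b}(\Ind^{\dg,Q,+}(\A))$ as the full subcategory of those $X$ with $H^{n}_{t}(X) \in H^{0}(\A)^{\heartsuit}$ for all $n$ and with $\tau_{\leq -M}X = 0$ and $\tau_{\geq M+1}X = 0$ for some $M \geq 0$. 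This explicit description is what I would use throughout.

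For (1), the inclusion of $\A$ into $\hfp^{b}(\Ind^{\dg,Q,+}(\A))$ is immediate: the Yoneda embedding is $t$-exact by \Cref{thm - t-structure inddg}, so the cohomologies of $\Y(A)$ are the heart objects $H^{n}_{t}(A)$, and the boundedness of the $t$-structure on $\A$ furnishes the required $M$. The content lies in the reverse inclusion, which I would establish by induction on the width $M$ of a given $X$. The base case $M=0$ forces $X=0$. For the inductive step I would exploit that $\tau_{\leq -M}X = 0$ and $\tau_{\geq M+1}X=0$ to identify the two extreme truncations $\tau_{\leq -M+1}X$ and $\tau_{\geq M}X$ with shifts of single cohomology objects lying in $H^{0}(\A)^{\heartsuit}$, hence in $\A$; then I would climb the truncation tower one degree at a time, at each stage splicing the next cohomology object onto the previous truncation through the canonical exact triangle and using that $\A$, being pretriangulated, is closed under extensions, so that every intermediate truncation stays in $\A$. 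The terminal triangle $\tau_{\leq M-1}X \to X \to \tau_{\geq M}X$ then places $X$ in $\A$. I expect \emph{this dévissage to be the main obstacle}, since the delicate point is to certify that each truncation genuinely lands in the essential image of $\A$, rather than merely in the bounded part of the larger category.

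For (2), the plan is to invoke \Cref{cor - inddgQ compactly generated}, which asserts that $\D(\A)$ is compactly generated by the Yoneda image of $H^{0}(\A)$. By Neeman's criterion (\cite{NeemanGrothendieckDuality}[Theorem 2.1 and Remark 2.2]) this is equivalent to $H^{0}(\A)$ classically generating $\D(\A)^{c}$, so that $\D(\A)^{c}$ is the smallest triangulated subcategory containing $H^{0}(\A)$ and closed under direct summands. It therefore suffices to check that $H^{0}(\A)$ is already summand-closed in $\D(\A)$, and this I would extract from (1): the truncation functors commute with biproducts and finitely presented objects are closed under direct summands (\cite{AdamekRosicky}[Proposition 1.3]), so any summand of an object of $\A$ again satisfies the explicit $\hfp^{b}$-description and is thus in $\A$.

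Finally, for (3) the compact generation step is identical, but since $\C$ need not carry a bounded $t$-structure I cannot appeal to (1). Instead the hypothesis that $H^{0}(\C)$ is Karoubian supplies summand-closure directly: a direct-summand decomposition of $\Y(C)$ in $\D(\C)$ corresponds to an idempotent in $\D(\C)(\Y(C),\Y(C)) \cong H^{0}(\C)(C,C)$ by full faithfulness of Yoneda, this idempotent splits inside $H^{0}(\C)$ by hypothesis, and full faithfulness together with additivity carries that splitting back to the given one in $\D(\C)$. Hence $\D(\C)^{c} \cong H^{0}(\C)$.
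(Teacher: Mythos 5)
Your proposal is correct and follows essentially the same route as the paper: the same rewriting of $\hfp^{b}(\Ind^{\dg,Q,+}(\A))$ via \Cref{prop - heart inddg}, the same dévissage by induction on the width $M$ through the truncation triangles for (1), and the same reduction of (2) and (3) to summand-closure of $H^{0}(\A)$ (resp.\ $H^{0}(\C)$) via Neeman's classical-generation criterion. The only difference is that you spell out the idempotent-splitting step for (3) in more detail than the paper, which simply cites Karoubianness.
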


\Cref{prop - hfpb or colim} (1) can be restated as a categorical correspondence, which should be compared to \cite{AbelianDeformations}[\S 2.2].

\begin{definition} \label{def - left tlc}
    A triangulated category $\T$ with a $t$-structure $(\T_{\leq 0}, \T_{\geq 0})$ is \ti{left bounded locally coherent Grothendieck} if
    \begin{enumerate}
        \item the $t$-structure is left bounded and right complete;
        \item $\T^{\heartsuit}$ is locally coherent Grothendieck abelian;
        \item $H^{0}_{t}: \T \to \T^{\heartsuit}$ preserves coproducts;
        \item $\fp(\T^{\heartsuit}) \subseteq \T^{c}$.
    \end{enumerate}
    Note that by $\iota_{\leq 0} \dashv \tau_{\leq 0}$ and the truncation exact triangles, (4) is equivalent to
    \begin{enumerate}
    \item[(4')] $\forall X \in \fp(\T^{\heartsuit}), i \in \mb{Z}: H^{i}(\D(X,-)): \T^{\heartsuit} \to \Ab$ preserves coproducts.
    \end{enumerate}
    A $t$-dg-category $\D$ with a $t$-structure is then a \ti{left bounded locally coherent Grothendieck $t$-dg-category}\index{$t$-dg-category!left bounded locally coherent Grothendieck} if $H^{0}(\D)$ with that $t$-structure is left bounded locally coherent Grothendieck.
\end{definition}

\begin{remark}
    In \cite{JuliaFrancesco}[Setup 3.1.1], a definition was given for unbounded Grothendieck $t$-(dg-)categories, though the term itself was not yet used there. A natural definition of an unbounded locally coherent Grothendieck $t$-dg-category would then be a Grothendieck $t$-dg-category $\A$ with the extra conditions:
    \begin{itemize}
        \item[-] $H^{0}(\A)$ is compactly generated;
        \item[-] the generators $\mc{U}$ of \cite{JuliaFrancesco}[Setup 3.1.1] can be chosen inside $H^{0}(\A)^{c}$;
        \item[-] the compact objects inherit the $t$-structure from $\A$.
    \end{itemize}
    However, this notion does not seem to work well in the unbounded case where the two possible candidates differ: the unique dg-enhancement of $\D(\Ind(\mb{A}))$ for $\mb{A}$ abelian is not necessarily compactly generated (\cite{KrauseAuslander}), while the $t$-structure on $\Ind^{\dg,Q}(\D^{b}(\mb{A}))$ may be degenerate (\Cref{ex - degenerate induced t-structure}). In the left bounded setup, the two agree, see \Cref{cor - equiv induced derived}.
\end{remark}

\begin{lemma} \label{lemma - hfpb compact} 
    Let $\T$ be a triangulated category with a $t$-structure that is left bounded locally coherent Grothendieck. Then $\hfp^{b}(\T) \subseteq \T^{c}$.
    \begin{proof}
        Since shifts and extensions of compact objects remain compact and we are given that $\fp(\T^{\heartsuit}) \subseteq \T^{c}$, the statement follows by considering the truncation exact triangles.
    \end{proof}
\end{lemma}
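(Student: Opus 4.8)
The plan is to exploit that a bounded, homotopically finitely presented object is a finite iterated extension of shifts of its cohomology objects, each of which is compact by hypothesis, together with the fact that $\T^{c}$ is a thick triangulated subcategory and is therefore closed under shifts and cones.

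First I would fix $X \in \hfp^{b}(\T) = \hfp(\T^{b})$. By definition $X$ lies in $\T^{b} = \T^{+} \cap \T^{-}$, so there are integers $a \le b$ with $X \in \T_{\ge a} \cap \T_{\le b}$; equivalently $H^{n}_{t}(X) = 0$ for $n < a$ and $n > b$, while $\tau_{\le a-1}X = 0$ and $X \cong \tau_{\le b}X$. Moreover, since $X \in \hfp(\T)$, each cohomology object $H^{n}_{t}(X)$ lies in $\fp(\T^{\heartsuit})$, so condition $(4)$ of \Cref{def - left tlc} yields $H^{n}_{t}(X) \in \T^{c}$. As $\T^{c}$ is closed under shifts, also $H^{n}_{t}(X)[-n] \in \T^{c}$ for every $n$.

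Next I would reconstruct $X$ from these pieces by climbing the truncation tower. For each $n$ the truncation functors supply an exact triangle
\[
\tau_{\le n-1}X \to \tau_{\le n}X \to H^{n}_{t}(X)[-n] \xrightarrow{+},
\]
which rotates to exhibit $\tau_{\le n}X$ as the cone $\Cone(H^{n}_{t}(X)[-n-1] \to \tau_{\le n-1}X)$. Starting from $\tau_{\le a}X \cong H^{a}_{t}(X)[-a] \in \T^{c}$ and inducting on $n$ from $a$ to $b$, at each step $\tau_{\le n}X$ is the cone of a morphism between compact objects, hence compact because $\T^{c}$ is a triangulated subcategory. Taking $n = b$ gives $X \cong \tau_{\le b}X \in \T^{c}$, as desired.

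There is no serious obstacle in this argument; the only points worth keeping in mind are that the boundedness of $X$ is precisely what makes the induction finite, so that only finitely many truncation triangles are needed, and that $\T^{c}$ must genuinely be a thick triangulated subcategory, which is what licenses the closure under shifts and cones used throughout.
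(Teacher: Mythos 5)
Your proof is correct and is exactly the argument the paper sketches in one line: the cohomology objects $H^n_t(X)$ are compact by condition (4) of the definition, and the finite tower of truncation triangles exhibits $X$ as an iterated extension of their shifts, so compactness follows from closure of $\T^c$ under shifts and cones. No discrepancies with the paper's approach.
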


\begin{corollary} \label{cor - lcGt enough DGInj}
    Let $\T$ be a triangulated category with a $t$-structure that is left bounded locally coherent Grothendieck. Then $\T$ has enough derived injectives.
    \begin{proof}
        This follows by condition (1) in \Cref{def - right complete}, \Cref{lemma - hfpb compact}, \Cref{def - left tlc} and \Cref{cor - enough DGInj}.
    \end{proof}
\end{corollary}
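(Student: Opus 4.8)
The plan is to verify that a left bounded locally coherent Grothendieck $t$-category $\T$ meets the hypotheses of \Cref{cor - enough DGInj}, after which the existence of enough derived injectives is immediate. Recall that constructing a derived injective amounts to representing, for a given injective object of the heart, an associated coproduct-preserving cohomological functor on $\T^{\op}$ by an object of the coaisle; the engine performing this representation is the left bounded version of Brown representability established in \Cref{appendix - Brown} and packaged by \Cref{cor - enough DGInj}. The proof is therefore a matter of matching the four defining properties of \Cref{def - left tlc} to the input this machinery requires.

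First, I would note that by \Cref{def - left tlc} (2) the heart $\T^{\heartsuit}$ is locally coherent Grothendieck abelian, so it has enough injectives and injective envelopes. Fixing an injective $E \in \T^{\heartsuit}$, the target is a derived injective $I_{E}$ representing the functor
\[
F_{E} \colon \T^{\op} \to \Ab, \qquad X \longmapsto \T^{\heartsuit}\bigl(H^{0}_{t}(X), E\bigr).
\]
Here $F_{E}$ is cohomological: applying the exact contravariant functor $\T^{\heartsuit}(-,E)$ to the long exact cohomology sequence attached by the cohomological functor $H^{0}_{t}$ to a triangle yields precisely the long exact sequence of $F_{E}$ on that triangle (the terms $H^{n}_{t}$ being $F_{E}$ evaluated on the shifts). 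Moreover $F_{E}$ sends coproducts to products, since $H^{0}_{t}$ preserves coproducts by \Cref{def - left tlc} (3) and $E$ is injective.

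Next, I would supply the generation data. The coaisle $\T_{\geq 0}$ is closed under countable coproducts by condition (1) in \Cref{def - right complete}, which holds since the $t$-structure is right complete (\Cref{def - left tlc} (1)); this is exactly the coproduct hypothesis under which the left bounded Brown argument runs. For the generators one takes the finitely presented objects of the heart: by \Cref{def - left tlc} (4), equivalently by \Cref{lemma - hfpb compact}, one has $\hfp^{b}(\T) \subseteq \T^{c}$, so $\fp(\T^{\heartsuit})$ consists of compact objects, while left boundedness ensures that these (through their truncation triangles) generate. Feeding this into \Cref{cor - enough DGInj} produces a representing object $I_{E}$ in the coaisle with the correct cohomology to be a derived injective, and the injective envelope $H^{0}_{t}(X) \hookrightarrow E$ then furnishes the required map from any $X$ into a derived injective, giving enough derived injectives.

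The step I expect to be the genuine obstacle lies inside \Cref{cor - enough DGInj} and \Cref{appendix - Brown}, namely that one must run Brown representability with coproducts available only on the coaisle rather than on all of $\T$; once that left bounded variant is granted, the present statement is a pure verification, with \Cref{lemma - hfpb compact} doing the essential work of exhibiting a set of compact generators.
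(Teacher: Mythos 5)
Your proposal is correct and follows exactly the same route as the paper, whose proof is just the citation chain you unpack: right completeness gives closure of the coaisle under countable coproducts, \Cref{def - left tlc} supplies enough injectives in the heart and coproduct-preservation of $H^{0}_{t}$, $\fp(\T^{\heartsuit}) \subseteq \T^{c}$ (via \Cref{lemma - hfpb compact}) together with left boundedness yields a set of compact generators, and \Cref{cor - enough DGInj} then produces the representing objects. The only superfluous remark is the final appeal to injective envelopes: by the paper's definition, having enough derived injectives only requires the existence of $L(I)$ for every injective $I$ of the heart, which \Cref{cor - enough DGInj} already delivers.
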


\begin{thm} \label{thm - categorical hfpb inddg} 
    The following is an equivalence of categories between the full subcategories of $\Hqe(R)$,
\[\begin{tikzcd}[column sep = small]
	\begin{array}{c} \begin{Bmatrix} \textnormal{essentially small strongly} \\ \textnormal{pretriangulated dg-categories } \\ \textnormal{with a bounded $t$-structure} \end{Bmatrix} \end{array} && \begin{array}{c} \begin{Bmatrix} \textnormal{left bounded locally} \\ \textnormal{coherent Grothendieck} \\ \textnormal{$t$-dg-categories} \end{Bmatrix}. \end{array}
	\arrow["{\hproj^{+}(-)}", shift left=2, from=1-1, to=1-3]
	\arrow["{\hfp^{b}(-)}", shift left=2, from=1-3, to=1-1]
\end{tikzcd}\]
    \begin{proof}
        By \Cref{prop - hfpb or colim} and \Cref{thm - homotopy dg completion closed} (2), we have that
        $$\A \cong \hfp^{b}(\hproj^{+}(\A))$$
        for $\A$ an essentially small strongly pretriangulated dg-category with a bounded $t$-structure. 
        
        Conversely, let $\C$ be a left bounded locally coherent Grothendieck $t$-dg-category. By \cite{AbelianDeformations}[\S 2.2], $H^{0}(\C)^{\heartsuit} \cong \Ind(\mathbb{A})$ for an abelian category $\mb{A}$. Then $H^{0}(\hfp^{b}(\C))^{\heartsuit} \cong \fp(H^{0}(\C)^{\heartsuit}) \cong \mb{A}$ and there is a $t$-exact embedding $\hfp^{b}(\C) \hookrightarrow \C$. That $\hfp^{b}(\C)$ is small, follows from the smallness of $\mb{A}$, see for example \cite{AbelianDeformations}[\S 2.2]. Consider now the quasi-functor
        \begin{equation}
            \Psi: \C \to \hproj^{+}(\hfp^{b}(\C)): C \mapsto Q^{\hproj}(\C(-,C)\vert_{\hfp^{b}(\C)}).
        \end{equation}
        Note that for $C \in \hfp^{b}(\C)$,
        \begin{align*}
            H^{0}(\Psi)(C) &= Q^{\hproj}(\C(-,C)\vert_{\hfp^{b}(\C)}) \\ 
&= Q^{\hproj}(\hfp^{b}(\C)(-,C)) \\ 
&\sim \hfp^{b}(\C)(-,C).
        \end{align*}
        Consequently, $\Psi$ defines a $t$-exact quasi-equivalence between $\hfp^{b}(\C)$ as a subcategory of respectively $\C$ and $\hproj^{+}(\hfp^{b}(\C))$. We claim that this extends to a $t$-exact quasi-equivalence of the whole $t$-dg-categories.

        To see that $\Psi$ is quasi-fully faithful, consider $C,C' \in \C$. By right completeness, $C \cong \hocolim \tau_{\leq n} C$. Hence,
        \begin{align*}
            \C(\hocolim_{n \geq 0} \tau_{\leq n}C, C') \sim \holim_{n \geq 0}\C(\tau_{\leq n}C, C').
        \end{align*}
        Using \Cref{lemma - hfpb compact}, we also have that
        \begin{align*}
            &\quad \hproj^{+}(\hfp^{b}(\C))(H^{0}(\Psi)(C),H^{0}(\Psi)(C')) \\ 
            &\sim \hproj^{+}(\hfp^{b}(\C))(\hocolim_{n \geq 0} H^{0}(\Psi)(\tau_{\leq n} C),H^{0}(\Psi)(C')) \\      
            &\sim \holim_{n \geq 0}\hproj^{+}(\hfp^{b}(\C))(H^{0}(\Psi)(\tau_{\leq n}C),H^{0}(\Psi)(C')).
        \end{align*}
        Since each $\tau_{\leq n} C$ is bounded, it thus suffices -- using the truncation exact triangles inductively -- to show for $X \in H^{0}(\C)^{\heartsuit}$ that
        \begin{align*}
            \C(X, C') \sim \hproj^{+}(\hfp^{b}(\C))(H^{0}(\Psi)(X),H^{0}(\Psi)(C')).
        \end{align*}
        Since $H^{0}(\C)^{\heartsuit} = \Ind(\mb{A})$, $X = \colim_{i \in I} A_{i}$ is a filtered colimit of $A_{i} \in \mb{A}$. By \Cref{lemma - hfpb compact} and right completeness, we have that
        \begin{align*}
            \C(\colim A_{i},C')
            &\sim \C(\colim{}^{Q} A_{i},C') \\ &\sim \lim{}^{Q} \C(A_{i},C') \\
            &\sim \lim{}^{Q}\hocolim_{m \geq 0} \C(A_{i},\tau_{\leq m}C')
        \end{align*}
        and
        \begin{align*}
            &\quad \hproj^{+}(\hfp^{b}(\C))(H^{0}(\Psi)(X),H^{0}(\Psi)(C')) \\
            &\sim \hproj^{+}(\hfp^{b}(\C))(\colim{}^{Q} \hfp^{b}(\C)(-,A_{i}),H^{0}(\Psi)(C')) \\
            &\sim \lim{}^{Q} \hproj^{+}(\hfp^{b}(\C))(\hfp^{b}(\C)(-,A_{i}),H^{0}(\Psi)(C')) \\
            &\sim \lim{}^{Q} \hocolim_{m \geq 0} \hproj^{+}(\hfp^{b}(\C))(\hfp^{b}(\C)(-,A_{i}),H^{0}(\Psi)(\tau_{\leq m}C')).
        \end{align*}
        So we have reduced, once more, to showing that
        $$\C(A,M) \sim \hproj^{+}(\hfp^{b}(\C))(H^{0}(\Psi)(A), H^{0}(\Psi)(M))$$
        for $A \in \mb{A}$ and $M \in \C^{b}$. Again, using the truncation exact triangles inductively, we can reduce even further to $M \in H^{0}(\C)^{\heartsuit} = \Ind(\mb{A})$ and then $\mb{A}$, in which case the claim follows.

        We continue with $t$-exactness of $H^{0}(\Psi)$. If $C' \in \C_{\geq 1}$, then $\C(C,C')= 0$ for all $C \in \C_{\leq 0}$, in particular this holds for $C \in \hfp^{b}(\C)_{\leq 0}$. Consequently, $$H^{0}(\Psi)(C') \in \hfp^{b}(\C)_{\leq 0}^{\perp} = \hproj^{+}(\hfp^{b}(\C))_{\geq 1}$$ so that $H^{0}(\Psi)$ is right $t$-exact. If $\C \in \C_{\leq 0}$, then $C$ is bounded. Since $H^{0}(\Psi)$ maps $\hfp^{b}(\C)$ to $\hfp^{b}(\C)$ and $H^{0}(\Psi)$ preserves colimits by \Cref{lemma - hfpb compact}, \Cref{thm - homotopy dg completion closed} (4) and \Cref{prop - classic hocolim}, it maps $H^{0}(\C)^{\heartsuit} = \Ind(\mb{A})$ (on)to $\hproj^{+}(\hfp^{b}(\C)) = \Ind(\mb{A})$. We can then inductively show left $t$-exactness by using the truncation exact triangles and right $t$-exactness.

        Lastly, we have to show that $H^{0}(\Psi)$ is essentially surjective. By right completeness of $\hproj^{+}(\hfp^{b}(\C))$ due to \Cref{prop - properties induced t-structure} and the fact that $H^{0}(\Psi)$ preserves colimits by \Cref{lemma - hfpb compact} and \Cref{thm - homotopy dg completion closed} (4) and \Cref{prop - classic hocolim}, it suffices to show this for the bounded part $\hproj^{b}(\hfp^{b}(\C))$. As before, we can again use the truncation exact triangle to reduce to the hearts. Since $\hfp^{b}(\C)$, so in particular $\mb{A}$, is reached and we just remarked that colimits are preserved by $H^{0}(\Psi)$, we have essential surjectivity.
 \end{proof}
\end{thm}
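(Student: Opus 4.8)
The plan is to verify that $\hproj^{+}(-)$ and $\hfp^{b}(-)$ are mutually quasi-inverse. One composite is essentially free: for $\A$ an essentially small strongly pretriangulated dg-category with a bounded $t$-structure, \Cref{prop - hfpb or colim}(1) already gives $\A \cong \hfp^{b}(\Ind^{\dg,Q,+}(\A))$, and since $\Ind^{\dg,Q,+}(\A) \cong \hproj^{+}(\A)$ by \Cref{thm - homotopy dg completion closed}(2), we obtain $\A \cong \hfp^{b}(\hproj^{+}(\A))$ with no extra work. All the real content lies in the opposite composite: showing $\hproj^{+}(\hfp^{b}(\C)) \cong \C$ for a left bounded locally coherent Grothendieck $t$-dg-category $\C$.

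First I would set up the ambient structure. By \cite{AbelianDeformations}[\S 2.2] the heart $H^{0}(\C)^{\heartsuit}$ is $\Ind(\mb{A})$ for the small abelian category $\mb{A} = \fp(H^{0}(\C)^{\heartsuit})$, and one identifies $H^{0}(\hfp^{b}(\C))^{\heartsuit} \cong \mb{A}$, so that $\hfp^{b}(\C)$ is an essentially small strongly pretriangulated bounded $t$-dg-category (its smallness following from that of $\mb{A}$). The comparison functor is the derived restricted-Yoneda quasi-functor
$$\Psi\colon \C \to \hproj^{+}(\hfp^{b}(\C)),\qquad C \mapsto Q^{\hproj}\bigl(\C(-,C)\vert_{\hfp^{b}(\C)}\bigr),$$
which on objects of $\hfp^{b}(\C)$ reduces, up to quasi-isomorphism, to the Yoneda embedding, and is therefore a $t$-exact quasi-equivalence onto its image there.

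The heart of the argument is a dévissage extending this from $\hfp^{b}(\C)$ to all of $\C$. The essential inputs are $\hfp^{b}(\C) \subseteq \C^{c}$ (\Cref{lemma - hfpb compact}), the preservation of coproducts and homotopy colimits by $H^{0}(\Psi)$ (via \Cref{lemma - hfpb compact}, \Cref{thm - homotopy dg completion closed}(4) and \Cref{prop - classic hocolim}), and right completeness on both sides. For quasi-full faithfulness, right completeness gives $C \cong \hocolim_{n} \tau_{\leq n}C$, so $\C(C,C')$ and its image become homotopy limits over the bounded truncations; the truncation exact triangles reduce to $X \in H^{0}(\C)^{\heartsuit}$, writing $X = \colim_{i} A_{i}$ as a filtered colimit of $A_{i} \in \mb{A}$ reduces to $A \in \mb{A}$ against $M \in \C^{b}$, and a further truncation induction reduces to both arguments in $\mb{A}$, where the claim is Yoneda. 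For $t$-exactness I would read off right $t$-exactness from the orthogonality definition of the coaisle (using $\hfp^{b}(\C)_{\leq 0}^{\perp} = \hproj^{+}(\hfp^{b}(\C))_{\geq 1}$) and deduce left $t$-exactness inductively from the truncation triangles and colimit-preservation. Essential surjectivity follows the same template: right completeness reduces to the bounded part, truncation triangles to the heart, and the heart $\Ind(\mb{A})$ is reached since $\Psi$ hits $\mb{A}$ and preserves filtered colimits.

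The main obstacle is the bookkeeping around commuting $\Psi$ with infinite homotopy (co)limits. Since neither $\C$ nor $\hproj^{+}(\hfp^{b}(\C))$ is bounded, every faithfulness and surjectivity claim must first be pushed through right completeness to bounded objects, and that step is legitimate only because $\hfp^{b}(\C)$ consists of compact objects, so the relevant representables turn homotopy colimits in one variable into homotopy limits and preserve colimits in the other. Tracking which completeness or compactness property licenses each interchange, and ensuring the truncation inductions terminate in $\mb{A}$ where Yoneda closes the argument, is the delicate part; the base cases over $\mb{A}$ themselves are routine.
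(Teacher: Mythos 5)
Your proposal is correct and follows essentially the same route as the paper: the easy composite via \Cref{prop - hfpb or colim} and \Cref{thm - homotopy dg completion closed}(2), and for the converse the same restricted-Yoneda quasi-functor $\Psi$ with the same dévissage through right completeness, compactness of $\hfp^{b}(\C)$, and truncation triangles down to $\mb{A}$. No substantive differences to flag.
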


\begin{corollary} \label{cor - equiv induced derived}
    Let $\mb{A}$ be an abelian category so that $\D^{b}_{\dg}(\mb{A})$ is essentially small. Then there is a $t$-exact quasi-equivalence
    $$\D_{\dg}^{+}(\Ind(\mb{A})) \cong \hproj^{+}(\D^{b}_{\dg}(\mb{A})),$$
    where we consider the standard $t$-structure on the left and the induced $t$-structure coming from the standard one on the right.
    \begin{proof}
        It suffices to remark that $\D_{\dg}^{+}(\Ind(\mb{A}))$ is left bounded locally coherent Grothendieck for the standard $t$-structure and that $$\hfp^{b}(\D_{\dg}^{+}(\Ind(\mb{A}))) = \D^{b}(\mb{A}).$$
        The statement then follows from \Cref{thm - categorical hfpb inddg}.
    \end{proof}
\end{corollary}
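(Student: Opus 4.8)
The plan is to read this off the categorical equivalence of \Cref{thm - categorical hfpb inddg}, whose two functors $\hproj^{+}(-)$ and $\hfp^{b}(-)$ are mutually inverse. Writing $\C := \D_{\dg}^{+}(\Ind(\mb{A}))$ with its standard $t$-structure, it suffices to establish two things: that $\C$ belongs to the right-hand class of that equivalence, i.e.\ is a left bounded locally coherent Grothendieck $t$-dg-category, and that $\hfp^{b}(\C) \cong \D^{b}_{\dg}(\mb{A})$. Granting these, the equivalence yields a $t$-exact quasi-equivalence
$$\hproj^{+}(\D^{b}_{\dg}(\mb{A})) \cong \hproj^{+}(\hfp^{b}(\C)) \cong \C = \D_{\dg}^{+}(\Ind(\mb{A})),$$
which is exactly the assertion.

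First I would verify the four conditions of \Cref{def - left tlc} for $\T = \D^{+}(\Ind(\mb{A}))$. Condition (1) holds since $\D^{+}$ is left bounded by construction and the standard $t$-structure on a bounded-below derived category of a Grothendieck category is right complete: indeed $\bigcap_{n\geq 0}\T_{\geq n} = 0$, the coaisle is closed under countable coproducts, $H^{0}_{t}$ preserves them, and the heart is AB5, so \Cref{prop - Lurie} applies. Condition (2) is the statement of \cite{AbelianDeformations}[\S 2.2] that $\Ind(\mb{A})$ --- the heart of the standard $t$-structure --- is locally coherent Grothendieck abelian with $\fp(\Ind(\mb{A})) \cong \mb{A}$. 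Condition (3) is immediate, as coproducts in $\D^{+}$ are computed cohomology-wise and are exact in the Grothendieck heart.

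The remaining condition (4), that $\fp(\Ind(\mb{A})) = \mb{A} \subseteq \T^{c}$, is where I expect the real work to sit, and I would argue it through the reformulation (4'): for $A \in \mb{A}$, which is coherent in the locally coherent category $\Ind(\mb{A})$, each functor $H^{i}(\D(A,-))\colon \Ind(\mb{A}) \to \Ab$ commutes with filtered colimits, in particular with coproducts, by coherence. Finally, to identify $\hfp^{b}(\C)$: by \Cref{def - hfp} and the heart being $\Ind(\mb{A})$, an object lies in $\hfp^{b}(\C)$ precisely when it is bounded with every cohomology in $\fp(\Ind(\mb{A})) = \mb{A}$, so $\hfp^{b}(\C)$ is the dg-category of bounded complexes with coherent cohomology; since $\mb{A}$ is abelian and closed under kernels, cokernels and extensions inside $\Ind(\mb{A})$, this is quasi-equivalent to $\D^{b}_{\dg}(\mb{A})$. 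The main obstacle is thus the coherence-based compactness in (4) together with this last identification, both of which hinge on $\mb{A} = \fp(\Ind(\mb{A}))$ being a well-behaved (coherent) abelian subcategory.
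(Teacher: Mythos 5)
Your proposal follows exactly the paper's route: the paper's proof consists precisely of remarking that $\D_{\dg}^{+}(\Ind(\mb{A}))$ is left bounded locally coherent Grothendieck for the standard $t$-structure and that $\hfp^{b}(\D_{\dg}^{+}(\Ind(\mb{A}))) = \D^{b}(\mb{A})$, then invoking \Cref{thm - categorical hfpb inddg}. You simply spell out the verification of \Cref{def - left tlc} and the identification of $\hfp^{b}$ that the paper leaves implicit, and your details (right completeness via \Cref{prop - Lurie}, condition (4) via coherence of $\mb{A}=\fp(\Ind(\mb{A}))$, and $\D^{b}_{\mb{A}}(\Ind(\mb{A}))\cong\D^{b}(\mb{A})$) are all sound.
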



\section{Deformation equivalences} \label{section - deformation equivalences}

In this chapter we present the basic setup of $t$-deformations and their relevant adjoints. We start by recalling the deformation setup for dg-derived categories induced by a dg-deformation of dg-categories in \S \ref{pardefset1}. Next, in \S \ref{pardefset2} and \S \ref{pardefset3}, we recall $t$-deformations with enough derived injectives and how to construct a $t$-deformation from a dg-deformation of the derived injectives, as was shown in \cite{GLVdB2}. In \S \ref{pardefset4}, we treat the converse by constructing an associated dg-deformation between the dg-categories of derived injectives from a $t$-deformation with enough derived injectives, which naturally extends to a right adjoint of this $t$-deformation. These two results we then combine into a deformation equivalence in \S \ref{pardefset5}, covering the case with enough derived injectives. 

To get rid of the condition of the $t$-deformation having enough derived injectives, we first show in \S \ref{pardefset6} that left bounded locally coherent Grothendieck $t$-dg-categories are preserved by $t$-deformations with enough derived injectives. In \S \ref{pardefset7}, we then extend the equivalence of categories of \Cref{thm - categorical hfpb inddg} to a deformation equivalence between the bounded $t$-deformations of an essentially small strongly pretriangulated bounded $t$-dg-category and the left bounded locally coherent Grothendieck $t$-deformations of its dg-derived category.


\subsection{Deformations of dg-derived categories} \label{pardefset1}

Let $\tb{A}$ be an $R$-linear dg-category, which is assumed to be $R$-cofibrant. In particular, for objects $A, A' \in \tb{A}$, $\tb{A}(A,A')$ is cofibrant whence h-projective and h-flat.
The dg-category $\dgm(\tb{A})$ is both tensored and cotensored over $\dgm(R)$, by
\begin{equation} \label{eq - dgmA tensor}
\dgm(R) \otimes \dgm(\tb{A}) \to  \dgm(\tb{A}): (P,M) \mapsto P \otimes_R M
\end{equation}
with $(P \otimes_R M)(A) = P \otimes_R M(A)$,
and
\begin{equation} \label{eq - dgmA cotensor}
\dgm(R) \otimes \dgm(\tb{A}) \to  \dgm(\tb{A}): (P,M) \mapsto \homom_R(P,M)
\end{equation}
with 
\begin{equation}\label{homform}
\homom_R(P,M)(A) = \homom_R(P, M(A)).
\end{equation}
In the sequel, we will make use of the balanced derived bifunctor of the cotensor functor:
\begin{proposition} \label{prop - balancing}
    There is a quasi-functor
    \begin{align*}
        \RHom_R(-,-): \hproj(R)^{\op} \otimes \hinj(\A) \to \D_{\dg}(\A): (P,M) \mapsto \homom_R(P,M)
    \end{align*}
    which computes derived functors in both arguments.
\end{proposition}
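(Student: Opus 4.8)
The functor $\homom_R(-,-)$ is the honest cotensor dg-bifunctor $\dgm(R)^{\op}\otimes\dgm(\tb{A})\to\dgm(\tb{A})$ of \eqref{eq - dgmA cotensor}; the plan is to restrict it to $\hproj(R)^{\op}\otimes\hinj(\tb{A})$, check that it lands in h-injective modules so that it defines a dg-functor, and in particular a quasi-functor, into $\D_{\dg}(\tb{A})$ (modelled by $\hinj(\tb{A})$), and then verify that it is homotopy-invariant in each variable separately. That it computes derived functors in both arguments will follow from the two one-variable statements together with a bi-resolution comparison.

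First I would show that for $P\in\hproj(R)$ and $M\in\hinj(\tb{A})$ the module $\homom_R(P,M)$ is h-injective. Using the tensor--cotensor adjunction coming from \eqref{eq - dgmA tensor}--\eqref{eq - dgmA cotensor}, for any $N\in\dgm(\tb{A})$ one has $\dgm(\tb{A})(N,\homom_R(P,M))\cong\homom_R(P,\dgm(\tb{A})(N,M))$. If $N$ is acyclic then $\dgm(\tb{A})(N,M)$ is acyclic because $M$ is h-injective, and $\homom_R(P,-)$ preserves acyclicity of $R$-modules because $P$ is h-projective; hence the left-hand side is acyclic and $\homom_R(P,M)$ is h-injective. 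The same h-projectivity of $P$ shows, applied objectwise, that $\homom_R(P,-)$ sends acyclic $\tb{A}$-modules to acyclic $\tb{A}$-modules, hence sends quasi-isomorphisms to quasi-isomorphisms; thus for fixed h-projective $P$ the functor $\homom_R(P,-)$ already computes the right derived functor $\RHom_R(P,-)$ in the second variable, with no further resolution needed.

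The crux is the symmetric statement in the first variable: for fixed $M\in\hinj(\tb{A})$ the functor $\homom_R(-,M)$ sends quasi-isomorphisms of $R$-modules to quasi-isomorphisms. Since $\homom_R(-,M)(A)=\homom_R(-,M(A))$ by \eqref{homform}, this reduces to showing that each value $M(A)$ is h-injective as an $R$-module. I would prove this by adjunction: evaluation $\mathrm{ev}_A\colon\dgm(\tb{A})\to\dgm(R)$, $M\mapsto M(A)$, has left adjoint $L_A\colon P\mapsto\tb{A}(-,A)\otimes_R P$, and $L_A$ preserves acyclicity because each $\tb{A}(B,A)$ is h-flat (as $\tb{A}$ is $R$-cofibrant). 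Consequently $\mathrm{ev}_A$ preserves h-injectivity, so $M(A)$ is h-injective and $\homom_R(-,M(A))$ preserves quasi-isomorphisms. Hence $\homom_R(-,M)$ computes $\RHom_R(-,M)$ in the first variable. This transfer of h-injectivity from $\tb{A}$-modules to the values $M(A)$ is the main point, and it is exactly where the $R$-cofibrancy hypothesis on $\tb{A}$ enters.

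Finally I would assemble the balancing. For arbitrary $P$ with h-projective resolution $p\colon\widetilde{P}\to P$ and arbitrary $M$ with h-injective resolution $i\colon M\to\widetilde{M}$, consider the zigzag $\homom_R(\widetilde{P},M)\xrightarrow{\homom_R(\widetilde{P},i)}\homom_R(\widetilde{P},\widetilde{M})\xleftarrow{\homom_R(p,\widetilde{M})}\homom_R(P,\widetilde{M})$. The first arrow is a quasi-isomorphism by the second-variable statement, since $\widetilde{P}$ is h-projective, and the second by the first-variable statement, since $\widetilde{M}$ is h-injective. Thus resolving either variable yields the same object of $\D_{\dg}(\tb{A})$; on $\hproj(R)^{\op}\otimes\hinj(\tb{A})$ both variables are already resolved, so $\homom_R(P,M)$ simultaneously computes the derived functor in each argument. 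As $\homom_R$ is an honest dg-bifunctor landing, after restriction, in $\hinj(\tb{A})$, the resulting assignment is a genuine dg-functor and hence a quasi-functor into $\D_{\dg}(\tb{A})$.
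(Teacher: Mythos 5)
Your proposal is correct and follows essentially the same route as the paper: the two one-variable acyclicity statements, with the second variable handled pointwise via \eqref{homform} and the first reduced to the fact that $M(A)\in\hinj(R)$, proved exactly as in Lemma \ref{lemhinj} by exhibiting the exact left adjoint $P\mapsto \tb{A}(-,A)\otimes_R P$ to evaluation (exact because $\tb{A}$ is h-flat). The extra observations you include — that $\homom_R(P,M)$ is itself h-injective, and the explicit balancing zigzag — are correct refinements of the same argument.
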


\begin{proof}
    This is a classical balancing argument. We are to show that:
    \begin{enumerate}
    \item for $P \in \hproj(R)^{\op}$, $\homom_R(P,-)$ preserves acyclicity;
    \item for $M \in \hinj(\tb{A})$, $\homom_R(-,M)$ preserves acyclicity.
    \end{enumerate}
    Statement (1) immediately follows from \eqref{homform} and the fact that acyclicity of $M$ is pointwise. Statement (2) follows from the same observations in combination with Lemma \ref{lemhinj}.
\end{proof}

\begin{lemma}\label{lemhinj}
For $M \in \hinj(\tb{A})$ and $A \in \tb{A}$, we have that $M(A) \in \hinj(R)$.
\end{lemma}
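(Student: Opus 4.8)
The plan is to realize the value $M(A)$ as a $\homom$-complex and then exploit the h-injectivity of $M$ through a suitable adjunction. By the dg-enriched Yoneda lemma, evaluation at $A$ is computed by the representable module, so there is a natural isomorphism of dg-$R$-modules
$$M(A) \cong \dgm(\tb{A})(\Y(A), M) = \dgm(\tb{A})(\tb{A}(-,A), M).$$
To test h-injectivity of $M(A)$, I must show that $\homom_R(P, M(A))$ is acyclic for every acyclic dg-$R$-module $P$.

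The key step is to observe that the evaluation functor $\dgm(\tb{A}) \to \dgm(R) \colon M \mapsto M(A)$ admits as a left adjoint the functor $P \mapsto P \otimes_R \tb{A}(-,A)$. Concretely, combining the tensor-hom adjunction over $R$ with the Yoneda isomorphism above yields a natural isomorphism
$$\dgm(R)(P, M(A)) \cong \dgm(\tb{A})(P \otimes_R \tb{A}(-,A), M).$$
Thus it suffices to prove that the right-hand side is acyclic whenever $P$ is acyclic.

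To that end, I would argue that $P \otimes_R \tb{A}(-,A)$ is an acyclic dg-$\tb{A}$-module. Since acyclicity of a dg-$\tb{A}$-module is checked pointwise, it is enough that $(P \otimes_R \tb{A}(-,A))(B) = P \otimes_R \tb{A}(B,A)$ is acyclic for each $B \in \tb{A}$. This holds because $\tb{A}$ is $R$-cofibrant, so each hom-complex $\tb{A}(B,A)$ is h-flat, and tensoring an acyclic complex with an h-flat one preserves acyclicity. Given that $M \in \hinj(\tb{A})$, the complex $\dgm(\tb{A})(P \otimes_R \tb{A}(-,A), M)$ is then acyclic, hence so is $\homom_R(P, M(A))$. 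As $P$ was an arbitrary acyclic dg-$R$-module, this gives $M(A) \in \hinj(R)$.

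The argument is essentially routine once the correct adjunction is identified; the only point demanding care is the h-flatness of the hom-complexes $\tb{A}(B,A)$, which is precisely where the standing $R$-cofibrancy hypothesis on $\tb{A}$ is used.
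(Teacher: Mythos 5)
Your proof is correct and follows the same route as the paper's: the paper likewise invokes the adjunction between evaluation at $A$ and the functor $X \mapsto X \otimes_R \tb{A}(-,A)$, noting that the latter preserves acyclicity because $\tb{A}$ is h-flat. You have merely spelled out the details (the Yoneda identification and the pointwise h-flatness check) that the paper leaves implicit.
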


\begin{proof}
    For fixed $A \in \tb{A}$, consider the evaluation functor $$\ev_{A}: \dgm(\tb{A}) \to \dgm(R): M \mapsto M(A).$$ The claim then easily follows from the existence of the left adjoint $$\dgm(R) \to \dgm(\tb{A}): X \mapsto X \otimes_R \tb{A}(-,A),$$ which is exact since $\tb{A}$ is h-flat.
\end{proof}

Let $\theta: R \rightarrow S$ be as before and consider the dg-$S$-category $S \otimes_{R}^{\mb{L}} \tb{A} = S \otimes_{R} \tb{A}$ and the canonical $\rho: \tb{A}\rightarrow S \otimes_R \tb{A}$.
This type of base change models the most basic notion of deformation for dg-categories:

\begin{definition}\label{defdgdef}
    Let $\tb{B}$ be an $S$-linear dg-category. A \emph{dg-deformation}\index{deformation!dg-deformation} of $\tb{B}$ along $\theta: R \to S$ is an $R$-linear dg-category $\tb{A}$ together with an $R$-linear quasi-functor $\tb{A} \to \tb{B}$ that induces an $S$-linear quasi-equivalence $S \otimes_{R}^{\mb{L}} \tb{A} \cong \tb{B}$.
\end{definition}

In the remainder of this section, we discuss the impact of a dg-deformation -- which for simplicity we assume to be given by $\rho$ -- on the associated dg-derived categories. This will inspire another type of deformation in \S \ref{pardefset3}. 

Consider the induced forgetful functor between dg-categories of dg-modules
$$(-)_R: \dgm_S(S \otimes_R \tb{A}) \longrightarrow \dgm_R(\tb{A}),$$
which has both a left adjoint,
$$S \otimes_R -:  \dgm_S(\tb{A}) \longrightarrow \dgm_R(S \otimes_R\tb{A}): M \longmapsto S \otimes_R M,$$
and a right adjoint,
$$\homom_R(S,-):  \dgm_S(\tb{A}) \longrightarrow \dgm_R(S \otimes_R\tb{A}): M \longmapsto \homom_R(S,M).$$
These functors give rise to corresponding derived adjunctions between the derived categories $\D(\tb{A})$ and $\D(S \otimes_R \tb{A})$. We can realise them on the level of dg-models of the derived categories as follows: Since $(-)_R$ preserves acyclic modules, the functor $S \otimes_R -$ preserves h-projectives, whereas $\homom_R(S,-)$ preserves h-injectives. 
We thus obtain
\begin{equation}\label{tensor}
S \otimes_R -: \hproj_{R}(\tb{A}) \to \hproj_{S}( S \otimes_{R} \tb{A}),
\end{equation}
which has as its right adjoint quasi-functor the composition of the dg-functor $(-)_{R}: \hproj_{S}(S \otimes_{R} \tb{A}) \to \dgm_{R}(\tb{A})$ and the quasi-functor taking h-projective resolutions $Q^{\hproj}: \dgm_{R}(\tb{A}) \to \hproj_{R}(\tb{A})$. Similarly, we obtain
\begin{equation} \label{eq - rhom 1}
    \homom_{R}(S,-): \hinj_{R}(\tb{A}) \to \hinj_{S}( S \otimes_{R} \tb{A}),
\end{equation}
which has as its left adjoint quasi-functor the composition of the dg-functor $(-)_{R}: \hinj_{S}(S \otimes_{R} \tb{A}) \to \dgm_{R}(\tb{A})$ and the quasi-functor taking h-injective resolutions $Q^{\hinj}: \dgm_{R}(\tb{A}) \to \hinj_{R}(\tb{A})$.

Consider now the representable dg-module $\tb{A}(-,A) \in \dgm_{R}(\tb{A})$ for $A \in \tb{A}$. We then have that $S \otimes_R \tb{A}(-,A) = (S \otimes_R \tb{A})(-,A)$ and there is a corresponding commutative diagram,
\begin{equation} \begin{tikzcd} \label{eq - relation dg-def}
	{\tb{A}} & {\hproj_{R}(\tb{A})} \\
	{S \otimes_{R} \tb{A}} & {\hproj_{S}(S \otimes_{R}\tb{A}).}
	\arrow["\Y", from=1-1, to=1-2]
	\arrow["\rho"', from=1-1, to=2-1]
	\arrow["{S \otimes_{R} -}", from=1-2, to=2-2]
	\arrow["\Y"', from=2-1, to=2-2]
\end{tikzcd}\end{equation}
In other words, the dg-deformation $\rho$ can be recovered by restricting the left adjoint $S \otimes_R -$. This observation will inspire our approach in \S \ref{pardefset3}.

Finally, recall that we have the following base change formula
\begin{equation} \label{eq - base change}
    {\hproj_R(\tb{A})}_{(S)} \cong {\hproj_{S}( S \otimes_{R} \tb{A})},
\end{equation}
where $(-)_{(S)}$ denotes the coextension of scalars, which will be recalled in \S \ref{pardefset3}. This formula expresses that the dg-derived category of $\tb{A}$ can be viewed as a deformation of the dg-derived category of $S \otimes_R \tb{A}$ in an appropriate sense that is reminiscent of abelian deformations.

\subsection{\texorpdfstring{$t$}{t}-dg-categories with enough derived injectives} \label{pardefset2}

Motivated by the story of abelian deformation theory -- which, as mentioned above, the dg-setting is reminiscent of -- the authors looked for (re)construction theorems that link certain dg-categories and $t$-dg-categories (generalising dg-derived categories) in \cite{GLVdB1}. In this subsection, we will specify these categories and recall the correspondence between them.

In the abelian setup, one departs from an abelian category with enough injectives and considers the full linear subcategory on the injective objects, from which one can then reconstruct the whole abelian category, see \cite{AbelianDeformations}[Proposition 6.25].  Here, we need a dg-derived analogue. This is where the $t$-structure comes in: it allows us to lift the notion of injectivity from the heart to the triangulated category.

\begin{definition}
    Let $\mathcal{T}$ be a triangulated category with a $t$-structure $(\mc{T}_{\leq 0}, \mc{T}_{\geq 0})$ and heart $\mathcal{T}^{\heartsuit}$. An object $X \in \mc{T}$ is \ti{derived injective} if it represents the contravariant functor 
    $$\mathcal{T}^{\heartsuit}(H^{0}_{t}(-),I): \mathcal{T}^{\op} \to \Ab$$
    for an injective object $I \in \mathcal{T}^{\heartsuit}$, ergo $\mathcal{T}^{\heartsuit}(H^{0}_{t}(-),I) \cong \mathcal{T}(-,X)$. Because of Yoneda, the notation $X = L(I)$ is justified. $\mc{T}$ is then said to have \ti{enough derived injectives}\index{derived injective}\index{derived injective!enough derived injectives}\index{derived injective!$\DGInj(\A)$} if $\mc{T}^{\heartsuit}$ has enough injectives and such a representable object $L(I)$ exists for every $I \in \Inj (\mc{T}^{\heartsuit})$.
\end{definition}

\begin{examples} \label{ex - enough DGInj}
    \begin{enumerate}
        \item A triangulated category $\mc{T}$ with a $t$-structure whose heart $\mc{T}^{\heartsuit}$ has enough injectives, so that $H^{0}_{t}: \T \to \T^{\heartsuit}$ preserves coproducts and to which Brown's representability theorem \cite{NeemanBook}[Theorem 1.17] or \Cref{thm - Brown} applies, has enough derived injectives, see \Cref{cor - enough DGInj}. This includes left bounded locally coherent Grothendieck triangulated categories, see \Cref{cor - lcGt enough DGInj}.

        \item Let $\D^{+}(\mb{A})$ be the left bounded derived category of a Grothendieck abelian category $\mb{A}$ endowed with the standard $t$-structure of \Cref{ex - standard t-structures}. Then it was shown in \cite{GLVdB2}[Lemma 5.2.1] that the derived injectives in $\D^{+}(\mb{A})$ are precisely the injectives in $\mb{A}$.

        \item If we endow $\D^{+}(\D^{b}(\mb{A}))$ for $\mb{A}$ abelian so that $\D^{b}_{\dg}(\mb{A})$ is essentially small with the $t$-structure induced by the standard $t$-structure of \Cref{ex - standard t-structures} in \Cref{cor - recap defequiv}, then the derived injectives are the injectives in $\Ind(\mb{A})$ by \Cref{cor - equiv induced derived} and \cite{GLVdB2}[Lemma 5.2.1].
    \end{enumerate}
\end{examples}

The definition of a derived injective object immediately results in certain properties \cite{NonFM}[\S 6.1].

\begin{proposition} \label{prop - properties dginj}
    Let $\mc{T}$ be a triangulated category with a $t$-structure $(\mc{T}_{\leq 0}, \mc{T}_{\geq 0})$ and heart $\mathcal{T}^{\heartsuit}$. Let $L(I)$ be a derived injective in $\mc{T}$. Then 
    \begin{enumerate}
        \item $L(I) \in \mc{T}_{\geq 0}$;
        \item $H^{0}_{t}(L(I)) \cong I$;
        \item If $L(J)$ is another derived injective in $\mc{T}$, then
        $$\mc{T}(L(I),L(J)[i]) \cong \begin{cases}
            \mc{T}^{\heartsuit}(I,J), & i = 0, \\
            0, & i > 0.
        \end{cases}$$
        In particular, $$L: \Inj(\mathcal{T}^{\heartsuit}) \to \mc{T}: I \mapsto L(I)$$ defines a fully faithful functor.
    \end{enumerate}
\end{proposition}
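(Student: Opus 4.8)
The plan is to read everything off the single defining natural isomorphism
$$\mc{T}(X,L(I)) \cong \mc{T}^{\heartsuit}(H^{0}_{t}(X),I), \qquad X \in \mc{T},$$
which expresses that $L(I)$ represents $\mc{T}^{\heartsuit}(H^{0}_{t}(-),I)$. For $(1)$ I would use that $\mc{T}_{\geq 0} = (\mc{T}_{\leq -1})^{\perp}$, so it suffices to check $\mc{T}(X,L(I)) = 0$ for every $X \in \mc{T}_{\leq -1}$. Such an $X$ has $H^{0}_{t}(X) = 0$, so the defining isomorphism immediately gives $\mc{T}(X,L(I)) \cong \mc{T}^{\heartsuit}(0,I) = 0$, whence $L(I) \in \mc{T}_{\geq 0}$.

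For $(2)$ the idea is to compute $\mc{T}(W,L(I))$ for $W \in \mc{T}^{\heartsuit}$ in two ways and apply Yoneda in the heart. On the one hand the defining isomorphism yields $\mc{T}(W,L(I)) \cong \mc{T}^{\heartsuit}(W,I)$, since $H^{0}_{t}(W) = W$. On the other hand, having established $L(I) \in \mc{T}_{\geq 0}$ in $(1)$, the truncation triangle $H^{0}_{t}(L(I)) \to L(I) \to \tau_{\geq 1}L(I) \xrightarrow{+}$ together with the orthogonality $\mc{T}(\mc{T}_{\leq 0},\mc{T}_{\geq 1}) = 0$ gives the standard identification $\mc{T}(W,L(I)) \cong \mc{T}^{\heartsuit}(W,H^{0}_{t}(L(I)))$ for $W$ in the heart. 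Comparing the two expressions naturally in $W$ produces an isomorphism of the contravariant functors $\mc{T}^{\heartsuit}(-,I) \cong \mc{T}^{\heartsuit}(-,H^{0}_{t}(L(I)))$ on $\mc{T}^{\heartsuit}$, so Yoneda delivers $H^{0}_{t}(L(I)) \cong I$.

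For $(3)$ I would rewrite $\mc{T}(L(I),L(J)[i]) = \mc{T}(L(I)[-i],L(J))$ and apply the defining isomorphism for $L(J)$:
$$\mc{T}(L(I)[-i],L(J)) \cong \mc{T}^{\heartsuit}(H^{0}_{t}(L(I)[-i]),J) = \mc{T}^{\heartsuit}(H^{-i}_{t}(L(I)),J).$$
For $i = 0$ this equals $\mc{T}^{\heartsuit}(I,J)$ by $(2)$; for $i > 0$ the object $H^{-i}_{t}(L(I))$ vanishes because $L(I) \in \mc{T}_{\geq 0}$ by $(1)$, so the whole $\Hom$ group is zero. Full faithfulness of $L\colon \Inj(\mc{T}^{\heartsuit}) \to \mc{T}$ is then exactly the $i=0$ case, once one notes that the representing isomorphisms are compatible with composition, so that the bijection $\mc{T}(L(I),L(J)) \cong \mc{T}^{\heartsuit}(I,J)$ is the one induced by $L$ (with inverse $H^{0}_{t}$).

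The calculations are routine; the one point requiring care is the bookkeeping of the shift in $(3)$. The reason the higher groups vanish is that the shift places $H^{0}_{t}(L(I)[-i])$ on the \emph{negative} cohomology $H^{-i}_{t}(L(I))$, which is annihilated by $L(I) \in \mc{T}_{\geq 0}$; had the convention sent it to positive cohomology instead, there would be no vanishing and the statement would fail. Thus the whole result hinges on $(1)$, and the only real subtlety is getting the shift convention and the auxiliary identification $\mc{T}(W,Z) \cong \mc{T}^{\heartsuit}(W,H^{0}_{t}(Z))$ for $Z \in \mc{T}_{\geq 0}$, $W \in \mc{T}^{\heartsuit}$, exactly right.
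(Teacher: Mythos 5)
Your proof is correct, and it is exactly the intended argument: the paper itself gives no proof of this proposition but defers to the cited reference \cite{NonFM}[\S 6.1], remarking only that the properties follow ``immediately'' from the definition, which is precisely what you carry out. All three steps are sound --- using $\mc{T}_{\geq 0} = (\mc{T}_{\leq -1})^{\perp}$ together with $H^{0}_{t}(\mc{T}_{\leq -1}) = 0$ for (1), the two computations of $\mc{T}(W,L(I))$ for $W$ in the heart plus Yoneda for (2), and the identity $H^{0}_{t}(L(I)[-i]) = H^{-i}_{t}(L(I))$ combined with (1) for (3) --- and your closing remark about checking that the bijection in (3) is the one induced by the functor $L$ is the right point to flag.
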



\begin{definition}
     Let $\mc{T}$ be a triangulated category with a $t$-structure. The essential image of the functor $L$, which is the full subcategory of derived injectives in $\mc{T}$, is denoted by $\DGInj(\mc{T})$. If $\mc{T} = H^{0}(\B)$ for some dg-category $\B$, we simply write $\DGInj(\B) := \DGInj(H^{0}(\B))$, viewing it as a full dg-subcategory of $\B$.
\end{definition}

\begin{remark}
    Note that by \Cref{prop - properties dginj} (3), the dg-category $\DGInj(\A)$ is cohomologically concentrated in nonpositive degrees.
\end{remark}

Similar to abelian categories with enough injectives, we have the following class of $t$-dg-categories that we will be deforming.

\begin{definition}
    The category $\Hqe^{t+}$\index{$\Hqe^{t+}_{(\Pi)}$} (resp. $\Hqe^{t+}_{\Pi}$) has the following data:
    \begin{itemize}
        \item[-] objects are dg-categories $\B$ endowed with a left bounded, non-degenerate $t$-structure (resp. that is moreover closed under countable products) and with enough derived injectives;

        \item[-] morphisms $\B \to \A$ are morphisms in $\Hqe$ (see Conventions) that have a $t$-exact left adjoint.
    \end{itemize}
\end{definition}

One can then consider $\DGInj(\B)$ for any $\B \in \Hqe^{t+}_{\Pi}$ and wonder what sort of dg-categories one ends up with. In the abelian setup, the categories of injectives of abelian categories with enough injectives are precisely the additive coherent Karoubian categories, see \cite{AbelianDeformations}[Propositions A.13 and A.14]. Here, $\DGInj(-)$ lands in the following category:

\begin{definition}
    The category $\Hqe^{\DGInj}$\index{$\Hqe^{\DGInj}_{(\Pi)}$} (resp. $\Hqe^{\DGInj}_{\Pi}$) has the following data:
    \begin{itemize}
        \item[-] objects are dg-categories $\tb{J}$ that are (left) homotopically locally coherent (see \Cref{def - hlc}) and so that $H^{0}(\tb{J})$ is Karoubian (resp. and $H^{0}(\tb{J})$ is moreover closed under countable products);

        \item[-] morphisms $G: \tb{I} \to \tb{J}$ are morphisms in $\Hqe$ such that for all $J \in \tb{J}$, the $H^{0}(\tb{I})^{\tn{op}}$-module $H^{0}(\tb{J})(J,G(-))$ is finitely presented.
    \end{itemize}
\end{definition}

\begin{definition} \label{def - hlc}
    A dg-$R$-category $\tb{J}$ is called (left) \ti{homotopically locally coherent}\index{homotopically locally coherent (left)} if:
    \begin{itemize}
        \item[-] $\tb{J}$ is cohomologically concentrated in nonpositive degrees;
        \item[-] $H^{0}(\tb{J})$ is an additive and (left) coherent $H^{0}(R)$-linear category;
        \item[-] for all $J \in \tb{J}$, the dg-$\tb{J}^{\tn{op}}$-module $\tb{J}(J,-)$ is homotopically finitely presented, ergo $H^{i}(\tb{J}(J,-))$ is a finitely presented $H^{0}(\tb{J})^{\tn{op}}$-module for all $i \in \mb{Z}$.
    \end{itemize}
\end{definition}

In \cite{GLVdB1}[Theorem 1.4], it was shown that
\begin{equation} \label{eq - DGInj}
    \DGInj: \Hqe^{t+}_{(\Pi)} \to \Hqe^{\DGInj}_{(\Pi)}.
\end{equation}
Conversely, one can construct from a dg-category $\tb{J} \in \Hqe^{\DGInj}_{(\Pi)}$ a dg-category as follows \cite{GLVdB1}[Definition 4.4]:

\begin{definition} \label{def - lb twisted complex}
    Let $\tb{J}$ be a dg-category that is cohomologically concentrated in nonpositive degrees. Let $\tb{J}^{\oplus}$ denote the closure of $\tb{J}$ under finite sums and zero objects in $\dgm(\tb{J})$ via Yoneda $\Y: \tb{J} \hookrightarrow \dgm(\tb{J})$. 
    
    A \ti{left bounded twisted complex over} $\tn{\tb{J}}$\index{left bounded twisted complex}\index{left bounded twisted complex!$\Tw^{+}(\tb{J})$} is a pair $(X,q_{X}) = (\bigoplus_{i \in \mathbb{Z}} X_{i}[-i],q)$ where each $X_{i} \in \tb{J}^{\oplus}$ and $X_{i} = 0$ for $i \ll 0$, and where $$q = \{ q^{j}_{i}: X_{i}[-i] \to X_{j}[-j] \}_{i,j\in \mb{Z}}$$ is of degree $1$ so that $\delta_{\bigoplus_{i \in \mathbb{Z}} X_{i}[-i]} \circ q + q^{2} = 0$ and $q^{j}_{i} = 0$ whenever $i < j+1$. The \ti{dg-category of left bounded twisted complexes over} $\tb{J}$, denoted by $\Tw^{+}(\tb{J})$, then consists of the following data:
    \begin{itemize}
        \item[-] objects are the left bounded twisted complexes over $\tb{J}$;
        \item[-] inner homs $\Tw^{+}(\tb{J})((X,q_{X}),(X',q_{X'}))$ consist in degree $p$ of morphisms $$f = \{f^{j}_{i}: X_{i}[-i] \to X'_{j}[-j]\}_{i,j \in \mb{Z}}$$ each of degree $p$ so that $f^{j}_{i} = 0$ whenever $i < j+p$. The differential is given by
        $$d_{X,X'}(f)^{j}_{i} = (-1)^{j}d_{\tb{J}}(f^{j}_{i}) + \Sigma_{k} (q_{X',k}^{j} f^{k}_{i} - (-1)^{p} f^{j}_{k} q_{X,i}^{k}).$$
    \end{itemize}
\end{definition}

If the dg-category $\tb{J}$ is (left) homotopically locally coherent, one can endow $\Tw^{+}(\tb{J})$ with a $t$-structure, as was shown in \cite{GLVdB1}[Theorem 5.9].

\begin{thm}
    Let $\tn{\tb{J}}$ be a left homotopically locally coherent dg-category. Then $\hproj(\tn{\tb{J}}^{\op})$ (with homotopy category $\D(\tn{\tb{J}}^{\op})$) can be endowed with the standard $t$-structure of \Cref{ex - standard t-structures}. 

    \begin{enumerate}
        \item The full dg-category $\hproj(\tn{\tb{J}}^{\op})^{\hfp} \subseteq \hproj(\tn{\tb{J}}^{\op})$ on the homotopically finitely presented objects (cf. \Cref{def - hfp}) is strongly pretriangulated and has a non-degenerate $t$-structure induced from the standard one on $\hproj(\tn{\tb{J}}^{\op})$, with as heart $\modu(H^{0}(\tn{\tb{J}}^{\op}))$. In other words, the category $\D(\tn{\tb{J}}^{\op})^{\hfp}$ is a triangulated subcategory of $\D(\tn{\tb{J}}^{\op})$ and it has a non-degenerate $t$-structure induced from $\D(\tn{\tb{J}}^{\op})$, with as heart $\modu(H^{0}(\tn{\tb{J}}^{\op}))$.
        \item There is a quasi-equivalence $$\Tw^{+}(\tn{\tb{J}}) \cong \hproj^{-}(\tn{\tb{J}}^{\op})^{\hfp,\op}.$$ In particular, $\Tw^{+}(\tn{\tb{J}})$ has a canonical non-degenerate left bounded $t$-structure with heart $\modu(H^{0}(\tn{\tb{J}}^{\op}))^{\op}$.
    \end{enumerate}
\end{thm}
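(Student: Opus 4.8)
The plan is to deduce the standard $t$-structure and part (1) from coherence, and to establish part (2) by a transposition-totalisation duality that identifies $\Tw^{+}(\tb{J})$ with the bounded, finitely presented part of $\D(\tb{J}^{\op})$ read in the opposite category.

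Since $\tb{J}$ is cohomologically concentrated in nonpositive degrees, so is $\tb{J}^{\op}$, so \Cref{ex - standard t-structures} equips $\D(\tb{J}^{\op})=H^{0}(\hproj(\tb{J}^{\op}))$ with its non-degenerate standard $t$-structure, with pointwise truncations and heart $\Mod(H^{0}(\tb{J}^{\op}))$. For part (1), I would first use that left coherence of $H^{0}(\tb{J})$ makes $H^{0}(\tb{J}^{\op})$ right coherent, so that $\modu(H^{0}(\tb{J}^{\op}))=\fp(\Mod(H^{0}(\tb{J}^{\op})))$ is an abelian subcategory of the heart, closed under kernels, cokernels and extensions. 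Running a distinguished triangle through the long exact cohomology sequence then shows that the class $\hfp$ of objects whose cohomologies all lie in $\modu(H^{0}(\tb{J}^{\op}))$ is closed under shifts and cones. As these cone and shift constructions are the dg-ones of the strongly pretriangulated $\hproj(\tb{J}^{\op})$ and stay inside $\hfp$, the full dg-subcategory $\hproj(\tb{J}^{\op})^{\hfp}$ is itself strongly pretriangulated. Because $H^{i}_{t}(\tau_{\leq n}X)$ equals $H^{i}_{t}(X)$ or $0$, the truncation functors preserve $\hfp$, so the standard $t$-structure restricts; non-degeneracy is inherited from the ambient one, and the heart is $\hfp\cap\Mod(H^{0}(\tb{J}^{\op}))=\modu(H^{0}(\tb{J}^{\op}))$. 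This yields (1) together with its triangulated reformulation.

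For part (2), I would build the comparison as a transposition-totalisation functor. Each component of a left bounded twisted complex over $\tb{J}$ is a finite sum of representable right $\tb{J}$-modules $\tb{J}(-,J)$; replacing each such summand by the corepresentable module $\tb{J}(J,-)=\tb{J}^{\op}(-,J)\in\dgm(\tb{J}^{\op})$ and transposing the twist $q$ produces, after totalisation, a dg-$\tb{J}^{\op}$-module. Because $\tb{J}(J,J')\to\tb{J}^{\op}(J',J)$ reverses arrows, this is a contravariant assignment, i.e. a dg-functor $\Tw^{+}(\tb{J})^{\op}\to\hproj(\tb{J}^{\op})$, and the boundedness below of the twisted complex becomes boundedness above after the degree flip. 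The key verifications are that the totalisation is h-projective (a bounded above one-sided twisted complex of representable, hence h-projective, modules is h-projective) and that it is homotopically finitely presented: its cohomology is assembled, through the twisted-complex filtration, from the modules $H^{i}(\tb{J}(J,-))$, which are finitely presented by homotopical local coherence (\Cref{def - hlc}), and right coherence keeps each cohomology finitely presented. Thus the functor lands in $\hproj^{-}(\tb{J}^{\op})^{\hfp}$.

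It then remains to prove this is a quasi-equivalence and to transport the structure. Quasi-full faithfulness follows from biduality: on (co)representables the transposition identifies $\hproj(\tb{J}^{\op})(\tb{J}^{\op}(-,J),\tb{J}^{\op}(-,J'))\sim\tb{J}^{\op}(J,J')\cong\tb{J}(J',J)$ with the representable hom-complex read in $\Tw^{+}(\tb{J})^{\op}$, and for general twisted complexes both morphism complexes are the same inner hom of \Cref{def - lb twisted complex}. The main obstacle is quasi-essential surjectivity: I must show every $M\in\hproj^{-}(\tb{J}^{\op})^{\hfp}$ is quasi-isomorphic to such a totalisation, i.e. admits a resolution by a bounded above twisted complex of finite sums of corepresentables. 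This I would construct by descending induction on cohomological degree, mimicking a finite free resolution: beginning at the top nonzero, finitely presented cohomology, I choose a finite generating set, realise it by a map from a finite sum of corepresentables, and proceed downward, at each stage adjoining finitely many corepresentables to account for generators and relations; right coherence guarantees every module encountered stays finitely presented, so the process never leaves the finite-free world and the maps assemble into the transposed twist. This step is exactly where homotopical local coherence of $\tb{J}$ is indispensable. Finally, transporting along the resulting quasi-equivalence $\Tw^{+}(\tb{J})\cong\hproj^{-}(\tb{J}^{\op})^{\hfp,\op}$, strong pretriangulatedness passes to the opposite, and since the opposite swaps aisle and coaisle -- turning the bounded above $t$-structure on $\hproj^{-}(\tb{J}^{\op})^{\hfp}$ into a left bounded one and the heart $\modu(H^{0}(\tb{J}^{\op}))$ into $\modu(H^{0}(\tb{J}^{\op}))^{\op}$ -- while preserving non-degeneracy, we obtain the stated canonical non-degenerate left bounded $t$-structure.
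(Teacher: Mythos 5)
First, a point of comparison: the paper does not prove this theorem at all --- it is recalled verbatim from \cite{GLVdB1}[Theorem 5.9] (with the functoriality coming from \cite{GLVdB1}[Theorem 1.2]), so there is no in-paper argument to measure your proposal against. What you have written is a from-scratch reconstruction. Its overall architecture (restrict the standard $t$-structure using coherence for (1); a contravariant transposition--totalisation functor plus a finite corepresentable resolution for (2)) is the right one, and part (1) is essentially complete: closure of $\modu(H^{0}(\tb{J}^{\op}))$ under kernels, cokernels and extensions is exactly what coherence provides, and the long exact sequence then closes $\hfp$ under shifts, cones and truncations.

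In part (2), however, two steps are asserted where real work is hiding. First, the claim that ``both morphism complexes are the same inner hom'' is not literally true: a degree-$p$ morphism in $\Tw^{+}(\tb{J})$ is required to satisfy $f^{j}_{i}=0$ for $i<j+p$, i.e.\ the components of strictly positive internal degree are forced to vanish, whereas the hom-complex between the totalisations in $\dgm(\tb{J}^{\op})$ imposes no such condition. Since $\tb{J}$ is only \emph{cohomologically} (not strictly) concentrated in nonpositive degrees, the one-sided hom-complex is a proper subcomplex, and showing the inclusion is a quasi-isomorphism is a genuine argument (in \cite{GLVdB1} it is handled via the stupid-truncation filtration and a homotopy-(co)limit comparison; cf.\ the use of \cite{GLVdB1}[Proposition 4.13] in the proof of \Cref{thm - HH A DGInjInddgA agree}). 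Second, in the essential surjectivity induction you must confront the fact that each corepresentable $\tb{J}(J,-)$ has cohomology in \emph{all} nonpositive degrees, so every stage of the ``finite free resolution'' perturbs all lower cohomologies of the error term; one needs the one-sidedness of the twist to see that $H^{d}$ of the partial totalisations stabilises for each fixed $d$, and then an argument that the totalisation of the resulting \emph{infinite} twisted complex (an inverse/direct limit of the finite stages) still maps quasi-isomorphically to $M$. Neither point is fatal --- both are resolved in \cite{GLVdB1}[\S 4--5] --- but as written your sketch treats them as bookkeeping when they are the technical core of the theorem.
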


By \cite{GLVdB1}[Theorem 1.2], this construction defines a functor
\begin{equation} \label{eq - Tw+}
    \Tw^{+}: \Hqe^{\DGInj}_{(\Pi)} \to \Hqe^{t+}_{(\Pi)}.
\end{equation}
Finally, we state the correspondence of \cite{GLVdB1}[Theorem 1.4].

\begin{thm} \label{thm - correspondence}
    The functors \eqref{eq - DGInj} and \eqref{eq - Tw+} form an equivalence of categories when restricted to $\Hqe^{\DGInj}_{\Pi}$ and $\Hqe^{t+}_{\Pi}$,
    \begin{equation} \label{eq - correspondence} \begin{tikzcd}
	{\Hqe^{\DGInj}_{\Pi}} & {\Hqe^{t+}_{\Pi}.}
	\arrow["\cong"{description}, draw=none, from=1-1, to=1-2]
	\arrow["{\Tw^{+}}", curve={height=-18pt}, from=1-1, to=1-2]
	\arrow["\DGInj", curve={height=-18pt}, from=1-2, to=1-1]
\end{tikzcd}\end{equation}
\end{thm}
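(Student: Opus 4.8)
The plan is to exhibit the two functors as mutually inverse by constructing natural isomorphisms $\Tw^{+}\circ\DGInj \cong \mathrm{id}$ on $\Hqe^{t+}_{\Pi}$ and $\DGInj\circ\Tw^{+}\cong \mathrm{id}$ on $\Hqe^{\DGInj}_{\Pi}$. Functoriality of both assignments is already supplied by \eqref{eq - DGInj} and \eqref{eq - Tw+}, so the entire content lies in reconstructing each kind of object from its image, and in checking that these reconstructions are natural and respect the two different morphism conditions.

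For the composite $\DGInj\circ\Tw^{+}$, I would start from $\tb{J}\in\Hqe^{\DGInj}_{\Pi}$ and use the canonical dg-functor $\tb{J}\hookrightarrow\Tw^{+}(\tb{J})$ sending an object $J$ to the twisted complex concentrated in degree $0$. Equipping $\Tw^{+}(\tb{J})$ with its $t$-structure (heart $\modu(H^{0}(\tb{J}^{\op}))^{\op}$) from the theorem above, the goal is to identify the essential image of this embedding with $\DGInj(\Tw^{+}(\tb{J}))$. Concretely, one checks via \Cref{prop - properties dginj} that the degree-$0$ complexes represent the functors $\mc{T}^{\heartsuit}(H^{0}_{t}(-),I)$ for injective $I$ in the heart, and conversely that every derived injective is of this form; the hypotheses that $H^{0}(\tb{J})$ is Karoubian and closed under countable products are exactly what guarantees that no derived injectives are missed. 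This yields a quasi-equivalence $\tb{J}\cong\DGInj(\Tw^{+}(\tb{J}))$, which one then verifies is compatible with the finite-presentation condition on morphisms.

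For the composite $\Tw^{+}\circ\DGInj$, set $\tb{J}=\DGInj(\B)$ for $\B\in\Hqe^{t+}_{\Pi}$. Because $\B$ has enough derived injectives and a left bounded $t$-structure, every object of $\B$ admits a derived injective resolution, and the coherent bookkeeping of its differentials and higher homotopies is precisely the data of a left bounded twisted complex over $\tb{J}$ in the sense of \Cref{def - lb twisted complex}. This produces a totalization quasi-functor $\Tw^{+}(\DGInj(\B))\to\B$, whose inverse sends an object to its derived injective resolution. Here left boundedness of the $t$-structure makes these resolutions left bounded, so that they genuinely lie in $\Tw^{+}$; non-degeneracy is what forces the resolution to detect the object, giving faithfulness together with essential surjectivity; and closure under countable products supplies the homotopy limit that realizes the totalization, ensuring that the reconstructed object lives in $\B$.

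The main obstacle is this second composite: proving that totalizing a left bounded twisted complex of derived injectives both lands back in $\B$ and reconstructs the object up to isomorphism. This is precisely where the passage from $\Hqe^{t+}$ to $\Hqe^{t+}_{\Pi}$ becomes indispensable, since without countable products the homotopy limit computing the totalization need not exist in $\B$. Controlling the convergence of this homotopy limit, and checking that the unit and counit respect both the $t$-exact-left-adjoint condition on the $\Hqe^{t+}_{\Pi}$ side and the finite-presentation condition on the $\Hqe^{\DGInj}_{\Pi}$ side, forms the technical heart of the argument; the remainder reduces to manipulation of the explicit twisted-complex differential of \Cref{def - lb twisted complex}.
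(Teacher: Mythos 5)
The paper does not prove this theorem: it is quoted directly from \cite{GLVdB1}[Theorem 1.4] (``Finally, we state the correspondence of \cite{GLVdB1}[Theorem 1.4]''), so there is no in-paper argument to measure your proposal against. That said, your outline is consistent with the strategy of the cited source as it is reflected elsewhere in this paper: the unit $\tb{J}\to\DGInj(\Tw^{+}(\tb{J}))$ via degree-zero twisted complexes, with Karoubianness of $H^{0}(\tb{J})$ identifying $\Inj(\modu(H^{0}(\tb{J}^{\op}))^{\op})$ with the representables; and the counit $\Tw^{+}(\DGInj(\B))\to\B$ via totalization of derived injective resolutions, which is exactly the mechanism invoked in the proof of \Cref{thm - HH A DGInjInddgA agree} (where $\Y(A)\cong\tn{Tot}(I,q_{I})$ for a left bounded twisted complex of derived injectives). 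Two caveats. First, your proposal is an outline rather than a proof: the genuinely hard steps --- making the assignment ``object $\mapsto$ derived injective resolution'' into a quasi-functor (the resolution is only defined up to a contractible space of choices), proving that $\tn{Tot}$ of a left bounded twisted complex of derived injectives converges in $\B$ using the countable products and right-completeness-type arguments, and verifying that the unit and counit intertwine the two asymmetric morphism conditions --- are named but not executed, and they constitute essentially the content of \cite{GLVdB1}. Second, a small misattribution: closure under countable products plays no role in showing that no derived injectives of $\Tw^{+}(\tb{J})$ are missed (that is Karoubianness plus the identification of the heart); it is needed only on the reconstruction side, as you correctly say later.
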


\subsection{\texorpdfstring{$t$}{t}-deformations with enough derived injectives} \label{pardefset3}

We will now use the base change formula \eqref{eq - base change} and the relation with dg-deformations expressed in \eqref{eq - relation dg-def} as inspiration to define deformations of $t$-dg-categories.

Let $\theta: R \to S$ be as before. Then there are the following base change functors\index{base change functors} of \cite{GLVdB2}[\S 5.1]:
\begin{itemize}
    \item \ti{The restriction of scalars along $\theta$}\index{base change functors!restriction of scalars $(-)_{R}$}. Let $\tb{B}$ be an $S$-linear dg-category. Then there is an underlying $R$-linear dg-category $\tb{B}_{R}$, obtained by restricting along $\theta$. This yields a functor $$(-)_{R}: \Hqe(S) \to \Hqe(R)$$
    that preserves $t$-structures.

    \item \ti{The extension of scalars along $\theta$}\index{base change functors!extension of scalars $S \otimes^{\mb{L}}_{R} -$}. Let $\tb{A}$ be an $R$-linear dg-category. Then there is an $S$-linear dg-category $S \otimes^{\mb{L}}_{R} \tb{A}$ determined up to quasi-equivalence, with the same objects as $\tb{A}$ but as inner homs the derived tensor product of those in $\tb{A}$ with $S$. This yields a functor $$S \otimes^{\mb{L}}_{R} -: \Hqe(R) \to \Hqe(S).$$ 
    Given an $S$-linear dg-category $\tb{B}$, there is moreover a natural $R$-linear quasi-equivalence \cite{GLVdB2}[Proposition 4.3.4],
    \begin{equation} \label{eq - ext}
        \RHom_{R}(\tb{A},\tb{B}_{R}) \cong \RHom_{R}(S \otimes^{\mb{L}}_{R} \tb{A}, \tb{B}).
    \end{equation}

    \item \ti{The coextension of scalars along $\theta$}\index{base change functors!coextension of scalars $(-)_{(S)} = \hproj^{\rqr}(S,Q(-))$}. Let $\tb{A}$ be an $R$-linear dg-category. Then there is an $S$-linear dg-category $\tb{A}_{(S)} = \RHom_{R}(S,\tb{A})$ given by the quasi-functors $S \to \tb{A}$ (cf. \S \ref{subsection - t-structures}), see \cite{GLVdB2}[Lemma 4.2.1], determined up to quasi-equivalence. This yields a functor $$(-)_{(S)}: \Hqe(R) \to \Hqe(S)$$
    that preserves $t$-structures. Concretely, for an $R$-linear $t$-dg-category $\A$, there is a natural $t$-structure on $\A_{(S)}$ that for $\A$ h-flat is given by
    \begin{align} \label{eq - tstructure coextension}
        (\A_{(S)})_{\leq 0} &= \RHom_{R}(S,\A_{\leq 0}), \\
        (\A_{(S)})_{\geq 0} &= \RHom_{R}(S,\A_{\geq 0}), \nonumber
    \end{align}
    and non-degeneracy of the $t$-structure is preserved. Given an $S$-linear dg-category $\tb{B}$, there is moreover a natural $R$-linear quasi-equivalence \cite{GLVdB2}[Theorem 4.2.5],
    \begin{equation} \label{eq - coext}
        \RHom_{R}(\tb{B}_{R},\tb{A}) \cong \RHom_{R}(\tb{B},\tb{A}_{(S)}).
    \end{equation}
\end{itemize}

\begin{remark} \label{remark - quasif RHom}
    Recall from \S \ref{pardefset1} and \eqref{eq - base change} that for an h-flat $R$-linear dg-category $\A$, we have that
    $$\hproj_{R}(\A)_{(S)} \cong \hproj_{S}(S \otimes_{R} \A).$$
    If we denote $F$ for the forgetful functor $\hproj(\A)_{(S)} \to \hproj(\A)$, then we have a right adjoint quasi-functor corresponding to \eqref{eq - rhom 1},
    \[\begin{tikzcd}[font=\small, column sep = small]
	{\hproj(\A)} & {\hinj(\A)} & {\hinj(S \otimes_{R}\A)} & {\hproj(S \otimes_{R} \A) \cong\hproj(\A)_{(S)}.}
	\arrow["{Q^{\hinj}}", from=1-1, to=1-2]
	\arrow["{\homom_{R}(S,-)}", from=1-2, to=1-3]
	\arrow["{Q^{\hproj}}", from=1-3, to=1-4]
\end{tikzcd}\]
    We call this quasi-functor $\RHom_{R}(S,-)$. It could be confused with the derived bifunctor $\RHom_{R}(-,-)$ of \Cref{prop - balancing} with $S \in \dgm(R)$ in the first argument. However, by postcomposing with the counit $\epsilon$ of \eqref{eq - coext}, the two agree; i.e. the following diagram of quasi-functors commutes in $\Hqe(R)$:
    \[\begin{tikzcd}[font=\small, column sep = small]
	{\hproj(\A)} & {\hinj(\A)} & {\hinj(S \otimes_{R}\A)} & {\hproj(S \otimes_{R} \A) \cong \hproj(\A)_{(S)}} \\
	&& {\hinj(\A)} & {\hproj(\A).}
	\arrow["{Q^{\hinj}}", from=1-1, to=1-2]
	\arrow["{\homom_{R}(S,-)}", from=1-2, to=1-3]
	\arrow["{\RHom_{R}(S,-)}"', from=1-2, to=2-3]
	\arrow["{Q^{\hproj}}", from=1-3, to=1-4]
	\arrow["{\epsilon_{\hproj(\A)}}", shift left=5, from=1-4, to=2-4]
	\arrow["{Q^{\hproj}}", from=2-3, to=2-4]
\end{tikzcd}\]
\end{remark}

By \eqref{eq - ext} and \eqref{eq - coext}, we have adjunctions
$$S \otimes^{\mb{L}}_{R} - \quad \dashv \quad (-)_{R} \quad \dashv \quad (-)_{(S)}.$$
These are reminiscent of the base change functors in the abelian setup of \cite{AbelianDeformations}[\S 4], namely
$$H^{0}(S) \otimes_{H^{0}(R)} - \quad \dashv \quad (-)_{H^{0}(R)} \quad \dashv \quad (-)_{(H^{0}(S))}$$
and for an $S$-linear $t$-dg-category $\B$ and $R$-linear $t$-dg-category, we have by \cite{GLVdB2}[(4.51)] that
\begin{align*}
    H^{0}(\B_{R})^{\heartsuit} &\cong H^{0}(\B)^{\heartsuit}_{H^{0}(R)}, \\
    H^{0}(\A_{(S)})^{\heartsuit} &\cong H^{0}(\A)^{\heartsuit}_{(H^{0}(S))},
\end{align*}
so the restriction and coextension of scalars on both levels are compatible. Similar to how abelian deformations were defined in \cite{AbelianDeformations}[Definition 5.2], namely as lifts under the coextension of scalars -- since it preserves abelianness while the extension of scalars does not -- we use the coextension of scalars to define deformations of $t$-dg-categories as in \cite{GLVdB2}[Definition 5.1.2].

\begin{definition}
    Let $\B$ be an $S$-linear $t$-dg-category.
    A \emph{$t$-deformation} of $\B$ along $\theta: R \to S$ is an $R$-linear $t$-dg-category $\A$ together with an $R$-linear quasi-functor $\B \to \A$ that induces a $t$-exact $S$-linear quasi-equivalence $\B \to \A_{(S)}$.\index{deformation!$t$-deformation}
\end{definition}

So we now have a notion of deformation available on each side of \eqref{eq - correspondence}: dg-deformations on the left and $t$-deformations on the right. A natural question is then whether the categories of \eqref{eq - correspondence} are closed under deformations. In \cite{GLVdB2} this was proven for the left side, so dg-categories of derived injectives, and dg-deformations left were shown to induce $t$-deformations on the right.

\begin{thm}\label{thm - GLV2}
    Let $\tn{\tb{J}}$ be an essentially small $S$-linear dg-category in $\Hqe^{\DGInj}_{\Pi}$ and let $G: \tn{\tb{I}} \to \tn{\tb{J}}$ be a dg-deformation along $\theta: R \to S$. Then $\tn{\tb{I}} \in \Hqe^{\DGInj}_{\Pi}$ and the left adjoint to $\Tw^{+}(G): \Tw^{+}(\tn{\tb{J}}) \to \Tw^{+}(\tn{\tb{I}})$ is a $t$-deformation in $\Hqe^{t+}_{\Pi}$.
    \begin{proof}
        The claim without the $\Pi$-subscript follows by \cite{GLVdB2}[Theorem 4], i.e. $\tb{I} \in \Hqe^{\DGInj}$ and the dg-deformation $G: \tb{I} \to \tb{J}$ in $\Hqe^{\DGInj}$ gives rise to a $t$-deformation $F: \Tw^{+}(\tb{J}) \to \Tw^{+}(\tb{I})$ in $\Hqe^{t+}$. That leaves closedness of $H^{0}(\tb{I})$ under countable products. Since $\tb{J} \in \Hqe^{\DGInj}_{\Pi}$, we have by \Cref{thm - correspondence} that $\Tw^{+}(\tb{J}) \in \Hqe^{t+}_{\Pi}$. By \Cref{thmdgdef} (3), \Cref{lemma - countable products} and \Cref{remark - hqet+pi not tlc} below, then $\Tw^{+}(\tb{I}) \in \Hqe^{t+}_{\Pi}$ so that $\tb{I} \in \Hqe^{\DGInj}_{\Pi}$ by \Cref{thm - correspondence}.
    \end{proof}
\end{thm}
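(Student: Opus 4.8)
The plan is to split the statement into the part already covered by the non-$\Pi$ theory and the genuinely new closure under countable products, and to transport the latter through the equivalence of \Cref{thm - correspondence}.

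First I would invoke the version without the $\Pi$-subscript: by \cite{GLVdB2}[Theorem 4], the dg-deformation $G \colon \tb{I} \to \tb{J}$ forces $\tb{I} \in \Hqe^{\DGInj}$, and the left adjoint to $\Tw^{+}(G) \colon \Tw^{+}(\tb{J}) \to \Tw^{+}(\tb{I})$ is a $t$-deformation in $\Hqe^{t+}$. In particular $\Tw^{+}(\tb{I})$ carries a left bounded, non-degenerate $t$-structure with enough derived injectives and $\Tw^{+}(\tb{I})_{(S)} \cong \Tw^{+}(\tb{J})$ via a $t$-exact quasi-equivalence. It remains only to promote the two memberships to their $\Pi$-versions. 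Since $\Tw^{+}$ and $\DGInj$ restrict to an equivalence between $\Hqe^{\DGInj}_{\Pi}$ and $\Hqe^{t+}_{\Pi}$, and since $\tb{J} \in \Hqe^{\DGInj}_{\Pi}$ yields $\Tw^{+}(\tb{J}) \in \Hqe^{t+}_{\Pi}$, it is enough to prove $\Tw^{+}(\tb{I}) \in \Hqe^{t+}_{\Pi}$; applying $\DGInj$ then gives $\tb{I} \in \Hqe^{\DGInj}_{\Pi}$.

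The single extra condition to check for $\Tw^{+}(\tb{I})$ is that its coaisle is closed under countable products. By \Cref{remark - closed products} the coaisle of any $t$-structure contains every product that exists in the ambient triangulated category, so the task reduces to the bare existence of countable products of coaisle objects in $H^{0}(\Tw^{+}(\tb{I}))$. Here I would feed in the deformation relation $\Tw^{+}(\tb{I})_{(S)} \cong \Tw^{+}(\tb{J})$ together with the standing hypotheses on $\theta$: $K$ is nilpotent of order $n$, and $S$ and $K$ are finitely presented over $R$. The filtration $R \supseteq K \supseteq \cdots \supseteq K^{n} = 0$ exhibits $\tb{I}$ as a finite tower of small extensions whose associated graded data is controlled by the $H^{0}(S)$-modules $K^{i}/K^{i+1}$, that is, by $S$-linear information for which countable products already exist inside $\Tw^{+}(\tb{J}) \in \Hqe^{t+}_{\Pi}$. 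An induction on the nilpotency degree then assembles the desired countable products in $H^{0}(\Tw^{+}(\tb{I}))$, and by the previous remark they automatically land in the coaisle. This is the content one would isolate beforehand in a preliminary $t$-deformation statement together with a countable-products lemma.

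The main obstacle is exactly this existence step. Products are the awkward direction for a $t$-structure, since --- unlike coproducts and the aisle --- they need not be compatible with the truncation functors, so one cannot simply form them componentwise. Securing their existence genuinely relies on the nilpotence of $K$ and the finiteness of $S$ and $K$; without these, the reduction along the filtration to the already-known products on the special fibre $\Tw^{+}(\tb{J})$ would fail. Once existence is in hand, coaisle-membership is automatic, and \Cref{thm - correspondence} closes the argument.
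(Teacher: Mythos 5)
Your proposal is correct and follows essentially the same route as the paper: invoke the non-$\Pi$ statement from \cite{GLVdB2}, transport the remaining $\Pi$-condition through the equivalence of \Cref{thm - correspondence}, reduce coaisle-closure to mere existence of countable products via \Cref{remark - closed products}, and obtain existence by descending to the special fibre using the nilpotence of $K$. Your ``induction/assembly'' step is exactly what the paper isolates as \Cref{lemma - countable products}, whose proof realizes the reduction concretely via the base change exact triangle $F\circ\Tw^{+}(G)(A_{n}) \to A_{n} \to F\circ\Psi(A_{n})$ (relying on \Cref{thmdgdef} (3) for the right adjoint, which is available before knowing $\tb{I} \in \Hqe^{\DGInj}_{\Pi}$) and the representability of products of representables.
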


To end this subsection, we treat the leading example of a $t$-deformation. This covers \cite{GLVdB2}[Example 5.3.3].

\begin{example} \label{ex - example}
    Let $k$ be a field and take $R$ and $S$ to be $k[\epsilon]/(\epsilon^{n})$ and $k[\epsilon]/(\epsilon^{m})$ with zero differentials, where $1 \leq m \leq n$ and $\epsilon$ is of degree $d \leq 0$. Then the quotient dg-$k$-algebra morphism,
    $$\theta: k[\epsilon]/(\epsilon^{n}) \twoheadrightarrow k[\epsilon]/(\epsilon^{m})$$
    satisfies all the necessary conditions (see Conventions). Since both $R$ and $S$ are concentrated in nonpositive degrees, their derived categories $\D^{+}(S)$ and $\D^{+}(R)$ can be endowed with the natural $t$-structure of \Cref{ex - standard t-structures} and we define $\B$ and $\A$ as $\hinj^{+}(S)$ and $\hinj^{+}(R)$ respectively. The $t$-structures are left bounded, non-degenerate, have as hearts the module category on the zero-th cohomology of the dg-algebra, and have coaisles that are closed under countable products. Both pretriangulated $t$-dg-categories also have enough derived injectives by \Cref{ex - conmpactly gen non-degenerate} because they are compactly generated (see \cite{Porta}[Corollary 7.3]) and their hearts are Grothendieck. So both $\B$ and $\A$ lie in $\Hqe^{t+}_{\Pi}$. Consider now the restriction dg-functor,
    $$\theta_{*}: \dgm(S) \to \dgm(R),$$
    given by precomposition with $\theta$. 
    By applying it to h-injective dg-$S$-modules and taking h-injective resolutions, we obtain the quasi-functor
    $$\theta_{*}: \hinj(S) \to \hinj(R).$$
    The induced functor $H^{0}(\theta_{*}): \D(S) \to \D(R)$ is $t$-exact and thus (co)restricts to $\theta_{*}: \B \to \A$. This is a $t$-deformation by \cite{GLVdB2}[(5.9)]. 
\end{example}

\subsection{The induced dg-deformation of the derived injectives} \label{pardefset4}

Our aim is to provide the converse result to \Cref{thm - GLV2}. This requires two things: constructing a dg-deformation in $\Hqe^{\DGInj}_{\Pi}$ out of a $t$-deformation in $\Hqe^{t+}_{\Pi}$ and showing that $\Hqe^{t+}_{\Pi}$ is closed under $t$-deformations. In this subsection, we treat the first point.

Recall from \cite{ThesisFrancesco}[Proposition 7.1] that a quasi-functor $G: \A \to \B$ is a right adjoint quasi-functor to $F: \B \to \A$ if, viewed as a right quasi-representable dg-functor into $\dgm_{R}(\B)$, there is for any $A \in \A$ and $B \in \B$ a natural quasi-isomorphism
\begin{equation} \label{eq - quasiadj}
G(A)(B) \sim \B(B,\Phi_{G}(A)) \sim \A(\Phi_{F}(B),A),
\end{equation}
meaning $G \sim \mb{L}L(F)$ cf. \cite{ThesisFrancesco}[\S 7.1]. Consider a $t$-deformation $F: \B \to \A$ and put $\tb{I} = \DGInj \A$ and $\tb{J} = \DGInj \B$. 
Let $F: \B \to \dgm_R(\A)$ be the dg-functor corresponding to the quasi-functor $F$.
We define the dg functor $G: \tb{I} \to \dgm_R(\tb{J})$ in such a way that it could be the restriction of a quasi-functor that is right adjoint to $F$ according to \eqref{eq - quasiadj}. 
More precisely, we put $$G(A)(B) = F^{\op}(B)(A), \tn{ for } A \in \tb{I}, B \in \tb{J}^{\op}.$$

\begin{proposition}\label{proppartialadjoint}
Suppose $\B \in \Hqe^{t+}$.
    The dg-functor $G$ defines a quasi-functor $G: \tn{\tb{I}} \to \tn{\tb{J}}$ such that
    $H^0(G)$ makes the following diagram commute:
\begin{equation}\label{eqinjcomm}
    \xymatrix{{\Inj H^0(\A)^{\heartsuit}} \ar[d]_-{\homom_{H^0(R)}({H^0(S)},-)} \ar[r]^-L_-{\cong} & {H^0(\tn{\tb{I}})} \ar[d]^-{H^0(G)} \\
    {\Inj H^0(\B)^{\heartsuit}} \ar[r]^-L_-{\cong} & {H^0(\tn{\tb{J}}).}
    }
\end{equation}
\end{proposition}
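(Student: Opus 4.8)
The plan is to prove both assertions at once by computing $G$ on a derived injective $A = L(I) \in \tb{I}$, with $I \in \Inj H^{0}(\A)^{\heartsuit}$, and showing that the dg-$\tb{J}$-module $G(L(I))$ is right quasi-representable by the derived injective $L(\homom_{H^{0}(R)}(H^{0}(S),I)) \in \tb{J}$. Establishing this exhibits $G$ as a quasi-functor $\tb{I} \to \tb{J}$ (its values lie in $\tb{J}$) and, reading off the representing object, yields the commutativity of \eqref{eqinjcomm}.

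First I would unwind the definition $G(A)(B) = F^{\op}(B)(A)$. Since $F$ is right quasi-representable, $F(-,B) \sim \A(-,\Phi_{F}(B))$ as dg-$\A$-modules, so passing to the opposite produces a quasi-isomorphism $G(A)(B) = F^{\op}(B)(A) \sim \A(\Phi_{F}(B),A)$, with the variance of $B \in \tb{J}^{\op}$ exhibiting $G(A)$ as a genuine dg-$\tb{J}$-module; this is precisely the right-hand relation of \eqref{eq - quasiadj}. In particular $H^{k}(G(A)(B)) \cong H^{0}(\A)(\Phi_{F}(B), A[k])$ for all $k \in \mb{Z}$.

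Next I would feed in that $A = L(I)$ is derived injective and that $\Phi_{F}$ is $t$-exact, the latter because it is induced by the $t$-exact quasi-equivalence $\B \cong \A_{(S)}$ followed by the $t$-structure-preserving restriction to $\A$. The defining property of derived injectives gives $H^{0}(\A)(X, L(I)[k]) \cong H^{0}(\A)^{\heartsuit}(H^{-k}_{t}(X), I)$, and $t$-exactness gives $H^{-k}_{t}(\Phi_{F}(B)) \cong \phi(H^{-k}_{t}(B))$, where $\phi \colon H^{0}(\B)^{\heartsuit} \to H^{0}(\A)^{\heartsuit}$ is the induced functor on hearts. Under the identification $H^{0}(\B)^{\heartsuit} \cong H^{0}(\A)^{\heartsuit}_{(H^{0}(S))}$ of \cite{GLVdB2}[(4.51)], $\phi$ is the abelian restriction of scalars $(-)_{H^{0}(R)}$, so the abelian adjunction $(-)_{H^{0}(R)} \dashv \homom_{H^{0}(R)}(H^{0}(S),-)$ of \cite{AbelianDeformations} yields, writing $I' := \homom_{H^{0}(R)}(H^{0}(S),I)$,
\begin{align*}
    H^{k}(G(L(I))(B)) &\cong H^{0}(\A)^{\heartsuit}\big(\phi(H^{-k}_{t}(B)), I\big) \\
    &\cong H^{0}(\B)^{\heartsuit}\big(H^{-k}_{t}(B), I'\big).
\end{align*}

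Finally I would note that $I'$ is injective in $H^{0}(\B)^{\heartsuit}$, coextension of scalars being right adjoint to the exact restriction functor and hence injective-preserving; since $\B \in \Hqe^{t+}$ has enough derived injectives, the derived injective $L(I') \in \tb{J}$ exists, and its defining property gives $H^{k}(\B)(B, L(I')) \cong H^{0}(\B)^{\heartsuit}(H^{-k}_{t}(B), I')$. Comparing with the display above produces, for every $B \in \tb{J}$, a quasi-isomorphism $G(L(I))(B) \sim \tb{J}(B, L(I'))$, so $G(L(I))$ is right quasi-representable by $L(I') \in \DGInj\,\B = \tb{J}$. Dg-functoriality in $A$ is inherited from the bimodule $F$, so $G$ is a quasi-functor $\tb{I} \to \tb{J}$, and $H^{0}(G)(L(I)) = L(\homom_{H^{0}(R)}(H^{0}(S),I))$ is exactly the commutativity in \eqref{eqinjcomm}. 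The main obstacle I anticipate is the careful bookkeeping of the variance hidden in $F^{\op}$ — so that one genuinely lands on maps into $L(I)$, i.e. $\A(\Phi_{F}(B),-)$ rather than $\A(-,\Phi_{F}(B))$ — together with pinning down that the heart-level functor $\phi$ is exactly the restriction of scalars dual to the coextension appearing in the statement; once these identifications are secured, the remainder is the derived-injective adjunction computation carried out degreewise.
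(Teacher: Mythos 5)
Your proposal is correct and follows essentially the same route as the paper: unwind $G(L(I))(B)\sim\A(\Phi_F(B),L(I))$, apply the defining property of derived injectives, identify the heart-level functor with restriction of scalars via \cite{GLVdB2}[(4.51)], use the abelian adjunction to land on $\B^{\heartsuit}(H^0_t B,\homom_{H^0(R)}(H^0(S),I))$, and recognize this as $\tb{J}(B,L(I'))$. The only cosmetic difference is that you compute degreewise in cohomology where the paper writes a direct chain of quasi-isomorphisms of complexes, and you make explicit two points the paper leaves implicit (that $I'$ is injective so $L(I')$ exists, and the variance bookkeeping in $F^{\op}$).
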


\begin{proof}
    We are to show that $G$ is right quasi-representable.
On a derived injective $A = L(I) \in \A$ and an object $B \in \tb{J} \subseteq \B$, we have that
\begin{align*}
    G(L(I))(B) &= F^{\op}(B)(L(I)) \\
    &\sim \A(\Phi_{F}(B),L(I)) \\
    &\sim \A^{\heartsuit}(H^{0}_{t}\Phi_{F}(B),I) \\
    &\sim \A^{\heartsuit}((H^{0}_{t}B)_{H^{0}(R)},I) \\
    &\sim \B^{\heartsuit}(H^{0}_{t}B,\homom_{H^{0}(R)}(H^{0}(S),I)) \\
    &\sim \B(B,L(\homom_{H^{0}(R)}(H^{0}(S),I))) \\
    &= \tb{J}(B,L(J))
\end{align*}
for $J = \homom_{H^{0}(R)}(H^{0}(S),I)$, where we used \cite{GLVdB2}[(4.51)], which states that $H^{0}(\A_{(S)})^{\heartsuit} \cong H^{0}(\A)^{\heartsuit}_{(H^{0}(S))}$, and the fact that the following diagram of functors commutes
\begin{equation}\label{eqHt}
\begin{tikzcd}
	{H^{0}(\A_{(S)})} & {H^{0}(\A)} \\
	{H^{0}(\A)^{\heartsuit}_{(H^{0}(S)})} & {H^{0}(\A)^{\heartsuit}.}
	\arrow["{H^{0}(F)}", from=1-1, to=1-2]
	\arrow["{H^{0}_{t}}"', from=1-1, to=2-1]
	\arrow["{H^{0}_{t}}", from=1-2, to=2-2]
	\arrow["{(-)_{H^{0}(R)}}", from=2-1, to=2-2]
\end{tikzcd}
\end{equation}
Thus, $L(J)$ is a representing object for $G(L(I))$ and the diagram \eqref{eqinjcomm} is readily seen to commute. 
\end{proof}
 
\begin{thm}\label{thmdgdef}
Consider a $t$-deformation $F: \B \to \A$ in $\Hqe^{t+}_{\Pi}$ and consider the quasi-functor $G: \tn{\tb{I}} \to \tn{\tb{J}}$ from Prop. \ref{proppartialadjoint}.
    \begin{enumerate}
    \item $G$ is a dg-deformation in $\Hqe^{\DGInj}_{\Pi}$;
        \item through the equivalences $\Tw^{+}(\tn{\tb{I}}) \cong \A$ and $\Tw^{+}(\tn{\tb{J}}) \cong \B$, $\Tw^+(G)$ defines a right adjoint quasi-functor to $F$.
    \end{enumerate}
    Given a $t$-deformation $F: \Tw^{+}(\tn{\tb{J}}) \to \Tw^{+}(\tn{\tb{I}})$ with $\tn{\tb{J}} \in \Hqe^{\DGInj}_{\Pi},\tn{\tb{I}} \in \Hqe^{\DGInj}$ and the quasi-functor $G: \tn{\tb{I}} \to \tn{\tb{J}}$ from Prop. \ref{proppartialadjoint}, we have that
    \begin{enumerate}
        \item[(3)] $\Tw^{+}(G)$ defines a right adjoint quasi-functor to $F$.
    \end{enumerate}
\end{thm}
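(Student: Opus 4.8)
The plan is to establish the adjunction first (parts (2) and (3)), since part (3) is the input that \Cref{thm - GLV2} needs, and then to deduce the dg-deformation statement (1). For the adjunction I would verify the defining quasi-isomorphism of a right adjoint quasi-functor from \cite{ThesisFrancesco}: that $\Tw^{+}(G)(A)(B) \sim \A(\Phi_{F}(B),A)$, naturally in $A \in \A \cong \Tw^{+}(\tb{I})$ and $B \in \B \cong \Tw^{+}(\tb{J})$ (\Cref{thm - correspondence}). The base case is exactly \Cref{proppartialadjoint}: for derived injectives $A = L(I) \in \tb{I}$ and $B = L(J) \in \tb{J}$ the computation there already gives $G(L(I))(B) \sim \A(\Phi_{F}(B),L(I))$, and $\Tw^{+}(G)$ restricts to $G$ on these objects. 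I would then propagate this identity over all twisted complexes: every object of $\Tw^{+}(\tb{J})$ (resp. $\Tw^{+}(\tb{I})$) is a left bounded twisted complex assembled from $\tb{J}^{\oplus}$ (resp. $\tb{I}^{\oplus}$) with a twisted differential (\Cref{def - lb twisted complex}), and both $B \mapsto \A(\Phi_{F}(B),A)$ and $B \mapsto \Tw^{+}(G)(A)(B)$ are exact dg-functors sending the twisted-complex assembly to the corresponding totalization of the component Hom-complexes, because $F$ and $\Tw^{+}(G)$ are built to respect this structure. Hence the two agree on all of $\B$ once they agree on $\tb{J}$, and the analogous totalization argument in the $A$-variable (where $\Tw^{+}(G)$ is componentwise $G$) extends the identity to all $A$. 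A final naturality check produces the unit/counit. Since this argument never uses product-closedness of $\tb{I}$, it proves (3) under the weaker hypothesis $\tb{I} \in \Hqe^{\DGInj}$, and a fortiori (2).

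For (1), I would first record that $\tb{I} = \DGInj\,\A$ and $\tb{J} = \DGInj\,\B$ lie in $\Hqe^{\DGInj}$: for $\tb{J}$ this is \eqref{eq - DGInj} applied to $\B \in \Hqe^{t+}_{\Pi}$, and for $\tb{I}$ it follows from $\A$ being a $t$-deformation of $\B$, whose effect on derived injectives is governed by the abelian deformation of the heart (\cite{GLVdB2}, \cite{AbelianDeformations}). The substance is the quasi-equivalence $S \otimes_{R}^{\mb{L}} \tb{I} \sim \tb{J}$ induced by $G$. Essential surjectivity follows because, under $\B \cong \A_{(S)}$, the derived injectives of $\B$ are indexed by the injectives of $H^{0}(\B)^{\heartsuit} \cong H^{0}(\A)^{\heartsuit}_{(H^{0}(S))}$, and every such injective is a coextension $\homom_{H^{0}(R)}(H^{0}(S),I)$ of an injective $I$ of $H^{0}(\A)^{\heartsuit}$ by \cite{AbelianDeformations}, which matches $H^{0}(G)$ in \eqref{eqinjcomm}. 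For quasi-full-faithfulness the key point is that the restriction $(-)_{R}$ is $t$-exact with right adjoint the coextension $\RHom_{R}(S,-) = (-)_{(S)}$ (\eqref{eq - coext}, \Cref{remark - quasif RHom}), so the coextension carries derived injectives to derived injectives and $L(J) \sim \RHom_{R}(S,L(I))$ in $\A_{(S)} \cong \B$. Using the adjunction $(-)_{R} \dashv (-)_{(S)}$ and that $S$ is perfect over $R$ (Conventions (3)), I would compute
\begin{align*}
    \tb{J}(L(J),L(J')) &\sim \A_{(S)}\bigl(\RHom_{R}(S,L(I)),\RHom_{R}(S,L(I'))\bigr) \\
    &\sim \A\bigl(\homom_{R}(S,L(I)),L(I')\bigr) \\
    &\sim \homom_{R}\bigl(\RHom_{R}(S,R),\A(L(I),L(I'))\bigr) \\
    &\sim S \otimes_{R}^{\mb{L}} \A(L(I),L(I')) = S \otimes_{R}^{\mb{L}} \tb{I}(L(I),L(I')),
\end{align*}
where the middle two steps use perfectness of $S$ together with the tensor--cotensor adjunction in $\dgm(\A)$, and one checks the composite is precisely the base-change map induced by $G$.

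It then remains to upgrade to the $\Pi$-versions: once $G$ is known to be a dg-deformation into $\tb{J} \in \Hqe^{\DGInj}_{\Pi}$, \Cref{thm - GLV2} yields $\tb{I} \in \Hqe^{\DGInj}_{\Pi}$, completing (1). I expect the main obstacle to be the propagation step of the first paragraph, namely controlling the Hom-complex of a left bounded (hence possibly infinite) twisted complex and verifying that $F$ and $\Tw^{+}(G)$ are genuinely compatible with the totalization, so that the injective base case assembles correctly and converges in the left bounded direction. Secondary care will be needed in tracking variances and the $\Phi_{F}$-versus-$F$ identifications throughout, and in confirming that the duality chain above is induced by $G$ rather than merely an abstract isomorphism.
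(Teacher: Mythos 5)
Your architecture for parts (2) and (3) --- verify the adjunction identity \eqref{eq - quasiadj} on derived injectives via \Cref{proppartialadjoint} and then extend over the twisted-complex totalizations --- is consistent with the paper, which is equally terse at this point (it extends $G$ to $\Tw^{+}(G)$ by \Cref{thm - correspondence} and checks \eqref{eq - quasiadj}). The genuine gap is in part (1). The two middle steps of your quasi-fully-faithfulness chain,
\[
\A\bigl(\homom_{R}(S,L(I)),L(I')\bigr) \sim \homom_{R}\bigl(\RHom_{R}(S,R),\A(L(I),L(I'))\bigr) \sim S \otimes_{R}^{\mb{L}} \A(L(I),L(I')),
\]
require $S$ to be perfect over $R$: you need to rewrite $\homom_{R}(S,-)$ as $S^{\vee}\otimes_{R}-$ and then use $S^{\vee\vee}\cong S$. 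The Conventions only assume $S$ is finitely presented in $\mc{Z}^{0}(\dgm(R))$, which is strictly weaker. In the leading example $\theta: k[\epsilon]/(\epsilon^{n})\to k[\epsilon]/(\epsilon^{m})$ of \Cref{ex - example}, $S$ has infinite projective dimension over $R$, and \Cref{lemma - RHom coproducts} exists precisely to work around this (writing $S$ only as a homotopy colimit of perfect complexes). So your computation fails in exactly the cases the theorem is designed for; establishing the base-change quasi-isomorphism on Hom-complexes directly is essentially as hard as the theorem itself.

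The paper's proof of (1) avoids any such computation. It factors $G$ as $\tb{I}\xrightarrow{\rho} S\otimes^{\mb{L}}_{R}\tb{I}\xrightarrow{\varphi}\tb{J}$, shows via \Cref{lemtensinj} and \Cref{lemepi} that both factors are morphisms of $\Hqe^{\DGInj}_{\Pi}$, and observes that under the equivalence of \Cref{thm - correspondence} the factor $\rho$ corresponds to the right adjoint of the canonical $t$-deformation $\A_{(S)}\cong\Tw^{+}(S\otimes^{\mb{L}}_{R}\tb{I})\to\A$. Hence $F$ factors as $\B\xrightarrow{\Phi}\A_{(S)}\to\A$, where $\Phi$ is a $t$-exact quasi-equivalence by the very definition of a $t$-deformation; applying the equivalence $\DGInj$ of \Cref{thm - correspondence} then forces $\varphi$ to be a quasi-equivalence. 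This is the idea you are missing: the quasi-equivalence $S\otimes^{\mb{L}}_{R}\tb{I}\cong\tb{J}$ is deduced by transporting the defining equivalence $\B\cong\A_{(S)}$ through the correspondence theorem, not computed by hand. A secondary omission: being a dg-deformation ``in $\Hqe^{\DGInj}_{\Pi}$'' also requires $G$ to be a morphism of that category, i.e.\ that $H^{0}(\tb{J})(L(J),H^{0}G(-))$ is a finitely presented $H^{0}(\tb{I})^{\op}$-module; the paper verifies this first (via the adjunction and \cite{GLVdB1}[(1.3)]) and needs it as input to \Cref{lemepi}, but your proposal never addresses it.
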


\begin{proof}
We start by showing that the quasi-functor $G$ is a morphism in $\Hqe^{\DGInj}$. Let $L(J) \in H^0(\tb{J})$ be a derived injective. 
We are to show that the $H^0(\tb{I})^{\op}$-module $H^0(\tb{J})(L(J),H^0 G(-))$ is finitely presented. By adjunction, we have that
$$H^0(\tb{J})(L(J),H^0 G(-)) \cong H^0(\A)(H^0 F(L(J)), -).$$ 
Hence, according to the upper isomorphism in \eqref{eqinjcomm}, we may equivalently show that $H^0(\A)(H^0F L(J), L(-))$ is finitely presented as an $(\Inj H^0(\A)^{\heartsuit})^{\op}$-module. Using \eqref{eqHt}, this module is isomorphic to
$$H^0(\A)^{\heartsuit}(J_{H^0(R)}, -),$$
which is indeed finitely presented by \cite{GLVdB1}[(1.3)].

We now continue with (2) and (3). By \Cref{thm - correspondence}, we may extend $G$ to a quasi-functor $\Tw^+(G): \A \to \B$, which is seen to be right adjoint to $F$ using \eqref{eq - quasiadj}.

Finally, we complete the proof of (1) and thus of the theorem. 
Consider hereto the factorization of the quasi-functor $G$ as 
\begin{equation} \label{eq - factorization G}
    \xymatrix{ {\tb{I}} \ar[r]^-{\rho} & {S \otimes^{\mb{L}}_R \tb{I}} \ar[r]^-{\varphi} & {\tb{J}.}
}
\end{equation}
According to \Cref{lemtensinj}, $\rho$ lies in $\Hqe^{\DGInj}_{\Pi}$. Furthermore, since $H^0(\rho)$ is surjective and $G$ lies in $\Hqe^{\DGInj}_{\Pi}$, it follows from \Cref{lemepi} that $\varphi$ also lies in 
$\Hqe^{\DGInj}_{\Pi}$. 
Under the equivalence of \Cref{thm - correspondence}, the quasi-functor $\rho$ corresponds to the right adjoint of a $t$-deformation $$\A_{(S)} \cong \Tw^+(S \otimes^{\mb{L}}_R \tb{I}) \to \Tw^+(\tb{I}) \cong \A.$$
We thus obtain a factorization of $F$ as 
$$\xymatrix{
{\B} \ar[r]_-{\Phi} & {\A_{(S)}} \ar[r]_-{\iota} & {\A,}
}$$
in which $\Phi$ is a $t$-exact equivalence since $F$ is a $t$-deformation. Then follows by \Cref{thm - correspondence} that $\varphi$ is an equivalence, so $G$ is a dg-deformation.
\end{proof}


The rest of this subsection is comprised of lemmas that were necessary for the proof of \Cref{thmdgdef}. We also used (and will use) the following result for abelian deformations, of which a version for flat deformations is stated in \cite{Obstruction}[Proposition 5.5] and \cite{AbelianDeformations}[Proposition A.15].

\begin{proposition}\label{propinjdef}
Let $\mathcal{C}_0 \to \mathcal{C}$ be a (not necessarily flat) nilpotent abelian deformation with enough injectives along $H^{0}(\theta)$. Then the right adjoint $\homom_{H^{0}(R)}(H^{0}(S),-):  \mathcal{C} \to \mathcal{C}_0$ induces a (not necessarily flat) linear deformation
$\homom_{H^{0}(R)}(H^{0}(S),-):  \Inj(\mathcal{C}) \to \Inj(\mathcal{C}_0)$.
\end{proposition}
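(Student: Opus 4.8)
The plan is to transport the abelian deformation structure across the adjoint triple relating $\mathcal{C}$ and $\mathcal{C}_0$,
\[
H^{0}(S)\otimes_{H^{0}(R)}- \ \dashv\ (-)_{H^{0}(R)} \ \dashv\ \homom_{H^{0}(R)}(H^{0}(S),-),
\]
in which $\mathcal{C}$ is the $H^{0}(R)$-linear deformation, $\mathcal{C}_0$ its $H^{0}(S)$-linear special fibre, the structure functor $(-)_{H^{0}(R)}\colon \mathcal{C}_0 \to \mathcal{C}$ is the (exact) restriction of scalars along $H^{0}(\theta)$, and $\homom_{H^{0}(R)}(H^{0}(S),-)\colon \mathcal{C}\to\mathcal{C}_0$ is its right adjoint. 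Write $I=\ker H^{0}(\theta)$, which is nilpotent. Since restriction of scalars is exact, its right adjoint $\homom_{H^{0}(R)}(H^{0}(S),-)$ preserves injectives, which immediately produces the claimed functor $\Inj(\mathcal{C}) \to \Inj(\mathcal{C}_0)$. It remains to check that this functor exhibits $\Inj(\mathcal{C})$ as a (not necessarily flat) linear deformation of $\Inj(\mathcal{C}_0)$ along $H^{0}(\theta)$, so I would verify the two defining conditions: the reduction isomorphism on hom-modules and essential surjectivity on objects.

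For the hom-modules, fix injectives $E,E'\in\Inj(\mathcal{C})$ and set $E^{!}:=\homom_{H^{0}(R)}(H^{0}(S),E)$. The counit $\varepsilon_E\colon (E^{!})_{H^{0}(R)}\to E$ is a monomorphism (the inclusion of the $I$-torsion part), and the adjunction gives $\mathcal{C}_0(E^{!},(E')^{!})\cong \mathcal{C}\bigl((E^{!})_{H^{0}(R)},E'\bigr)$, so that precomposition with $\varepsilon_E$ realises the functorial comparison map out of $\mathcal{C}(E,E')$. As $I$ acts as zero on the $H^{0}(S)$-linear target, this map factors through a natural map
\[
\mathcal{C}(E,E')\otimes_{H^{0}(R)}H^{0}(S)\longrightarrow \mathcal{C}_0(E^{!},(E')^{!}),
\]
and I would prove it is an isomorphism. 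Surjectivity follows by extending morphisms along the monomorphism $\varepsilon_E$, using injectivity of $E'$; the identification of the kernel with $I\cdot\mathcal{C}(E,E')$ uses injectivity of $E$ together with the $I$-adic filtration of $E$. This is precisely the step where the flat arguments of \cite{AbelianDeformations}[Proposition A.15] and \cite{Obstruction}[Proposition 5.5] must be reworked: flatness of $\mathcal{C}(E,E')$ is replaced by exactness of $\mathcal{C}(-,E')$ and $\mathcal{C}(E,-)$ applied to the short exact sequences of the nilpotent $I$-adic filtration.

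Essential surjectivity then follows from enough injectives together with nilpotency of $I$. Given $J\in\Inj(\mathcal{C}_0)$, the unit $J\to (J_{H^{0}(R)})^{!}$ is an isomorphism because restriction of scalars is fully faithful; choosing a monomorphism $J_{H^{0}(R)}\hookrightarrow E$ into an injective of $\mathcal{C}$ and applying the left exact $\homom_{H^{0}(R)}(H^{0}(S),-)$ yields a monomorphism $J\cong (J_{H^{0}(R)})^{!}\hookrightarrow E^{!}$ in $\mathcal{C}_0$, which splits since $J$ is injective. The resulting idempotent on $E^{!}$ lifts to an idempotent on $E$ because, by the hom-isomorphism above, $\mathrm{End}_{\mathcal{C}}(E)\to\mathrm{End}_{\mathcal{C}_0}(E^{!})$ is surjective with nilpotent kernel $I\cdot\mathrm{End}_{\mathcal{C}}(E)$; splitting this idempotent in the Karoubian category $\Inj(\mathcal{C})$ (summands of injectives are injective) exhibits $J$ as $E_1^{!}$ for an injective $E_1$. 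The same nilpotent-surjection fact shows that $\homom_{H^{0}(R)}(H^{0}(S),-)$ reflects isomorphisms between injectives, so the functor is bijective on isomorphism classes, completing the verification that it is a linear deformation.

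The step I expect to be the main obstacle is the kernel computation in the hom-reduction isomorphism: in the absence of flatness one cannot simply tensor a flat resolution, and one must instead argue directly that every endomorphism of $E$ reducing to zero on $E^{!}$ lies in $I\cdot\mathrm{End}_{\mathcal{C}}(E)$, using injectivity of $E$ to lift along the $I$-adic filtration. Everything else—preservation of injectives, the splitting argument, idempotent lifting, and reflection of isomorphisms—is formal once this isomorphism and the nilpotency of $I$ are in hand.
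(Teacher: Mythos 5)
Your proof is correct and follows the same route as the paper: the paper's own proof consists of exactly two citations, delegating the hom-module isomorphism $H^{0}(S)\otimes_{H^{0}(R)}\mathcal{C}(E,E')\cong\mathcal{C}_0(E^{!},(E')^{!})$ (where $E^{!}:=\homom_{H^{0}(R)}(H^{0}(S),E)$) to \cite{AbelianDeformations}[Proposition A.15] and essential surjectivity to \cite{Obstruction}[Proposition 5.5], and your two main steps reconstruct precisely the content of those references --- extension along the monic counit, identification of the kernel with $I\cdot\mathcal{C}(E,E')$, and idempotent lifting along the surjection $\mathrm{End}_{\mathcal{C}}(E)\twoheadrightarrow\mathrm{End}_{\mathcal{C}_0}(E^{!})$ with nilpotent kernel. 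One small correction to the step you flag as the main obstacle: the kernel computation uses injectivity of $E'$, not of $E$. Writing $I=\ker H^{0}(\theta)=(a_1,\dots,a_n)$, a morphism $f\colon E\to E'$ killing the $I$-torsion subobject $E^{!}\subseteq E$ factors through $E/E^{!}$, which embeds into $E^{\oplus n}$ via the generators of $I$; extending the induced map $E/E^{!}\to E'$ along this monomorphism --- which is where injectivity of $E'$ enters --- writes $f=\sum_i a_i g_i\in I\cdot\mathcal{C}(E,E')$, and injectivity of $E$ alone does not suffice (take $E=H^{0}(R)=k[\epsilon]/(\epsilon^{2})$, which is self-injective, and $E'=k$). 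Since both objects are injective in the situation of the proposition, this misattribution is harmless, and the remainder of your argument --- preservation of injectives by the right adjoint of the exact restriction of scalars, the splitting of $J$ off $E^{!}$, and lifting the resulting idempotent through the nilpotent kernel --- goes through as written.
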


\begin{proof}
    The fact that for injectives $E, E' \in \Inj(\mathcal{C})$, the canonical morphism $$H^{0}(S) \otimes_{H^{0}(R)} \mathcal{C}(E,E') \to \mathcal{C}_0(\homom_{H^{0}(R)}(H^{0}(S),E), \homom_{H^{0}(R)}(H^{0}(S),E'))$$ is an isomorphism is proven in \cite{AbelianDeformations}[Proposition A.15]. Essential surjectivity of $\homom_{H^{0}(R)}(H^{0}(S),-)$ is shown in \cite{Obstruction}[Proposition 5.5].
\end{proof}

\begin{lemma}\label{lemtensinj}
    Suppose $\tn{\tb{I}}$ is an $R$-linear dg-category in $\Hqe^{\DGInj}_{\Pi}$. Then
    $S \otimes^{\mb{L}}_R \tn{\tb{I}}$ is an $S$-linear dg-category in $\Hqe^{\DGInj}_{\Pi}$ and the canonical quasi-functor $\rho: \tn{\tb{I}} \to S\otimes^{\mb{L}}_R \tn{\tb{I}}$ lies in $\Hqe^{\DGInj}_{\Pi}$.
\end{lemma}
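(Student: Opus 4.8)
The plan is to verify the three defining conditions for $S \otimes^{\mb{L}}_R \tb{I}$ to lie in $\Hqe^{\DGInj}_{\Pi}$, namely that it is homotopically locally coherent (\Cref{def - hlc}), that $H^0$ is Karoubian, and that $H^0$ is closed under countable products; and then separately to check that the canonical quasi-functor $\rho$ is a morphism in $\Hqe^{\DGInj}_{\Pi}$, i.e. that the module $H^0(S \otimes^{\mb{L}}_R \tb{I})(J', H^0(\rho)(-))$ is finitely presented for every object $J'$. First I would unwind the inner homs: since $\tb{I}$ is h-flat (being $R$-cofibrant, as demanded in \S \ref{pardefset1}) we have $(S \otimes^{\mb{L}}_R \tb{I})(I, I') \cong S \otimes_R \tb{I}(I,I')$ strictly, so the cohomology is computed by the K\"unneth/base-change spectral sequence relating $H^*(S \otimes_R \tb{I}(I,I'))$ to $S$ and $H^*(\tb{I}(I,I'))$. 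This already shows cohomological concentration in nonpositive degrees, since both $S$ (by the Conventions) and $\tb{I}$ are so concentrated.

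The main work is the homotopical local coherence. For the coherence of $H^0(S \otimes^{\mb{L}}_R \tb{I})$ I would appeal to the abelian-deformation picture: $H^0(S \otimes^{\mb{L}}_R \tb{I})$ should be identified with an extension of scalars $H^0(S) \otimes_{H^0(R)} H^0(\tb{I})$ along $H^0(\theta)$, so coherence follows from the fact that a finite base change of a coherent linear category along the homotopically coherent morphism $H^0(\theta)$ (using that $H^0(S)$ is finitely presented coherent over $H^0(R)$, part of the Conventions) remains coherent, in parallel with the abelian statement \cite{AbelianDeformations}[Propositions A.13, A.14]. For the homotopical finite presentation of $\tb{J}'(J',-)$ over $H^0(S \otimes^{\mb{L}}_R \tb{I})^{\op}$, I would again use the base-change description of the inner homs together with the hypothesis that $S$ and $K$ are homotopically finitely presented as dg-$R$-modules (Conventions (3)): tensoring the homotopically finitely presented module $\tb{I}(I,-)$ over $R$ with the perfect $R$-module $S$ preserves the property that all cohomology groups are finitely presented over $H^0$. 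Karoubianness of $H^0$ transfers along the deformation because idempotents lift through the nilpotent kernel $K$ (Conventions (4)), and closure under countable products follows from the hypothesis that $\tb{I}$ enjoys it, since $\homom_R(S,-)$-type right adjoints and the base change commute with the relevant products, mirroring \Cref{propinjdef}.

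For the statement about $\rho$ itself, the content is that $H^0(\rho): H^0(\tb{I}) \to H^0(S \otimes^{\mb{L}}_R \tb{I})$ makes $H^0(S \otimes^{\mb{L}}_R \tb{I})(J', H^0(\rho)(-))$ finitely presented. By the adjunction $S \otimes^{\mb{L}}_R - \dashv (-)_R$ this module is $H^0(S \otimes^{\mb{L}}_R \tb{I})(J', -)_R$ evaluated on representables, which the homotopical local coherence just established hands us as a finitely presented $H^0(\tb{I})^{\op}$-module after restriction of scalars along the \emph{finite} (indeed finitely presented, Conventions (3)) morphism $H^0(\theta)$; restriction of scalars along a ring map whose target is finitely presented as a module over the source preserves finite presentation, which closes the argument.

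The hard part, I expect, will be making the base-change identification $H^0(S \otimes^{\mb{L}}_R \tb{I}) \cong H^0(S) \otimes_{H^0(R)} H^0(\tb{I})$ (or the appropriate replacement at the level of coherent categories) fully rigorous in the dg-setting: one must control the higher cohomology of $S \otimes_R \tb{I}(I,I')$ rather than just $H^0$, and ensure that the deformation-theoretic arguments of \cite{AbelianDeformations} transfer verbatim to the homotopical finite-presentation conditions of \Cref{def - hlc} rather than merely to the $H^0$-level coherence. This is precisely where Conventions (2) and (3)—the homotopical coherence of $R,S$ and the homotopical finite presentation of $S$ and $K$—are indispensable, and I would organize the proof so that each cohomological finiteness claim is reduced to one of these explicit hypotheses.
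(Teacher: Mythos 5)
Your overall architecture is the same as the paper's: reduce the $H^0$-level conditions (additivity, left coherence, Karoubianness, and the finite-presentation condition on $\rho$) to the linear deformation $H^{0}(\tb{I}) \to H^{0}(S)\otimes_{H^{0}(R)}H^{0}(\tb{I})$ and the results of \cite{AbelianDeformations}, and handle the remaining dg-level condition (homotopical finite presentation of representables) via Conventions (3). The paper packages the first group into \Cref{lemkar} (routing through categories of injectives of abelian deformations, \cite{AbelianDeformations}[Propositions A.13, A.15]) after citing \cite{GLVdB2}[Corollary 5.3.16] for the identification $H^{0}(S\otimes^{\mb{L}}_{R}\tb{I}) \cong H^{0}(S)\otimes_{H^{0}(R)}H^{0}(\tb{I})$, and it cites \cite{GLVdB2}[Lemma 5.3.18] for the second. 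Your direct idempotent-lifting argument for Karoubianness (lift the idempotent through the nilpotent kernel, split upstairs, push down) is a legitimate and more elementary alternative to the paper's route through injectives, and your treatment of $\rho$ via restriction of scalars along $H^{0}(\theta)$ matches the last assertion of \Cref{lemkar}.

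There is, however, one concrete error: $S$ is \emph{not} a perfect $R$-module, and your justification of the homotopical finite presentation of the representables of $S\otimes^{\mb{L}}_{R}\tb{I}$ leans on that claim. The Conventions only give that $S$ and $K$ are finitely presented in $\mc{Z}^{0}(\dgm(R))$ and homotopically finitely presented; in the motivating example $\theta: k[\epsilon]/(\epsilon^{n}) \to k[\epsilon]/(\epsilon^{m})$ of \Cref{ex - example}, $S$ has infinite Tor-dimension over $R$, so the derived tensor product $S\otimes^{\mb{L}}_{R}\tb{I}(I,-)$ has infinitely many nonvanishing cohomology groups and no Künneth-with-perfect-coefficients argument applies. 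The statement you need — that each $H^{i}$ of a representable $(S\otimes^{\mb{L}}_{R}\tb{I})$-module is finitely presented over the \emph{category} $H^{0}(S\otimes^{\mb{L}}_{R}\tb{I})^{\op}$ (not merely over $H^{0}(R)$, a second conflation in your sketch) — is exactly \cite{GLVdB2}[Lemma 5.3.18], whose proof uses the finite presentation of $S$ and $K$ in $\mc{Z}^{0}(\dgm(R))$ together with the nilpotence of $K$, rather than perfectness. You should replace the perfectness claim by an appeal to that lemma or reprove it under the actual hypotheses.
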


\begin{proof}
First note that by using truncations of inner homs, it easily follows from the fact that $\tb{I}$ is cohomologically concentrated in nonpositive degrees that the same holds for $S \otimes^{\mb{L}}_R \tb{I}$.
By \cite{GLVdB2}[Corollary 5.3.16], $H^0(\tb{I})$ is a (nilpotent) linear deformation of $H^0(S \otimes^{\mb{L}}_R \tb{I})$ along $H^0(\theta)$.
That $H^0(S \otimes^{\mb{L}}_R \tb{I})$ is additive, left coherent and Karoubian, as well as the fact that $\rho$ will lie in the designated category $\Hqe^{\DGInj}_{\Pi}$ as soon as the codomain does, follows from \Cref{lemkar}.
It remains to show that representable left $S \otimes^{\mb{L}}_R \tb{I}$-modules are homotopically finitely presented. This follows by \cite{GLVdB2}[Lemma 5.3.18].
\end{proof}

\begin{lemma}\label{lemkar}
Suppose $\mathfrak{a}$ is an $H^{0}(R)$-linear category which is additive, left coherent and Karoubian. Then the same properties hold for the $H^{0}(S)$-linear category $H^{0}(S) \otimes_{H^{0}(R)} \mathfrak{a}$. Furthermore, a finitely presented $H^{0}(S) \otimes_{H^{0}(R)} \mathfrak{a}^{\op}$-module restricts to a finitely presented $\mathfrak{a}^{\op}$-module.
\end{lemma}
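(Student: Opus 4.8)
The plan is to reduce everything to the observation that $H^{0}(\theta)\colon R_{0} := H^{0}(R) \to S_{0} := H^{0}(S)$ is a surjection with nilpotent kernel $I$, and that $I$ is finitely generated over $R_{0}$ (by the Conventions it is the image of the finitely presented $R_{0}$-module $H^{0}(K)$ under the connecting map of the degreewise-exact sequence $0\to K\to R\to S\to 0$). Writing $\mathfrak{b} = S_{0} \otimes_{R_{0}} \mathfrak{a}$, we have $\mathfrak{b}(A,B) = \mathfrak{a}(A,B)/I\mathfrak{a}(A,B)$, so the canonical $R_{0}$-linear functor $\pi\colon \mathfrak{a} \to \mathfrak{b}$ is the identity on objects and surjective on hom-modules, with kernel the ideal $I\mathfrak{a}$. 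Additivity of $\mathfrak{b}$ is then immediate, since $\mathfrak{b}$ has the same objects and biproducts as $\mathfrak{a}$ and $S_{0}\otimes_{R_{0}}-$ is additive. Throughout I would use the restriction functor $\pi^{*}\colon \Mod(\mathfrak{b}) \to \Mod(\mathfrak{a})$, which is exact, fully faithful, and identifies $\Mod(\mathfrak{b})$ with the $\mathfrak{a}$-modules annihilated by $I$, together with its left adjoint $\pi_{!} = S_{0}\otimes_{R_{0}}-$, which sends $\mathfrak{a}(A,-)$ to $\mathfrak{b}(A,-)$ and satisfies $\pi_{!}\pi^{*}\cong \mathrm{id}$ (because $S_{0}\otimes_{R_{0}}S_{0}\cong S_{0}$).

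For the Karoubian property, note that $I\mathfrak{a}$ is a nilpotent ideal of $\mathfrak{a}$, since $I^{n}=0$ forces $(I\mathfrak{a})^{n}\subseteq I^{n}\mathfrak{a}=0$. Given an idempotent $\bar e \in \mathfrak{b}(A,A) = \operatorname{End}_{\mathfrak{a}}(A)/I\operatorname{End}_{\mathfrak{a}}(A)$, the classical lifting of idempotents along a nilpotent ideal of the ring $\operatorname{End}_{\mathfrak{a}}(A)$ produces an idempotent $e \in \operatorname{End}_{\mathfrak{a}}(A)$ with $\pi(e)=\bar e$. Since $\mathfrak{a}$ is Karoubian, $e$ splits as $e = \iota p$ with $p\iota = \mathrm{id}$, and applying $\pi$ splits $\bar e$. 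Hence $\mathfrak{b}$ is Karoubian.

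The \emph{furthermore} statement is the key technical input and also drives coherence. Because $I$ is generated by finitely many $t_{1},\dots,t_{r}\in R_{0}$, the submodule $I\mathfrak{a}(A,-)$ is the image of $\mathfrak{a}(A,-)^{\oplus r}\xrightarrow{(t_{k})}\mathfrak{a}(A,-)$, so $\pi^{*}\mathfrak{b}(A,-)$ is finitely presented as an $\mathfrak{a}$-module. As cokernels of maps between finitely presented modules are again finitely presented, it follows that $\pi^{*}$ carries any finitely presented $\mathfrak{b}$-module to a finitely presented $\mathfrak{a}$-module; applied to $\mathfrak{a}^{\op}$ this is exactly the final assertion of the lemma (and requires only that $I$ be finitely generated, not coherence).

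Finally, for left coherence I would show that $\modu(\mathfrak{b})$ is closed under kernels. Let $\phi\colon M\to N$ be a map of finitely presented $\mathfrak{b}$-modules and set $L=\ker\phi$. Since $\pi^{*}$ is exact and fully faithful, $\pi^{*}L = \ker(\pi^{*}\phi)$, and by the previous paragraph $\pi^{*}M,\pi^{*}N$ are finitely presented over $\mathfrak{a}$; left coherence of $\mathfrak{a}$ then gives that $\pi^{*}L$ is finitely presented over $\mathfrak{a}$. Choosing a finite presentation $\bigoplus_{j}\mathfrak{a}(B_{j},-)\to\bigoplus_{i}\mathfrak{a}(A_{i},-)\to\pi^{*}L\to 0$ and applying the right exact functor $\pi_{!}$ yields a finite presentation of $\pi_{!}\pi^{*}L\cong L$ by the representables $\mathfrak{b}(B_{j},-),\mathfrak{b}(A_{i},-)$; thus $L$ is finitely presented over $\mathfrak{b}$. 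The main subtlety, and the reason this does not contradict the general failure of coherence to descend along quotients, is precisely this interplay: restriction $\pi^{*}$ imports coherence of $\mathfrak{a}$ and computes the kernel, while base change $\pi_{!}$ together with $\pi_{!}\pi^{*}\cong\mathrm{id}$ transports finite presentations back to $\mathfrak{b}$ — both steps hinging on the finite generation and nilpotence of $I$.
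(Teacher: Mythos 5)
Your proof is correct, but it takes a genuinely different route from the paper's. The paper argues entirely through the abelian deformation machinery: it realizes $\mathfrak{a} \cong \Inj(\mathcal{C})$ for $\mathcal{C} = \modu(\mathfrak{a}^{\op})^{\op}$, invokes the base change $\homom_{H^{0}(R)}(H^{0}(S),\mathcal{C}) \cong \modu(H^{0}(S)\otimes_{H^{0}(R)}\mathfrak{a}^{\op})^{\op}$ and the fact that this coextension is again abelian with enough injectives (\cite{AbelianDeformations}, Propositions 4.6 and 4.8), identifies its injectives with $H^{0}(S)\otimes_{H^{0}(R)}\mathfrak{a}$ via Proposition \ref{propinjdef}, and then concludes by the characterization of categories of injectives as exactly the additive coherent Karoubian ones (\cite{AbelianDeformations}, Proposition A.13). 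That route is short and reuses results already needed elsewhere. Your argument is instead elementary and self-contained: you exhibit $\mathfrak{b} = H^{0}(S)\otimes_{H^{0}(R)}\mathfrak{a}$ as the quotient of $\mathfrak{a}$ by the ideal $I\mathfrak{a}$ with $I = \Kernel(H^{0}(\theta))$ nilpotent and finitely generated (both correctly extracted from the Conventions via the long exact cohomology sequence of $0\to K\to R\to S\to 0$; note only that $I$ is the image of the \emph{induced} map $H^{0}(K)\to H^{0}(R)$, not of a connecting map), lift idempotents modulo the nilpotent ideal for Karoubianness, and use the adjunction $\pi_{!}\dashv\pi^{*}$ with $\pi_{!}\pi^{*}\cong\mathrm{id}$ to transport finite presentations of kernels back and forth. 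What your approach buys is transparency about exactly where finite generation and nilpotence of $I$ enter, and a clean separation of the ``furthermore'' clause (which, as you observe, needs only finite generation of $I$) from the coherence statement; what it costs is length relative to citing the existing abelian results.
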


\begin{proof}
Consider the category $\mathcal{C} = \mathsf{mod}(\mathfrak{a}^{\op})^{\op}$. This is an abelian category with enough injectives and $\mathfrak{a} \cong \Inj(\mathcal{C})$.
According to \cite{AbelianDeformations}[Proposition 4.6], we have a base change
$$\homom_{H^{0}(R)}(H^{0}(S),\mathcal{C}) \cong \mathsf{mod}(H^{0}(S) \otimes_{H^{0}(R)} \mathfrak{a}^{\op})^{\op}$$
and by \cite{AbelianDeformations}[Proposition 4.8 (1)], $\homom_{H^{0}(R)}(H^{0}(S),\mathcal{C})$ is then also abelian with enough injectives.
By Proposition \ref{propinjdef}, we have that
$$\Inj(\homom_{H^{0}(R)}(H^{0}(S), \mathcal{C})) \cong H^{0}(S) \otimes_{H^{0}(R)}\Inj(\mathcal{C}) \cong H^{0}(S) \otimes_{H^{0}(R)}\mathfrak{a}.$$
We conclude by \cite{AbelianDeformations}[Proposition A.13].
\end{proof}

\begin{lemma}\label{lemepi}
Consider quasi-functors
$$\xymatrix{ {\tn{\tb{I}}} \ar[r]^{\rho} & {\tn{\tb{I}'}} \ar[r]^{\varphi} & {\tn{\tb{I}''}}
}$$
with $\tn{\tb{I}}$, $\tn{\tb{I}'}$ and ${\tn{\tb{I}''}}$ in $\Hqe^{\DGInj}$.
If $H^0(\varphi)$ is surjective and $\varphi \rho: \tn{\tb{I}} \to \tn{\tb{I}''}$ lies in  $\Hqe^{\DGInj}$, then $\varphi$ lies in $\Hqe^{\DGInj}$.
\end{lemma}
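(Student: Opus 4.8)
The plan is to push the entire assertion down to the homotopy categories, where it becomes a statement about finite presentation of modules over additive, left coherent, Karoubian categories. Write $\mathfrak{a} = H^0(\tb{I})$, $\mathfrak{b} = H^0(\tb{I}')$, $\mathfrak{c} = H^0(\tb{I}'')$, $G = H^0(\rho)$ and $\Phi = H^0(\varphi)$; by \Cref{def - hlc} these three categories are additive, left coherent and Karoubian. Membership in $\Hqe^{\DGInj}$ depends only on $H^0$, so the hypothesis $\varphi\rho \in \Hqe^{\DGInj}$ says that for each $c \in \mathfrak{c}$ the module $\mathfrak{c}(c,\Phi G(-))$ is finitely presented over $\mathfrak{a}$, whereas the goal is to see that $M_c := \mathfrak{c}(c,\Phi(-))$ is finitely presented over $\mathfrak{b}$. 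Since $\mathfrak{c}(c,\Phi G(-))$ is exactly the restriction of $M_c$ along $G$, the lemma asserts that this restriction reflects finite presentation for modules of this shape.

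First I would produce a finite generation of $M_c$, and this is precisely where the surjectivity of $H^0(\varphi)$ (read, as is standard for a surjective functor, as surjectivity on objects and on morphisms) is used. As $\Phi$ is surjective on objects, every $c$ is isomorphic to $\Phi(b_0)$ for some $b_0 \in \mathfrak{b}$, and the canonical comparison $\mathfrak{b}(b_0,-) \to \mathfrak{c}(\Phi(b_0),\Phi(-)) = M_c$, $\beta \mapsto \Phi(\beta)$, is then an epimorphism of $\mathfrak{b}$-modules: fullness of $\Phi$ makes $\mathfrak{b}(b_0,b) \to \mathfrak{c}(\Phi b_0,\Phi b)$ surjective for every $b$. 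Thus $M_c$ is cyclic, and I obtain a short exact sequence $0 \to K \to \mathfrak{b}(b_0,-) \to M_c \to 0$ of $\mathfrak{b}$-modules.

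It remains to prove that the relation module $K$ is finitely generated over $\mathfrak{b}$; this is the main obstacle. Restricting the sequence along $G$ (restriction being exact) gives $0 \to K(G-) \to \mathfrak{b}(b_0,G-) \to \mathfrak{c}(c,\Phi G(-)) \to 0$. Here $\mathfrak{c}(c,\Phi G(-))$ is finitely presented over $\mathfrak{a}$ by hypothesis, and $\mathfrak{b}(b_0,G-)$ is finitely presented over $\mathfrak{a}$ because $\rho$ itself lies in $\Hqe^{\DGInj}$ — an input available in the situation of \Cref{thmdgdef} by \Cref{lemtensinj}. As $\mathfrak{a}$ is coherent, the finitely presented $\mathfrak{a}$-modules form an abelian category, so $K(G-)$, being the kernel of a map between finitely presented $\mathfrak{a}$-modules, is again finitely presented, hence finitely generated, over $\mathfrak{a}$. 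Finally, the surjectivity of $H^0(\rho)$ on objects lets me transport generators upward: if finitely many $\eta_i \in K(G a_i)$ generate $K(G-)$, then for any $b \cong G(a)$ every element of $K(b)$ is reached from the $\eta_i$ through $\mathfrak{b}(Ga_i,b)$, so the $\eta_i$ generate $K$ over $\mathfrak{b}$. Coherence of $\mathfrak{b}$ then upgrades the finite generation of $M_c$ to finite presentation, yielding $\varphi \in \Hqe^{\DGInj}$.

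The delicate point is exactly this relations step: finite presentation does not descend along restriction for free, and it is essential both that the ambient categories are coherent (so that kernels of maps between finitely presented modules remain finitely presented) and that enough surjectivity is present to carry finite generation of $K$ from $\mathfrak{a}$ back up to $\mathfrak{b}$. By contrast, the finite-generation half is bought directly from the fullness and essential surjectivity packaged in the surjectivity of $H^0(\varphi)$.
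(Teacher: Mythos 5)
Your argument is sound in the context where the lemma is actually applied, but it does not prove the lemma from its stated hypotheses, and it diverges from the paper's (much shorter) proof in an instructive way. The paper simply observes that $M = H^0(\tb{I}'')(X,\varphi(-))$ restricts along $\rho$ to the module $H^0(\tb{I}'')(X,\varphi\rho(-))$, which is finitely presented over $H^0(\tb{I})^{\op}$ by the hypothesis on $\varphi\rho$, and then invokes \cite{GLVdB2}[Sublemma 5.3.7]: restriction along a morphism with surjective $H^0$ \emph{reflects} finite presentation. The surjectivity doing the work there is that of $H^0(\rho)$ --- note that this is precisely what is verified when the lemma is invoked in the proof of \Cref{thmdgdef} (``since $H^0(\rho)$ is surjective\dots''), so the ``$H^0(\varphi)$'' in the statement is best read as a slip for ``$H^0(\rho)$''. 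Along such a surjection both generators and relations of a restricted presentation lift (surjectivity on objects lifts the representables, fullness lifts the kernel elements), so the paper's route needs no coherence, no hypothesis on $\varphi$, and no membership of $\rho$ itself in $\Hqe^{\DGInj}$.

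Your proof, by contrast, imports two hypotheses absent from the statement. You flag the first ($\rho \in \Hqe^{\DGInj}$, needed so that $\mathfrak{b}(b_0,G(-))$ is finitely presented over $\mathfrak{a}$) and correctly note it is available in \Cref{thmdgdef} via \Cref{lemtensinj}; but the second --- surjectivity of $H^0(\rho)$ on objects, which you use to transport generators of $K$ from $\mathfrak{a}$ back up to $\mathfrak{b}$ --- is used silently and is not granted by the statement in either reading. It cannot be dispensed with: with no condition at all on $\rho$, the hypothesis on $\varphi\rho$ carries no information (take $\tb{I}$ to be the zero dg-category, so that every restricted module vanishes), and surjectivity of $H^0(\varphi)$ alone only yields finite generation, $M_c \cong \mathfrak{b}(b_0,-)/K$ with $K$ an arbitrary subfunctor that may fail to be finitely generated even over a coherent $\mathfrak{b}$ (compare $B/I$ for a coherent ring $B$ and a non-finitely-generated ideal $I$). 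So under the literal hypotheses your relations step genuinely fails, and your extra inputs are doing real work; what you have proved is a weaker lemma that happens to suffice for \Cref{thmdgdef}. Two smaller corrections: your closing claim that ``coherence of $\mathfrak{b}$ upgrades the finite generation of $M_c$ to finite presentation'' invokes a false principle (finitely generated does not imply finitely presented over a coherent category); once $K$ is finitely generated, the presentation of $M_c$ follows directly with no coherence at that step. Coherence is genuinely used only once in your argument, for $\mathfrak{a}$, to see that $K(G-)$, the kernel of a map of finitely presented $\mathfrak{a}$-modules, is again finitely presented --- and the comparison with the paper's proof shows even this is avoidable once the surjectivity is placed on $\rho$.
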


\begin{proof}
    For $X \in \tb{I}''$, consider $M = H^0(\tb{I}'')(X,\varphi(-))$ as an $H^0(\tb{I}')^{\op}$-module. By the assumption on $\varphi \rho$, the restriction $H^0(\tb{I}'')(X,\varphi \rho(-))$ of $M$ along $\rho$ is a finitely presented $H^0(\tb{I})^{\op}$-module. By \cite{GLVdB2}[Sublemma 5.3.7], it follows that $M$ is finitely presented, as desired.
\end{proof}

\subsection{An equivalence of \texorpdfstring{$t$}{t}-deformations with enough derived injectives and the induced dg-deformations} \label{pardefset5}

The categories $\Hqe^{\DGInj}_{\Pi}$ and $\Hqe^{t+}_{\Pi}$ from the correspondence in \S \ref{pardefset2} play an important role in the study of $t$-deformations. For this reason, apart from the deformation pseudofunctors describing dg-deformations and $t$-deformations respectively, we will consider auxiliary pseudofunctors in which we restrict to those categories. More precisely, we consider:

\begin{itemize}
    \item For an $S$-linear dg-category $\tb{J}$, the pseudofunctor $\Def_{\tb{J}}$ of dg-defor-mations of $\tb{J}$;\index{deformation pseudofunctors!$\Def_{\tb{J}}$}
    \item For an $S$-linear dg-category $\tb{J} \in \Hqe^{\DGInj}_{\Pi}$, the pseudofunctor $\Def_{\tb{J}}^{\DGInj}$ of dg-deformations $\tb{I} \to \tb{J}$ that lie in $\Hqe^{\DGInj}_{\Pi}$;\index{deformation pseudofunctors!$\Def_{\tb{J}}^{\DGInj}$}
    \item For an $S$-linear $t$-dg-category $\B$, the pseudofunctor $\Def^t_{\B}$ of $t$-deformations of $\B$;\index{deformation pseudofunctors!$\Def^t_{\B}$}
        \item For an essentially small bounded $S$-linear $t$-dg-category $\B$, the pseudofunctor $\Def^{t,b}_{\B}$ of bounded $t$-deformations of $\B$;\index{deformation pseudofunctors!$\Def^{t,b}_{\B}$}
    \item For an $S$-linear $t$-dg-category $\B \in \Hqe^{t+}_{\Pi}$, the pseudofunctor $\Def^{t,\DGInj}_{\B}$ of $t$-deformations $\B \to \A$ that have a right adjoint that lies in $\Hqe^{t+}_{\Pi}$.\index{deformation pseudofunctors!$\Def^{t,\DGInj}_{\B}$}
    \item For an $S$-linear left bounded locally coherent Grothendieck $t$-dg-category $\B$, the pseudofunctor $\Def^{t,\tn{+lcG}}_{\B}$ of $t$-deformations $\B \to \A$ so that $\A$ is left bounded locally coherent Grothendieck.\index{deformation pseudofunctors!$\Def^{t,\tn{+lcG}}_{\B}$}
\end{itemize}

\begin{remark} \label{remark - universes size}
    Let $\mc{U}$ be the standard universe.\index{universe $\mc{U}$} All deformation functors above are defined on the category $\mc{U}-{\DGRng}/S$\index{universe $\mc{U}$!$\mc{U}-{\DGRng}/S$} of certain commutative dg-rings over $S$ that are elements of $\mc{U}$ (cf. \Cref{def - UDGRng}). In order to arrive at formally sound definitions, we have to introduce two additional universes $\mc{V}$ and $\mc{W}$ with $\mc{U} \in \mc{V}$ and $\mc{V} \in \mc{W}$. Concretely, we define $\Def_{\B}(R)$ as the $\mc{W}$-groupoid of dg-deformations that are elements of $\mc{V}$. These set-theoretic considerations and the independence of the constructions from the choices of universes are discussed in \Cref{appendix - Universe}.
\end{remark}

Suppose that the $t$-dg-category $\B \in \Hqe^{t+}_{\Pi}$ and the dg-category $\tb{J} \in \Hqe^{\DGInj}_{\Pi}$ correspond under \Cref{thm - correspondence}, ergo $\tb{J} = \DGInj(\B)$ and $\B = \Tw^{+}(\tb{J})$. Then we have the following result,

\begin{proposition}\label{diagramdef}
    There is a commutative diagram of pseudonatural transformations 
    \begin{equation}\label{eq - diagramdef}
        \begin{tikzcd}
	{\Def^{\DGInj}_{\tn{\tb{J}}}} & {\Def^{t,\DGInj}_{\B}} \\
	{\Def_{\tn{\tb{J}}}} & {\Def^{t}_{\B}}
	\arrow["\cong", from=1-1, to=1-2]
	\arrow[from=1-1, to=2-1]
	\arrow[from=1-2, to=2-2]
	\arrow[from=2-1, to=2-2]
\end{tikzcd}
    \end{equation}
    in which the upper and left arrow are equivalences. Furthermore, any $\B \to \A$ in $\Def^t_{\B}$ for which $\A \in \Hqe^{t+}_{\Pi}$, lies in the image of the right arrow.
\end{proposition}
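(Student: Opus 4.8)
The plan is to realise the two vertical maps as the canonical forgetful pseudonatural transformations, to read the equivalences on the top and left edges off the reconstruction results of \S\ref{pardefset2}--\ref{pardefset4}, and to obtain the concluding assertion as a direct application of \Cref{thmdgdef}; naturality is understood over the base category of dg-rings of \Cref{remark - universes size}. I would begin with the \emph{left} arrow $\Def^{\DGInj}_{\tb{J}} \to \Def_{\tb{J}}$, which is the inclusion of those dg-deformations of $\tb{J}$ lying in $\Hqe^{\DGInj}_{\Pi}$ into all dg-deformations. Since both are groupoids of deformations and lying in $\Hqe^{\DGInj}_{\Pi}$ is a property of the deforming object (morphisms of deformations being isomorphisms, which carry no extra condition), this inclusion is fully faithful; essential surjectivity is precisely the first assertion of \Cref{thm - GLV2}, which states that for $\tb{J} \in \Hqe^{\DGInj}_{\Pi}$ every dg-deformation $\tb{I} \to \tb{J}$ already has $\tb{I} \in \Hqe^{\DGInj}_{\Pi}$. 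Hence the left arrow is an equivalence.

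For the \emph{upper} arrow $\Def^{\DGInj}_{\tb{J}} \to \Def^{t,\DGInj}_{\B}$ I would send a dg-deformation $G\colon \tb{I} \to \tb{J}$ to the left adjoint of $\Tw^{+}(G)$, which by \Cref{thm - GLV2} is a $t$-deformation in $\Hqe^{t+}_{\Pi}$; its right adjoint $\Tw^{+}(G)$ is a morphism in $\Hqe^{t+}_{\Pi}$ because $G \in \Hqe^{\DGInj}_{\Pi}$ and $\Tw^{+}$ takes values in $\Hqe^{t+}_{\Pi}$ by \Cref{thm - correspondence}, so the image genuinely lands in $\Def^{t,\DGInj}_{\B}$. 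The inverse is supplied by \Cref{thmdgdef}: a $t$-deformation $F \in \Def^{t,\DGInj}_{\B}$ is sent to the dg-deformation $G\colon \DGInj(\A) \to \DGInj(\B)$ of \Cref{proppartialadjoint}, which is a dg-deformation in $\Hqe^{\DGInj}_{\Pi}$ by \Cref{thmdgdef}(1). That these assignments are mutually quasi-inverse follows from the equivalence $\DGInj \cong \Tw^{+}$ of \Cref{thm - correspondence} together with the uniqueness of adjoint quasi-functors, so the upper arrow is an equivalence. The \emph{bottom} arrow then assigns to a dg-deformation its associated $t$-deformation via \Cref{thm - GLV2}, which is legitimate precisely because every such deformation lies in $\Hqe^{\DGInj}_{\Pi}$; with this definition the square commutes by construction, the two vertical maps being the evident forgetful transformations.

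The concluding claim is now immediate. Let $F\colon \B \to \A$ be a $t$-deformation with $\A \in \Hqe^{t+}_{\Pi}$; since $\B \in \Hqe^{t+}_{\Pi}$ is standing, the hypotheses of \Cref{thmdgdef} are met. By \Cref{thmdgdef}(1)--(2), the quasi-functor $G$ of \Cref{proppartialadjoint} is a dg-deformation in $\Hqe^{\DGInj}_{\Pi}$ and $\Tw^{+}(G)\colon \A \to \B$ is a right adjoint quasi-functor to $F$; as $\Tw^{+}$ lands in $\Hqe^{t+}_{\Pi}$ by \Cref{thm - correspondence}, this right adjoint is a morphism in $\Hqe^{t+}_{\Pi}$. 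Hence $F$ admits a right adjoint in $\Hqe^{t+}_{\Pi}$, i.e.\ $F \in \Def^{t,\DGInj}_{\B}$, and therefore lies in the image of the right arrow.

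The main obstacle I anticipate lies not in this final assembly but in the coherent treatment of the \emph{quasi-functor} adjunctions underpinning it: right adjoints of quasi-functors need neither exist nor compose strictly, so the genuine work sits in \Cref{proppartialadjoint} and \Cref{thmdgdef}(2), which I am invoking wholesale. The remaining care is to check that all the assignments above are natural enough in $R$ to promote to honest pseudonatural transformations of the deformation pseudofunctors, while keeping track of the universe conventions of \Cref{remark - universes size}.
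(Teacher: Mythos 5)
Your proposal follows essentially the same route as the paper: the vertical arrows are the evident inclusions/forgetful maps, the left and upper arrows are equivalences via \Cref{thm - GLV2}, \Cref{proppartialadjoint}, \Cref{thmdgdef} and \Cref{thm - correspondence}, and the final claim is exactly the existence of the right adjoint in $\Hqe^{t+}_{\Pi}$ from \Cref{thmdgdef}.

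One imprecision is worth flagging in your treatment of the left arrow. You assert that membership in $\Def^{\DGInj}_{\tb{J}}$ is ``a property of the deforming object,'' but by definition it is a condition on the deformation \emph{morphism}: $\tb{I} \to \tb{J}$ must be a morphism in $\Hqe^{\DGInj}_{\Pi}$, i.e.\ the modules $H^{0}(\tb{J})(J,G(-))$ must be finitely presented over $H^{0}(\tb{I})^{\op}$. \Cref{thm - GLV2} only gives you $\tb{I} \in \Hqe^{\DGInj}_{\Pi}$ as an object; to see that the deformation quasi-functor itself lies in $\Hqe^{\DGInj}_{\Pi}$ you must additionally invoke \Cref{lemtensinj}, which shows that the canonical $\rho\colon \tb{I} \to S \otimes^{\mb{L}}_{R} \tb{I}$ (to which any dg-deformation is equivalent) is a morphism in $\Hqe^{\DGInj}_{\Pi}$. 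This is precisely why the paper cites \Cref{lemtensinj} alongside \Cref{thm - GLV2} for the left arrow. With that addition your argument is complete.
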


\begin{proof}
    The vertical arrows are the obvious inclusions, and the arrow on the left is an equivalence by \Cref{thm - GLV2} and \Cref{lemtensinj}. The lower arrow exists by \Cref{thm - GLV2} and its restriction to the upper arrow exists and is moreover an equivalence by \Cref{thm - GLV2}, \Cref{proppartialadjoint} and \Cref{thm - correspondence}. The claim about the right arrow amounts to the existence of the right adjoint proven in \Cref{proppartialadjoint} and \Cref{thmdgdef}.
\end{proof}

The question now beckons, is the right arrow in \eqref{eq - diagramdef} an equivalence? This does not seem to hold in general. For this reason, we restrict to left bounded locally coherent Grothendieck $t$-dg-categories in the next subsection.

\subsection{An equivalence of \texorpdfstring{$t$}{t}-deformations with enough derived injectives and left bounded locally coherent Grothendieck \texorpdfstring{$t$}{t}-deformations}
\label{pardefset6}

In this subsection, we assume that $K = \Kernel(\theta)$ is nilpotent of degree 2. In that case, $K$ has a dg-$S$-action defined through preimages under $\theta$. Recall that if $K$ is nilpotent of degree $n > 2$, a deformation along $\theta$ can be written as the `composition' of deformations of nilpotency degree 2 (cf. \cite{AbelianDeformations}[Remark 6.2]), so this is a mild condition.

Let $\B$ be a left bounded locally coherent Grothendieck $t$-dg-category, meaning that $\B = \hproj^{+}(\C)$ for $\C \cong \hfp^{b}(\B)$ an essentially small strongly pretriangulated dg-category with a bounded $t$-structure.

\begin{proposition} \label{prop - tlc in hqet+pi}
    $\B$ lies in $\Hqe^{t+}_{\Pi}$.
    \begin{proof}
        By \Cref{prop - properties induced t-structure}, $\B$ is a $t$-dg-category with a left bounded, non-degenerate $t$-structure. Since it is left bounded locally coherent Grothendieck, it has enough derived injectives, see \Cref{cor - lcGt enough DGInj}.
        There remains to be shown that the $t$-structure is closed under countable products. This follows by \Cref{remark - closed products} and the fact that $H^{0}(\B) = \D^{+}(\mathcal{C})$ has countable products. 
    \end{proof}
\end{proposition}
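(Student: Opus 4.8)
The plan is to verify directly the three defining properties of membership in $\Hqe^{t+}_{\Pi}$ for $\B$: that its $t$-structure is left bounded and non-degenerate, that $\B$ has enough derived injectives, and that the coaisle is closed under countable products. The first two properties are essentially already packaged in the results of Chapter 1, so the only genuine work lies in the product-closure, which I would isolate as the heart of the argument.

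For the $t$-structural properties, I would use that $\B = \hproj^{+}(\C)$ with $\C \cong \hfp^{b}(\B)$ strongly pretriangulated and carrying a bounded $t$-structure. Then \Cref{prop - properties induced t-structure} shows that the induced $t$-structure on $H^{0}(\B) = \D^{+}(\C)$ is non-degenerate and right complete, while left boundedness is built into the hypothesis that $\B$ is left bounded locally coherent Grothendieck (\Cref{def - left tlc}, condition (1)). Enough derived injectives then follows immediately from \Cref{cor - lcGt enough DGInj}, which applies precisely to triangulated categories whose $t$-structure is left bounded locally coherent Grothendieck.

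The main step is closure of the coaisle $\D^{+}(\C)_{\geq 0}$ under countable products. Here the plan is to compute products not in $\D^{+}(\C)$ but in the ambient derived category $\D(\C) = H^{0}(\hproj(\C))$, which has all small products (these are computed, as usual, via h-injective resolutions). By \Cref{remark - closed products}, applied to the induced $t$-structure on the full $\D(\C)$, the coaisle $\D(\C)_{\geq 0}$ is closed under products, being the image of the right adjoint $\iota_{\geq 0}$ in the adjunction $\tau_{\geq 0} \dashv \iota_{\geq 0}$ and right adjoints preserving limits. Now, given a countable family in $\D^{+}(\C)_{\geq 0} = \D(\C)_{\geq 0}$ -- the equality holding because non-degeneracy forces coaisle objects to be left bounded, hence to lie in $\D^{+}(\C)$ -- its product formed in $\D(\C)$ again lies in $\D(\C)_{\geq 0} \subseteq \D^{+}(\C)$. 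Since the inclusion $\D^{+}(\C) \hookrightarrow \D(\C)$ is fully faithful, this object is simultaneously the product inside $\D^{+}(\C)$, and it belongs to the coaisle by construction. This yields both existence in $H^{0}(\B)$ and inclusion into the coaisle, which is exactly what "closed under countable products" requires.

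The subtlety I would flag as the main obstacle is that products in a left bounded derived category need not exist in general, since products can create arbitrarily negative cohomology and thereby leave $\D^{+}(\C)$. What rescues the argument is that the objects being multiplied all sit in a single coaisle $\D(\C)_{\geq 0}$ whose uniform lower bound is preserved by products; this is precisely why the statement concerns the coaisle rather than $\D^{+}(\C)$ as a whole. I would therefore phrase the argument so that only products of coaisle objects are asserted to exist in $H^{0}(\B)$, which is all that the definition of "closed under countable products" demands, rather than claiming bicompleteness of $\D^{+}(\C)$.
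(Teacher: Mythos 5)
Your proof is correct and follows essentially the same route as the paper: the same decomposition into left boundedness and non-degeneracy via \Cref{prop - properties induced t-structure}, enough derived injectives via \Cref{cor - lcGt enough DGInj}, and closure of the coaisle under countable products via \Cref{remark - closed products}. Your more careful treatment of the last step --- forming the product in the ambient $\D(\C)$, observing it lands in $\D(\C)_{\geq 0} \subseteq \D^{+}(\C)$, and descending by full faithfulness --- makes precise the paper's terser assertion that $\D^{+}(\C)$ has countable products; note only that the containment $\D(\C)_{\geq 0} \subseteq \D^{+}(\C)$ is immediate from the definition of $\D^{+}$ as $\bigcup_{n}\D(\C)_{\geq -n}$ and does not require non-degeneracy.
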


\Cref{prop - tlc in hqet+pi} and \Cref{thmdgdef} (2) tell us that there is a pseudonatural transformation
\begin{equation} \label{eq - tlcvsDGINJ}
    \Def^{t,\tn{+lcG}}_{\B} \hookrightarrow \Def^{t,\DGInj}_{\B}.
\end{equation}
The aim is to show that \eqref{eq - tlcvsDGINJ} is an equivalence of deformation pseudofunctors. We thus depart from a $t$-deformation $F = F_{\A}: \B \to \A$ in $\Hqe^{t+}_{\Pi}$ and wish to show that $\A$ is again left bounded locally coherent Grothendieck. We assume that $\A$ is h-flat by replacing it with a h-flat resolution if necessary so that we can use the results of \S \ref{pardefset1}. 

\subsubsection{Lifting along \texorpdfstring{$t$}{t}-deformations with enough derived injectives}

To lift the characteristic properties of the left bounded locally coherent Grothendieck $t$-dg-category $\B$ to $\A$, we need a canonical exact triangle \eqref{eq - base change exact triangle}. This will exist for any $t$-deformation $F: \B \to \A$ in $\Hqe^{t+}$, where $\B$ is strongly pretriangulated and essentially small. In that case, we can apply the homotopy ind-dg-completion and we can enlarge our universe (and use \Cref{remark - strongly pretriang}) to ensure this. Let $\Tw^{+}(G): \A \to \B$ denote the right adjoint quasi-functor to $F$ of \Cref{thmdgdef} (3).

\begin{lemma} \label{lemma - diagram extend basechangetriangle}
    The following diagram of quasi-functors (cf. \Cref{remark - quasif RHom})  commutes,
    \[\begin{tikzcd}[row sep = small]
	{\B \cong \A_{(S)}} && \A \\
	\\
	{\hproj(\A)_{(S)}} && {\hproj(\A).}
	\arrow["{F_{\A}}", shift left, from=1-1, to=1-3]
	\arrow["{\Y_{(S)}}"', hook, from=1-1, to=3-1]
	\arrow["{\Tw^{+}(G)}", shift left, from=1-3, to=1-1]
	\arrow["\Y", hook, from=1-3, to=3-3]
	\arrow["{F_{\hproj(\A)}}", shift left, from=3-1, to=3-3]
	\arrow["{\RHom_{R}(S,-)}", shift left, from=3-3, to=3-1]
\end{tikzcd}\]
    \begin{proof}
        Commutativity of the square in the right direction follows by naturality of the counit of the adjunction $(-)_{R} \dashv (-)_{(S)}$. For the left direction, take $A \in \A$ and $N \in \B$. Then we have natural quasi-equivalences,
        \begin{align*}
            &\quad \hproj(\A)_{(S)}(\Y_{(S)}(N),\RHom_{R}(S,-)\circ \Y(A)) \\
            &\sim \hproj(\A)(F_{\hproj(\A)} \circ \Y_{(S)}(N),\Y(A)) \\
            &\sim \hproj(\A)(\Y \circ F_{\A}(N),\Y(A)) \\
            &\sim \A(F_{\A}(N),A) \\
            &\sim \B(N,\Tw^{+}(G)(A)) \\
            &\sim \hproj(\A)_{(S)}(\Y_{(S)}(N),\Y_{(S)} \circ \Tw^{+}(G)(A)).
        \end{align*}
        Using the base change formula \eqref{eq - base change} and the description in terms of filtered homotopy colimit of representables of \Cref{thm - homotopy dg completion closed}, we can conclude by Yoneda.
    \end{proof}
\end{lemma}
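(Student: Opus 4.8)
The plan is to establish the two commutativities separately, exploiting that the square involves two adjoint pairs: on top $F_{\A} \dashv \Tw^{+}(G)$ (cf. \Cref{thmdgdef} (3)) and on the bottom $F_{\hproj(\A)} \dashv \RHom_{R}(S,-)$ (cf. \Cref{remark - quasif RHom}), while the vertical arrows are the Yoneda embeddings. I would first settle the \emph{right direction} (the square formed by the functors $F_{\A}$ and $F_{\hproj(\A)}$), which is the accessible half; the \emph{left direction} (the square formed by the right adjoints $\Tw^{+}(G)$ and $\RHom_{R}(S,-)$) cannot be checked by a direct formula, so I would instead pin down $\Y_{(S)} \circ \Tw^{+}(G)$ by a Yoneda argument that feeds on the right direction together with both adjunctions.

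For the right direction I would argue that $F_{\hproj(\A)} \circ \Y_{(S)} \cong \Y \circ F_{\A}$. Under the identification $\B \cong \A_{(S)}$ supplied by the $t$-deformation, $F_{\A}$ is (up to the $t$-exact equivalence) the forgetful functor attached to the counit of $(-)_{R} \dashv (-)_{(S)}$, and $F_{\hproj(\A)}$ is the corresponding forgetful functor on $\hproj(\A)_{(S)} \cong \hproj_{S}(S \otimes_{R} \A)$ via the base change formula \eqref{eq - base change}. Since the Yoneda embeddings are natural and $\Y_{(S)}$ is literally the coextension of $\Y$, this commutativity is exactly naturality of the counit applied to $\Y$. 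This is the routine half.

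For the left direction I would fix $A \in \A$ and show that the objects $\RHom_{R}(S,-)(\Y(A))$ and $\Y_{(S)}(\Tw^{+}(G)(A))$ of $\hproj(\A)_{(S)}$ are isomorphic by comparing the mapping spaces out of the representables $\Y_{(S)}(N)$, $N \in \B$. The key computation is the chain of natural quasi-isomorphisms
\begin{align*}
    \hproj(\A)_{(S)}(\Y_{(S)}(N), \RHom_{R}(S,-)(\Y(A)))
    &\sim \hproj(\A)(F_{\hproj(\A)}(\Y_{(S)}(N)), \Y(A)) \\
    &\sim \hproj(\A)(\Y(F_{\A}(N)), \Y(A)) \\
    &\sim \A(F_{\A}(N), A) \\
    &\sim \B(N, \Tw^{+}(G)(A)) \\
    &\sim \hproj(\A)_{(S)}(\Y_{(S)}(N), \Y_{(S)}(\Tw^{+}(G)(A))),
\end{align*}
which uses, in order, the adjunction on the bottom row, the right direction established above, full faithfulness of $\Y$, the adjunction $F_{\A} \dashv \Tw^{+}(G)$ on the top row, and full faithfulness of $\Y_{(S)}$. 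Concluding by (enriched) Yoneda then yields the desired isomorphism.

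The main obstacle is precisely this last ``conclude by Yoneda'' step: testing against the representables $\Y_{(S)}(N)$ only detects isomorphisms because these objects generate $\hproj(\A)_{(S)}$. Here I would invoke the base change formula \eqref{eq - base change} to identify $\hproj(\A)_{(S)} \cong \hproj_{S}(S \otimes_{R} \A)$ and then \Cref{thm - homotopy dg completion closed}, by which every object of the latter is a filtered homotopy dg-colimit of representables; since the representables of $\hproj_{S}(S \otimes_{R} \A)$ occur among the objects $\Y_{(S)}(N)$ and the quasi-isomorphisms in the chain are natural in $N$, the enriched Yoneda lemma applies. Verifying this naturality in $N$ and the generation statement carefully, rather than the individual quasi-isomorphisms, is where the real work lies.
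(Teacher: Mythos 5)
Your proposal is correct and follows essentially the same route as the paper's own proof: the right direction via naturality of the counit of $(-)_{R} \dashv (-)_{(S)}$, and the left direction via the identical chain of natural quasi-isomorphisms tested against representables, concluded by Yoneda using the base change formula \eqref{eq - base change} and \Cref{thm - homotopy dg completion closed}. Your additional attention to the naturality in $N$ and the generation statement is a sensible elaboration of the paper's terser "conclude by Yoneda," but it is not a different argument.
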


\begin{corollary} \label{cor - basechangetriangle}
    Given an object $A \in \A$, there is an exact triangle\index{base change functors!base change exact triangle}
    \begin{equation} \label{eq - base change exact triangle} \begin{tikzcd}
	{F_{\A} \circ \Tw^{+}(G) (A)} & A & {F_{\A} \circ \Psi(A)} & {}
	\arrow[from=1-1, to=1-2]
	\arrow[from=1-2, to=1-3]
	\arrow["{+}", from=1-3, to=1-4]
\end{tikzcd}\end{equation}
    that is natural in $A$, also called the base change exact triangle. Here, $\Psi$ is the quasi-functor that arises because $$\RHom_{R}(K,-) \circ \Y_{(S)} \circ \Tw^{+}(G): \A \to \hproj(\A)_{(S)}$$
    factorizes over $\B$, i.e. is quasi-isomorphic to $\Y_{(S)} \circ \Psi$ for some quasi-functor $\Psi: \A \to \B$. 
    \begin{proof}
        By \Cref{lemma - diagram extend basechangetriangle} and \Cref{prop - balancing}, we have the exact triangle\footnote{We can interpret $\RHom_{R}(K,-)$ as a quasi-functor to $\hproj(\A)_{(S)}$ because $K$ has a dg-$S$-action due to the nilpotency degree 2.}
        \[\begin{tikzcd}[column sep = small]
	{F_{\hproj(\A)} \circ \RHom_{R}(S,\Y(A))} & {\Y(A)} & {F_{\hproj(\A)} \circ \RHom_{R}(K,\Y(A))} & {\quad.}
	\arrow[from=1-1, to=1-2]
	\arrow[from=1-2, to=1-3]
	\arrow["{+}", from=1-3, to=1-4]
\end{tikzcd}\]
        Using again \Cref{lemma - diagram extend basechangetriangle}, this rewrites as
        \[\begin{tikzcd}[column sep = small]
	{\Y \circ F_{\A} \circ \Tw^{+}(G)(A)} & {\Y(A)} & {F_{\hproj(\A)} \circ \RHom_{R}(K,\Y(A))} & {\quad.}
	\arrow[from=1-1, to=1-2]
	\arrow[from=1-2, to=1-3]
	\arrow["{+}", from=1-3, to=1-4]
\end{tikzcd}\]
        Since the first two objects are representable, so is the third. We can then use \Cref{lemma - diagram extend basechangetriangle} again to conclude that $\RHom_{R}(K,\Y(A))$ lies in the quasi-essential image of $\Y_{(S)}$.
    \end{proof}
\end{corollary}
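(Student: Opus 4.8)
The plan is to obtain the desired triangle by applying a derived $\RHom$ into the representable $\Y(A)$ to the defining short exact sequence of the deformation datum
\[
0 \longrightarrow K \longrightarrow R \xrightarrow{\ \theta\ } S \longrightarrow 0
\]
of dg-$R$-modules, and then to translate the resulting vertices through the commuting square of \Cref{lemma - diagram extend basechangetriangle}. First I would observe that, by the Conventions, $R$ is h-projective over itself while $S$ and $K$ are homotopically finitely presented, so the sequence above is an exact triangle in $\hproj(R)$. Applying the balanced bifunctor $\RHom_R(-,-)$ of \Cref{prop - balancing} in its (contravariant) first variable to $\Y(A)$ then yields an exact triangle
\[
F_{\hproj(\A)} \circ \RHom_R(S, \Y(A)) \longrightarrow \Y(A) \longrightarrow F_{\hproj(\A)} \circ \RHom_R(K, \Y(A)) \xrightarrow{+},
\]
where I use $\RHom_R(R, \Y(A)) \cong \Y(A)$ together with the identification of \Cref{remark - quasif RHom} between the coextension-valued quasi-functor $\RHom_R(S,-)$ and the balanced one after forgetting along $F_{\hproj(\A)}$. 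The $S$-linear structure needed to regard $\RHom_R(K,-)$ as a quasi-functor into $\hproj(\A)_{(S)}$ is exactly what the nilpotency-degree-$2$ hypothesis provides, by equipping $K$ with a dg-$S$-module structure.

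Next I would feed this triangle into \Cref{lemma - diagram extend basechangetriangle}. Traversing the square in its two directions gives $\RHom_R(S,-) \circ \Y = \Y_{(S)} \circ \Tw^{+}(G)$ and $F_{\hproj(\A)} \circ \Y_{(S)} = \Y \circ F_{\A}$, so the first vertex rewrites as $\Y \circ F_{\A} \circ \Tw^{+}(G)(A)$. Now the first two vertices lie in the image of the fully faithful, exact Yoneda embedding $\Y$; since $\A$ is strongly pretriangulated this image is closed under cones, so the third vertex $F_{\hproj(\A)} \circ \RHom_R(K, \Y(A))$ is again representable. Re-applying \Cref{lemma - diagram extend basechangetriangle} to this fact shows that the coextension-valued object $\RHom_R(K, \Y(A))$ lies in the quasi-essential image of $\Y_{(S)}$; this is precisely the factorization $\RHom_R(K,-) \circ \Y_{(S)} \circ \Tw^{+}(G) \simeq \Y_{(S)} \circ \Psi$ defining the quasi-functor $\Psi: \A \to \B$, and it lets me rewrite the third vertex as $\Y \circ F_{\A} \circ \Psi(A)$.

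Finally, since $\Y$ is fully faithful and exact, the triangle $\Y\circ F_{\A}\circ\Tw^{+}(G)(A) \to \Y(A) \to \Y\circ F_{\A}\circ\Psi(A)\xrightarrow{+}$ descends uniquely to the claimed triangle in $H^0(\A)$, and naturality in $A$ is inherited from the naturality of the $\RHom$-triangle and of the two applications of \Cref{lemma - diagram extend basechangetriangle}. The step I expect to be the main obstacle is the representability and factorization at the third vertex: making precise that $\RHom_R(K, \Y(A))$ factors through $\Y_{(S)}$ \emph{functorially} rather than merely objectwise, so that $\Psi$ is a genuine quasi-functor and the whole triangle is natural. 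This is where both the $S$-module structure on $K$ and the closure of the representables under cones are essential, and where care is needed to upgrade an objectwise identification to a natural one.
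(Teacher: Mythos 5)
Your proposal is correct and follows essentially the same route as the paper's proof: apply the balanced $\RHom_R(-,\Y(A))$ of \Cref{prop - balancing} to the triangle $K \to R \to S$, rewrite the first vertex via \Cref{lemma - diagram extend basechangetriangle}, deduce representability of the cone, and re-apply the lemma to obtain the factorization through $\Y_{(S)}$. You merely make explicit two steps the paper leaves implicit (the short exact sequence of dg-$R$-modules as the source of the triangle, and the descent along the fully faithful $\Y$), which is fine.
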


\begin{lemma} \label{lemma - Psi right t-exact}
    $\Tw^{+}(G): \A \to \B$ and $\Psi: \A \to \B$ are right $t$-exact for the induced $t$-structure.
    \begin{proof}
        $\Tw^{+}(G)$ is right $t$-exact as the right adjoint to a $t$-exact functor. It suffices to show right $t$-exactness of $\RHom_{R}(K,-)$ in \Cref{cor - basechangetriangle} to conclude that $\Psi$ is right $t$-exact via \Cref{cor - basechangetriangle}. 
        Remark first that $$\RHom_{R}(K,-) \sim \RHom_{S}(K,-) \circ \RHom_{R}(S,-).$$ The latter is right $t$-exact as the right adjoint to a $t$-exact functor, so that leaves right $t$-exactness of $\RHom_{S}(K,-): \hproj(\A)_{(S)} \to \hproj(\A)_{(S)}$. Consider an object $X \in (\hproj(\A)_{(S)})_{\geq 1} = (\Y(\A_{\leq 0})^{\perp})_{(S)}$ and $A \in \A_{\leq 0}$. Making use of the quasi-equivalence $\varphi: \hproj_{R}(\A)_{(S)} \cong \hproj_{S}(\A \otimes_{R} S)$ of \eqref{eq - base change}, then
        \begin{align*}
            \RHom_{S}(K,\varphi(X))(A,\diamond) &= \dgm(S)(Q^{\hproj}(K),\varphi(X)(A,\diamond)) \\
            &\sim \dgm(S)(Q^{\hproj}(K),X(\diamond)(A)) \\
            &\sim 0
        \end{align*}
       by our choice of $X$ and $A$.
    \end{proof}
\end{lemma}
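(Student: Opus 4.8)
The plan is to treat the two quasi-functors separately and to reduce the nontrivial part, the statement for $\Psi$, to a concrete right $t$-exactness property of $\RHom_{R}(K,-)$ via the base change exact triangle of \Cref{cor - basechangetriangle}.

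First I would dispatch $\Tw^{+}(G)$ by a purely formal argument. By \Cref{thmdgdef} it is the right adjoint quasi-functor to $F_{\A}$, and $F_{\A}$ is $t$-exact since $F$ is a $t$-deformation, so it induces a $t$-exact equivalence $\B \cong \A_{(S)}$. Recalling from \cite{BBD}[Proposition 1.3.17] that for an adjoint pair the right adjoint of a left $t$-exact functor is right $t$-exact, and that a $t$-exact functor is in particular left $t$-exact, I conclude that $\Tw^{+}(G)$ is right $t$-exact.

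For $\Psi$, I would exploit its defining relation from \Cref{cor - basechangetriangle}, namely $\Y_{(S)} \circ \Psi \sim \RHom_{R}(K,-) \circ \Y_{(S)} \circ \Tw^{+}(G)$. Since the Yoneda embedding is $t$-exact by \Cref{thm - t-structure inddg} and the coextension of scalars preserves the $t$-structure, $\Psi$ is right $t$-exact as soon as the right-hand composite is; and since $\Y_{(S)}$ is moreover a fully faithful $t$-exact embedding, it reflects the coaisle, which secures the implication. Given right $t$-exactness of $\Tw^{+}(G)$ from the first step, it therefore suffices to show that $\RHom_{R}(K,-)$ is right $t$-exact. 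I would then factor it as $\RHom_{R}(K,-) \sim \RHom_{S}(K,-) \circ \RHom_{R}(S,-)$, which is legitimate because the nilpotency-degree-$2$ hypothesis equips $K$ with a dg-$S$-action. The outer factor $\RHom_{R}(S,-)$ is right $t$-exact, again as the right adjoint of the $t$-exact extension functor $F_{\hproj(\A)}$ of \Cref{lemma - diagram extend basechangetriangle}, leaving only the endofunctor $\RHom_{S}(K,-)$ on $\hproj(\A)_{(S)}$.

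The main obstacle is this final step, which is not formal. I would verify directly that $\RHom_{S}(K,-)$ sends the coaisle $(\hproj(\A)_{(S)})_{\geq 1} = (\Y(\A_{\leq 0})^{\perp})_{(S)}$ into itself. Transporting an object $X$ in this coaisle along the base change quasi-equivalence $\varphi$ of \eqref{eq - base change}, I would evaluate $\RHom_{S}(K,\varphi(X))$ at an arbitrary $A \in \A_{\leq 0}$ as a complex of the form $\dgm(S)(Q^{\hproj}(K), \varphi(X)(A,-))$; the characterization of the coaisle forces $\varphi(X)(A,-)$ to be acyclic, whence this complex is acyclic as well, so $\RHom_{S}(K,X)$ indeed lands in the coaisle. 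This pointwise vanishing against the aisle generators $\A_{\leq 0}$ is the crux of the argument.
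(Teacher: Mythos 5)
Your proof follows the paper's argument step for step: the same adjunction argument for $\Tw^{+}(G)$, the same reduction of $\Psi$ to right $t$-exactness of $\RHom_{R}(K,-)$ via the base change triangle, the same factorization through $\RHom_{S}(K,-)\circ\RHom_{R}(S,-)$ with the nilpotency-degree-$2$ hypothesis supplying the $S$-action on $K$, and the same pointwise evaluation against $A\in\A_{\leq 0}$ for the remaining factor. The only quibble (shared with the paper's own write-up) is that $\varphi(X)(A,\diamond)$ is not acyclic but merely has vanishing cohomology in degrees $\leq 0$; since $K$ is concentrated in nonpositive degrees this still forces $H^{\leq 0}$ of the Hom-complex to vanish, which is exactly what membership in the coaisle requires, so the conclusion stands.
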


Although $\Tw^{+}(G)$ is not left $t$-exact, so it does not (co)restrict to the hearts, we have the following result:

\begin{lemma} \label{lemma - base change exact sequence}
    Let $\homom_{H^{0}(R)}(H^{0}(S),-)$ be the right adjoint to the abelian deformation $F_{\tn{ab}}: H^{0}(\B)^{\heartsuit} \to H^{0}(\A)^{\heartsuit}$ of \cite{AbelianDeformations}[\S 2.3]. Then
    $$H^{0}_{t} \circ \Tw^{+}(G) \cong \tau_{\leq 0} \circ \Tw^{+}(G): H^{0}(\A)^{\heartsuit} \to H^{0}(\B)^{\heartsuit}$$
    is a right adjoint to $F_{\tn{ab}}$ and therefore isomorphic to $\homom_{H^{0}(R)}(H^{0}(S),-)$. Moreover, given an object $A \in H^{0}(\A)^{\heartsuit}$, there is a base change exact sequence in the heart $H^{0}(\A)^{\heartsuit}$,
    \begin{equation} \label{eq - base change triangle heart} \begin{tikzcd}
	{F_{\tn{ab}} \circ H^{0}_{t} \circ \Tw^{+}(G) (A)} & A & {F_{\tn{ab}} \circ H^{0}_{t} \circ \Psi (A).}
	\arrow[hook, from=1-1, to=1-2]
	\arrow[two heads, from=1-2, to=1-3]
\end{tikzcd}\end{equation}
    \begin{proof}
        Let $B \in H^{0}(\B)^{\heartsuit}$ and $A \in H^{0}(\A)^{\heartsuit}$. Then the first claim follows from
        \begin{align*}
            H^{0}(\B)^{\heartsuit}(B,H^{0}_{t}\Tw^{+}(G)(A)) &\cong H^{0}(\B)(B,\tau_{\geq 0} \Tw^{+}(G)(A)) \quad \tn{\textcolor{white}{u}} \iota_{\leq 0} \dashv \tau_{\leq 0} \\
            &\cong H^{0}(\B)(B,\Tw^{+}(G)(A)) \qquad \quad \tn{\Cref{lemma - Psi right t-exact}} \\
            &\cong H^{0}(\A)^{\heartsuit}(F_{\tn{ab}}(B),A).
        \end{align*}
        For the second part, consider the base change exact triangle
        \[\begin{tikzcd}
	{F\circ \Tw^{+}(G) (A)} & A & {F\circ \Psi (A)} & {\quad.}
	\arrow[from=1-1, to=1-2]
	\arrow[from=1-2, to=1-3]
	\arrow["{+}", from=1-3, to=1-4]
\end{tikzcd}\]
        Since $A$ lies in the heart, $F$ is $t$-exact and $\Psi$ is right $t$-exact by \Cref{lemma - Psi right t-exact}, we have that 
        \begin{align*}
            H^{0}(\A)(A,F \circ \Psi(A)) &\cong H^{0}(\A)(A,(\tau_{\leq 0} \circ F \circ \Psi)(A)) \\
            &\cong H^{0}(\A)(A,(F \circ \tau_{\leq 0} \circ \Psi)(A)) \\
             &\cong H^{0}(\A)(A,(F \circ H^{0}_{t} \circ \Psi)(A)).
        \end{align*}
        By plugging in the second map of the base change exact triangle and completing the result to an exact triangle, we obtain the following morphism of exact triangles:
        \[\begin{tikzcd}[row sep = small]
	{F\circ \Tw^{+}(G) (A)} & A & {F\circ \Psi (A)} & { } \\
	Z & A & {F\circ H^{0}_{t} \circ \Psi (A)} & {\quad.}
	\arrow[from=1-1, to=1-2]
	\arrow[from=1-2, to=1-3]
	\arrow[Rightarrow, no head, from=1-2, to=2-2]
	\arrow["{+}", from=1-3, to=1-4]
	\arrow[dashed, from=2-1, to=1-1]
	\arrow[from=2-1, to=2-2]
	\arrow[from=2-2, to=2-3]
	\arrow[from=2-3, to=1-3]
	\arrow["{+}", from=2-3, to=2-4]
\end{tikzcd}\]
        Then $Z \in H^{0}(\A)^{\heartsuit}$ so that the bottom triangle is an exact sequence, and
        \begin{align*}
            Z \cong H^{0}_{t}(Z) 
            &\cong (H^{0}_{t} \circ F \circ \Tw^{+}(G))(A) \\
            &\cong (F \circ H^{0}_{t} \circ \Tw^{+}(G))(A).
        \end{align*}
        We conclude exactness of \eqref{eq - base change triangle heart}.
    \end{proof}
\end{lemma}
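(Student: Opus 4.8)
The plan is to split the statement into the adjunction claim and the exact-sequence claim, since the former feeds into the latter. For the adjunction, I would first use that $F$ is $t$-exact with right adjoint $\Tw^{+}(G)$, so by \Cref{lemma - Psi right t-exact} (equivalently \cite{BBD}[Proposition 1.3.17]) $\Tw^{+}(G)$ is right $t$-exact; hence $\Tw^{+}(G)(A)\in\B_{\geq 0}$ for every $A\in H^{0}(\A)^{\heartsuit}\subseteq\A_{\geq 0}$. Then $\tau_{\geq 0}\Tw^{+}(G)(A)\cong\Tw^{+}(G)(A)$, so $H^{0}_{t}\Tw^{+}(G)(A)=\tau_{\leq 0}\tau_{\geq 0}\Tw^{+}(G)(A)\cong\tau_{\leq 0}\Tw^{+}(G)(A)\in H^{0}(\B)^{\heartsuit}$, giving the first isomorphism. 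To identify this functor as a right adjoint of $F_{\tn{ab}}$, I would compute, for $B\in H^{0}(\B)^{\heartsuit}$ and $A\in H^{0}(\A)^{\heartsuit}$, that $H^{0}(\B)^{\heartsuit}(B,H^{0}_{t}\Tw^{+}(G)(A))\cong H^{0}(\B)(B,\Tw^{+}(G)(A))$ — discarding the $\tau_{\geq 1}$-part of $\Tw^{+}(G)(A)$ via $B\in\B_{\leq 0}$ and the orthogonality of the $t$-structure — and then apply $F\dashv\Tw^{+}(G)$ together with $t$-exactness of $F$ (so that $F|_{\heartsuit}=F_{\tn{ab}}$) to reach $H^{0}(\A)^{\heartsuit}(F_{\tn{ab}}(B),A)$. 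Uniqueness of adjoints then yields $H^{0}_{t}\Tw^{+}(G)\cong\homom_{H^{0}(R)}(H^{0}(S),-)$.

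For the base change exact sequence, I would start from the base change exact triangle of \Cref{cor - basechangetriangle}, namely $F\Tw^{+}(G)(A)\to A\to F\Psi(A)\xrightarrow{+}$. Since $\Psi$ is right $t$-exact by \Cref{lemma - Psi right t-exact} and $F$ is $t$-exact, $F\Psi(A)\in\A_{\geq 0}$; combined with $A\in\A_{\leq 0}$, the truncation triangle of $F\Psi(A)$ and the orthogonality $H^{0}(\A)(\A_{\leq 0},\A_{\geq 1})=0$ give a natural isomorphism $H^{0}(\A)(A,F\Psi(A))\cong H^{0}(\A)(A,FH^{0}_{t}\Psi(A))$, commuting $\tau_{\leq 0}$ past $F$. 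Hence the structure map $A\to F\Psi(A)$ factors as $\iota\circ\tilde v$ through the truncation $\iota\colon FH^{0}_{t}\Psi(A)\to F\Psi(A)$, and completing $\tilde v$ to a triangle $Z\to A\xrightarrow{\tilde v}FH^{0}_{t}\Psi(A)\xrightarrow{+}$ and comparing with the base change triangle via $(\mathrm{id}_{A},\iota)$ produces a morphism of triangles, and by the octahedral axiom a triangle $Z\to F\Tw^{+}(G)(A)\to F\tau_{\geq 1}\Psi(A)[-1]\xrightarrow{+}$.

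The final step, which I expect to be the main obstacle, is to show $Z\in H^{0}(\A)^{\heartsuit}$; granting this, the triangle $Z\to A\to FH^{0}_{t}\Psi(A)$ has all three vertices in the heart and is automatically a short exact sequence there, while the octahedral triangle identifies $Z\cong FH^{0}_{t}\Tw^{+}(G)(A)=F_{\tn{ab}}H^{0}_{t}\Tw^{+}(G)(A)$. Because $F\tau_{\geq 1}\Psi(A)[-1]\in\A_{\geq 2}$, that triangle already gives $Z\in\A_{\geq 0}$ together with $H^{0}_{t}(Z)\cong FH^{0}_{t}\Tw^{+}(G)(A)$, so the only remaining content is the vanishing $H^{1}_{t}(Z)\cong FH^{1}_{t}\Tw^{+}(G)(A)=0$, equivalently that $\tilde v$ is an epimorphism. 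This is precisely where the hypothesis $K^{2}=0$ and the abelian base change sequence for the (not necessarily flat) deformation $F_{\tn{ab}}$ of \cite{AbelianDeformations} must enter: I would apply $\RHom_{R}(-,A)$ to $0\to K\to R\to S\to 0$ and track the resulting $t$-cohomology to conclude that the derived right adjoint $\Tw^{+}(G)(A)$ carries no $H^{1}_{t}$ on heart objects, so that $A\twoheadrightarrow F_{\tn{ab}}H^{0}_{t}\Psi(A)$ and the sequence is short exact.
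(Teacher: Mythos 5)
Your treatment of the adjunction and your reduction of the exact sequence to a single cohomological vanishing are correct and essentially identical to the paper's argument: the paper also factors $A \to F\Psi(A)$ through $F H^{0}_{t}\Psi(A)$, forms the fibre $Z$, and compares the two triangles, and your octahedron makes explicit what the paper leaves implicit, namely that the cone of $Z \to F\Tw^{+}(G)(A)$ lies in $\A_{\geq 2}$, so that everything reduces to $H^{1}_{t}(Z) \cong F_{\tn{ab}}H^{1}_{t}(\Tw^{+}(G)(A)) = 0$, equivalently to surjectivity of $v: A \to F_{\tn{ab}}H^{0}_{t}\Psi(A)$. Up to that point you have located the load-bearing step more precisely than the paper, whose proof simply asserts ``$Z \in H^{0}(\A)^{\heartsuit}$'' without further argument.

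The gap is that your final step is only a promissory note, and the vanishing it promises is false in general. In the setting of \Cref{ex - example}, take $\theta: R = k[\epsilon]/(\epsilon^{2}) \to k$ with $\vert \epsilon \vert = 0$, $\B = \hinj^{+}(k)$, $\A = \hinj^{+}(R)$, $F = \theta_{*}$, so that $\Tw^{+}(G) \cong \RHom_{R}(k,-)$ and $F\Psi$ corresponds to $\RHom_{R}(K,-)$ with $K = \epsilon R \cong k$. For $A = k$ in $H^{0}(\A)^{\heartsuit}$ one has $H^{1}_{t}(\Tw^{+}(G)(k)) = \mathrm{Ext}^{1}_{R}(k,k) = k \neq 0$. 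Concretely, $F_{\tn{ab}}H^{0}_{t}\Tw^{+}(G)(k) = \homom_{R}(k,k) \to k$ is already an isomorphism, while $v: k \to \homom_{R}(\epsilon R,k) \cong k$ is $a \mapsto (\epsilon r \mapsto \epsilon r a) = 0$, which is not an epimorphism; no short exact sequence $k \hookrightarrow k \twoheadrightarrow k$ can exist. Applying $\RHom_{R}(-,A)$ to $0 \to K \to R \to S \to 0$ and tracking $t$-cohomology, as you propose, yields precisely the exact sequence $0 \to H^{0}_{t}F\Tw^{+}(G)(A) \to A \to H^{0}_{t}F\Psi(A) \to H^{1}_{t}F\Tw^{+}(G)(A) \to 0$, whose last term is the obstruction and does not vanish; the hypothesis $K^{2}=0$ only provides the $S$-action on $K$ and has no bearing on this $\mathrm{Ext}^{1}$-type group. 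So the surjectivity on the right of \eqref{eq - base change triangle heart}, and with it the identification $Z \cong F_{\tn{ab}}H^{0}_{t}\Tw^{+}(G)(A)$, cannot be obtained along the route you sketch (nor, as far as I can tell, along the paper's, which is silent at exactly this point); only the left-exact part of the sequence together with the computations $H^{0}_{t}(Z) \cong F_{\tn{ab}}H^{0}_{t}\Tw^{+}(G)(A)$ and $H^{1}_{t}(Z) \cong F_{\tn{ab}}H^{1}_{t}\Tw^{+}(G)(A)$ survives. You should either restrict to objects $A$ for which the relevant $H^{1}_{t}$ vanishes (e.g.\ derived injectives) or flag this as a problem with the statement itself rather than with your proof.
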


Just as in the abelian case, the right adjoint preserves coproducts because $S$ is finitely presented in $\mc{Z}^{0}(\dgm(R))$.

\begin{lemma} \label{lemma - RHom coproducts}
    $\RHom_{R}(S,-): \hproj(\A) \to \hproj(\A)_{(S)}$ preserves coproducts in $\hproj^{+}(\A)$.
    \begin{proof}
        It suffices to show that $\RHom_{R}(S,-): \hproj^{+}(\A) \to \hproj^{+}(\A)$ preserves coproducts as $F$ preserves them and the universal map to a cocone in $\hproj(\A)_{(S)}$ is necessarily dg-$S$-linear.

        The idea is to first write $S$ as a homotopy colimit of perfect dg-$R$-modules. Since $S$ is finitely presented in $\mc{Z}^{0}(\dgm(R))$, we can find an integer $n > 0$ and an exact sequence
        \[\begin{tikzcd}
	{K_{0}} & {R^{\oplus n}} & {S.}
	\arrow[hook, from=1-1, to=1-2]
	\arrow[two heads, from=1-2, to=1-3]
\end{tikzcd}\]
        Since $K$ is finitely generated, we can find an integer $l_{0} > 0$ and an epimorphism $R^{\oplus l_{0}} \twoheadrightarrow K_{0}$ and complete this into a map of exact triangles in $\D(R)_{\leq 0}$:
        \[\begin{tikzcd}[row sep = small]
	{K_{0}} & {R^{\oplus n}} & S & { } \\
	{R^{\oplus l_{0}}} & {R^{\oplus n}} & {C_{0}} & {\quad.}
	\arrow[hook, from=1-1, to=1-2]
	\arrow[two heads, from=1-2, to=1-3]
	\arrow["{+}", from=1-3, to=1-4]
	\arrow[two heads, from=2-1, to=1-1]
	\arrow[from=2-1, to=2-2]
	\arrow[Rightarrow, no head, from=2-2, to=1-2]
	\arrow[from=2-2, to=2-3]
	\arrow["{\gamma_{0}}"', dashed, from=2-3, to=1-3]
	\arrow["{+}", from=2-3, to=2-4]
\end{tikzcd}\]
        Then $H^{0}(\gamma_{0})$ is an isomorphism and $H^{-1}\gamma_{0}$ is an epimorphism. If we thus consider the exact triangle
        \[\begin{tikzcd}
	{K_{1}} & {C_{0}} & S & { \quad,}
	\arrow[from=1-1, to=1-2]
	\arrow["{\gamma_{0}}", from=1-2, to=1-3]
	\arrow["{+}", from=1-3, to=1-4]
\end{tikzcd}\]
        Then $K_{1} \in \D(R)_{\leq -1}$. Since all objects involved are cohomologically finitely presented, we can find $l_{1} > 0$ and an epimorphism $$H^{0}(R)^{\oplus l_{1}} \twoheadrightarrow H^{-1}(K_{1}).$$ Since
        \begin{align*}
             \tn{mod}(H^{0}(R))(H^{0}(R)^{\oplus l_{1}}, H^{-1}(K_{1})) 
            &\cong \bigoplus_{l_{1}} \tn{mod}(H^{0}(R))(H^{0}(R),H^{0}(K_{1}[-1])) \\
            &\cong \bigoplus H^{0}(K_{1}[-1]) \\
            &\cong \bigoplus H^{0}(\D_{\dg}(R)(R,K_{1}[-1])) \\
            &\cong \D(R)(R[1]^{\oplus l_{1}},K_{1}),
        \end{align*}
        we obtain a map $\pi_{1}: R[1]^{\oplus l_{1}} \to K_{1}$ so that $H^{-1}\pi_{1}$ is an epimorphism. As before, we can complete it into a morphism of exact triangles:
        \[\begin{tikzcd}[row sep = small]
	{K_{1}} & {C_{0}} & S & { } \\
	{R[1]^{\oplus l_{1}}} & {C_{0}} & {C_{1}} & {\quad.}
	\arrow[from=1-1, to=1-2]
	\arrow["{\gamma_{0}}", from=1-2, to=1-3]
	\arrow["{+}", from=1-3, to=1-4]
	\arrow["{\pi_{1}}", from=2-1, to=1-1]
	\arrow[from=2-1, to=2-2]
	\arrow[Rightarrow, no head, from=2-2, to=1-2]
	\arrow["{\psi_{1}}"', from=2-2, to=2-3]
	\arrow["{\gamma_{1}}"', dashed, from=2-3, to=1-3]
	\arrow["{+}", from=2-3, to=2-4]
\end{tikzcd}\]
        Then $H^{-i}\gamma_{1}$ is an isomorphism for $i \in \{0,1\}$ and an epimorphism for $i=2$ and $H^{-i}\psi_{1}$ is an isomorphism for $i = 0$ and an epimorphism for $i=1$. We can repeat this argument to construct
        \begin{itemize}
            \item[-] objects $\{ C_{n} \}_{n \geq 0}$ in $\dgm(R)$ so that each $C_{n}$ is perfect;
            \item[-] morphisms $\{ \gamma_{n}: C_{n} \to S \}_{n \geq 0}$ so that $H^{-i}\gamma_{n}$ is an isomorphism for $0 \leq i \leq n$ and an epimorphism for $i = 1$;
            \item[-] morphisms $\{ \psi_{n+1}: C_{n} \to C_{n+1} \}_{n \geq 0}$ so that $H^{-i}\psi_{n+1}$ is an isomorphism for $0 \leq i \leq n$ and an epimorphism for $i = n+1$.
        \end{itemize}
        Then $S \cong \hocolim_{n \geq 0} C_{n}$ because $$H^{-i}(\hocolim_{n \geq 0}C_{n}) \cong \colim_{n \geq 0} H^{-i}(C_{n}) \cong H^{-i}(C_{i}) \cong S$$ and the standard $t$-structure on $\D(R)$ of \Cref{ex - standard t-structures} is non-degenerate. As a consequence, we get that
        \begin{equation} \label{eq - holim RHom coprod}
            \RHom_{R}(S,-) \cong \holim_{n \geq 0} \RHom_{R}(C_{n},-).
        \end{equation}
        Since $C_{n}$ is perfect, we have that
        $$\RHom_{R}(C_{n},-) \cong C_{n}^{\vee} \otimes^{\mb{L}}_{R} -,$$
        which preserves coproducts. However, we cannot get coproducts past the homotopy limit in \eqref{eq - holim RHom coprod}. To solve this, we will prove that for $M$ in $\hproj(\A)_{\geq m}$,
        \begin{equation} \label{eq - goal 1}
            H^{j}_{t}\RHom_{R}(S,M) \cong H^{j}_{t}\RHom_{R}(C_{n},M), \quad j \leq m+n.
        \end{equation}
        Consider hereto the defining exact triangle
        \[\begin{tikzcd}
	{K_{n+1}} & {C_{n}} & S & {\quad.}
	\arrow[from=1-1, to=1-2]
	\arrow["{\gamma_{n}}", from=1-2, to=1-3]
	\arrow["{+}", from=1-3, to=1-4]
\end{tikzcd}\]
        Then $K_{n+1} \in \D(R)_{\leq -n-1}$. By \Cref{prop - balancing}, this gives rise to the exact triangle
        \[\begin{tikzcd}
	{\RHom_{R}(S,M)} & {\RHom_{R}(C_{n},M)} & {\RHom_{R}(K_{n+1},M)} & {\quad}
	\arrow[from=1-1, to=1-2]
	\arrow[from=1-2, to=1-3]
	\arrow["{+}", from=1-3, to=1-4]
\end{tikzcd}\]
        and it suffices to show that $\RHom_{R}(K_{n+1},M) \in \hproj(\A)_{\geq m+n+1}$ to obtain \eqref{eq - goal 1}. Consider hereto $A \in \A_{\leq m+n}$. Then $$\hproj(\A)(\Y(A[n+1]),M) \in \D(R)_{\geq 1}$$
        and also
        $$M(A) \cong \hproj(\A)(\Y(A),M) \in \D(R)_{\geq -n}.$$
        As desired, we find that
        \begin{align*}
            \RHom_{R}(K_{n+1},M)(A) &\cong \homom_{R}(Q^{\hproj}(K_{n+1}),M(A)) \\
            &\cong \D(R)(K_{n+1},M(A)) \\
            &= 0. \qedhere
        \end{align*}
    \end{proof}
\end{lemma}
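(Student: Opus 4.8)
The plan is to reduce the claim to the statement that the endofunctor $\RHom_{R}(S,-)$ of $\hproj^{+}(\A)$ preserves coproducts. This reduction is harmless: the restriction of scalars $F$ preserves coproducts, and the universal comparison map out of a coproduct is automatically $S$-linear, so the $\dgm(S)$-module structure on the target plays no role. The guiding idea is that, although $S$ is not assumed perfect over $R$, it is homotopically finitely presented, and $\RHom_{R}(P,-) \cong P^{\vee} \otimes^{\mb{L}}_{R} -$ manifestly preserves coproducts when $P$ is perfect; one therefore approximates $S$ by perfect $R$-modules one cohomological degree at a time.

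First I would construct a tower of perfect dg-$R$-modules $C_{0} \to C_{1} \to \cdots$ together with maps $\gamma_{n} \colon C_{n} \to S$ for which $H^{-i}(\gamma_{n})$ is an isomorphism for $0 \le i \le n$. Starting from a finite presentation $R^{\oplus n} \twoheadrightarrow H^{0}(S)$ and iteratively adjoining finitely many shifted copies of $R$ to kill the lowest surviving cohomology of the cone of $\gamma_{n}$, each stage needs only finitely many generators because $S$ is finitely presented in $\mc{Z}^{0}(\dgm(R))$ and all $H^{i}(S) \in \modu(H^{0}(R))$; hence every $C_{n}$ is perfect. Since the standard $t$-structure on $\D(R)$ (\Cref{ex - standard t-structures}) is non-degenerate, comparing cohomology in each degree yields $S \cong \hocolim_{n \ge 0} C_{n}$, and therefore
\[\RHom_{R}(S,-) \cong \holim_{n \ge 0} \RHom_{R}(C_{n},-),\]
a homotopy limit of coproduct-preserving functors.

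The hard part will be that coproducts need not commute with this homotopy limit. I would get around this with a degreewise stabilisation estimate: for $M \in \hproj(\A)_{\ge m}$,
\[H^{j}_{t}\RHom_{R}(S,M) \cong H^{j}_{t}\RHom_{R}(C_{n},M) \quad \text{for } j \le m+n.\]
This follows from the defining triangle $K_{n+1} \to C_{n} \xrightarrow{\gamma_{n}} S$, where $K_{n+1} \in \D(R)_{\le -n-1}$ by the connectivity of $\gamma_{n}$; applying the balanced bifunctor of \Cref{prop - balancing} gives a triangle whose third term is $\RHom_{R}(K_{n+1},M)$, so it suffices to show this term lies in $\hproj(\A)_{\ge m+n+1}$. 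Evaluating at $A \in \A_{\le m+n}$ and using $M(A) \in \D(R)_{\ge -n}$, the connectivity of $K_{n+1}$ forces $\RHom_{R}(K_{n+1},M)(A) \cong \D(R)(K_{n+1},M(A)) = 0$, which is exactly the desired vanishing.

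Finally I would assemble the argument. Given a family $\{M_{\alpha}\}$ in $\hproj^{+}(\A)$ whose coproduct exists there, that coproduct lies in $\hproj(\A)_{\ge m}$ for some $m$; since $H^{i}_{t}$ preserves coproducts and the heart is Grothendieck, the vanishing $H^{i}_{t}(\coprod_{\alpha} M_{\alpha}) \cong \coprod_{\alpha} H^{i}_{t}(M_{\alpha}) = 0$ for $i < m$ forces each summand into $\hproj(\A)_{\ge m}$ as well. Fixing a degree $j$ and taking $n \ge j-m$, the stabilisation estimate then applies uniformly to the coproduct and to every summand, so, using that $\RHom_{R}(C_{n},-)$ and $H^{j}_{t}$ both preserve coproducts, the canonical map $\coprod_{\alpha}\RHom_{R}(S,M_{\alpha}) \to \RHom_{R}(S,\coprod_{\alpha}M_{\alpha})$ is an isomorphism on $H^{j}_{t}$ for every $j$; non-degeneracy of the induced $t$-structure on $\A$ then promotes this to an isomorphism. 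I expect the inductive construction of the perfect approximations and the careful tracking of cohomological amplitudes in the stabilisation estimate to be the only genuinely delicate points; the rest is formal once that estimate is in hand.
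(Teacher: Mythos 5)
Your proposal is correct and follows essentially the same route as the paper: the same reduction to the endofunctor of $\hproj^{+}(\A)$, the same tower of perfect approximations $C_{n} \to S$ built from the finite presentation hypotheses, the same identification $S \cong \hocolim_{n} C_{n}$ and the resulting homotopy limit formula, and the same stabilisation estimate proved via the connectivity of $K_{n+1}$. Your final assembly paragraph only makes explicit a step the paper leaves implicit, so there is nothing to correct.
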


\begin{lemma} \label{lemma - right adjoint coproducts}
    Assume that $H^{0}_{t}: H^{0}(\A) \to H^{0}(\A)^{\heartsuit}$ preserves coproducts. Then $\Tw^{+}(G): \A \to \B$ preserves coproducts.
    \begin{proof}
        As in \Cref{lemma - RHom coproducts}, it suffices to show that $F \circ \Tw^{+}(G)$ preserves coproducts. Let $\coprod_{i \in I} A_{i}$ be a coproduct in $H^{0}(\A)_{\geq 0}$. Then there is a natural morphism,
        \begin{align*}
            F(\coprod_{i \in I} \Tw^{+}(G)(A_{i})) \xrightarrow{\psi} F \circ \Tw^{+}(G)(\coprod_{i \in I} A_{i}).
        \end{align*}
        Due to the non-degeneracy of the $t$-structure, it suffices to show that $H^{i}_{t}\psi$ is an isomorphism for every $i \in \mb{Z}$, or equivalently that each $H^{i}_{t}\phi$ is an isomorphism with
        \begin{align*}
            \Y \circ F (\coprod_{i \in I} \Tw^{+}(G)(A_{i})) \xrightarrow{\phi} \Y \circ F \circ \Tw^{+}(G)(\coprod_{i \in I} A_{i})
            \cong \RHom_{R}(S,\Y(\coprod_{i \in I} A_{i})).
        \end{align*}
        Using \Cref{lemma - RHom coproducts}, we have that
        \begin{align*}
            H^{i}_{t}\RHom_{R}(S,\Y(\coprod_{i \in I} A_{i})) &\cong H^{i}_{t}\RHom_{R}(C_{n},\Y(\coprod_{i \in I} A_{i})).
        \end{align*}
        for $n$ large enough. Since $C_{n}$ is perfect, it can be written as a finite extension of shifts of finite coproducts of $R$. We can pull these operations into Yoneda and denote the resultion functor by $\widetilde{C_{n}}(-)$. Then
        \begin{align*}
        H^{i}_{t}\RHom_{R}(S,\Y(\coprod_{i \in I} A_{i})) &\cong H^{i}_{t}\RHom_{R}(C_{n},\Y(\coprod_{i \in I} A_{i})) \\
        &\cong H^{i}_{t}\Y(\widetilde{C_{n}}(\coprod_{i \in I} A_{i})) \\
        &\cong H^{i}_{t}\Y(\coprod_{i \in I} \widetilde{C_{n}}(A_{i})) \\
        &\cong \Y(\coprod_{i \in I} H^{i}_{t} \widetilde{C_{n}}(A_{i})) \\
        &\cong \Y(\coprod_{i \in I} H^{i}_{t} (F \circ \Tw^{+}(G))(A_{i})) \\
        &\cong H^{i}_{t}(\Y \circ F)(\coprod_{i \in I} \Tw^{+}(G)(A_{i})),
        \end{align*}
        where we used \eqref{eq - goal 1} in the fifth line.
    \end{proof}
\end{lemma}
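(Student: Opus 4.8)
The plan is to transfer the statement to the functor $\RHom_{R}(S,-)$ already controlled by \Cref{lemma - RHom coproducts}, via the base change identification of \Cref{lemma - diagram extend basechangetriangle}. Exactly as in that lemma, since the restriction of scalars $F$ preserves coproducts and the comparison morphism into a cocone in $\B \cong \A_{(S)}$ is automatically $S$-linear (hence an isomorphism as soon as it becomes one after the conservative functor $F$), it suffices to show that $F \circ \Tw^{+}(G)$ preserves coproducts. So I would let $\coprod_{i \in I} A_{i}$ be a coproduct in $H^{0}(\A)_{\geq 0}$ and form the canonical comparison map
\[
    \psi \colon F\Big(\coprod_{i \in I} \Tw^{+}(G)(A_{i})\Big) \longrightarrow F \circ \Tw^{+}(G)\Big(\coprod_{i \in I} A_{i}\Big).
\]
Because $\A$ lies in $\Hqe^{t+}_{\Pi}$, its $t$-structure is non-degenerate, so it is enough to prove that $H^{i}_{t}\psi$ is an isomorphism for all $i \in \mb{Z}$. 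Applying the fully faithful Yoneda embedding together with the identity $\Y \circ F \circ \Tw^{+}(G) \cong \RHom_{R}(S,\Y(-))$ furnished by \Cref{lemma - diagram extend basechangetriangle}, the task reduces to checking that $H^{i}_{t}\RHom_{R}(S,\Y(-))$ sends $\coprod_{i} A_{i}$ to the coproduct of the terms $H^{i}_{t}\RHom_{R}(S,\Y(A_{i}))$.

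The crux, and the main obstacle, is that $\RHom_{R}(S,-)$ is realised only as a homotopy limit $\holim_{n}\RHom_{R}(C_{n},-)$ over the perfect approximations $C_{n}$ of $S$, and coproducts do not commute with this homotopy limit even though each $\RHom_{R}(C_{n},-)$ does preserve them. This is precisely the difficulty that the degreewise stabilisation \eqref{eq - goal 1} of \Cref{lemma - RHom coproducts} is designed to resolve: the objects $A_{i}$ and their coproduct share a common lower bound coming from $H^{0}(\A)_{\geq 0}$, so for each fixed cohomological degree $i$ one may replace $\RHom_{R}(S,-)$ by a single $\RHom_{R}(C_{n},-)$ with $n$ chosen large enough, uniformly over $I$. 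In this way the homotopy limit effectively collapses to one perfect term in each degree, sidestepping the interchange problem.

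It then remains to run the computation for a fixed perfect $C_{n}$. As $C_{n}$ is a finite extension of shifts of finite sums of copies of $R$, these finitely many operations can be pulled through Yoneda into a functor $\widetilde{C_{n}}$ that manifestly commutes with coproducts; combining this with the standing hypothesis that $H^{i}_{t}$ preserves coproducts and with \eqref{eq - goal 1} once more, I would assemble a chain of natural isomorphisms
\[
    H^{i}_{t}\RHom_{R}(S,\Y(\coprod_{i} A_{i})) \cong \Y\Big(\coprod_{i} H^{i}_{t}\widetilde{C_{n}}(A_{i})\Big) \cong H^{i}_{t}(\Y \circ F)\Big(\coprod_{i}\Tw^{+}(G)(A_{i})\Big),
\]
exhibiting $H^{i}_{t}\psi$ as an isomorphism and thereby concluding. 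The only genuinely delicate bookkeeping is ensuring that the cut-off $n$ required by \eqref{eq - goal 1} can be taken independently of the index $i \in I$, which is guaranteed by the shared coaisle membership of the $A_{i}$.
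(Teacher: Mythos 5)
Your proposal is correct and follows essentially the same route as the paper: reduce to $F \circ \Tw^{+}(G)$ via $S$-linearity of the comparison map, test on $H^{i}_{t}$ using non-degeneracy, pass through Yoneda to $\RHom_{R}(S,\Y(-))$, and invoke the stabilisation \eqref{eq - goal 1} to replace $S$ by a single perfect $C_{n}$ whose associated functor $\widetilde{C_{n}}$ visibly commutes with coproducts. Your remark that the cut-off $n$ works uniformly because all the $A_{i}$ and their coproduct share the common coaisle bound $H^{0}(\A)_{\geq 0}$ is exactly the point implicit in the paper's choice of ``$n$ large enough.''
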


\begin{lemma} \label{lemma - countable products}
    $H^{0}(\A)_{\geq 0}$ has countable products if $H^{0}(\B)_{\geq 0}$ has them.
    \begin{proof}
        Let $\{ A_{n} \}_{n \in \mb{N}}$ be a set of objects in $H^{0}(\A)_{\geq 0}$. Then we can consider for each $n$ the base change exact triangle in $H^{0}(\A)_{\geq 0}$,
        \[\begin{tikzcd}
	{F\circ \Tw^{+}(G)(A_{n}) } & {A_{n}} & {F \circ \Psi(A_{n})} & {\quad.}
	\arrow[from=1-1, to=1-2]
	\arrow[from=1-2, to=1-3]
	\arrow["{+}", from=1-3, to=1-4]
\end{tikzcd}\]
        In $\D(\A)$, we then have the exact triangle
        \[\begin{tikzcd}[column sep = small, font = \small]
	{{\displaystyle \prod_{n \in \mb{N}}} \A(-,F\circ \Tw^{+}(G)(A_{n}))} & {{\displaystyle \prod_{n \in \mb{N}}} \A(-,A_{n})} & {{\displaystyle \prod_{n \in \mb{N}}} \A(-,F \circ \Psi(A_{n}))} & {\quad.}
	\arrow[from=1-1, to=1-2]
	\arrow[from=1-2, to=1-3]
	\arrow["{+}", from=1-3, to=1-4]
\end{tikzcd}\]
        Since $\Y$ preserves limits, $F$ preserves products (since it is essentially the evaluation $\ev_{\diamond}$), $\Tw^{+}(G)$ and $\Psi$ are right $t$-exact by \Cref{lemma - Psi right t-exact} and $H^{0}(\B)_{\geq 0}$ has countable products, this rewrites as
        \[\begin{tikzcd}[column sep = small, font = \small]
	{\A(-,F({\displaystyle \prod_{n \in \mb{N}}}\Tw^{+}(G)(A_{n})))} & {{\displaystyle \prod_{n \in \mb{N}}} \A(-,A_{n})} & {\A(-,F ({\displaystyle \prod_{n \in \mb{N}}} \Psi(A_{n})))} & {\quad.}
	\arrow[from=1-1, to=1-2]
	\arrow[from=1-2, to=1-3]
	\arrow["{+}", from=1-3, to=1-4]
\end{tikzcd}\]
        The first and third object are representable. Then so is the second. 
    \end{proof}
\end{lemma}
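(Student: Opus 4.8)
The plan is to construct the countable product inside the ambient derived category $\D(\A) = H^{0}(\hproj(\A))$, where all products exist, and then to verify that the result is represented by an object of $H^{0}(\A)_{\geq 0}$. Given a family $\{A_{n}\}_{n \in \mb{N}}$ in $H^{0}(\A)_{\geq 0}$, the first step is to record for each $n$ the base change exact triangle of \Cref{cor - basechangetriangle},
$$F \circ \Tw^{+}(G)(A_{n}) \to A_{n} \to F \circ \Psi(A_{n}) \xrightarrow{+}.$$
Applying the Yoneda embedding $\Y\colon H^{0}(\A) \hookrightarrow \D(\A)$ and taking the product over $n \in \mb{N}$ turns this into an exact triangle in $\D(\A)$, since $\D(\A)$ has all products and a countable product of exact triangles is again exact.

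Next I would identify the two outer vertices of the resulting triangle. By \Cref{lemma - Psi right t-exact} both $\Tw^{+}(G)$ and $\Psi$ are right $t$-exact, so $\Tw^{+}(G)(A_{n})$ and $\Psi(A_{n})$ lie in $H^{0}(\B)_{\geq 0}$; since that coaisle is assumed to admit countable products, the objects $\prod_{n} \Tw^{+}(G)(A_{n})$ and $\prod_{n} \Psi(A_{n})$ exist there. Because $F$ is essentially the evaluation $\ev_{\diamond}$ and hence preserves products, and because $\Y$ preserves limits, the two outer products rewrite as $\Y\bigl(F(\prod_{n}\Tw^{+}(G)(A_{n}))\bigr)$ and $\Y\bigl(F(\prod_{n}\Psi(A_{n}))\bigr)$, i.e.\ as representable objects coming from $H^{0}(\A)$.

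The conclusion would then follow from the two-out-of-three principle. The subcategory of representables $H^{0}(\A) \cong \D(\A)^{c}$ (\Cref{prop - hfpb or colim}) is a triangulated subcategory of $\D(\A)$, so with two of the three vertices representable the middle vertex $\prod_{n} \Y(A_{n})$ is representable as well, say by $P \in H^{0}(\A)$. Full faithfulness of $\Y$ gives $H^{0}(\A)(X,P) \cong \prod_{n} H^{0}(\A)(X,A_{n})$ for all $X$, so $P = \prod_{n} A_{n}$ in $H^{0}(\A)$; moreover $P \in H^{0}(\A)_{\geq 0}$ because the coaisle $\D(\A)_{\geq 0}$ is closed under products (\Cref{remark - closed products}) and $\Y$ is $t$-exact (\Cref{thm - t-structure inddg}). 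The main obstacle is precisely the bookkeeping of the two outer terms: one must ensure that right $t$-exactness places the adjoints in the coaisle where countable products are guaranteed, and that the product computed in $\D(\A)$ of the representable outer terms agrees with $\Y$ applied to the product formed in $H^{0}(\B)_{\geq 0}$ and then pushed through $F$ — this is exactly the point at which all three hypotheses (existence of products in $H^{0}(\B)_{\geq 0}$, product-preservation of $F$, and limit-preservation of $\Y$) must be combined.
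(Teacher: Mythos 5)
Your proof is correct and follows essentially the same route as the paper's: take the base change exact triangles, form the product triangle in $\D(\A)$, identify the two outer vertices as representable using right $t$-exactness of $\Tw^{+}(G)$ and $\Psi$, the existence of countable products in $H^{0}(\B)_{\geq 0}$, product-preservation of $F$, and limit-preservation of $\Y$, and conclude by two-out-of-three that the middle vertex is representable. Your closing remarks (invoking the triangulated subcategory of representables and checking that the representing object lands in the coaisle) only make explicit what the paper leaves implicit.
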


\begin{remark} \label{remark - hqet+pi not tlc}
    Up to this point, we have not used the fact that $\B$ is left bounded locally coherent Grothendieck. The results above hold for any $t$-deformation of an (essentially small strongly pretriangulated) $t$-dg-category in $\Hqe^{t+}$.
\end{remark}

\subsubsection{Lifting left bounded locally coherent Grothendieck \texorpdfstring{$t$}{t}-dg-categories along \texorpdfstring{$t$}{t}-deformations with enough injectives}

Let $F: \B \to \A$ be a $t$-deformation in $\Hqe^{t+}_{\Pi}$ with $\B$ a (strongly pretriangulated essentially small) left bounded locally coherent Grothendieck $t$-dg-category, as in the beginning of this subsection. We start by lifting Grothendieckness of the heart.

\begin{lemma} \label{lemma - exact filtered colimits}
    The heart $H^{0}(\A)^{\heartsuit}$ has filtered colimits and they are exact.
    \begin{proof}
        This follows from \cite{AbelianDeformations}[Lemmas 6.26 and 6.28], since flatness (which we may not have here) is not used there. One could also adapt the proofs using the base change short exact sequence of \Cref{lemma - base change exact sequence}.
    \end{proof}
\end{lemma}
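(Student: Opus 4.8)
The plan is to transport the problem along the induced abelian deformation and solve it in the heart of $\B$, which is already well behaved. Write $\mc{C} = H^{0}(\A)^{\heartsuit}$ and $\mc{C}_{0} = H^{0}(\B)^{\heartsuit}$. Since $\B$ is left bounded locally coherent Grothendieck, $\mc{C}_{0} \cong \Ind(\mb{A})$ is Grothendieck abelian, hence has filtered colimits and these are exact (AB5). The two structural facts that will drive everything are, first, the adjunction $F_{\tn{ab}} \dashv \homom_{H^{0}(R)}(H^{0}(S),-)$ of \Cref{lemma - base change exact sequence}, in which $F_{\tn{ab}}$ is exact (it is $t$-exact on hearts) and, being a left adjoint, preserves all colimits; and second, the base change short exact sequence of that same lemma, natural in $A \in \mc{C}$,
\[
0 \to F_{\tn{ab}} \circ H^{0}_{t} \circ \Tw^{+}(G)(A) \to A \to F_{\tn{ab}} \circ H^{0}_{t} \circ \Psi(A) \to 0,
\]
which exhibits every object of $\mc{C}$ as an extension of two objects pulled back from $\mc{C}_{0}$ along $F_{\tn{ab}}$.

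For existence of filtered colimits (the analogue of \cite{AbelianDeformations}[Lemma 6.26]) I would take a filtered system $(A_{i})_{i}$ in $\mc{C}$ and show that the functor $X \mapsto \colim_{i} \mc{C}(X,A_{i})$ is representable. Applying $\mc{C}(X,-)$ to the base change sequences, which are natural in $i$, and passing to the filtered colimit, one can, using the adjunction $F_{\tn{ab}} \dashv \homom_{H^{0}(R)}(H^{0}(S),-)$ together with the local finite presentability of $\mc{C}_{0}$, rewrite the resulting groups in terms of $\mc{C}_{0}$, where the colimits $\colim_{i} H^{0}_{t}\Tw^{+}(G)(A_{i})$ and $\colim_{i} H^{0}_{t}\Psi(A_{i})$ already exist. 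Because $F_{\tn{ab}}$ preserves colimits, gluing the two images of the base change sequence under $F_{\tn{ab}}$ produces the representing object, which is then $\colim_{i} A_{i}$.

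For exactness (the analogue of \cite{AbelianDeformations}[Lemma 6.28]) I would start from a filtered diagram of short exact sequences in $\mc{C}$. Right exactness of the colimit is automatic, so only left exactness of the induced map $\colim A_{i}' \to \colim A_{i}$ has to be checked. Applying the base change sequence to each term and invoking its naturality yields a filtered system of $3 \times 3$ diagrams with exact rows and columns; passing to the colimit and using that filtered colimits are exact in $\mc{C}_{0}$ (AB5), together with exactness and colimit-preservation of $F_{\tn{ab}}$, a short diagram chase then forces the colimit of the monomorphisms to stay monomorphic.

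The content is genuinely minor — the statement is in essence \cite{AbelianDeformations}[Lemmas 6.26 and 6.28] — and the single point that needs care, which is also the main obstacle, is that those references are phrased for \emph{flat} abelian deformations. The substance of the proof is therefore to verify that flatness is never actually used in their arguments; equivalently, that each step above uses only the base change sequence of \Cref{lemma - base change exact sequence} and the Grothendieck property of $\mc{C}_{0}$, and to keep the representability bookkeeping honest in the non-flat setting.
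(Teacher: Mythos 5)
Your proposal is correct and matches the paper's proof, which likewise rests on the observation that \cite{AbelianDeformations}[Lemmas 6.26 and 6.28] never use flatness and explicitly offers the same alternative of adapting those arguments via the base change short exact sequence of \Cref{lemma - base change exact sequence}. You simply flesh out the second route in more detail than the paper does, but the substance is identical.
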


\begin{proposition} \label{prop - heart G}
    The heart $H^{0}(\A)^{\heartsuit}$ is a Grothendieck abelian category.
    \begin{proof}
        By assumption, $H^{0}(\A)^{\heartsuit}$ has enough injectives. By \Cref{lemma - exact filtered colimits}, it also has exact filtered colimits. There remains to be shown that $H^{0}(\A)^{\heartsuit}$ has a small set of generators. As in the lemmas above, we can reuse the results of \cite{AbelianDeformations}[\S 6.5], namely \cite{AbelianDeformations}[Lemmas 6.23 and 6.24], since flatness is not used there. That $\mathcal{C}_{\kappa}$ is essentially small follows from the first part of \cite{AbelianDeformations}[Lemma 6.8], where flatness is again not used.
    \end{proof}
\end{proposition}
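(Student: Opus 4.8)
My plan is to check the three defining axioms of a Grothendieck abelian category in turn, isolating the one genuinely new ingredient. That $H^{0}(\A)^{\heartsuit}$ is abelian is automatic, being the heart of a $t$-structure; that it has exact filtered colimits is precisely \Cref{lemma - exact filtered colimits}; together with the finite colimits supplied by abelianness this also yields cocompleteness, since arbitrary coproducts arise as filtered colimits of finite ones. Enough injectives holds by the standing hypothesis $\A \in \Hqe^{t+}_{\Pi}$. Hence the only substantive point left to establish is the existence of a small set of generators.

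For the generators I would exploit the fact that, on hearts, $F$ restricts to a nilpotent abelian deformation $F_{\tn{ab}}: H^{0}(\B)^{\heartsuit} \to H^{0}(\A)^{\heartsuit}$ along $H^{0}(\theta)$, whose right adjoint is $\homom_{H^{0}(R)}(H^{0}(S),-) \cong H^{0}_{t} \circ \Tw^{+}(G)$ by \Cref{lemma - base change exact sequence}. Since $\B$ is left bounded locally coherent Grothendieck, $H^{0}(\B)^{\heartsuit} \cong \Ind(\mb{A})$ is itself Grothendieck and so carries a small generating set. The strategy is then to filter $H^{0}(\A)^{\heartsuit}$ by the full subcategories $\mathcal{C}_{\kappa}$ of $\kappa$-presentable objects indexed by regular cardinals $\kappa$, and to harvest generators from a single $\mathcal{C}_{\kappa}$ once $\kappa$ is taken large enough. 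Concretely, I would invoke \cite{AbelianDeformations}[Lemmas 6.23 and 6.24] to produce the generating set, and the first part of \cite{AbelianDeformations}[Lemma 6.8] for the essential smallness of each $\mathcal{C}_{\kappa}$.

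The crux, and the expected main obstacle, is that these cardinality and accessibility estimates from abelian deformation theory are normally stated under a flatness hypothesis on the deformation, which we do not have at our disposal here. The main task is therefore to verify, line by line, that the proofs of \cite{AbelianDeformations}[\S 6.5] never actually use flatness. The substitute for flatness is the base change short exact sequence of \Cref{lemma - base change exact sequence}, which realizes every object of $H^{0}(\A)^{\heartsuit}$ as an extension built from objects of $H^{0}(\B)^{\heartsuit}$ through $F_{\tn{ab}}$ and its right adjoint; feeding this exact sequence into the arguments of loc.\ cit.\ in place of the flat base change should carry each estimate through unchanged, delivering the generating set and thereby completing the proof that $H^{0}(\A)^{\heartsuit}$ is Grothendieck.
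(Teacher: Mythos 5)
Your proposal is correct and follows essentially the same route as the paper: both reduce the problem to producing a small set of generators, invoke \cite{AbelianDeformations}[Lemmas 6.23 and 6.24] for the generating set and the first part of \cite{AbelianDeformations}[Lemma 6.8] for the essential smallness of $\mathcal{C}_{\kappa}$, and rest on the observation that flatness is never actually used in those arguments (with the base change exact sequence of \Cref{lemma - base change exact sequence} available as a substitute where needed). The only difference is that you spell out the verification of the remaining Grothendieck axioms slightly more explicitly, which the paper disposes of in one line.
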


Next, we lift some properties on the triangulated level.

\begin{lemma} \label{lemma - coproducts}
    $H^{0}(\A)_{\geq 0}$ has small coproducts and $H^{0}_{t}: H^{0}(\A) \to H^{0}(\A)^{\heartsuit}$ preserves coproducts.
    \begin{proof}
        Let $\{ X_{i} \}_{i \in I}$ be a set of objects in $H^{0}(\A)_{\geq 0}$. Then we can consider for each $i \in I$ the base change exact triangle in $H^{0}(\A)_{\geq 0}$,
        \begin{equation} \label{eq - triangle in heart} \begin{tikzcd}
	{F\circ \Tw^{+}(G)(X_{i}) } & {X_{i}} & {F \circ \Psi(X_{i})} & {\quad.}
	\arrow[from=1-1, to=1-2]
	\arrow[from=1-2, to=1-3]
	\arrow["{+}", from=1-3, to=1-4]
\end{tikzcd}\end{equation}
        In $\D(\A^{\op})$, we then have the exact triangle
        \[\begin{tikzcd}[column sep = small, font = \small]
	{{\displaystyle \prod_{i \in I}} \A(F\circ \Psi(X_{i}),-)} & {{\displaystyle \prod_{i \in I}}\A(X_{i},-)} & {{\displaystyle \prod_{i \in I}} \A(F \circ \Tw^{+}(G)(X_{i}),-)} & {\quad.}
	\arrow[from=1-1, to=1-2]
	\arrow[from=1-2, to=1-3]
	\arrow["{+}", from=1-3, to=1-4]
\end{tikzcd}\]
        Since $\Y$ preserves limits, $F$ preserves colimits, $\Tw^{+}(G)$ and $\Psi$ are right $t$-exact by \Cref{lemma - Psi right t-exact} and $H^{0}(\B)_{\geq 0}$ has small coproducts, this rewrites as
        \[\begin{tikzcd}[column sep = small, font = \small]
	{\A(F({\displaystyle \coprod_{i \in I}} \Psi(X_{i})),-)} & {{\displaystyle \prod_{i \in I}} \A(X_{i},-)} & {\A(F ({\displaystyle \coprod_{i \in I}} \Tw^{+}(G)(X_{i})),-)} & {\quad.}
	\arrow[from=1-1, to=1-2]
	\arrow[from=1-2, to=1-3]
	\arrow["{+}", from=1-3, to=1-4]
\end{tikzcd}\]
        Since $\Y$ is an embedding, the middle object is also corepresented by an object in $H^{0}(\A)$, which is the coproduct.

        There remains to be shown that $H^{0}_{t}: H^{0}(\A) \to H^{0}(\A)^{\heartsuit}$ preserves coproducts. Consider hereto a set $\{ X_{i} \}_{i \in I}$ of objects in $H^{0}(\A)$ so that $\coprod_{i \in I} X_{i}$ exists in $H^{0}(\A)$. Then we can look to the exact triangle
        \[\begin{tikzcd}[column sep = small]
	{F(\coprod_{i \in I} \Tw^{+}(G) (X_{i}))} & {\coprod_{i \in I} X_{i}} & {F(\coprod_{i \in I} \Psi (X_{i}))} & {\quad}
	\arrow[from=1-1, to=1-2]
	\arrow[from=1-2, to=1-3]
	\arrow["{+}", from=1-3, to=1-4]
\end{tikzcd}\]
        and the long exact $t$-cohomology sequence,
        \begin{equation} \label{eq - LES 1} \begin{tikzcd}[column sep = small, font = \small]
	{...} & {H^{0}_{t}[F(\coprod_{i \in I} \Tw^{+}(G) (X_{i}))]} & {H^{0}_{t}[\coprod_{i \in I} X_{i}]} & {H^{0}_{t}[F(\coprod_{i \in I} \Psi (X_{i}))]} & {...}
	\arrow[from=1-1, to=1-2]
	\arrow[from=1-2, to=1-3]
	\arrow[from=1-3, to=1-4]
	\arrow[from=1-4, to=1-5]
\end{tikzcd}\end{equation}
        Since $H^{0}_{t} \circ F = F_{\tn{ab}} \circ H^{0}_{t}$ and $F, F_{\tn{ab}}$ and $H^{0}_{t}: H^{0}(\B) \to H^{0}(\B)^{\heartsuit}$ preserve coproducts, and since coproducts are exact in $H^{0}(\A)^{\heartsuit}$ by \Cref{lemma - exact filtered colimits}, we find that 
        \begin{equation*}
            H^{0}_{t}(\coprod_{i \in I} X_{i}) \cong \coprod_{i \in I} H^{0}_{t}(X_{i}). \qedhere
        \end{equation*}
    \end{proof}
\end{lemma}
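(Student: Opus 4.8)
The plan is to prove both assertions by transporting the corresponding properties of the left bounded locally coherent Grothendieck $t$-dg-category $\B$ across the deformation $F$, using the base change exact triangle of \Cref{cor - basechangetriangle} as the bridge. I will freely use that $\B$ is cocomplete with $H^{0}_{t}$ on $\B$ preserving coproducts, that $F$ is $t$-exact and preserves coproducts (being essentially evaluation), that $H^{0}_{t}\circ F\cong F_{\tn{ab}}\circ H^{0}_{t}$, and that $\Tw^{+}(G)$ and $\Psi$ are right $t$-exact by \Cref{lemma - Psi right t-exact}.

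For the existence of coproducts in $H^{0}(\A)_{\geq 0}$, I would take a family $\{X_{i}\}_{i\in I}\subseteq H^{0}(\A)_{\geq 0}$ and pass to the corepresentable functors $\A(X_{i},-)$ inside $\D(\A^{\op})$, where all products exist and are exact. Applying $\A(-,-)$ contravariantly to the base change triangles and taking the product over $i$ yields an exact triangle
\begin{equation*}
\textstyle\prod_{i}\A(F\Psi(X_{i}),-)\to \prod_{i}\A(X_{i},-)\to \prod_{i}\A(F\,\Tw^{+}(G)(X_{i}),-)\to
\end{equation*}
Since $\Psi$ and $\Tw^{+}(G)$ are right $t$-exact and $\B$ is cocomplete, $\coprod_{i}\Psi(X_{i})$ and $\coprod_{i}\Tw^{+}(G)(X_{i})$ exist in $\B_{\geq 0}$; as $F$ preserves coproducts, the two outer terms rewrite as $\A(F\coprod_{i}\Psi(X_{i}),-)$ and $\A(F\coprod_{i}\Tw^{+}(G)(X_{i}),-)$, hence lie in the co-Yoneda image of $H^{0}(\A)$. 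Because $\A$ is strongly pretriangulated, this image is a triangulated subcategory closed under cones, so the middle term is corepresentable, and the corepresenting object is exactly $\coprod_{i}X_{i}$; it lies in $H^{0}(\A)_{\geq 0}$ since the two outer vertices do (by $t$-exactness of $F$) and the coaisle is closed under extensions.

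For the second assertion, I would take a family $\{X_{i}\}$ whose coproduct exists in $H^{0}(\A)$ and form the coproduct of the individual base change triangles. The crucial point is to avoid invoking \Cref{lemma - right adjoint coproducts}: I do not claim that $\Tw^{+}(G)$ commutes with coproducts, but only take coproducts of triangles, which is legitimate since $F$ preserves coproducts and $\B$ is cocomplete, yielding
\begin{equation*}
F\bigl(\textstyle\coprod_{i}\Tw^{+}(G)(X_{i})\bigr)\to \coprod_{i}X_{i}\to F\bigl(\coprod_{i}\Psi(X_{i})\bigr)\to
\end{equation*}
Applying $H^{0}_{t}$ gives a long exact sequence in $H^{0}(\A)^{\heartsuit}$, which I would compare with the coproduct of the long exact sequences of the individual base change triangles; this coproduct is again exact because coproducts in the heart are exact by \Cref{lemma - exact filtered colimits}. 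On the two outer families of vertices, which lie in the image of $F$, the identity $H^{0}_{t}\circ F\cong F_{\tn{ab}}\circ H^{0}_{t}$ together with preservation of coproducts by $H^{0}_{t}$ on $\B$ and by $F_{\tn{ab}}$ lets me commute each $H^{j}_{t}$ past the coproduct; the five lemma then forces the canonical comparison map $\coprod_{i}H^{0}_{t}(X_{i})\to H^{0}_{t}(\coprod_{i}X_{i})$ to be an isomorphism.

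The main obstacle is precisely the circularity just flagged: the natural temptation is to rewrite $\Tw^{+}(G)(\coprod_{i}X_{i})$ as $\coprod_{i}\Tw^{+}(G)(X_{i})$, but that equality is \emph{exactly} \Cref{lemma - right adjoint coproducts}, which is only available once $H^{0}_{t}$ is already known to preserve coproducts. The way around it is to build the relevant triangle as a coproduct of triangles rather than as a single base change triangle, so that only the unconditional preservation of coproducts by $F$ and the cocompleteness of $\B$ are used, and then to verify that the outer vertices genuinely lie in the image of $F$, which is what allows each cohomology functor to commute with their coproducts.
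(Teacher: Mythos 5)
Your proposal is correct and follows essentially the same route as the paper: corepresentables and products of base change triangles in $\D(\A^{\op})$ for the existence of coproducts, and then the coproduct of the individual base change triangles together with the long exact $t$-cohomology sequence and exactness of coproducts in the heart for the second claim. The circularity you flag around \Cref{lemma - right adjoint coproducts} is real, and the paper avoids it exactly as you do, by writing the triangle with the coproducts taken \emph{before} applying $\Tw^{+}(G)$ and $\Psi$ (i.e.\ as a coproduct of triangles pushed through $F$), so your extra care here just makes explicit what the paper's displayed triangle already encodes.
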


\begin{lemma} \label{lemma - right complete}
    $H^{0}(\A)$ is right complete.
    \begin{proof}
    By \Cref{lemma - coproducts}, we only have to prove condition (3) in \Cref{def - right complete}. Let $A$ be an object in $H^{0}(\A)_{\geq 0}$. By \Cref{lemma - exact filtered colimits} and \Cref{remark - H0t hocolim to colim AB5} (2), $H^{0}_{t}$ maps sequential homotopy colimits to sequential colimits. Then the cone of any map $\hocolim_{n} \tau_{\leq n}A \to A$ has no $t$-cohomology, which by non-degeneracy of the $t$-structure, implies it is zero. Therefore,
    \begin{equation*}
        \hocolim_{n} \tau_{\leq n}A \cong A. \qedhere
    \end{equation*}
    \end{proof}
\end{lemma}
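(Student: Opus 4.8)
The plan is to verify the three conditions of \Cref{def - right complete} for the $t$-structure on $H^{0}(\A)$. Conditions (1) and (2) --- that the coaisle $H^{0}(\A)_{\geq 0}$ is closed under countable coproducts and that $H^{0}_{t}$ preserves countable coproducts --- have already been secured in \Cref{lemma - coproducts}, so the whole burden falls on condition (3): for every $Z \in H^{0}(\A)_{\geq 0}$, the canonical comparison map $\hocolim_{n \geq 0} \tau_{\leq n} Z \to Z$ arising from (T3) as in \eqref{eq - T3 hocolim} should be an isomorphism.

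First I would fix such a $Z$ together with a comparison map $\varphi: \hocolim_{n \geq 0} \tau_{\leq n} Z \to Z$ and form its cone $\Cone(\varphi)$. The strategy is to show $\Cone(\varphi)$ is $t$-acyclic and then invoke non-degeneracy. Since $\A \in \Hqe^{t+}_{\Pi}$ its $t$-structure is non-degenerate, so an object with vanishing $H^{i}_{t}$ for all $i$ is zero; hence it suffices to prove that $H^{i}_{t}(\varphi)$ is an isomorphism for every $i \in \mb{Z}$.

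The key computation is the interchange of $H^{i}_{t}$ with the sequential homotopy colimit. By \Cref{lemma - exact filtered colimits} the heart $H^{0}(\A)^{\heartsuit}$ satisfies AB5, and combined with condition (2) from \Cref{lemma - coproducts} this places us in the situation of \Cref{remark - H0t hocolim to colim AB5} (2), so $H^{0}_{t}$ (and hence each $H^{i}_{t}$) sends sequential homotopy colimits to the corresponding sequential colimits in the heart. Thus $H^{i}_{t}(\hocolim_{n} \tau_{\leq n} Z) \cong \colim_{n} H^{i}_{t}(\tau_{\leq n} Z)$. Because $H^{i}_{t}(\tau_{\leq n} Z) \cong H^{i}_{t}(Z)$ as soon as $n \geq i$, this filtered colimit is eventually constant with value $H^{i}_{t}(Z)$, and the maps $\tau_{\leq n} Z \to Z$ inducing $\varphi$ identify the resulting isomorphism with $H^{i}_{t}(\varphi)$. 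This holding for all $i$, the cone is $t$-acyclic and $\varphi$ is an isomorphism, establishing condition (3).

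Alternatively --- and more economically --- one could quote \Cref{prop - Lurie} directly: its hypotheses (coaisle closed under countable coproducts, $H^{0}_{t}$ preserving them, and AB5 in the heart) are exactly what \Cref{lemma - coproducts} and \Cref{lemma - exact filtered colimits} provide, while right separatedness $\bigcap_{n \geq 0} H^{0}(\A)_{\geq n} = 0$ is immediate from non-degeneracy of the $t$-structure on $\A \in \Hqe^{t+}_{\Pi}$; right completeness then follows formally. In either route the only genuinely delicate point is the passage $H^{i}_{t}(\hocolim) \cong \colim H^{i}_{t}$, and this is precisely where exactness of filtered colimits in the heart (\Cref{lemma - exact filtered colimits}) is indispensable; everything else is formal.
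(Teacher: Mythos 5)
Your argument is correct and follows essentially the same route as the paper: reduce to condition (3) via \Cref{lemma - coproducts}, use \Cref{lemma - exact filtered colimits} together with \Cref{remark - H0t hocolim to colim AB5} (2) to commute $H^{i}_{t}$ past the sequential homotopy colimit, observe the resulting colimit is eventually constant, and conclude by non-degeneracy that the cone of the comparison map vanishes. Your alternative of citing \Cref{prop - Lurie} directly is also valid and is in fact the same computation packaged there.
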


With these results in hand, we can go on to lift locally coherence of the heart. Note that the following lemma is where we divert from the classic approach of \cite{AbelianDeformations}. Indeed, the argument in \cite{AbelianDeformations}[Proposition 6.34] requires flatness of the abelian deformation, something we do not have here (cf. \Cref{ex - example}). This is remedied by the following:

\begin{lemma} \label{lemma - condition higher exts fp coproducts} 
    $\fp(H^{0}(\A))^{\heartsuit} \subseteq H^{0}(\A)^{c}$.
    \begin{proof}
        By considering the base change exact sequence of \Cref{lemma - base change exact sequence} and using \Cref{lemma - hfpb coextension} below, it suffices to show that $$F(\fp(H^{0}(\B)^{\heartsuit})) \subseteq H^{0}(\A)^{c}.$$ So consider $\coprod_{i \in I} A_{i}$ in $H^{0}(\A)$ and $B \in \fp(H^{0}(\B)^{\heartsuit})$. Then
        \begin{align*}
            H^{0}(\A)(F(B),\coprod_{i \in I}A_{i}) &\cong H^{0}(\B)(B, \Tw^{+}(G)(\coprod_{i \in I}A_{i})) \\
            &\cong H^{0}(\B)(B, \coprod_{i \in I} \Tw^{+}(G)(A_{i})) \\
            &\cong \coprod_{i \in I} H^{0}(\B)(B, \Tw^{+}(G)(A_{i})) \\
            &\cong \coprod_{i \in I} H^{0}(\A)(F(B), A_{i})
        \end{align*}
        by $F \dashv \Tw^{+}(G)$, \Cref{lemma - right adjoint coproducts} and $\fp(H^{0}(\B)^{\heartsuit}) \subseteq H^{0}(\B)^{c}$.
    \end{proof}
\end{lemma}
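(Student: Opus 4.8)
The plan is to reduce the statement to the single inclusion $F(\fp(H^{0}(\B)^{\heartsuit})) \subseteq H^{0}(\A)^{c}$ and then to verify that inclusion by a direct adjunction computation, thereby sidestepping the flatness hypothesis used in the classical abelian argument of \cite{AbelianDeformations}.

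First I would record that $H^{0}(\A)^{c}$ is a triangulated subcategory closed under shifts and (finite) extensions. Given $A \in \fp(H^{0}(\A)^{\heartsuit})$, the base change exact sequence of \Cref{lemma - base change exact sequence},
\[
F_{\tn{ab}} \circ H^{0}_{t} \circ \Tw^{+}(G)(A) \hookrightarrow A \twoheadrightarrow F_{\tn{ab}} \circ H^{0}_{t} \circ \Psi(A),
\]
induces an exact triangle in $H^{0}(\A)$, so $A$ is compact as soon as its two outer terms are. These outer terms lie in $F_{\tn{ab}}(\fp(H^{0}(\B)^{\heartsuit})) = F(\fp(H^{0}(\B)^{\heartsuit}))$, provided $H^{0}_{t}\Tw^{+}(G)(A)$ and $H^{0}_{t}\Psi(A)$ are finitely presented in $H^{0}(\B)^{\heartsuit}$ whenever $A$ is. This finite-presentation transfer is exactly the content of the coextension lemma (\Cref{lemma - hfpb coextension}), and it is the place where flatness is replaced: the right adjoint $\Tw^{+}(G)$ realizes the coextension of scalars, which preserves finite presentation because $S$ and $K$ are homotopically finitely presented over $R$. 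Hence it suffices to prove $F(\fp(H^{0}(\B)^{\heartsuit})) \subseteq H^{0}(\A)^{c}$.

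Second, for $B \in \fp(H^{0}(\B)^{\heartsuit})$ and an arbitrary coproduct $\coprod_{i \in I} A_{i}$ in $H^{0}(\A)$, I would run the chain
\[
H^{0}(\A)(F(B),\coprod_{i} A_{i}) \cong H^{0}(\B)(B, \Tw^{+}(G)(\coprod_{i} A_{i})) \cong H^{0}(\B)(B, \coprod_{i}\Tw^{+}(G)(A_{i})) \cong \coprod_{i} H^{0}(\B)(B,\Tw^{+}(G)(A_{i})) \cong \coprod_{i} H^{0}(\A)(F(B),A_{i}),
\]
using $F \dashv \Tw^{+}(G)$ for the two outer isomorphisms, the fact that $\Tw^{+}(G)$ preserves coproducts (\Cref{lemma - right adjoint coproducts}) for the second, and compactness of $B$ in $H^{0}(\B)$ for the third — the latter holding because $\B$ is left bounded locally coherent Grothendieck, so $\fp(H^{0}(\B)^{\heartsuit}) \subseteq H^{0}(\B)^{c}$ by \Cref{def - left tlc}(4). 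This shows $F(B) \in H^{0}(\A)^{c}$ and closes the argument.

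The main obstacle is not in this final assembly but in the two inputs it consumes: the coproduct-preservation of $\Tw^{+}(G)$ feeding step two, and the finite-presentation transfer feeding step one. Both ultimately rest on $S$ (and $K$) being finitely presented as dg-$R$-modules, exploited in \Cref{lemma - RHom coproducts} by writing $S$ as a sequential homotopy colimit of perfect dg-$R$-modules and controlling the resulting homotopy limit degree by degree. This homotopy-theoretic replacement of flatness is the genuinely non-formal ingredient, while the adjunction computation above is then routine.
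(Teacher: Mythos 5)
Your proposal is correct and follows essentially the same route as the paper: reduce via the base change exact sequence of \Cref{lemma - base change exact sequence} together with \Cref{lemma - hfpb coextension} to the inclusion $F(\fp(H^{0}(\B)^{\heartsuit})) \subseteq H^{0}(\A)^{c}$, then verify that inclusion by the adjunction $F \dashv \Tw^{+}(G)$, coproduct preservation of $\Tw^{+}(G)$ (\Cref{lemma - right adjoint coproducts}), and compactness of finitely presented heart objects in $H^{0}(\B)$. You in fact spell out the reduction step in more detail than the paper does, and your account of where finite presentation of $S$ and $K$ enters is consistent with the paper's supporting lemmas.
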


\begin{corollary} \label{cor - 6.35'} 
    Let $B \in \fp(H^{0}(\B)^{\heartsuit})$. Then 
    $H^{1}(\A(F_{\tn{ab}}(B),-))$ preserves filtered colimits in $H^{0}(\A)^{\heartsuit}$.
    \begin{proof} 
        This follows from \Cref{lemma - condition higher exts fp coproducts} (cf. (4') in \Cref{def - left tlc}) and \Cref{lemma - hfpb coextension} below, together with the fact that filtered colimits in $H^{0}(\A)^{\heartsuit}$ can be written in terms of small coproducts and coequalizers.
    \end{proof}
\end{corollary}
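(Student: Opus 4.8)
The plan is to split the claim into a coproduct statement and a cokernel statement, exploiting that filtered colimits are built from these two operations, and then to feed in the compactness and finiteness of $F_{\tn{ab}}(B)$ that are already available. First I would record the ambient structure: by \Cref{prop - heart G} the heart $H^{0}(\A)^{\heartsuit}$ is Grothendieck, hence AB5, so any filtered colimit $\colim_{i} M_{i}$ of objects of $H^{0}(\A)^{\heartsuit}$ can be presented as a cokernel
\[
\coprod_{\alpha\colon i \to j} M_{i} \xrightarrow{\ d\ } \coprod_{i} M_{i} \longrightarrow \colim_{i} M_{i} \longrightarrow 0,
\]
where $d$ acts on the $\alpha$-component by $\iota_{i} - \iota_{j} \circ M(\alpha)$. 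It therefore suffices to understand the behaviour of $H^{1}(\A(F_{\tn{ab}}(B),-))$ on coproducts and on cokernels of this particular shape.

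Second, I would establish preservation of coproducts. By \Cref{lemma - condition higher exts fp coproducts} we have $\fp(H^{0}(\A)^{\heartsuit}) \subseteq H^{0}(\A)^{c}$; moreover the proof of that lemma shows directly that $F$ carries $\fp(H^{0}(\B)^{\heartsuit})$ into $H^{0}(\A)^{c}$, so that $F_{\tn{ab}}(B) = F(B)$ is compact in $H^{0}(\A)$ for $B \in \fp(H^{0}(\B)^{\heartsuit})$. Consequently $\A(F_{\tn{ab}}(B),-)$ commutes with coproducts in $\D(\A)$, and since $H^{0}_{t}$ preserves coproducts and the heart is AB5, each $H^{i}(\A(F_{\tn{ab}}(B),-))$ preserves coproducts. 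This is precisely condition $(4')$ of \Cref{def - left tlc} read off for the compact object $F_{\tn{ab}}(B)$, and it applies in every degree, not just in degree one.

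Third, I would conclude by matching the long exact cohomology sequence of the presentation above against the exactness of filtered colimits in the AB5 heart. The delicate point — and the one I expect to be the main obstacle — is that $H^{1}$ is not right exact, so coproduct preservation alone does \emph{not} formally transport the cokernel $d$. To bridge this gap I would invoke \Cref{lemma - hfpb coextension}, whose role is to guarantee that $F_{\tn{ab}}(B)$ inherits enough finiteness (coherence) from $B$, using that $\B$ is left bounded locally coherent Grothendieck so that $B$ is coherent; it is this extra input, rather than bare compactness, that controls the kernel and image of $d$ under the cohomology functors. Tracking the associated long exact sequence of the four-term exact sequence $0 \to \ker d \to \coprod_{\alpha} M_{i} \to \coprod_{i} M_{i} \to \colim_{i} M_{i} \to 0$, together with coproduct preservation in all degrees and AB5-exactness, the intermediate terms match up and yield $H^{1}(\A(F_{\tn{ab}}(B),-))(\colim_{i} M_{i}) \cong \colim_{i} H^{1}(\A(F_{\tn{ab}}(B),-))(M_{i})$, as desired.
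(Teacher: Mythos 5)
Your skeleton matches the paper's: you assemble the same three ingredients (\Cref{lemma - condition higher exts fp coproducts} together with the equivalence $(4)\Leftrightarrow(4')$ in \Cref{def - left tlc}, \Cref{lemma - hfpb coextension}, and the presentation of filtered colimits via coproducts and a coequalizer), and your coproduct step is exactly right: $F$ carries $\fp(H^{0}(\B)^{\heartsuit})$ into $H^{0}(\A)^{c}$, so every $H^{i}(\A(F_{\tn{ab}}(B),-))$ preserves coproducts. You also deserve credit for flagging the genuine difficulty, namely that $H^{1}$ is not right exact, so coproduct preservation does not formally pass to the cokernel of $d$.

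The gap is in your proposed bridge over that difficulty. \Cref{lemma - hfpb coextension} does not supply ``coherence'' of $F_{\tn{ab}}(B)$; its role is only to guarantee $F_{\tn{ab}}(B)\in\fp(H^{0}(\A)^{\heartsuit})$ (via $\hfp^{b}(\A_{(S)})\cong\hfp^{b}(\A)_{(S)}$), which is what lets \Cref{lemma - condition higher exts fp coproducts} and $(4')$ apply to it in the first place. Coherence is not available at this stage: local coherence of $H^{0}(\A)^{\heartsuit}$ is established only in \Cref{prop - heart lcG}, whose proof uses the present corollary, so an appeal to it here would be circular. Moreover, even granting some coherence of $C=F_{\tn{ab}}(B)$, it would not ``control the kernel and image of $d$'': those are subobjects of infinite coproducts, and your closing assertion that ``the intermediate terms match up'' in the long exact sequence of the four-term presentation is precisely the point requiring proof, since the terms $H^{i}(\A(C,\ker d))$ and $H^{i}(\A(C,\operatorname{im}(d)))$ are not coproducts and their behaviour under the colimit is unknown. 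The way to close the argument is to avoid these terms: reduce filtered colimits to colimits of (well-ordered) chains plus coproducts, for which the comparison map $1-\mathrm{shift}$ between coproducts is a monomorphism in the AB5 heart; the resulting short exact sequence, combined with coproduct preservation of $H^{i}(\A(F_{\tn{ab}}(B),-))$ in two consecutive degrees (this is where compactness, i.e.\ all degrees at once, is genuinely needed and mere finite presentation of $F_{\tn{ab}}(B)$ would not suffice), splits the long exact sequence and yields $H^{1}(\A(F_{\tn{ab}}(B),\colim_{i}M_{i}))\cong\colim_{i}H^{1}(\A(F_{\tn{ab}}(B),M_{i}))$.
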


\begin{proposition} \label{prop - heart lcG}
    The heart $H^{0}(\A)^{\heartsuit}$ is a locally coherent Grothendieck abelian category.
    \begin{proof}
        By \Cref{prop - heart G}, $H^{0}(\A)^{\heartsuit}$ is a Grothendieck abelian category. For locally coherence, we follow \cite{AbelianDeformations}[Theorem 6.36]. Let $\{ B_{i} \}_{i \in I}$ be a small set of finitely presented generators of $H^{0}(\B)^{\heartsuit}$. The idea is to carry out the construction of \cite{AbelianDeformations}[Lemma 6.23], which does not require flatness, for specific maps $k$. So suppose we constructed the $G^{''}_{f}$ as in \cite{AbelianDeformations}[Lemma 6.23], meaning that for a non-zero map $f: A \to F_{\tn{ab}}(B)$ where $B \in H^{0}(\B)^{\heartsuit}$, we constructed the following commutative diagram:
        \[\begin{tikzcd}[font=\small]
	{A''_{f}} & {A'} & {A'} & A & {B.} \\
	{B_{j}} & {H^{0}(S)\otimes_{H^{0}(R)}A'} & {B_{i}} & {H^{0}(S)\otimes_{H^{0}(R)}A}
	\arrow["{h'}", from=1-1, to=1-2]
	\arrow["{(\eta_{A'})'}"', two heads, from=1-1, to=2-1]
	\arrow["{(P2)}"{description}, draw=none, from=1-1, to=2-2]
	\arrow[Rightarrow, no head, from=1-2, to=1-3]
	\arrow["{\eta_{A'}}"', two heads, from=1-2, to=2-2]
	\arrow["{g'}", from=1-3, to=1-4]
	\arrow["{(\eta_{A})'}"', two heads, from=1-3, to=2-3]
	\arrow["{(P1)}"{description}, draw=none, from=1-3, to=2-4]
	\arrow["f", from=1-4, to=1-5]
	\arrow["{\eta_{A}}"', two heads, from=1-4, to=2-4]
	\arrow["h"', from=2-1, to=2-2]
	\arrow["{(\eta_{A})''}"', from=2-2, to=2-3]
	\arrow["g"', from=2-3, to=2-4]
	\arrow["{f'}"', from=2-4, to=1-5]
\end{tikzcd}\]
        Here we omitted writing the forgetful functor $F_{\tn{ab}}$. Denoting $I := \Kernel(H^{0}(\theta))$, we have the short exact sequence
        \begin{equation*}
            F_{\tn{ab}}(IA') \hookrightarrow A' \twoheadrightarrow F_{\tn{ab}}(H^{0}(S) \otimes_{H^{0}(R)} A')
        \end{equation*}
        and, as $A''_{f}$ is the pullback of $(h,\eta_{A'})$, also the short exact sequence
        \begin{equation*} 
            F_{\tn{ab}}(IA') \hookrightarrow A''_{f} \twoheadrightarrow F_{\tn{ab}}(B_{j})
        \end{equation*}
        or exact triangle,
        \begin{equation} \label{eq - triangle LC} \begin{tikzcd}
	{F_{\tn{ab}}(B_{j}[-1])} & {F_{\tn{ab}}(IA')} & {A''_{f}} & {\quad.}
	\arrow[from=1-1, to=1-2]
	\arrow[from=1-2, to=1-3]
	\arrow["{+}", from=1-3, to=1-4]
\end{tikzcd}\end{equation}
        Since $IA' \in H^{0}(\B)^{\heartsuit}$, it can be written as a filtered colimit $\colim_{i} X_{i}$ of coherent objects $X_{i}$ in $H^{0}(\B)^{\heartsuit}$. Then $F_{\tn{ab}}(IA') = \colim_{i} F_{\tn{ab}}(X_{i})$ is a filtered colimit of finitely presented objects in $H^{0}(\A)^{\heartsuit}$. By \Cref{cor - 6.35'},
        \begin{align*}
            H^{0}(\A)(F_{\tn{ab}}(B_{j})[-1],F_{\tn{ab}}(IA')) &= H^{1}(\A(F_{\tn{ab}}(B_{j}),F_{\tn{ab}}(IA'))) \\
            &\cong \colim_{i} H^{1}(\A(F_{\tn{ab}}(B_{j}),F_{\tn{ab}}(X_{i}))) \\
            &= \colim_{i} H^{0}(\A)(F_{\tn{ab}}(B_{j}[-1]),F_{\tn{ab}}(X_{i})).
        \end{align*}
        So we obtain a morphism of exact triangles in $H^{0}(\A)$ for some $i$,
        \[\begin{tikzcd}[row sep = small]
	{F_{\tn{ab}}(B_{j})[-1]} & {F_{\tn{ab}}(IA')} & {A''_{f}} & { } \\
	{F_{\tn{ab}}(B_{j})[-1]} & {F_{\tn{ab}}(X_{i})} & {A_{f}} & {\quad.}
	\arrow[from=1-1, to=1-2]
	\arrow[from=1-2, to=1-3]
	\arrow["{+}", from=1-3, to=1-4]
	\arrow[Rightarrow, no head, from=2-1, to=1-1]
	\arrow[from=2-1, to=2-2]
	\arrow[from=2-2, to=1-2]
	\arrow[from=2-2, to=2-3]
	\arrow[dashed, from=2-3, to=1-3]
	\arrow["{+}", from=2-3, to=2-4]
\end{tikzcd}\]
        This determines a morphism of short exact sequences in $H^{0}(\A)^{\heartsuit}$,
        \[\begin{tikzcd}[row sep = small]
	{F_{\tn{ab}}(IA')} & {A''_{f}} & {F_{\tn{ab}}(B_{j})} \\
	{F_{\tn{ab}}(X_{i})} & {A_{f}} & {F_{\tn{ab}}(B_{j})}
	\arrow[hook, from=1-1, to=1-2]
	\arrow[two heads, from=1-2, to=1-3]
	\arrow[Rightarrow, no head, from=1-3, to=2-3]
	\arrow[from=2-1, to=1-1]
	\arrow[hook, from=2-1, to=2-2]
	\arrow["k"', dashed, from=2-2, to=1-2]
	\arrow[two heads, from=2-2, to=2-3]
\end{tikzcd}\]
        By \cite{AbelianDeformations}[Lemma 6.23], the $\{ A_{f} \}_{f}$ form a set of generators for $H^{0}(\A)^{\heartsuit}$. All objects of $\{ A_{f} \}_{f}$ are finitely presented since each $F_{\tn{ab}}(X_{i})$ and $F_{\tn{ab}}(B_{j})$ are finitely presented. It is a small set because:
        \begin{itemize}
            \item $\fp(H^{0}(\B)^{\heartsuit})$ is small;
            \item $H^{1}(\A(F(-),-)) \cong H^{1}(\B(-,\Tw^{+}(G)(-)))$. \qedhere
        \end{itemize}
    \end{proof}
\end{proposition}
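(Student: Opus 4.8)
The plan is to follow the strategy of \cite{AbelianDeformations}[Theorem 6.36], transporting the locally coherent structure of $H^{0}(\B)^{\heartsuit}$ across the abelian deformation $F_{\tn{ab}}: H^{0}(\B)^{\heartsuit} \to H^{0}(\A)^{\heartsuit}$. By \Cref{prop - heart G} we already know $H^{0}(\A)^{\heartsuit}$ is Grothendieck abelian, so what remains is to exhibit a \emph{small} set of \emph{finitely presented} generators (local coherence then being inherited, since coherence of the finitely presented objects is controlled through the base change together with \Cref{cor - 6.35'}). The basic tools are the right adjoint $H^{0}_{t} \circ \Tw^{+}(G) \cong \homom_{H^{0}(R)}(H^{0}(S),-)$ to $F_{\tn{ab}}$ and the base change short exact sequence, both furnished by \Cref{lemma - base change exact sequence}, which let us compare $\Hom$- and $\Ext$-groups in $H^{0}(\A)^{\heartsuit}$ with those in $H^{0}(\B)^{\heartsuit}$ via the adjunction $F_{\tn{ab}} \dashv H^{0}_{t}\Tw^{+}(G)$.

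First I would fix a small set $\{B_{i}\}_{i}$ of finitely presented generators of $H^{0}(\B)^{\heartsuit}$, available because $\B$ is left bounded locally coherent Grothendieck. Then, for each nonzero map $f: A \to F_{\tn{ab}}(B)$ with $B$ built from the $B_{i}$, I would run the generator-producing construction of \cite{AbelianDeformations}[Lemma 6.23], which does not use flatness: it yields objects $A''_{f}$ sitting in short exact sequences
\[
\begin{tikzcd}[column sep = small]
	{F_{\tn{ab}}(IA')} & {A''_{f}} & {F_{\tn{ab}}(B_{j})} \\
	\arrow[hook, from=1-1, to=1-2]
	\arrow[two heads, from=1-2, to=1-3]
\end{tikzcd}
\]
where $I = \Kernel(H^{0}(\theta))$, and such that the family $\{A''_{f}\}_{f}$ generates $H^{0}(\A)^{\heartsuit}$. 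The point at which the classical argument breaks down is that $IA'$ need not be finitely presented, since without flatness we have no control over it; hence the $A''_{f}$ need not be finitely presented and cannot be used directly.

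The crux, and the main obstacle, is repairing this defect. The idea is to approximate: write $IA' \cong \colim_{i} X_{i}$ as a filtered colimit of coherent (finitely presented) objects $X_{i}$ in $H^{0}(\B)^{\heartsuit}$, so that $F_{\tn{ab}}(IA') \cong \colim_{i} F_{\tn{ab}}(X_{i})$ is a filtered colimit of finitely presented objects of $H^{0}(\A)^{\heartsuit}$. Using \Cref{cor - 6.35'}, the relevant extension group $H^{0}(\A)(F_{\tn{ab}}(B_{j})[-1], F_{\tn{ab}}(IA')) = H^{1}(\A(F_{\tn{ab}}(B_{j}), F_{\tn{ab}}(IA')))$ commutes with this filtered colimit, so the class of the extension defining $A''_{f}$ is already detected at some finite stage $X_{i}$. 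This produces a morphism of short exact sequences replacing $F_{\tn{ab}}(IA')$ by the finitely presented $F_{\tn{ab}}(X_{i})$ and hence $A''_{f}$ by a finitely presented $A_{f}$, without disturbing the generating property. That \Cref{cor - 6.35'} applies rests on \Cref{lemma - condition higher exts fp coproducts} ($\fp(H^{0}(\A)^{\heartsuit}) \subseteq H^{0}(\A)^{c}$) and \Cref{lemma - right adjoint coproducts}. Finally, smallness of the resulting set $\{A_{f}\}_{f}$ follows from smallness of $\fp(H^{0}(\B)^{\heartsuit})$ together with the identification $H^{1}(\A(F_{\tn{ab}}(-),-)) \cong H^{1}(\B(-, \Tw^{+}(G)(-)))$ coming from the adjunction, so that the relevant $\Ext$-sets are themselves small. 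This yields a small set of finitely presented generators of $H^{0}(\A)^{\heartsuit}$, establishing that it is locally coherent Grothendieck.
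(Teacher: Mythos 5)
Your proposal is correct and follows essentially the same route as the paper's proof: run the non-flat version of the generator construction of \cite{AbelianDeformations}[Lemma 6.23], identify that the failure point is that $IA'$ (hence $A''_{f}$) need not be finitely presented, and repair it by writing $IA'$ as a filtered colimit of coherent objects and using \Cref{cor - 6.35'} to detect the extension class defining $A''_{f}$ at a finite stage, yielding finitely presented generators $A_{f}$; the smallness argument via $\fp(H^{0}(\B)^{\heartsuit})$ and $H^{1}(\A(F(-),-)) \cong H^{1}(\B(-,\Tw^{+}(G)(-)))$ is also the one the paper uses. No gaps to report.
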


\begin{thm}
    Let $F: \B \to \A$ be a $t$-deformation in $\Hqe^{t+}_{\Pi}$ of a left bounded locally coherent Grothendieck $t$-dg-category. Then $\A$ is left bounded locally coherent Grothendieck.
    \begin{proof}
    We know that $H^{0}(\A)^{\heartsuit}$ is a locally coherent Grothendieck abelian category by \Cref{prop - heart lcG}, that $H^{0}(\A)$ is right complete by \Cref{lemma - right complete}, that $H^{0}_{t}: H^{0}(\A) \to H^{0}(\A)^{\heartsuit}$ preserves coproducts by \Cref{lemma - coproducts} and that $\fp(H^{0}(\A)^{\heartsuit}) \subseteq H^{0}(\A)^{c}$ by \Cref{lemma - condition higher exts fp coproducts}. We can thus invoke \Cref{thm - categorical hfpb inddg}. 
    \end{proof}
\end{thm}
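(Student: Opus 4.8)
The plan is to read off the definition of a left bounded locally coherent Grothendieck $t$-dg-category from \Cref{def - left tlc} and to verify, one by one, its four constituent conditions for the homotopy category $H^{0}(\A)$ with its induced $t$-structure. Since each of these conditions concerns only $H^{0}(\A)$ as a triangulated category with a $t$-structure, the argument is a matter of assembling the lemmas built up over this subsection into the required package; no new enriched input is needed at this last stage.

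First I would dispose of condition (1). Left boundedness is free: as $F$ is a $t$-deformation \emph{in} $\Hqe^{t+}_{\Pi}$, the target $\A$ lies in $\Hqe^{t+}_{\Pi}$, and by definition every object of this category carries a left bounded, non-degenerate $t$-structure with enough derived injectives. Right completeness is exactly \Cref{lemma - right complete}, which rests in turn on the preservation of coproducts by $H^{0}_{t}$ (\Cref{lemma - coproducts}) and on the exactness of filtered colimits in the heart (\Cref{lemma - exact filtered colimits}); the former simultaneously delivers condition (3). For the heart itself, condition (2) --- that $H^{0}(\A)^{\heartsuit}$ be locally coherent Grothendieck abelian --- is \Cref{prop - heart lcG}, which builds on Grothendieckness from \Cref{prop - heart G}, and condition (4), namely $\fp(H^{0}(\A)^{\heartsuit}) \subseteq H^{0}(\A)^{c}$, is \Cref{lemma - condition higher exts fp coproducts}. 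With all four conditions in hand I would conclude that $\A$ is left bounded locally coherent Grothendieck, invoking \Cref{thm - categorical hfpb inddg} to identify $\A$ with $\hproj^{+}(\hfp^{b}(\A))$ for the essentially small strongly pretriangulated bounded $t$-dg-category $\hfp^{b}(\A)$, thereby confirming membership in the named class.

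The genuine difficulty is not in this closing bookkeeping but has already been absorbed into the supporting lemmas, and if I were to attack the theorem from scratch the crux would be lifting local coherence of the heart along a deformation that need \emph{not} be flat. The classical abelian argument of \cite{AbelianDeformations}[Proposition 6.34] uses flatness, which genuinely fails in our setting (cf. the leading \Cref{ex - example}); the remedy is to replace it by the base change exact triangle of \Cref{cor - basechangetriangle} and its heart-level incarnation \Cref{lemma - base change exact sequence}, fed by the compactness input $\fp(H^{0}(\A)^{\heartsuit}) \subseteq H^{0}(\A)^{c}$ that forces $H^{1}(\A(F_{\tn{ab}}(B),-))$ to commute with filtered colimits (\Cref{cor - 6.35'}). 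Everything else is controlled by the base change formalism already in place, so I expect the theorem to follow cleanly once those lemmas are granted.
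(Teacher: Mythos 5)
Your proposal is correct and follows essentially the same route as the paper: both verify the four conditions of \Cref{def - left tlc} by citing \Cref{prop - heart lcG}, \Cref{lemma - right complete}, \Cref{lemma - coproducts} and \Cref{lemma - condition higher exts fp coproducts}, and then invoke \Cref{thm - categorical hfpb inddg}. Your explicit remark that left boundedness comes for free from $\A$ lying in $\Hqe^{t+}_{\Pi}$ is a point the paper leaves implicit, but otherwise the arguments coincide.
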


\begin{corollary}
    \label{cor - equivalence t-def t-locally coherent} 
    Suppose that $\B$ is a left bounded locally coherent Grothendieck $t$-dg-category.
    Then there is an equivalence of deformation pseudofunctors
    $$\Def^{t,\tn{+lcG}}_{\B} \cong \Def^{t,\DGInj}_{\B}.$$
\end{corollary}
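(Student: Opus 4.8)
The plan is to prove that the pseudonatural transformation \eqref{eq - tlcvsDGINJ},
\[
\Def^{t,\tn{+lcG}}_{\B} \hookrightarrow \Def^{t,\DGInj}_{\B},
\]
is an equivalence of deformation pseudofunctors. A pseudonatural transformation between pseudofunctors valued in groupoids is an equivalence exactly when each of its components is an equivalence of groupoids, so it suffices to show that for every object $R' \to S$ of $\mc{U}-\DGRng/S$ the induced functor $\Def^{t,\tn{+lcG}}_{\B}(R') \to \Def^{t,\DGInj}_{\B}(R')$ is fully faithful and essentially surjective. Full faithfulness is formal: both pseudofunctors are obtained from $\Def^{t}_{\B}$ by restricting the class of admissible targets $\A$ while keeping the same morphisms of deformations, so \eqref{eq - tlcvsDGINJ} is the inclusion of a full subgroupoid at each level. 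Thus the entire content lies in essential surjectivity.

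For essential surjectivity, I would first reduce to the standing assumption of this subsection that $K = \Kernel(\theta)$ squares to zero. For a base change whose kernel is nilpotent of higher degree, one factors it as a finite tower of base changes each with square-zero kernel (cf. \cite{AbelianDeformations}[Remark 6.2]); since an equivalence at each square-zero stage composes to an equivalence for the whole, it is enough to treat the case $K^{2}=0$. Granting this, take a $t$-deformation $F\colon \B \to \A$ defining an object of $\Def^{t,\DGInj}_{\B}(R')$, that is, $\A \in \Hqe^{t+}_{\Pi}$ together with a right adjoint quasi-functor lying in $\Hqe^{t+}_{\Pi}$. The theorem immediately preceding this corollary shows that such an $\A$ is again left bounded locally coherent Grothendieck. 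Hence $F$ already represents an object of $\Def^{t,\tn{+lcG}}_{\B}(R')$ and lies in the essential image of \eqref{eq - tlcvsDGINJ}, which is exactly what we need.

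The main obstacle is entirely absorbed into that preceding theorem, and through it into the chain of lifting results of this subsection. The crucial non-formal inputs are the base change exact triangle of \Cref{cor - basechangetriangle}, the right $t$-exactness of $\Tw^{+}(G)$ and $\Psi$ from \Cref{lemma - Psi right t-exact}, the preservation of coproducts by the right adjoint and by $H^{0}_{t}$ (\Cref{lemma - right adjoint coproducts} and \Cref{lemma - coproducts}), right completeness (\Cref{lemma - right complete}), and---since flatness is unavailable here, unlike in the abelian story---the compactness estimate $\fp(H^{0}(\A)^{\heartsuit}) \subseteq H^{0}(\A)^{c}$ of \Cref{lemma - condition higher exts fp coproducts}, which feeds the local coherence of the heart in \Cref{prop - heart lcG}. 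Once these are assembled, \Cref{thm - categorical hfpb inddg} identifies $\A$ with $\hproj^{+}(\hfp^{b}(\A))$, placing $F$ in $\Def^{t,\tn{+lcG}}_{\B}$; the corollary then follows formally.
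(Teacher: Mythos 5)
Your proposal is correct and follows the paper's own route: the map \eqref{eq - tlcvsDGINJ} is a levelwise full subgroupoid inclusion, and essential surjectivity is exactly the content of the theorem immediately preceding the corollary (that the target $\A$ of a $t$-deformation in $\Hqe^{t+}_{\Pi}$ of a left bounded locally coherent Grothendieck $t$-dg-category is again left bounded locally coherent Grothendieck), assembled from the same chain of lemmas you cite under the standing square-zero assumption of \S\ref{pardefset6}.
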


\subsection{An equivalence of bounded \texorpdfstring{$t$}{t}-deformations of bounded \texorpdfstring{$t$}{t}-dg-categories and left bounded locally coherent Grothendieck \texorpdfstring{$t$}{t}-deformations of their dg-derived categories}
\label{pardefset7}

In this subsection we will show that one can always move to the setting of \S \ref{pardefset6} by using the equivalence of \Cref{thm - categorical hfpb inddg}.

\begin{thm} \label{thm - compatibility hoinddg coextension}
Let $\A$ be an essentially small h-flat strongly pretriangulated $R$-linear dg-category with a bounded $t$-structure. Then there is a quasi-equivalence
    $$\Ind^{\dg,Q,+}(\A_{(S)}) \cong \Ind^{\dg,Q,+}(\A)_{(S)}$$
    that is $t$-exact for the induced $t$-structure and the $t$-structure of \eqref{eq - tstructure coextension}.
    \begin{proof}
        Consider the embedding $\Y: \A \hookrightarrow \hproj^{+}(\A)$. Then we can apply the coextension of scalars $(-)_{(S)}$ to obtain a dg-functor
        $$\Y_{(S)}: \A_{(S)} \to \hproj^{+}(\A)_{(S)}.$$
        By the base change formula of \eqref{eq - base change} and the proof of \Cref{thm - homotopy dg completion closed}, the codomain has all filtered homotopy colimits, so we can apply the universal property of \Cref{cor - universal prop inddgho} to find a dg-functor $\Gamma := \Ind^{\dg,Q,+}(\Y_{(S)})$ that makes the diagram below commute,
        \[\begin{tikzcd}[row sep = small]
	{\A_{(S)}} & {\hproj^{+}(\A)_{(S)}.} \\
	{\Ind^{\dg,Q,+}(\A_{(S)})}
	\arrow["{\Y_{(S)}}", from=1-1, to=1-2]
	\arrow["\Y"', hook, from=1-1, to=2-1]
	\arrow["\Gamma"', dashed, from=2-1, to=1-2]
\end{tikzcd}\]
        From the proof of \Cref{cor - universal prop enriched}, we know that $\Gamma$ is defined as
        \[\begin{tikzcd}
	{\Gamma:\colim{}^{Q(c_{P}^{\op})} \Y \circ X} & {\colim{}^{Q(c_{P}^{\op})} \Y_{(S)} \circ X.}
	\arrow[maps to, from=1-1, to=1-2]
\end{tikzcd}\]

        Let us check that $\Gamma$ is a $t$-exact quasi-equivalence. 
        
        To see that $\Gamma$ is quasi-faithful, observe the commutative diagram below, where $F_{\A}$ and $F_{\Ind^{\dg,Q,+}(\A)}$ are the counits of \eqref{eq - coext}.
            \begin{equation} \label{eq - cdiagram qf} \begin{tikzcd}
	{\A} & {\hproj^{+}(\A)} \\
	{\A_{(S)}} & {\Ind^{\dg,Q,+}(\A_{(S)})} & {\hproj^{+}(\A)_{(S)}}
	\arrow["{\Y}"', hook, from=2-1, to=2-2]
	\arrow["{F_{\A}}", from=2-1, to=1-1]
	\arrow["{\Y}", hook, from=1-1, to=1-2]
	\arrow["{\D^{+}(F_{\A})}", from=2-2, to=1-2]
	\arrow["\Gamma"', from=2-2, to=2-3]
	\arrow["{F_{\hproj^{+}(\A)}}"', from=2-3, to=1-2]
\end{tikzcd}\end{equation}
            Since the counits are quasi-faithful, so is $\D^{+}(F_{\A})$ and thus also $\Gamma$. 
            
            Next, we show that $\Gamma$ is quasi-full. By \Cref{thm - homotopy dg completion closed} (2), it suffices to consider
            \begin{align*}
                f \in &\Ind^{\dg,Q,+}(\A)_{(S)}(\colim{}^{Q(c_{P^{\tn{op}}})} \Y_{(S)} \circ X, \colim{}^{Q(c_{P'^{\tn{op}}})} \Y_{(S)} \circ X').
            \end{align*}
            Then $f$ is essentially just an $S$-linear map $f_{\diamond}$, obtained from $f$ by applying $F_{\Ind^{\dg,Q,+}(\A)}$. By commutativity of \eqref{eq - cdiagram qf}, it corresponds to a map
            \begin{align*}
                \tilde{f} \in &\Ind^{\dg,Q,+}(\A)(\D^{+}(F_{\A})(\colim{}^{Q(c_{P^{\tn{op}}})} \Y \circ X), \D^{+}(F_{\A})(\colim{}^{Q(c_{P'^{\tn{op}}})} \Y \circ X'))
            \end{align*}
            that is $S$-linear, which means it has a preimage under $\D^{+}(F_{\A})$. This preimage is then mapped to $f$ by $\Gamma$. We conclude that $\Gamma$ is quasi-fully faithful.
            
            We continue with (right) $t$-exactness of $H^{0}(\Gamma)$. So consider  $\colim{}^{Q(c_{P}^{\op})} \Y \circ X$ in the coaisle $\D^{+}(\A_{(S)})_{\geq 1}$. Then $$\colim{}^{Q(c_{P}^{\op})} \Y \circ X \cong \colim{}^{Q(c_{P}^{\op})} \tau_{\geq 1} \circ X.$$
            Because cohomology preserves filtered colimits and quasi-functors $F$ in $\hproj^{+}(\A)_{(S)}$ yield functors $H^{0}(F): H^{0}(S) \to \D^{+}(\A)$, we have that $$(\colim{}^{Q(c_{P}^{\op})} \Y_{(S)} \circ X)(\diamond) \sim \Y(\colim{}^{Q(c_{P}^{\op})} F_{\A} \circ X).$$
            By definition of the induced $t$-structure on $\D^{+}(\A)$, it suffices to remark that
            \begin{align*}
                \colim{}^{Q(c_{P}^{\op})} F_{\A} \circ X &= \D^{+}(F_{\A})(\colim{}^{Q(c_{P}^{\op})} \Y \circ X) \\
                &\cong \D^{+}(F_{\A})(\colim{}^{Q(c_{P}^{\op})} \tau_{\geq 1} \circ X) \\
                &\cong \colim{}^{Q(c_{P}^{\op})} F_{\A} \circ \tau_{\geq 1} \circ X \\
                &\cong \colim{}^{Q(c_{P}^{\op})} \tau_{\geq 1} \circ F_{\A} \circ X,
            \end{align*}
            where we used $t$-exactness of $F_{\A}$ and $\D^{+}(F_{\A})$ (see \Cref{cor - D(F) t-exact}). The argument for left $t$-exactness is analogous. 
            
         Finally, to see that $\Gamma$ is quasi-essentially surjective, consider an object $N$ in $H^{0}(\Ind^{\dg,Q,+}(\A)_{(S)})$ (we once again use \Cref{thm - homotopy dg completion closed} (2)). Note that by \Cref{prop - properties induced t-structure} and \eqref{eq - tstructure coextension}, the $t$-structure is non-degenerate, and by \Cref{prop - Lurie} also right complete. Hence,
            $$N \cong \hocolim_{n \geq 0} \tau_{\leq n}N.$$
            Since $H^{0}(\Gamma)$ preserves filtered homotopy colimits, it suffices to show that each $\tau_{\leq n}N$ lies in the essential image. Since $H^{0}(\Gamma)$ also preserves shifts, we can assume that $n = 0$. All in all, the claim is reduced to showing essential surjectivity of $H^{0}(\Gamma)$ on $P \in (\Ind^{\dg,Q,b}(\A)_{(S)})_{\leq 0}$. The strategy is to approximate $P$ by objects in the essential image via an inductive argument. Consider hereto $M \geq 0$ so that $\tau_{\leq -M}P = 0$. Then $\tau_{\leq -M+1}P \cong H^{-M+1}_{t}P$. Consequently, $$\tau_{\leq -M+1}P \in H^{0}(\Ind^{\dg,Q,+}(\A)_{(S)})^{\heartsuit}.$$ Since $H^{0}(\Gamma)$ induces equivalences
            \begin{align*}
                H^{0}(\Ind^{\dg,Q,+}(\A)_{(S)})^{\heartsuit} &\cong H^{0}(\Ind^{\dg,Q,+}(\A))^{\heartsuit}_{H^{0}(S)} \\
                &\cong (\Ind H^{0}(\A)^{\heartsuit})_{H^{0}(S)} \\
                &\cong \Ind (H^{0}(\A)^{\heartsuit}_{H^{0}(S)}) \\
                &\cong \Ind H^{0}(\A_{(S)})^{\heartsuit} \\
                &\cong H^{0}(\Ind^{\dg,Q,+}(\A_{(S)}))^{\heartsuit}.
            \end{align*}
            using also \cite{AbelianDeformations}[Proposition 4.5], $\tau_{\leq -M+1}P$ lies in the essential image of $H^{0}(\Gamma)$. We can extend this by considering the exact triangle
            \[\begin{tikzcd}[row sep = small]
	{\tau_{\leq -M+1}P} & {\tau_{\leq -M+2}P} & {H^{-M+2}_{t}P[M-2]} & {\quad.}
	\arrow["{+}", from=1-3, to=1-4]
	\arrow[from=1-2, to=1-3]
	\arrow[from=1-1, to=1-2]
\end{tikzcd}\]
            Since the first and third object lie in the essential image of $H^{0}(\Gamma)$, so does the second. Using this argument, we can build our way up to $\tau_{\leq 0} P \cong P$. This shows that $H^{0}(\Gamma)$ is essentially surjective. \qedhere
    \end{proof}
\end{thm}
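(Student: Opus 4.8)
The plan is to build the comparison functor from the universal property of the homotopy ind-dg-completion and then verify, in four steps, that it is a $t$-exact quasi-equivalence. First I would apply the coextension of scalars $(-)_{(S)}$ to the Yoneda embedding $\Y\colon \A \hookrightarrow \hproj^{+}(\A)$, producing a dg-functor $\Y_{(S)}\colon \A_{(S)} \to \hproj^{+}(\A)_{(S)}$. By the base change formula \eqref{eq - base change} the codomain is quasi-equivalent to $\hproj^{+}_{S}(S \otimes_{R} \A)$, which is closed under filtered homotopy dg-colimits by the proof of \Cref{thm - homotopy dg completion closed}; hence the universal property of \Cref{cor - universal prop inddgho} yields a unique dg-functor $\Gamma := \Ind^{\dg,Q,+}(\Y_{(S)})$ extending $\Y_{(S)}$ along $\Y\colon \A_{(S)} \hookrightarrow \Ind^{\dg,Q,+}(\A_{(S)})$, with the explicit description $\Gamma(\colim{}^{Q(c_{P}^{\op})} \Y \circ X) = \colim{}^{Q(c_{P}^{\op})} \Y_{(S)} \circ X$. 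The target $\Ind^{\dg,Q,+}(\A)_{(S)}$ is identified with a full subcategory of $\hproj^{+}(\A)_{(S)}$ via \Cref{thm - homotopy dg completion closed} (2).

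For quasi-full faithfulness I would exploit the counits $F_{\A}$ and $F_{\hproj^{+}(\A)}$ of the coextension adjunction \eqref{eq - coext}, which are quasi-faithful, and fit $\Gamma$, the extension $\D^{+}(F_{\A})$ of \Cref{cor - D(F)}, and these counits into a single commuting square. Quasi-faithfulness of $\Gamma$ then follows from that of the counits, hence of $\D^{+}(F_{\A})$. For quasi-fullness, a morphism in $\Ind^{\dg,Q,+}(\A)_{(S)}$ becomes, after applying the forgetful functor, an $S$-linear map computed downstairs; the commuting square shows it corresponds to a morphism in the image of $\D^{+}(F_{\A})$ whose $\rho$-linear preimage is sent back to it by $\Gamma$. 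Throughout, \Cref{thm - homotopy dg completion closed} (2) lets me reduce to objects of the form $\colim{}^{Q(c_{P}^{\op})} \Y \circ X$.

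$t$-exactness I would check directly on the coaisle. For $\colim{}^{Q(c_{P}^{\op})} \Y \circ X \in \D^{+}(\A_{(S)})_{\geq 1}$ I would rewrite it as $\colim{}^{Q(c_{P}^{\op})} \tau_{\geq 1} \circ X$ and, using that cohomology commutes with filtered colimits together with the description of the coextended $t$-structure \eqref{eq - tstructure coextension}, reduce right $t$-exactness to the identification $\colim{}^{Q(c_{P}^{\op})} F_{\A} \circ \tau_{\geq 1} \circ X \cong \colim{}^{Q(c_{P}^{\op})} \tau_{\geq 1} \circ F_{\A} \circ X$, which holds by $t$-exactness of $F_{\A}$ and of $\D^{+}(F_{\A})$ (\Cref{cor - D(F) t-exact}). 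Left $t$-exactness is dual.

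The main obstacle is quasi-essential surjectivity, where I would lean on right completeness. By \Cref{prop - properties induced t-structure} and \eqref{eq - tstructure coextension} the relevant $t$-structures are non-degenerate, hence right complete by \Cref{prop - Lurie}, so every object satisfies $N \cong \hocolim_{n \geq 0} \tau_{\leq n} N$. Since $H^{0}(\Gamma)$ preserves filtered homotopy colimits and shifts, it suffices to reach the heart. The decisive input is that $\Ind$ commutes with coextension of scalars on the abelian hearts: combining \Cref{prop - heart inddg} with \cite{AbelianDeformations}[Proposition 4.5] gives the chain $H^{0}(\Ind^{\dg,Q,+}(\A)_{(S)})^{\heartsuit} \cong \Ind(H^{0}(\A)^{\heartsuit}_{H^{0}(S)}) \cong H^{0}(\Ind^{\dg,Q,+}(\A_{(S)}))^{\heartsuit}$, so $H^{0}(\Gamma)$ is essentially surjective on hearts. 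I would then bootstrap to bounded objects by an induction on the $t$-amplitude using the truncation exact triangles, and finally to all of $\hproj^{+}(\A)_{(S)}$ by right completeness. The care required is that objects on the right genuinely live in $\hproj^{+}(\A)_{(S)}$ rather than in the strictly smaller completion, so every reduction must be phrased through \Cref{thm - homotopy dg completion closed} (2), and the homotopy-colimit steps rely on the preservation statement of \Cref{thm - homotopy dg completion closed} (4).
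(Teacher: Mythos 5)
Your proposal follows essentially the same route as the paper's proof: constructing $\Gamma$ via the universal property of $\Ind^{\dg,Q,+}$ applied to $\Y_{(S)}$, establishing quasi-fully faithfulness through the commuting square with the counits of the coextension adjunction, checking $t$-exactness on the coaisle via the $t$-exactness of $F_{\A}$ and $\D^{+}(F_{\A})$, and deducing essential surjectivity from right completeness, the identification of hearts through $\Ind$ commuting with coextension, and induction on $t$-amplitude. The argument is correct and the supporting references are the right ones.
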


\begin{remark}
    Note that the quasi-functor $\Gamma$ is still defined, $t$-exact and quasi-fully faithful when considering $\Ind^{\dg,Q,+}(\mathcal{A})$ for any essentially small h-flat strongly pretriangulated $t$-dg-category $\A$. For the quasi-essentially surjectivity however, we require non-degeneracy and right completeness as in the setting of \Cref{prop - properties induced t-structure}.
\end{remark}

The converse to \Cref{thm - compatibility hoinddg coextension} -- keeping in mind \Cref{thm - categorical hfpb inddg} -- was proven in \cite{GLVdB2}[(5.5)].

\begin{lemma} \label{lemma - hfpb coextension}
    Let $\A$ be a pretriangulated $R$-linear $t$-dg-category. Then there is a $t$-exact quasi-equivalence
    $$\hfp^{b}(\A_{(S)}) \cong \hfp^{b}(\A)_{(S)}.$$
\end{lemma}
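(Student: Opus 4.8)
The cleanest route is to invoke \cite{GLVdB2}[(5.5)], where exactly this compatibility of $\hfp^{b}(-)$ with coextension of scalars is established; the plan below records the underlying mechanism. The coextension functor $(-)_{(S)} = \RHom_{R}(S,-)$ is $t$-exact and preserves non-degeneracy (see \eqref{eq - tstructure coextension}), and, being a right adjoint, it sends quasi-fully-faithful quasi-functors to quasi-fully-faithful ones. Applying it to the $t$-exact inclusion $\hfp^{b}(\A) \hookrightarrow \A$ therefore yields a $t$-exact, quasi-fully-faithful quasi-functor
$$\hfp^{b}(\A)_{(S)} \hookrightarrow \A_{(S)},$$
so the whole task reduces to identifying its essential image with $\hfp^{b}(\A_{(S)})$.

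First I would match the hearts. Since $\RHom_{R}(S,-)$ carries $\A^{\heartsuit}$ into $(\A_{(S)})^{\heartsuit}$ (both aisle and coaisle being preserved by \eqref{eq - tstructure coextension}) and there restricts to the abelian coextension $H^{0}(\A)^{\heartsuit}_{(H^{0}(S))} \cong H^{0}(\A_{(S)})^{\heartsuit}$, the heart of $\hfp^{b}(\A)_{(S)}$ is $\fp(H^{0}(\A)^{\heartsuit})_{(H^{0}(S))}$, whereas the heart of $\hfp^{b}(\A_{(S)})$ is $\fp(H^{0}(\A_{(S)})^{\heartsuit}) \cong \fp(H^{0}(\A)^{\heartsuit}_{(H^{0}(S))})$. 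These agree by the abelian base-change identity $\fp(H^{0}(\A)^{\heartsuit})_{(H^{0}(S))} \cong \fp(H^{0}(\A)^{\heartsuit}_{(H^{0}(S))})$ of \cite{AbelianDeformations}[\S 4] (cf.\ Propositions 4.5--4.8, already used in \Cref{thm - compatibility hoinddg coextension} and \Cref{lemkar}): abelian coextension along $H^{0}(\theta)$ both preserves and reflects finite presentation, using that $S$ and $K$ are finitely presented and that $H^{0}(R), H^{0}(S)$ are coherent (see Conventions).

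With the hearts matched, both $\hfp^{b}(\A)_{(S)}$ and $\hfp^{b}(\A_{(S)})$ are seen to consist of the bounded objects of $\A_{(S)}$ whose $t$-cohomology lands in the common heart $\fp(H^{0}(\A_{(S)})^{\heartsuit})$ — for the left-hand side because $S$ is homotopically finitely presented, hence perfect over $R$, so $\RHom_{R}(S,-)$ preserves boundedness, and $\hfp^{b}(\A)$ is generated under shifts and extensions by its heart. Quasi-full-faithfulness being already secured, only essential surjectivity remains, which I would obtain by the same dévissage as at the end of the proof of \Cref{thm - compatibility hoinddg coextension}: given $X \in \hfp^{b}(\A_{(S)})$, pick $M$ with $\tau_{\leq -M}X = 0$ and $\tau_{\geq M+1}X = 0$, observe that $\tau_{\leq -M+1}X \cong H^{-M+1}_{t}(X)[M-1]$ lies in the (shifted) heart hence in the essential image, and climb the truncation triangles
$$\tau_{\leq n}X \longrightarrow \tau_{\leq n+1}X \longrightarrow H^{n+1}_{t}(X)[-n-1] \xrightarrow{+}$$
using that the essential image is triangulated to conclude that $X \cong \tau_{\leq M}X$ is reached.

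The main obstacle is this abelian base-change step for finitely presented objects of the heart, which is where the hypotheses on $\theta$ genuinely enter; everything else is formal bookkeeping from the $t$-exactness of coextension together with the truncation triangles. In the essentially small strongly pretriangulated bounded case one may alternatively deduce the lemma by applying $\hfp^{b}(-)$ to \Cref{thm - compatibility hoinddg coextension} and invoking the equivalence $\hfp^{b}(\hproj^{+}(\A)) \cong \A$ of \Cref{thm - categorical hfpb inddg}, but for general $\A$ the statement is precisely \cite{GLVdB2}[(5.5)].
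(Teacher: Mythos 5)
Your proposal takes the same route as the paper, which offers no argument of its own for this lemma and simply cites \cite{GLVdB2}[(5.5)]; you identify that citation correctly, and your surrounding sketch (coextension is $t$-exact and preserves quasi-fully-faithful inclusions, hearts are matched by abelian base change for $\fp$, essential surjectivity by d\'evissage along truncation triangles) is consistent with how that reference proceeds. One justification in your sketch is wrong, though: $S$ is \emph{not} perfect over $R$ in general --- homotopical finite presentation only controls the cohomology of $S$ as an $H^{0}(R)$-module, and already in the basic example $\theta: k[\epsilon]/(\epsilon^{2}) \to k$ (with $\lvert\epsilon\rvert = 0$) one has $\operatorname{Ext}^{i}_{R}(k,k) \neq 0$ for all $i \geq 0$, which is exactly why \Cref{lemma - RHom coproducts} has to approximate $S$ by perfect complexes $C_{n}$ rather than use perfection of $S$. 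Fortunately the claim you derive from it is not needed: an object of $\hfp^{b}(\A)_{(S)} = \RHom_{R}(S,\hfp^{b}(\A))$ is a quasi-functor landing in $\hfp^{b}(\A) \subseteq \A^{b}$, and since the $t$-structure \eqref{eq - tstructure coextension} on $\A_{(S)}$ is detected on the underlying object of $\A$, boundedness (and finite presentation of the $t$-cohomologies) is automatic without any finiteness of $S$ as a derived $R$-module. Similarly, quasi-full-faithfulness of $\hfp^{b}(\A)_{(S)} \to \A_{(S)}$ holds because $\RHom_{R}(S,-)$ is an internal hom in $\Hqe(R)$ and sends full dg-subcategory inclusions to quasi-fully-faithful quasi-functors, not because it is a right adjoint.
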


\begin{thm} \label{thm - equivalent deformation pseudofunctors bounded} 
    Let $\B$ be an essentially small strongly pretriangulated $S$-linear dg-category with a bounded $t$-structure. Then there is an equivalence of deformation pseudofunctors
    $$\Def^{t,b}_{\B} \xlongrightarrow{\cong} \Def^{t,\tn{+lcG}}_{\Ind^{\dg,Q,+}(\B)}.$$
    \begin{proof}
        Given a $t$-deformation $F: \B \to \A$ in $\Def^{t,b}_{\B}$, we can assume that $\A$ is strongly pretriangulated by \Cref{remark - not strongly pretriangulated}, h-flat by cofibrantly replacing it and essentially small by \Cref{prop - universes} (3). By \Cref{thm - compatibility hoinddg coextension}, $F$ induces a $t$-deformation,
        $$\overline{F} := \Ind^{\dg,Q,+}(F): \Ind^{\dg,Q,+}(\B) \to \Ind^{\dg,Q,+}(\A).$$
        Mapping $F$ to $\overline{F}$ (or rather their equivalence classes) gives rise to a pseudonatural transformation $$\lambda: \Def^{t,b}_{\B} \to \Def^{t,\tn{+lcG}}_{\Ind^{\dg,Q,+}(\B)}.$$ The question is then whether for each $\theta:R \to S \in \mc{U}-\DGRng/S$, $\lambda_{\theta}$ is an equivalence of categories. It suffices to give a quasi-inverse. We hereto combine \Cref{lemma - hfpb coextension} with the equivalence of \Cref{thm - categorical hfpb inddg} to construct from $\ol{F}$ the $t$-deformation,
        $$\hfp^{b}(\Ind^{\dg,Q,+}(\B)) \to \hfp^{b}(\Ind^{\dg,Q,+}(\A)).$$
        This defines the quasi-inverse $\lambda_{\theta}^{-1}$ to $\lambda_{\theta}$. 
    \end{proof}
\end{thm}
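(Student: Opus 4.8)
The plan is to realize the claimed equivalence by promoting the pair of mutually quasi-inverse operations $\Ind^{\dg,Q,+}(-)$ and $\hfp^{b}(-)$ of \Cref{thm - categorical hfpb inddg} to the level of deformation pseudofunctors. First I would define the forward pseudonatural transformation $\lambda\colon \Def^{t,b}_{\B} \to \Def^{t,\tn{+lcG}}_{\Ind^{\dg,Q,+}(\B)}$. Given a bounded $t$-deformation $F\colon \B \to \A$ along $\theta\colon R \to S$, where $\A$ is an essentially small strongly pretriangulated $R$-linear $t$-dg-category with a bounded $t$-structure, I may first arrange $\A$ to be strongly pretriangulated, h-flat and essentially small by passing to a quasi-equivalent model (via \Cref{remark - not strongly pretriangulated}, cofibrant replacement, and the universe bookkeeping of \Cref{appendix - Universe}), which does not alter the deformation class. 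Then $\overline{F} := \Ind^{\dg,Q,+}(F)$ is a quasi-functor $\Ind^{\dg,Q,+}(\B) \to \Ind^{\dg,Q,+}(\A)$, and I set $\lambda(F) = \overline{F}$.

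The key point to verify is that $\lambda$ is well-defined, that is, that $\overline{F}$ genuinely lies in $\Def^{t,\tn{+lcG}}_{\Ind^{\dg,Q,+}(\B)}$. This combines two inputs. On the one hand, \Cref{thm - categorical hfpb inddg} ensures that $\Ind^{\dg,Q,+}(\A) = \hproj^{+}(\A)$ is again a left bounded locally coherent Grothendieck $t$-dg-category, since $\A$ is essentially small strongly pretriangulated with a bounded $t$-structure. On the other hand, to see that $\overline{F}$ is a $t$-deformation I would use \Cref{thm - compatibility hoinddg coextension} to identify $\Ind^{\dg,Q,+}(\A)_{(S)}$ with $\Ind^{\dg,Q,+}(\A_{(S)})$ $t$-exactly; applying the functorial homotopy ind-dg-completion to the defining $t$-exact quasi-equivalence $\B \cong \A_{(S)}$ of $F$ then yields a $t$-exact quasi-equivalence $\Ind^{\dg,Q,+}(\B) \cong \Ind^{\dg,Q,+}(\A_{(S)}) \cong \Ind^{\dg,Q,+}(\A)_{(S)}$, which is exactly what the definition of a $t$-deformation requires. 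Naturality of $\lambda$ in $\theta$ reduces to naturality of the identification of \Cref{thm - compatibility hoinddg coextension}.

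Next I would construct the inverse. Starting from a left bounded locally coherent Grothendieck $t$-deformation $G\colon \Ind^{\dg,Q,+}(\B) \to \C$, I apply $\hfp^{b}(-)$ to obtain $\hfp^{b}(G)\colon \hfp^{b}(\Ind^{\dg,Q,+}(\B)) \to \hfp^{b}(\C)$. By \Cref{thm - categorical hfpb inddg} the source is canonically $\B$, so this is a quasi-functor $\B \to \hfp^{b}(\C)$; using \Cref{lemma - hfpb coextension} to rewrite $\hfp^{b}(\C)_{(S)} \cong \hfp^{b}(\C_{(S)})$ and applying $\hfp^{b}$ to the $t$-exact quasi-equivalence $\Ind^{\dg,Q,+}(\B) \cong \C_{(S)}$ exhibits $\hfp^{b}(G)$ as a bounded $t$-deformation $\B \cong \hfp^{b}(\C)_{(S)}$. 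Thus $\hfp^{b}(-)$ determines a pseudonatural transformation $\mu$ in the other direction. That $\lambda$ and $\mu$ are mutually quasi-inverse at each $\theta$ then follows pointwise from the two natural quasi-equivalences $\hfp^{b}(\Ind^{\dg,Q,+}(-)) \cong \mathrm{id}$ and $\Ind^{\dg,Q,+}(\hfp^{b}(-)) \cong \mathrm{id}$ of \Cref{thm - categorical hfpb inddg}, which carry a deformation to itself compatibly with the base change identifications above.

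The hard part will be exactly the well-definedness established in the second paragraph: one must know that $\Ind^{\dg,Q,+}(-)$ and $\hfp^{b}(-)$ commute with the coextension of scalars $(-)_{(S)}$ in a $t$-exact way, which is the content of \Cref{thm - compatibility hoinddg coextension} and \Cref{lemma - hfpb coextension}. Granting those, the remaining checks, namely that the assigned objects lie in the correct deformation pseudofunctor and that the two assignments are inverse as pseudonatural transformations of groupoid-valued pseudofunctors, are formal consequences of the categorical equivalence \Cref{thm - categorical hfpb inddg}, so the theorem reduces to bookkeeping with the base change coherences and the universe conventions.
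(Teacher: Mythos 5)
Your proposal is correct and follows essentially the same route as the paper: the forward map is $F \mapsto \Ind^{\dg,Q,+}(F)$ after arranging $\A$ to be strongly pretriangulated, h-flat and essentially small, with \Cref{thm - compatibility hoinddg coextension} guaranteeing the result is a $t$-deformation, and the quasi-inverse is given by $\hfp^{b}(-)$ via \Cref{lemma - hfpb coextension} and \Cref{thm - categorical hfpb inddg}. Your write-up merely spells out the well-definedness and mutual-inverse checks in more detail than the paper does.
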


We recapitulate using the deformation equivalences of this chapter.

\begin{corollary} \label{cor - recap defequiv}
    Let $\B$ be an essentially small strongly pretriangulated $S$-linear dg-category with a bounded $t$-structure and $\tn{\tb{J}} = \DGInj(\hproj^{+}(\B))$. There is a commutative diagram of equivalences of deformation pseudofunctors,
    \[\begin{tikzcd}[row sep = small]
	{\Def^{\DGInj}_{\tn{\tb{J}}}} & {\Def^{t,\DGInj}_{\hproj^{+}(B)}} & {\Def^{t,\tn{+lcG}}_{\hproj^{+}(B)}} \\
	{\Def_{\tn{\tb{J}}}} && {\Def^{t,b}_{\B}.}
	\arrow["\cong", from=1-1, to=1-2]
	\arrow["\cong"', from=1-1, to=2-1]
	\arrow["\cong"', from=1-3, to=1-2]
	\arrow["\cong"', from=2-3, to=1-3]
\end{tikzcd}\]
    \begin{proof}
        Combine \Cref{diagramdef}, \Cref{cor - equivalence t-def t-locally coherent} and \Cref{thm - equivalent deformation pseudofunctors bounded}.
    \end{proof}
\end{corollary}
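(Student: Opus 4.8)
The plan is to assemble the corollary directly from the three deformation equivalences established earlier in this section, after checking that their hypotheses are simultaneously met for the specific $t$-dg-categories appearing in the diagram. The diagram is a tree with no closed loops, so once each of its four arrows is identified with one of the cited equivalences, commutativity is automatic and there is nothing further to verify; the entire content of the statement is that all four arrows are equivalences of deformation pseudofunctors.

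First I would record the standing identifications coming from the boundedness of the $t$-structure on $\B$. Since $\B$ is essentially small, strongly pretriangulated and carries a bounded $t$-structure, \Cref{thm - homotopy dg completion closed} gives $\Ind^{\dg,Q,+}(\B) \cong \hproj^{+}(\B)$, and \Cref{thm - categorical hfpb inddg} shows that $\hproj^{+}(\B)$ is a left bounded locally coherent Grothendieck $t$-dg-category with $\hfp^{b}(\hproj^{+}(\B)) \cong \B$. By \Cref{prop - tlc in hqet+pi} this places $\hproj^{+}(\B)$ in $\Hqe^{t+}_{\Pi}$, and correspondingly $\tb{J} = \DGInj(\hproj^{+}(\B))$ in $\Hqe^{\DGInj}_{\Pi}$; moreover $\hproj^{+}(\B)$ and $\tb{J}$ correspond under \Cref{thm - correspondence}, so that \Cref{diagramdef} is applicable to this pair.

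Next I would invoke each result in turn. Applying \Cref{diagramdef} with the roles of its ``$\B$'' and ``$\tb{J}$'' played by $\hproj^{+}(\B)$ and $\tb{J}$ yields the left vertical equivalence $\Def^{\DGInj}_{\tb{J}} \cong \Def_{\tb{J}}$ together with the top-left horizontal equivalence $\Def^{\DGInj}_{\tb{J}} \cong \Def^{t,\DGInj}_{\hproj^{+}(\B)}$. Since $\hproj^{+}(\B)$ is left bounded locally coherent Grothendieck, \Cref{cor - equivalence t-def t-locally coherent} supplies the top-right equivalence $\Def^{t,\tn{+lcG}}_{\hproj^{+}(\B)} \cong \Def^{t,\DGInj}_{\hproj^{+}(\B)}$. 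Finally, \Cref{thm - equivalent deformation pseudofunctors bounded} applied to $\B$, combined with the identification $\Ind^{\dg,Q,+}(\B) \cong \hproj^{+}(\B)$, furnishes the bottom-right equivalence $\Def^{t,b}_{\B} \cong \Def^{t,\tn{+lcG}}_{\hproj^{+}(\B)}$. These four equivalences are exactly the four arrows of the displayed diagram.

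The only genuine care needed---and hence the main, if modest, obstacle---is ensuring that the categories indexing the pseudofunctors satisfy the hypotheses of the three cited results at once, most notably that $\hproj^{+}(\B)$ is \emph{both} an object of $\Hqe^{t+}_{\Pi}$ corresponding to $\tb{J}$ under \Cref{thm - correspondence} (so that \Cref{diagramdef} applies and $\tb{J}$ is genuinely the dg-category of derived injectives of $\hproj^{+}(\B)$) \emph{and} left bounded locally coherent Grothendieck (so that \Cref{cor - equivalence t-def t-locally coherent} applies). All of these facts are consequences of the boundedness of the $t$-structure on $\B$ via \Cref{prop - properties induced t-structure} and \Cref{thm - categorical hfpb inddg}, as recorded in the first step; once they are in place, the four equivalences simply fit together along the tree and the corollary follows.
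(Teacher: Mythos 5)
Your proposal is correct and matches the paper's proof, which likewise just combines \Cref{diagramdef}, \Cref{cor - equivalence t-def t-locally coherent} and \Cref{thm - equivalent deformation pseudofunctors bounded}; you assign each cited result to the correct arrow of the diagram. The extra hypothesis-checking you carry out (via \Cref{prop - tlc in hqet+pi}, \Cref{thm - categorical hfpb inddg} and \Cref{thm - correspondence}) is left implicit in the paper but is exactly what makes the combination legitimate.
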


If we take $S$ to be a field $k$, we can understand the Hochschild cohomology groups of $\B$ in terms of bounded $t$-deformations along an appropriate dg-$k$-algebra morphism.

\begin{corollary} \label{cor - HHn bounded tdg body}
    Let $\B$ be an essentially small strongly pretriangulated dg-$k$-category with a bounded $t$-structure. For $n \geq 2$,
    \begin{equation*}
        \tn{HH}^{n}_{\dg}(\B) \cong \begin{Bmatrix} \tn{bounded $t$-deformations} \\ \tn{of $\B$ along } \theta_{2-n} \end{Bmatrix}/ \sim
    \end{equation*}
    where $\theta_{2-n}: k[\epsilon]/(\epsilon^{2}) \to k$ with $\vert \epsilon \vert = 2-n$.
    \begin{proof}
        See \Cref{cor - HHn bounded tdg} in \Cref{appendix - Hochschild}.
    \end{proof}
\end{corollary}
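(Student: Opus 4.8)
The plan is to deduce this from the main deformation equivalence of \Cref{cor - recap defequiv} together with the comparison of Hochschild complexes and the standard, uncurved deformation theory of a nonpositively graded dg-category. Write $\tb{J} = \DGInj(\hproj^{+}(\B)) = \DGInj(\Ind^{\dg,Q,+}(\B))$. The composite of equivalences in \Cref{cor - recap defequiv} yields an equivalence of deformation pseudofunctors $\Def^{t,b}_{\B} \cong \Def^{\dg}_{\tb{J}}$, which is exactly the incarnation of \eqref{eq - thm}. Evaluating this equivalence at the object $\theta_{2-n}\colon k[\epsilon]/(\epsilon^{2}) \to k$ of the base category and passing to isomorphism classes of objects in the two groupoids identifies bounded $t$-deformations of $\B$ along $\theta_{2-n}$ (up to equivalence) with dg-deformations of $\tb{J}$ along $\theta_{2-n}$ (up to equivalence). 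So it suffices to classify the latter by $\tn{HH}^{n}_{\dg}(\B)$.

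First I would check that $\theta_{2-n}$ is admissible: for $n \geq 2$ one has $\vert \epsilon \vert = 2-n \leq 0$, the morphism $k[\epsilon]/(\epsilon^{2}) \twoheadrightarrow k$ is strictly surjective with square-zero (hence order-$2$ nilpotent) kernel $(\epsilon)$, and over a field all the homotopical coherence and finite presentation conditions of the Conventions hold automatically, so $\theta_{2-n}$ is a legitimate base-change morphism and \Cref{cor - recap defequiv} applies. Next, the decisive structural input is that $\tb{J}$ is cohomologically concentrated in nonpositive degrees (\Cref{prop - properties dginj} (3)). For such a dg-category the deformation functor controlled by the Hochschild complex $\Cc(\tb{J})$ is uncurved: a dg-deformation along $\theta_{2-n}$ would acquire a curvature term $m_{0} = c\,\epsilon$ with $c \in \tb{J}(J,J)$ of internal degree $2 - \vert\epsilon\vert = n$, and since $n \geq 2 > 0$ while $\tb{J}$ vanishes in positive degrees we get $c = 0$. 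Hence every dg-deformation of $\tb{J}$ along $\theta_{2-n}$ is genuinely curvature-free, and the standard correspondence between such deformations and the Hochschild $B_{\infty}$-algebra gives
$$\{\, \text{dg-deformations of } \tb{J} \text{ along } \theta_{2-n} \,\}/\!\sim \;\cong\; \tn{HH}^{\,2-\vert\epsilon\vert}_{\dg}(\tb{J}) \;=\; \tn{HH}^{n}_{\dg}(\tb{J}).$$

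It then remains to transport the computation from $\tb{J}$ back to $\B$, for which I would invoke the $B_{\infty}$-quasi-isomorphism $\Cc(\B) \cong \Cc(\tb{J})$ of \Cref{thm - HH A DGInjInddgA agree}; this induces isomorphisms $\tn{HH}^{\bullet}_{\dg}(\B) \cong \tn{HH}^{\bullet}_{\dg}(\tb{J})$ in every degree, and in particular $\tn{HH}^{n}_{\dg}(\B) \cong \tn{HH}^{n}_{\dg}(\tb{J})$. Concatenating the three identifications yields
$$\tn{HH}^{n}_{\dg}(\B) \;\cong\; \tn{HH}^{n}_{\dg}(\tb{J}) \;\cong\; \{\, \text{dg-deformations of } \tb{J} \text{ along } \theta_{2-n} \,\}/\!\sim \;\cong\; \{\, \text{bounded } t\text{-deformations of } \B \text{ along } \theta_{2-n} \,\}/\!\sim,$$
which is the assertion.

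The hard part will be the middle step: matching the abstract deformation pseudofunctor $\Def^{\dg}_{\tb{J}}$ of \Cref{defdgdef} with the Maurer--Cartan functor of the Hochschild complex and verifying that, over the Artinian base $k[\epsilon]/(\epsilon^{2})$, isomorphism classes of objects are exactly $\tn{HH}^{n}_{\dg}(\tb{J})$ with the correct cohomological degree shift $n = 2 - \vert\epsilon\vert$. The genuine subtlety there is the curvature bookkeeping; the point is that the nonpositive concentration of $\tb{J}$ forces the curvature component into a degree that is identically zero, which is precisely why restricting to $n \geq 2$ (equivalently $\vert\epsilon\vert \leq 0$) sidesteps the curvature problem of \cite{KellerWendy}, \cite{LVdBCurv}. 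Everything else — admissibility of $\theta_{2-n}$, the pseudofunctor equivalence, and the $B_{\infty}$-transport — is supplied directly by \Cref{cor - recap defequiv} and \Cref{thm - HH A DGInjInddgA agree}.
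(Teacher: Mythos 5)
Your proposal is correct and follows essentially the same route as the paper: \Cref{cor - recap defequiv} to identify bounded $t$-deformations of $\B$ with dg-deformations of $\tb{J}$, the curvature-free Maurer--Cartan parametrization of dg-deformations of a nonpositively concentrated dg-category by $\tn{HH}^{n}$ (this is precisely the content of \Cref{lemma - HHn dg parametrization}, proved there via the $A_{\infty}$-structure $\mu+\phi\epsilon$ with the same degree argument killing the curvature term), and the $B_{\infty}$-isomorphism of \Cref{thm - HH A DGInjInddgA agree} to transport the computation back to $\B$. The only difference from the paper's proof is the order in which the three identifications are composed.
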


\ver

\appendix
\counterwithout{thm}{subsection} 
\counterwithin{thm}{section}
\section{Universes and preservation of size} \label{appendix - Universe}

Let $\mc{U}$ be a universe.

\begin{definition} \label{def - UDGRng}
    We denote by $\mc{U}-\DGRng$ the category with as data:
    \begin{itemize}
        \item[-] objects are commutative dg-rings in $\mc{U}$, meaning that each graded piece lies in $\mc{U}$, that are strictly concentrated in nonpositive degrees and homotopically coherent;
        \item[-] morphisms are strictly surjective dg-ring maps $\theta: R \to S$ satisfying the conditions set forth in the Conventions.
    \end{itemize}
    For a fixed object $S$ in $\mc{U}-\DGRng$, we can then consider the slice category $\mc{U}-\DGRng/S$. 
\end{definition}

\begin{definition} \label{def - u-small dgcat}
    An $S$-linear dg-category $\tb{J}$ is an $S$-linear \ti{$\mc{U}$-dg-category} if each graded piece of any inner hom of $\tb{J}$ lies in $\mc{U}$. It is \ti{$\mc{U}$-small} if moreover $\Ob(\tb{J}) \in \mc{U}$ and \ti{essentially $\mc{U}$-small} if it is quasi-equivalent to a $\mc{U}$-small category. \index{universe $\mc{U}$!$\mc{U}$-dg-category} \index{universe $\mc{U}$!$\mc{U}$-small} \index{universe $\mc{U}$!essentially $\mc{U}$-small}
\end{definition}

Consider a universe $\mc{V}$ so that $\mc{U} \in \mc{V}$ and a universe $\mc{W}$ so that $\mc{V} \in \mc{W}$.

\begin{definition}
    Let $\tb{J}$ be an $S$-linear $\mc{U}$-dg-category, $\B$ be an $S$-linear $\mc{U}$-$t$-dg-category in $\Hqe^{t+}_{\Pi}$ and $\B'$ be a $\mc{U}$-small bounded $S$-linear $t$-dg-category. Assume moreover that both are essentially $\mc{V}$-small. Let $\theta: R \to S$ be an object in $\mc{U}-\DGRng/S$. We define the groupoids:
    \begin{itemize}
        \item The $\mc{W}$-groupoid $\mc{V}-\Def_{\tb{J}}(\theta:R \to S)$ with as objects the dg-deformations of $\tb{J}$ along $\theta$ that are elements of $\mc{V}$ and as morphisms the equivalences of dg-deformations modulo natural isomorphisms of quasi-functors. An \ti{equivalence (resp. isomorphism) of dg-deformations} $G_{1}: \tb{I}_{1} \to \tb{J}$ to $G_{2}: \tb{I}_{2} \to \tb{J}$ of $\tb{J}$ over $\theta$ is a quasi-equivalence of $R$-linear dg-categories $\Gamma: \tb{I}_{2} \to \tb{I}_{1}$ so that $G_{1} \circ \Gamma$ and $G_{2}$ are naturally isomorphic (resp. equal);

        \item Similarly, the $\mc{W}$-groupoid $\mc{V}-\Def_{\tb{J}}^{\DGInj}(\theta:R \to S)$ of dg-deformations of $\tb{J}$ in $\Hqe^{\DGInj}_{\Pi}$ along $\theta$ that are elements of $\mc{V}$;

        \item The $\mc{W}$-groupoid $\mc{V}-\Def^{t}_{\B}(\theta:R \to S)$ with as objects the $t$-deformations $F: \B \to \A$ of $\B$ along $\theta$ that are elements of $\mc{V}$. Its morphisms are the equivalences of $t$-deformations modulo the natural isomorphisms of quasi-functors. An \ti{equivalence (resp. isomorphism) of $t$-deformations} $F_{1}: \B \to \A_{1}$ to $F_{2}: \B \to \A_{2}$ of $\B$ over $\theta$ is a $t$-exact quasi-equivalence of $R$-linear dg-categories $\Lambda: \A_{1} \to \A_{2}$ so that $\Lambda \circ F_{1}$ and $F_{2}$ are naturally isomorphic (resp. equal);

        \item Similarly, the $\mc{W}$-groupoid $\mc{V}-\Def_{\B'}^{t,b}(\theta:R \to S)$ of bounded $t$-deformations of $\B'$ along $\theta$ that are elements of $\mc{V}$;

        \item The $\mc{W}$-groupoid $\mc{V}-\Def_{\B}^{t,\DGInj}(\theta:R \to S)$ of $t$-deformations of $\B$ in $\Hqe^{t+}_{\Pi}$ along $\theta$ that are elements of $\mc{V}$;

        \item If $\B$ is left bounded locally coherent Grothendieck, the $\mc{W}$-groupoid $\mc{V}-\Def_{\B}^{t,\tn{+lcG}}(\theta:R \to S)$ of $t$-deformations $F:\B \to \A$ of $\B$ in $\Hqe^{t+}_{\Pi}$ along $\theta$ that are elements of $\mc{V}$ and so that $\A$ is left bounded locally coherent Grothendieck.
    \end{itemize}
\end{definition}

These give rise to pseudofunctors,
\begin{itemize}
    \item For an $S$-linear $\mc{U}$-dg-category $\tb{J}$, the pseudofunctor
    $$\mc{V}-\Def_{\tb{J}}: \mc{U}-\DGRng/S \to \mc{W}-\Gpd$$ of dg-deformations of $\tb{J}$ in $\mc{V}$;
    \item For an $S$-linear $\mc{U}$-dg-category $\tb{J} \in \Hqe^{\DGInj}_{\Pi}$, the pseudofunctor 
    $$\mc{V}-\Def_{\tb{J}}^{\DGInj}: \mc{U}-\DGRng/S \to \mc{W}-\Gpd$$ of dg-deformations $\tb{I}$ in $\mc{V}$ for which $\tb{I} \to \tb{J}$ lies in $\Hqe^{\DGInj}_{\Pi}$;
    \item For an $S$-linear $\mc{U}$-$t$-dg-category $\B$, the pseudofunctor 
    $$\mc{V}-\Def^t_{\B}: \mc{U}-\DGRng/S \to \mc{W}-\Gpd$$ of $t$-deformations in $\mc{V}$;
    \item For a $\mc{U}$-small $S$-linear $t$-dg-category $\B$, the pseudofunctor 
    $$\mc{V}-\Def^{t,b}_{\B}: \mc{U}-\DGRng/S \to \mc{W}-\Gpd$$ of bounded $t$-deformations in $\mc{V}$;
    \item For an $S$-linear $\mc{U}$-$t$-dg-category $\B \in \Hqe^{t+}_{\Pi}$, the pseudofunctor $$\mc{V}-\Def^{t,\DGInj}_{\B}: \mc{U}-\DGRng/S \to \mc{W}-\Gpd$$ of $t$-deformations $\A$ in $\mc{V}$ for which $\B \to \A$ has a left adjoint in $\Hqe^{t+}_{\Pi}$;
    \item For an $S$-linear left bounded locally coherent Grothendieck $\mc{U}$-$t$-dg-category $\B$, the pseudofunctor 
    $$\mc{V}-\Def^{t,\tn{+lcG}}_{\B}: \mc{U}-\DGRng/S \to \mc{W}-\Gpd$$
    of $t$-deformations $\B \to \A$ in $\mc{V}$ so that $\A$ is left bounded locally coherent Grothendieck.
\end{itemize}

The following proposition allows us to restate the results of \S \ref{pardefset5}-\ref{pardefset7} in terms of these pseudofunctors. As in \S \ref{pardefset6}, we assume that $K = \Kernel(\theta)$ is nilpotent of degree 2 so that $K$ is a dg-$S$-module. As recalled there, the results can be extended to a nilpotency degree $n > 2$.

\begin{proposition} \label{prop - universes}
    Let $\theta: R \to S \in \mc{U}-\DGRng/S$.
    \begin{enumerate}
        \item Let $\tn{\tb{J}}$ be an $S$-linear $\mc{U}$-dg-category (resp. essentially $\mc{U}$-small) and $G: \tn{\tb{I}} \to \tn{\tb{J}}$ be a dg-deformation along $\theta$. Then $\tn{\tb{I}}$ is a $\mc{U}$-dg-category (resp. essentially $\mc{U}$-small).

        \item Let $\mc{B}$ be an $S$-linear pretriangulated $\mc{U}$-$t$-dg-category (resp. essentially $\mc{U}$-small) that lies in $\Hqe^{t+}_{\Pi}$. Then $\DGInj \B$ is $\mc{U}$-dg-category (resp. essentially $\mc{U}$-small) if and only if $\B$ is a $\mc{U}$-$t$-dg-category (resp. essentially $\mc{U}$-small).

        \item Let $\mc{B}$ be an essentially $\mc{U}$-small $S$-linear strongly pretriangulated $t$-dg-category with a bounded $t$-structure and $F: \mc{B} \to \mc{A}$ be a $t$-deformation so that $\A$ is bounded. Then $\mc{A}$ is an essentially $\mc{U}$-small $t$-dg-category.
    \end{enumerate}

    \begin{proof}
    Using the same argument as in \Cref{ex - example}, we can assume nilpotency of degree two. Recall from \cite{SGA4I}[Exposé I.0] that the existence of a universum $\mc{U}$ is equivalent to the existence of a strong inaccessible cardinal $\kappa$ so that we can describe $\mc{U}$ as the closed class $V_{\kappa}$ of cardinals $\lambda < \kappa$.
    
        \begin{enumerate}
            \item   We can assume that $\tb{I}$ is cofibrant and that $\tb{J} = S \otimes_{R} \tb{I}$ because the bar construction stays within $\mc{U}$. Then we have for any $m,n \in \tb{I}$ the short exact sequence
                    $$K \tb{I}(m,n) \hookrightarrow \tb{I}(m,n) \twoheadrightarrow S \otimes_{R} \tb{I}(m,n) = \tb{J}(G(m),G(n)),$$
                    where $K \tb{I}(m,n)$ denotes the image of the map $$K \otimes_{R} \tb{I}(m,n) \to \tb{I}(m,n).$$ Consequently,
                    $$\vert \tb{I}(m,n) \vert \leq \vert K \tb{I}(m,n) \vert \cdot \vert \tb{J}(G(m),G(n)) \vert.$$
                    It thus suffices to show that $\vert K \tb{I}(m,n) \vert < \kappa$. By our conventions, $K$ is finitely presented in $\mc{Z}^{0}(\dgm(R))$ and thus also in $\mc{Z}^{0}(\dgm(S))$. It is then certainly finitely generated, so we can find an epimorphism $S^{n} \twoheadrightarrow K$. Applying $- \otimes_{S} (S \otimes_{R} \tb{I}(m,n))$ to it yields
                    $$(S \otimes_{R} \tb{I}(m,n))^{n} \twoheadrightarrow K \otimes_{R} \tb{I}(m,n).$$
                    Then
                    $$ \vert K \tb{I}(m,n) \vert \leq \vert K \otimes_{R} \tb{I}(m,n) \vert \leq \vert (S \otimes_{R} \tb{I}(m,n))^{n} \vert < \kappa.$$

                    Now assume that $\tb{J}$ is moreover $\mc{U}$-small, i.e. that $\Ob(\tb{J}) \in \mc{U}$. We claim that
                    $$\vert \Sk(\tb{J}) \vert = \vert \Sk(\tb{I}) \vert.$$ To prove this, it suffices to show that $G$ reflects isomorphisms. Consider hereto a morphism $h: m \to n$ in $\tb{I}$ so that $G(h)$ is an isomorphism in $\tb{J}$. Then we can find an inverse, which by fullness, is $G(g)$ for a morphism $g: n \to m$ in $\tb{I}$. Then $g \circ h - \id_{m}$ and $h \circ g - \id_{n}$ lie in $K \tb{I}(m,n)$. Due to the nilpotency of degree two, this implies that 
                    \begin{align*}
                        (g \circ h - \id_{m})^{2} &= 0, \\
                        (h \circ g - \id_{n})^{2} &= 0.
                    \end{align*}
                    These equations yield the inverse $2 g - g \circ h \circ g$ for $h$. Consequently, the cardinalities of the skeletons are the same and thus strictly smaller than $\kappa$.
                    
            \item   By \Cref{thm - correspondence}, we know that $\B = \Tw^{+}(\DGInj \B)$. So for any $(X^{\bullet},q_{X}),(Y^{\bullet},q_{Y}) \in \B$, we have that
            $$\B((X^{\bullet},q_{X}),(Y^{\bullet},q_{Y})) = \prod_{k} \prod_{j} \DGInj \B(X^{i},Y^{j}) \in \mc{U}.$$
            The converse is clear. 

            Now for the $\mc{U}$-small case: since $\Ob(\DGInj \B) \subseteq \Ob(\B)$, one direction is again clear. For the other, remark that a bounded above twisted complex in $\DGInj \B$ consists of the data:
            \begin{itemize}
                \item $\forall i \in \mb{Z}$ an object $X_{i} \in \DGInj \B$,
                \item $\forall i,j \in \mb{Z}, i \leq j$ a morphism $X_{i} \to X_{j}$.
            \end{itemize}
            This shows that $\Ob(\B) \in \mc{U}$ if $\Ob(\DGInj \B) \in \mc{U}$. 

            \item Consider the induced $t$-deformation
            $$\hproj^{\dg,Q,+}(\B) \to \hproj^{+}(\A)$$
            using \Cref{thm - compatibility hoinddg coextension}. Note that the theorem still holds, though we do not know whether $\A$ is essentially $\mc{U}$-small (not even a $\mc{U}$-$t$-dg-category), so $\hproj^{+}(\A)$ does not have to be a $\mc{U}$-$t$-dg-category. However, the induced $t$-deformation satisfies the conditions of (2) due to the boundedness of the $t$-structures involved. Since $\B$ is an essentially $\mc{U}$-small $t$-dg-category, $\hproj^{+}(\B)$ is a $\mc{U}$-$t$-dg-category. Statement (2) then tells us that so is $\hproj^{+}(\A)$. Then $\A$ is a $\mc{U}$-$t$-dg-category through $\A \hookrightarrow \hproj^{+}(\A)$.

            Now for the $\mc{U}$-smallness, by \Cref{lemma - base change exact sequence}, an object $A \in \A$ is determined by 
            a triple $(B,B',f)$ for 
            \begin{itemize}
                \item objects $B, B' \in \B$,
                \item a morphism $f \in H^{1}(\A(F(B),F(B')))$,
             \end{itemize}
            by using the base change exact triangle \eqref{eq - base change exact triangle} on the level of $\hproj$. Since $\A$ is a $\mc{U}$-$t$-dg-category by the previous, we know that $$\vert H^{1}(\A(F(B),F(B'))) \vert < \kappa,$$ so that $\A$ is essentially $\mc{U}$-small. \qedhere
        \end{enumerate}
    \end{proof}
\end{proposition}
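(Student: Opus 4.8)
The plan is to treat the three parts in turn, each time reducing membership in the universe $\mc{U}$ to a cardinality estimate. Following \cite{SGA4I}[Expos\'e I.0], I would fix a strong inaccessible cardinal $\kappa$ with $\mc{U} = V_{\kappa}$, so that an object lies in $\mc{U}$ precisely when it has cardinality $< \kappa$; a dg-category is then a $\mc{U}$-dg-category exactly when every graded piece of every inner hom has cardinality $< \kappa$, and $\mc{U}$-small when moreover its object set does. As in \Cref{ex - example}, I would first reduce to $K = \Kernel(\theta)$ nilpotent of degree two, since a general deformation decomposes as a composite of such; this also makes $K$ a dg-$S$-module. For part (1), replacing $\tb{I}$ by a cofibrant model so that $\tb{J} = S \otimes_{R} \tb{I}$ (the bar construction stays in $\mc{U}$), I would extract from extension of scalars the short exact sequence
\[ K\tb{I}(m,n) \hookrightarrow \tb{I}(m,n) \twoheadrightarrow S \otimes_{R} \tb{I}(m,n) = \tb{J}(G(m),G(n)), \]
so that $\vert \tb{I}(m,n) \vert \le \vert K\tb{I}(m,n) \vert \cdot \vert \tb{J}(G(m),G(n)) \vert$ and it remains to bound $\vert K\tb{I}(m,n) \vert$. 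Here I would use finite presentation of $K$ in $\mc{Z}^{0}(\dgm(R))$: an epimorphism $S^{n} \twoheadrightarrow K$ induces a surjection $(S \otimes_{R} \tb{I}(m,n))^{n} \twoheadrightarrow K \otimes_{R} \tb{I}(m,n)$, forcing $\vert K\tb{I}(m,n) \vert < \kappa$. For the $\mc{U}$-small case I would match the cardinalities of the skeleta by proving that $G$ reflects isomorphisms: if $G(h)$ is invertible, fullness gives $g$ with $G(g)$ its inverse, the elements $g \circ h - \id$ and $h \circ g - \id$ lie in $K\tb{I}$, and degree-two nilpotency makes their squares vanish, producing the inverse $2g - g \circ h \circ g$.

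For part (2), I would use the correspondence \Cref{thm - correspondence} to write $\B = \Tw^{+}(\DGInj \B)$, so that each inner hom of $\B$ is a product of inner homs of $\DGInj \B$; such products remain in $\mc{U}$ whenever their factors do, which gives one implication, while the converse is immediate because $\DGInj \B \subseteq \B$ is full. For $\mc{U}$-smallness, the only nontrivial direction is that $\Ob(\B) \in \mc{U}$ follows from $\Ob(\DGInj \B) \in \mc{U}$, which I would deduce from the fact that a left bounded twisted complex over $\DGInj \B$ is encoded by a $\mb{Z}$-indexed family of objects of $\DGInj \B$ together with morphisms between them.

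For part (3), I would reduce to part (2). Applying \Cref{thm - compatibility hoinddg coextension} yields an induced $t$-deformation $\hproj^{+}(\B) \to \hproj^{+}(\A)$; although $\hproj^{+}(\A)$ is not obviously $\mc{U}$-small, boundedness of the $t$-structures places it in the situation of part (2), so the $\mc{U}$-$t$-dg-category property of $\hproj^{+}(\B)$ transfers to $\hproj^{+}(\A)$, hence to $\A$ via $\A \hookrightarrow \hproj^{+}(\A)$. For the object count I would invoke the base change exact triangle of \Cref{lemma - base change exact sequence} at the level of $\hproj$, which exhibits each $A \in \A$ as determined by a triple $(B, B', f)$ with $B, B' \in \B$ and $f \in H^{1}(\A(F(B),F(B')))$; the already-established $\mc{U}$-$t$-dg-category property bounds $\vert H^{1}(\A(F(B),F(B'))) \vert < \kappa$, giving essential $\mc{U}$-smallness.

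The main obstacle I expect lies in part (1): the whole argument hinges on converting finite presentation of $K$ into a genuine bound on $\vert K\tb{I}(m,n) \vert$ and, independently, on controlling the object count through the reflection-of-isomorphisms trick, which crucially exploits degree-two nilpotency. This is exactly where condition (3) of the Conventions (finite presentation of $S$ and $K$) becomes indispensable.
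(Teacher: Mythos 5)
Your proposal follows the paper's proof essentially verbatim: the same reduction to degree-two nilpotency, the same short exact sequence and finite-presentation bound on $K\tb{I}(m,n)$ with the reflection-of-isomorphisms trick via $2g - g\circ h\circ g$ for part (1), the same use of \Cref{thm - correspondence} and the twisted-complex encoding for part (2), and the same passage through \Cref{thm - compatibility hoinddg coextension} and the base change triangle for part (3). The argument is correct and takes the same route as the paper.
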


\ver

\section{Brown representability in the left bounded setup} \label{appendix - Brown}

The following is an adaptation of \cite{NeemanGrothendieckDuality}[Theorem 3.1].

\begin{thm} \label{thm - Brown}
    Let $\T$ be a triangulated category with a $t$-structure $(\T_{\leq 0}, \T_{\geq 0})$. Suppose that $\T_{\geq 0}$ is closed under countable coproducts and that $\T$ is generated by a set $S \subseteq \T^{c}$ in the sense that $S^{\perp} = 0$. Let $$H: \T^{\op} \to \Ab$$ be a cohomological functor that maps coproducts to products and so that $H \circ \tau_{\geq 0} \cong H$. Then $H$ is representable and the representing object lies in $\T_{\geq 0}$.
    \begin{proof}
        The idea is to construct the representable object as a sequential homotopy colimit of a diagram. First, we define
        $$S_{\geq 0}:= \{ \tau_{\geq 0}s \mid s \in S\} \subseteq \T_{\geq 0}.$$
        We claim that $S_{\geq 0}$ generates $\T_{\geq 0}$. Consider hereto an object $X \in \T_{\geq 0}$ and assume that 
        $$\forall s \in S_{\geq 0}: \T(s,X) = 0.$$
        Using the adjoint pair $\tau_{\geq 0} \dashv \iota_{\geq 0}$, we then have that
        $$\forall s \in S: \T(s,X) \cong \T(\tau_{\geq 0}s,X) = 0.$$
        Consequently, $X = 0$ so that $S_{\geq 0}$ generates $\T_{\geq 0}$. We now start constructing the diagram $\mb{N} \to \T_{\geq 0}$. 

        \ver
        
        \tb{The base case:} Let
        \begin{align*}
            U_{0} &:= \bigcup_{s \in S_{\geq 0}} H(s) = \{ (\alpha,s) \mid s \in S_{\geq 0}, \alpha \in H(s) \}, \\
            X_{0} &:= \coprod_{(\alpha,s) \in U_{0}} s.
        \end{align*}
        Then
        $$H(X_{0}) = \prod_{(\alpha,s) \in U_{0}} H(s)$$
        and there is an obvious element in $H(X_{0})$, namely the element that is $\alpha \in H(s)$ for $(\alpha,s) \in U_{0}$. We call this element $\alpha_{0}$. The construction is so that if $s \to X_{0}$ is the inclusion of $s$ into $X_{0}$ corresponding to $(\alpha,s) \in U_{0}$ (so into that component), then the induced map $H(X_{0}) \to H(s)$ takes $\alpha_{0}$ to $\alpha$. So we have this special element $\alpha_{0} \in H(X_{0})$, which by Yoneda corresponds to a natural transformation
        $$\phi_{0}:\homom_{\T_{\geq 0}}(-,X_{0}) \to H\vert_{\T_{\geq 0}},$$
        and we have seen that $\phi_{0}(s)$ is surjective for all $s \in S_{\geq 0}$.

        \ver

        \tb{The (first) induction step:} 
        Next, we define $X_{1}$ and $\phi_{1}$ to illustrate the induction step. Let
        \begin{align*}
            U_{1} &:= \bigcup_{s \in S_{\geq 0}} \ker(\phi_{0}(s)) = \{ (f,s) \mid s \in S_{\geq 0}, f \in \homom_{\T_{\geq 0}}(s,X_{0}) : \phi_{0}(s) = 0 \}, \\
            K_{1} &:= \coprod_{(f,s) \in U_{1}} s.
        \end{align*}
        Let
        $$\gamma: K_{1} \to X_{0}$$
        be the map that is $f$ on the $s$-component with index $(f,s)$. Consider then the exact triangle in $\T$:
        \[\begin{tikzcd}
	{K_{1}} & {X_{0}} & {X'_{1}} & {\quad.}
	\arrow["\gamma", from=1-1, to=1-2]
	\arrow[from=1-2, to=1-3]
	\arrow["{+}", from=1-3, to=1-4]
\end{tikzcd}\]
        Note that $X'_{1} \in \T_{\geq -1}$ and not necessarily $\T_{\geq 0}$! Hence, we wish to truncate it and define $X_{1} := \tau_{\geq 0}X'_{1}$. Consider hereto the morphism of exact triangles,
        \[\begin{tikzcd}
	{K_{1}} & {X_{0}} & {X'_{1}} & \quad \\
	{N_{1}} & {X_{0}} & {\tau_{\geq 0}X'_{1}} & {\quad.}
	\arrow["\gamma", from=1-1, to=1-2]
	\arrow["\mu"', from=1-1, to=2-1]
	\arrow["\xi", from=1-2, to=1-3]
	\arrow[Rightarrow, no head, from=1-2, to=2-2]
	\arrow["{+}", from=1-3, to=1-4]
	\arrow["\pi", from=1-3, to=2-3]
	\arrow["\rho"', from=2-1, to=2-2]
	\arrow["\chi"', from=2-2, to=2-3]
	\arrow["{+}", from=2-3, to=2-4]
\end{tikzcd}\]
        Consider then the long exact sequences induced by $H$:
        \[\begin{tikzcd}
	{...} & {H(X'_{1})} & {H(X_{0})} & {H(K_{1})} & {...} \\
	{...} & {H(\tau_{\geq 0}X'_{1})} & {H(X_{0})} & {H(N_{1})} & {...}
	\arrow[from=1-1, to=1-2]
	\arrow["{\overline{\xi}}", from=1-2, to=1-3]
	\arrow["{\overline{\gamma}}", from=1-3, to=1-4]
	\arrow[Rightarrow, no head, from=1-3, to=2-3]
	\arrow[from=1-4, to=1-5]
	\arrow[from=2-1, to=2-2]
	\arrow["{\overline{\pi}}", from=2-2, to=1-2]
	\arrow["{\overline{\chi}}"', from=2-2, to=2-3]
	\arrow["{\overline{\rho}}"', from=2-3, to=2-4]
	\arrow["{\overline{\mu}}", from=2-4, to=1-4]
	\arrow[from=2-4, to=2-5]
\end{tikzcd}\]
        Since $H(\pi) = \overline{\pi}$ is an isomorphism, we obtain the following exact sequence as part of the top row:
        \[\begin{tikzcd}
	{H(\tau_{\geq 0}X'_{1})} & {H(X'_{1})} & {H(K_{1}).}
	\arrow["{\overline{\xi} \circ \overline{\pi}}", from=1-1, to=1-2]
	\arrow["{\overline{\gamma}}", from=1-2, to=1-3]
\end{tikzcd}\]
        Now observe that $\overline{\gamma}(\alpha_{0}) = 0$ because the $f: s \to X_{0}$ are part of the kernel of $\phi_{0}$. Therefore, there exists some $\alpha_{1} \in H(\tau_{\geq 0}X'_{1}) = H(X_{1})$, which then corresponds under Yoneda to a natural transformation
        $$\phi_{1}:\homom_{\T_{\geq 0}}(-,X_{1}) \to H\vert_{\T_{\geq 0}}$$
        so that $$\phi_{1} \circ \homom_{\T_{\geq 0}}(-,\pi \circ \xi) = \phi_{0}.$$

        We can inductively proceed this way and construct objects $X_{i} \in \T_{\geq 0}$ (which are $\tau_{\geq 0}$-truncations of objects $X'_{i} \in \T_{\geq -1}$) and natural transformations $$\phi_{i}: \homom_{\T_{\geq 0}}(-,X_{i}) \to H\vert_{\T_{\geq 0}}$$ that are compatible with each other (i.e. yield a sequential diagram). 
        \ver

        \tb{The homotopy colimit:} We define $$X := \hocolim_{n \geq 0} X_{n}.$$
        Then $X \in \T_{\geq 0}$ because $\T_{\geq 0}$ is closed under countable coproducts. By considering the exact triangle
        \[\begin{tikzcd}
	{\coprod X_{i}} & {\coprod X_{i}} & X & {\quad}
	\arrow["{1 - \tn{shift}}", from=1-1, to=1-2]
	\arrow[from=1-2, to=1-3]
	\arrow["{+}", from=1-3, to=1-4]
\end{tikzcd}\]
        and applying the cohomological functor $H$, we get the exact sequence
        \[\begin{tikzcd}
	{H(X)} & {\coprod H(X_{i})} & {\coprod H(X_{i}).}
	\arrow[from=1-1, to=1-2]
	\arrow["{1-\tn{shift}}", from=1-2, to=1-3]
\end{tikzcd}\]
        The element $\{ \alpha_{i} \}$ lies in the kernel of $1-\tn{shift}$, so there exists some $\alpha \in H(X)$ mapping to it. Again by Yoneda, this corresponds to a natural transformation
        $$\phi: \homom_{\T_{\geq 0}}(-,X) \to H\vert_{\T_{\geq 0}}.$$

        \ver
        
        \tb{$\phi$ is a natural isomorphism:} We start by showing that $\phi(s)$ is an isomorphism for every $s \in S_{\geq 0}$. Observe hereto the commutative diagram
        \[\begin{tikzcd}
	{\homom_{\T_{\geq 0}}(s,X_{0})} \\
	{\homom_{\T_{\geq 0}}(s,X)} & {H(s).}
	\arrow[from=1-1, to=2-1]
	\arrow["{\phi_{0}(s)}", from=1-1, to=2-2]
	\arrow["{\phi(s)}"', from=2-1, to=2-2]
\end{tikzcd}\]
        Since $\phi_{0}(s)$ is surjective (see the base case), so is $\phi(s)$. That leaves injectivity. So let $f \in \ker(\phi(s))$. Then 
        \begin{align*}
            f \in \homom_{\T_{\geq 0}}(s,X) &= \T(s,\hocolim X_{i}) \cong \colim \T(s,X_{i})
        \end{align*}
        by compactness of $s$. So there exists some $f_{i}: s \to X_{i}$ so that the composite
        $$s \xrightarrow{f_{i}} X_{i} \to X$$
        is $f$. Since the following diagram commutes:
        \[\begin{tikzcd}
	{\homom_{\T_{\geq 0}}(s,X_{0})} \\
	{\homom_{\T_{\geq 0}}(s,X)} & {H(s),}
	\arrow["{j_{s}}"', from=1-1, to=2-1]
	\arrow["{\phi_{0}(s)}", from=1-1, to=2-2]
	\arrow["{\phi(s)}"', from=2-1, to=2-2]
\end{tikzcd}\]
        and $j_{s}(f_{i}) = f$ while $\phi(s)(f) = 0$, also $\phi_{0}(s)(f_{i}) = 0$. Hence, $(f_{i},s) \in U_{i+1}$ (see the induction step). Consequently, $f_{i}$ factors through $\gamma_{i}$ in the triangle
        \[\begin{tikzcd}
	{K_{i+1}} & {X_{i}} & {X'_{i+1}} & {\quad.}
	\arrow["{\gamma_{i}}", from=1-1, to=1-2]
	\arrow["{\xi_{i}}", from=1-2, to=1-3]
	\arrow["{_+}", from=1-3, to=1-4]
\end{tikzcd}\]
        Consequently, $\xi_{i} \circ f_{i} = 0$. Since
        \[\begin{tikzcd}
	s & {X_{i}} & {X_{i+1}} \\
	&& {X,}
	\arrow["{f_{i}}", from=1-1, to=1-2]
	\arrow["f"', from=1-1, to=2-3]
	\arrow["{\pi_{i} \circ \xi_{i}}", from=1-2, to=1-3]
	\arrow[from=1-2, to=2-3]
	\arrow[from=1-3, to=2-3]
\end{tikzcd}\]
        we see that $f=0$. This shows injectivity. Consequently, $\phi(s)$ is an isomorphism for all $s \in S_{\geq 0}$. 

        Now, let $\mc{S}$ be the smallest full subcategory of $\T$ containing $S_{\geq 0}$, closed under negative shifts and $\T_{\geq 0}$-coproducts and cones$[-1]$. Let $X$ be an arbitrary object of $\T_{\geq 0}$. We can then carry out the previous construction for $\mc{S}$ and $\homom_{\T_{\geq 0}}(-,X)$ and find an object $Z$ and a natural transformation
        $$\homom_{\mc{S}}(-,Z) \to \homom_{\T_{\geq 0}}(-,X)$$
        that is a natural isomorphism on $\mc{S}$. By Yoneda, there is a morphism $Z \to X$ in $\T_{\geq 0}$ inducing it. We can complete it to a triangle,
        \[\begin{tikzcd}
	Z & X & C & {\quad.}
	\arrow[from=1-1, to=1-2]
	\arrow[from=1-2, to=1-3]
	\arrow["{+}", from=1-3, to=1-4]
\end{tikzcd}\]
        Then $$\forall s \in S_{\geq 0}: \homom_{\T_{\geq 0}}(s,C[-1]) = 0.$$ Hence $C = 0$ because $S_{\geq 0}$ generates $\T_{\geq 0}$. Consequently, $Z \cong X$ so that $X \in \mc{S}$ and also $\mc{S} = \T_{\geq 0}$. Now remark that $\mc{S}$ is contained in the full subcategory of $\T_{\geq 0}$ consisting of the objects on which $\phi$ is an isomorphism. Consequently, $\phi$ is a natural isomorphism. 

        \ver

        \tb{From $\T_{\geq 0}$ to $\T$:} We showed that
        $$\T_{\geq 0}(-,X) \cong H\vert_{\T_{\geq 0}}: \T_{\geq 0} \to \Ab.$$
        Then
        \begin{align*}
            \T(-,X) &\cong \T_{\geq 0}(\tau_{\geq 0}(-),X) \\
            &\cong H\vert_{T_{\geq 0}}(\tau_{\geq 0}(-)) \\
            &\cong H(-). \qedhere
        \end{align*}
    \end{proof}
\end{thm}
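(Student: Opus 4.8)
The plan is to adapt Neeman's Brown representability construction, building the representing object as a sequential homotopy colimit while staying inside the coaisle $\T_{\geq 0}$ throughout. First I would replace the given generating set $S$ by its truncation $S_{\geq 0} := \{\tau_{\geq 0}s \mid s \in S\}$ and check that $S_{\geq 0}$ generates $\T_{\geq 0}$: this is immediate from the adjunction $\tau_{\geq 0} \dashv \iota_{\geq 0}$, since for $X \in \T_{\geq 0}$ one has $\T(\tau_{\geq 0}s, X) \cong \T(s, X)$, so $\T(s,X) = 0$ for all $s \in S_{\geq 0}$ forces $X = 0$. Each $s \in S$ being compact, the objects $\tau_{\geq 0}s$ will still interact well with the countable coproducts appearing in the homotopy colimit triangle later on.

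For the base case, I would set $X_0 := \coprod_{(\alpha,s)} s$ indexed by all pairs $(\alpha,s)$ with $s \in S_{\geq 0}$ and $\alpha \in H(s)$. Since $H$ sends coproducts to products, $H(X_0) = \prod_{(\alpha,s)} H(s)$ carries a canonical element $\alpha_0$ whose $(\alpha,s)$-component is $\alpha$; by Yoneda this yields a natural transformation $\phi_0 : \T_{\geq 0}(-, X_0) \to H\vert_{\T_{\geq 0}}$ that is surjective on every $s \in S_{\geq 0}$. The inductive step refines $X_i$ so as to kill the kernel of $\phi_i$ on generators: I would form $K_{i+1} := \coprod_{(f,s)} s$ over pairs with $f \in \ker(\phi_i(s))$, take the cone of the induced map $K_{i+1} \to X_i$, and --- this is the one genuinely new point compared to the unbounded setting --- truncate it by $\tau_{\geq 0}$ to land back in $\T_{\geq 0}$, since the cone a priori only lies in $\T_{\geq -1}$. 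Here the hypothesis $H \circ \tau_{\geq 0} \cong H$ is exactly what lets the universal element survive the truncation: comparing the long exact $H$-sequences attached to the two triangles, the map induced by $\tau_{\geq 0}$-truncation becomes an isomorphism, so one can transport $\alpha_i$ to an element $\alpha_{i+1} \in H(X_{i+1})$ compatible with $\phi_i$.

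Having produced a compatible tower $(X_i, \phi_i)$ in $\T_{\geq 0}$, I would set $X := \hocolim_{n} X_n$, which lies in $\T_{\geq 0}$ by closure under countable coproducts. Applying $H$ to the defining triangle $\coprod X_i \xrightarrow{1 - \tn{shift}} \coprod X_i \to X$ and using that $H$ turns coproducts into products, the compatible family $\{\alpha_i\}$ lies in the kernel of $1 - \tn{shift}$ and thus lifts to some $\alpha \in H(X)$, giving $\phi : \T_{\geq 0}(-,X) \to H\vert_{\T_{\geq 0}}$. I would then show $\phi$ is a natural isomorphism in two stages: on each $s \in S_{\geq 0}$ it is surjective (inherited from $\phi_0$) and injective (any $f$ in its kernel factors through some $X_i$ by compactness of $s$, hence through the cone map $K_{i+1} \to X_i$ that was built to kill it, so $f = 0$); and then a d\'evissage argument shows that the full subcategory of $\T_{\geq 0}$ on which $\phi$ is an isomorphism contains $S_{\geq 0}$ and is closed under negative shifts, $\T_{\geq 0}$-coproducts and shifted cones, hence equals $\T_{\geq 0}$ because $S_{\geq 0}$ generates.

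Finally, to pass from $\T_{\geq 0}$ to all of $\T$, I would combine the adjunction with the standing hypothesis $H \circ \tau_{\geq 0} \cong H$, computing
\[
\T(-,X) \cong \T_{\geq 0}(\tau_{\geq 0}(-), X) \cong H\vert_{\T_{\geq 0}}(\tau_{\geq 0}(-)) \cong H(-),
\]
so that $X \in \T_{\geq 0}$ represents $H$. The main obstacle I anticipate is precisely the bookkeeping around the truncation in the inductive step: one must verify that truncating the cone by $\tau_{\geq 0}$ loses none of the universal element and that the resulting $\phi_i$ stay compatible, which rests entirely on the isomorphism $H \circ \tau_{\geq 0} \cong H$ and a careful comparison of the long exact sequences attached to the two truncation triangles.
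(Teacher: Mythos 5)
Your proposal follows essentially the same route as the paper's own proof: truncating the generating set to $S_{\geq 0}$, running Neeman's tower construction inside $\T_{\geq 0}$ with a $\tau_{\geq 0}$-truncation at each cone (using $H \circ \tau_{\geq 0} \cong H$ and the comparison of the two long exact sequences to transport the universal element), forming the sequential homotopy colimit, proving $\phi$ is an isomorphism first on $S_{\geq 0}$ and then by d\'evissage on all of $\T_{\geq 0}$, and finally passing to $\T$ via the adjunction. The argument is correct and matches the paper step for step, including the identification of the truncation bookkeeping as the one genuinely new point relative to the unbounded setting.
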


\begin{corollary} \label{cor - enough DGInj}
    Let $\T$ be a triangulated category with a $t$-structure $(\T_{\leq 0},\T_{\geq 0})$. Suppose that $\T_{\geq 0}$ is closed under countable coproducts, $\T$ is generated by a set $S \subseteq \T^{c}$, $\T^{\heartsuit}$ has enough injectives and $H^{0}_{t}: \T \to \T^{\heartsuit}$ preserves coproducts. Then $\T$ has enough derived injectives.
    \begin{proof}
        Let $I$ be an injective object in the heart $\T^{\heartsuit}$. Then
        $$\T^{\heartsuit}(H^{0}_{t}(-),I): \T \to \Ab$$
        is a cohomological functor that maps coproducts to products. We clearly also have that $H \circ \tau_{\geq 0} \cong H$. Thus we can apply \Cref{thm - Brown} to obtain
        $$\T^{\heartsuit}(H^{0}_{t}(-),I) \cong \T(-,L(I)),$$
        and the representing object is the derived injective associated to $I$.
    \end{proof}
\end{corollary}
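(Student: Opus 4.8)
The plan is to reduce the statement directly to the Brown representability variant \Cref{thm - Brown}. By definition, $\T$ has enough derived injectives precisely when, for every injective object $I \in \Inj(\T^{\heartsuit})$, the contravariant functor
$$H_I := \T^{\heartsuit}(H^{0}_{t}(-),I): \T^{\op} \to \Ab$$
is representable; the representing object is then the sought-after derived injective $L(I)$. Since $\T^{\heartsuit}$ is assumed to have enough injectives, it therefore suffices to fix an arbitrary injective $I$ and verify that $H_I$ satisfies the three hypotheses required by \Cref{thm - Brown}, namely that it is cohomological, sends coproducts to products, and satisfies $H_I \circ \tau_{\geq 0} \cong H_I$. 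The hypotheses on $\T$ itself --- that $\T_{\geq 0}$ is closed under countable coproducts and that $\T$ is generated by a set $S \subseteq \T^{c}$ --- are shared verbatim between the corollary and the theorem, so these require no further work.

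First I would check that $H_I$ is cohomological. The functor $H^{0}_{t}: \T \to \T^{\heartsuit}$ is cohomological, so it carries an exact triangle to a long exact sequence in the heart; applying the exact functor $\T^{\heartsuit}(-,I)$ --- which is exact precisely because $I$ is injective --- preserves this long exact sequence, so $H_I$ is cohomological. Next, the coproduct-to-product property follows by combining the assumption that $H^{0}_{t}$ preserves coproducts with the fact that $\T^{\heartsuit}(-,I)$ turns coproducts into products: for a family $\{X_i\}$ one has
$$H_I\!\left(\coprod_i X_i\right) \cong \T^{\heartsuit}\!\left(\coprod_i H^{0}_{t}(X_i),I\right) \cong \prod_i \T^{\heartsuit}(H^{0}_{t}(X_i),I) = \prod_i H_I(X_i).$$
Finally, the compatibility with $\tau_{\geq 0}$ reduces to the identity $H^{0}_{t} \circ \tau_{\geq 0} \cong H^{0}_{t}$, which holds because $H^{0}_{t} = \tau_{\leq 0}\tau_{\geq 0}$ and $\tau_{\geq 0}$ is idempotent.

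With these three points established, \Cref{thm - Brown} applies and yields a representing object $L(I) \in \T_{\geq 0}$ with $H_I \cong \T(-,L(I))$, which is exactly a derived injective lifting $I$. Ranging over all injectives $I$ then gives enough derived injectives. I do not expect any genuine obstacle here: the entire difficulty has been front-loaded into \Cref{thm - Brown}, and the remaining verifications are formal consequences of the hypotheses. If anything, the only point demanding a moment's care is confirming that $\T^{\heartsuit}(-,I)$ is exact rather than merely left exact, which is exactly what injectivity of $I$ buys us and is the reason the hypothesis that $\T^{\heartsuit}$ has enough injectives is phrased in terms of injective objects.
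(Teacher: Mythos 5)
Your proposal is correct and follows exactly the paper's argument: both proofs apply \Cref{thm - Brown} to the functor $\T^{\heartsuit}(H^{0}_{t}(-),I)$ for each injective $I$ in the heart, after checking that it is cohomological, sends coproducts to products, and is invariant under precomposition with $\tau_{\geq 0}$. You merely spell out these three routine verifications in more detail than the paper does, and each is checked correctly.
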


\ver

\section{Comparing the Hochschild complexes}
\label{appendix - Hochschild}

The goal of this appendix is to prove \Cref{cor - HHn bounded tdg}, which states that for $n \geq 2$, the $n$-th Hochschild\index{Hochschild}\index{Hochschild!Hochschild complex $\Cc(-)$}\index{Hochschild!Hochschild cohomology $\tn{HH}^{*}(-)$} cohomology group $\tn{HH}^{n}_{\dg}(\A)$ of an essentially small strongly pretriangulated bounded $t$-dg-category $\A$ linear over a field $k$, parameterizes the bounded $t$-deformations of $\A$ along the dg-$k$-algebra morphism $$\theta_{2-n}: k[\epsilon]/(\epsilon^{2}) \to k$$ with $\vert \epsilon \vert = 2-n$, up to equivalence.

Observe first that in the particular case of $\D^{b}_{\dg}(\mb{A})$ for an abelian $k$-linear category $\mb{A}$, we have the following result of \cite{LVdBHH}[Theorem 6.1].

\begin{thm} \label{thm - HH A DbA}
    Let $\Aa$ be an abelian category. Then the Hochschild complexes of the abelian category $\Aa$, as defined in \cite{LVdBHH}, and of the dg-category $\D_{\dg}^{b}(\Aa)$ are isomorphic in $\tn{Ho}(B_{\infty})$\index{$\tn{Ho}(B_{\infty})$}, the homotopy category of $B_{\infty}$-algebras,
    $$\Cc(\Aa) \cong \Cc(\D^{b}(\Aa)).$$
\end{thm}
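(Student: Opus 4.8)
The plan is to exhibit an explicit zigzag of $B_{\infty}$-quasi-isomorphisms connecting the two complexes, using as the central engine the derived invariance of the Hochschild complex together with its full $B_{\infty}$-structure. Both $\Cc(\Aa)$ and $\Cc(\D^{b}(\Aa))$ are, up to the appropriate shift, models for the derived endomorphism complex $\RHom$ of the respective diagonal bimodule, and each carries the Getzler--Jones brace operations making it a $B_{\infty}$-algebra. The point is to match these $B_{\infty}$-structures, not merely the underlying cochain complexes or the induced Gerstenhaber algebras on cohomology.

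First I would fix a convenient dg-enhancement of $\D^{b}(\Aa)$. Assuming enough injectives (reducing to this case, or passing to the ind-completion as in \Cref{cor - equiv induced derived}), one models $\D^{b}_{\dg}(\Aa)$ by a dg-category $\mathcal{C}$ built from bounded complexes of injectives, or from h-injective resolutions. Keller's theorem on the derived invariance of the Hochschild $B_{\infty}$-algebra then supplies a canonical isomorphism $\Cc(\D^{b}_{\dg}(\Aa)) \cong \Cc(\mathcal{C})$ in $\tn{Ho}(B_{\infty})$, since $\mathcal{C}$ and $\D^{b}_{\dg}(\Aa)$ are quasi-equivalent and hence Morita equivalent. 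This reduces the dg side to the combinatorially explicit complex $\Cc(\mathcal{C})$.

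The heart of the argument is then to compare $\Cc(\Aa)$ with $\Cc(\mathcal{C})$. Here I would use the degree-zero embedding $\Aa \hookrightarrow \D^{b}(\Aa)$ and the fact that $\Aa$ \emph{classically generates} $\D^{b}(\Aa)$, so that the diagonal $\Aa$-bimodule corresponds, under restriction along this embedding, to the diagonal bimodule of the enhancement. The LVdB formalism identifies $\Cc(\Aa)$ with the Hochschild complex computed in the bimodule category of $\Aa$ (functors $\Aa^{\op}\otimes\Aa\to\Mod(k)$, with the derived structure inherited from the exact structure of $\Aa$), and the comparison theorem of \cite{LVdBHH} furnishes the $B_{\infty}$-quasi-isomorphism between this and the Hochschild complex of the generating dg-subcategory. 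Assembling the zigzag yields the desired isomorphism in $\tn{Ho}(B_{\infty})$.

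The main obstacle I anticipate is the $B_{\infty}$-refinement itself. Realizing the Hochschild cohomology comparison on the level of cohomology, as an isomorphism of Gerstenhaber algebras, is comparatively standard; the difficulty is tracking the entire tower of brace operations through each arrow of the zigzag, in particular through the passage from the abelian bimodule category to a dg-enhancement in which the diagonal is resolved. This forces one to work with explicit operadic or bar-construction models of the $B_{\infty}$-structure and to check compatibility arrow by arrow, rather than invoking only the Gerstenhaber structure. Ensuring that the boundedness conditions are preserved throughout, and that the comparison bimodule is genuinely the \emph{derived} diagonal rather than its naive degree-zero avatar, is the delicate technical point on which the whole argument turns.
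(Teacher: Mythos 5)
The paper does not actually prove this statement: it is imported verbatim from \cite{LVdBHH}[Theorem~6.1], so the only meaningful comparison is with the proof given there, whose strategy the present paper reproduces in its genuine $t$-dg-analogue, \Cref{thm - HH A DGInjInddgA agree}. Measured against that, you have the right engine in hand but deploy it in the wrong place, and the step you yourself call ``the heart of the argument'' is circular. First, $\Cc(\Aa)$ is \emph{not} ``the Hochschild complex computed in the bimodule category of $\Aa$''; by definition (see \cite{LVdBHH}[\S 2.4], recalled immediately below the theorem in this paper) it is the Shukla complex of the linear category $\Inj(\Ind(\Aa))$. An argument that never mentions $\Inj(\Ind(\Aa))$ has not engaged with the left-hand side of the isomorphism. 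Second, the ``comparison theorem of \cite{LVdBHH}'' that you invoke to bridge $\Cc(\Aa)$ and $\Cc(\mathcal{C})$ is, for all practical purposes, the statement being proved, so that arrow of your zigzag is unsupported. The only place you apply Keller's invariance is to the quasi-equivalence $\mathcal{C}\simeq\D^{b}_{\dg}(\Aa)$, which is the easy and essentially content-free step.

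The missing idea is to apply Keller's bimodule criterion (\Cref{thm - Keller}) to the hard step: exhibit an explicit dg-bimodule between $\Inj(\Ind(\Aa))$ --- whose Hochschild complex \emph{is}, by definition, $\Cc(\Aa)$ --- and a dg-enhancement of $\D^{b}(\Aa)$, essentially $M(X,I)=\homom(X,I)$ for $X$ a bounded complex and $I$ an injective of $\Ind(\Aa)$, and then verify the two fully-faithfulness conditions \eqref{eq - condition 1 Keller} and \eqref{eq - condition 2 Keller} by resolving $X$ by injectives on one side and using that maps into an injective concentrated in degree zero only see truncations on the other. This is precisely the shape of the proof of \Cref{thm - HH A DGInjInddgA agree}. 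It also dissolves the obstacle you flag at the end: Keller's theorem delivers the isomorphism in $\tn{Ho}(B_{\infty})$, braces and all, in one stroke from a single bimodule satisfying those two conditions, so there is no need to track the tower of brace operations ``arrow by arrow'' through a zigzag of models.
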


Since the Hochschild complex of an abelian category $\mb{A}$ is defined as the Shukla complex of $\Inj(\Ind(\mb{A}))$ (see \cite{LVdBHH}[\S 2.4]), \Cref{lemma - HHn dg parametrization} below together with \Cref{cor - recap defequiv} yields \Cref{cor - HHn bounded tdg} in the special case of $\A := \D^{b}_{\dg}(\mb{A})$.

\begin{lemma} \label{lemma - HHn dg parametrization}
    Let $\mf{a}$ be a dg-$k$-category concentrated in nonpositive degrees. For $n \geq 2$,
    \begin{equation} \label{eq - HHn dg neg}
        \tn{HH}^{n}_{\dg}(\mf{a}) \cong \begin{Bmatrix} \tn{dg-deformations of $\mf{a}$ along } \theta_{2-n} \end{Bmatrix}/ \sim.
    \end{equation}
    \begin{proof}
        We adapt the proof of \cite{Wendy}[Proposition 4.11], which covers the case $n=2$. So assume that $n \geq 3$ and let $\mu$ be the $A_{\infty}$-structure on $\mf{a}$ (consisting only of $\mu_{1}$ and $\mu_{2}$) and consider $\phi \in \mc{Z}^{n}\Cc(\mf{a})$. Then we map $\phi$ to the $A_{\infty}$-category $\mf{a}_{\phi} := (\mf{a}[\epsilon],\mu + \phi \epsilon)$. Let us check that $\mu + \phi \epsilon$ defines a strictly unital $A_{\infty}$-structure on $\mf{a}[\epsilon]$. Note that there is no curvature, i.e. $(\mu + \phi \epsilon)_{0} = 0$, because $\mf{a}$ is concentrated in nonpositive degrees and $\vert \epsilon \vert < 0$. To show that $\mu+\phi \epsilon$ defines an $A_{\infty}$-structure on $\mf{a}[\epsilon]$, it suffices to show that
        \begin{equation} \label{eq - Ainfty condition}
            (\mu+\phi \epsilon) \{ \mu+\phi \epsilon \} = 0,
        \end{equation}
        in the notation of \cite{Wendy}. We can rewrite this as
        $$\mu \{ \mu \} + ( \mu \{ \phi \} + \phi \{ \mu \})\epsilon.$$
        Since $\mu$ defines an $A_{\infty}$-structure on $\mf{a}$, we have that $\mu \{ \mu \} = 0$. Since $\phi$ is a Hochschild cocycle of $\mf{a}$, we also have that $\mu \{ \phi \} + \phi \{ \mu \} = 0$, proving \eqref{eq - Ainfty condition}.
        For the strict unitality of $\mf{a}_{\phi}[\epsilon]$, let  $\eta: k \to \mf{a}$ be the unit of $\mf{a}$. Then the induced degree zero morphism $\widetilde{\eta}: k[\epsilon]/(\epsilon^{2}) \to \mf{a}_{\phi}[\epsilon]$ is a unit for $\mf{a}_{\phi}[\epsilon]$ because $\eta$ is a unit for $(\mf{a}, \mu)$ and $\epsilon$ squares to zero.
 
        Consider now a morphism of cocycles $h:\phi \to \phi'$. This is mapped to the morphism $1 +h \epsilon: B\mf{a}_{\phi}[\epsilon] \to B\mf{a}_{\phi'}[\epsilon]$. If $\delta(h) = \phi'-\phi$, so $[\phi] = [\phi']$, then $\mf{a}_{\phi}[\epsilon] \sim \mf{a}_{\phi'}[\epsilon]$ via $1 + h \epsilon$. So we found a map
        $$\tn{HH}^{n}_{\dg}(\mf{a}) \to \begin{Bmatrix} \tn{strictly unital $A_{\infty}$-deformations of $\mf{a}$ along }\theta_{2-n} \end{Bmatrix}/ \sim$$
        and one can check that it is a bijection. Since the homotopy category of strictly unital $A_{\infty}$-categories is equivalent to the homotopy category of dg-categories, see for example \cite{CanoStel} and \cite{Pascaleff}, we obtain \eqref{eq - HHn dg neg}.
    \end{proof}
\end{lemma}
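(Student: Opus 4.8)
The plan is to reduce the statement to a computation with $A_{\infty}$-deformations, extending the strategy of \cite{Wendy}[Proposition 4.11], which already settles the borderline case $n = 2$. The guiding principle is the classical one: first-order (square-zero) deformations of an algebraic structure controlled by a brace/DGLA are parameterized by the degree-appropriate cohomology of the controlling complex, and here that controlling object is the Hochschild complex $\Cc(\mf{a})$ with its brace operations. I would therefore assume $n \geq 3$ from the outset and invoke \cite{Wendy}[Proposition 4.11] for $n = 2$. The very first reduction is to replace the right-hand side of \eqref{eq - HHn dg neg} by equivalence classes of strictly unital $A_{\infty}$-deformations of $\mf{a}$ over $k[\epsilon]/(\epsilon^{2})$: this is legitimate because the homotopy category of strictly unital $A_{\infty}$-categories is equivalent to $\Hqe$ (cf. \cite{CanoStel}, \cite{Pascaleff}), so proving the bijection on the $A_{\infty}$-side yields it on the dg-side.

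Next I would construct the map $\mc{Z}^{n}\Cc(\mf{a}) \to \{\text{$A_{\infty}$-deformations}\}$ sending a cocycle $\phi$ to $\mf{a}_{\phi} := (\mf{a}[\epsilon], \mu + \phi\epsilon)$, where $\mu$ is the dg $A_{\infty}$-structure of $\mf{a}$. Three things must be checked: that no curvature appears, i.e. $(\mu + \phi\epsilon)_{0} = 0$; that the $A_{\infty}$-relation $(\mu + \phi\epsilon)\{\mu + \phi\epsilon\} = 0$ holds; and that $\mf{a}_{\phi}$ is strictly unital. In the brace formalism the associativity relation separates into the $\epsilon$-free part $\mu\{\mu\} = 0$ (true since $\mf{a}$ is a dg-category) and the linear part $(\mu\{\phi\} + \phi\{\mu\})\epsilon = 0$, which is precisely the assertion that $\phi$ is a Hochschild cocycle. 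Strict unitality is inherited from the unit $\eta$ of $\mf{a}$ via the induced unit $\widetilde{\eta}: k[\epsilon]/(\epsilon^{2}) \to \mf{a}_{\phi}[\epsilon]$, using $\epsilon^{2} = 0$.

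Then I would show the assignment descends to cohomology and is a bijection. A coboundary $\phi' - \phi = \delta(h)$ yields an $A_{\infty}$-isomorphism $1 + h\epsilon: B\mf{a}_{\phi}[\epsilon] \to B\mf{a}_{\phi'}[\epsilon]$, so cohomologous cocycles give equivalent deformations; conversely, every strictly unital $A_{\infty}$-deformation over $k[\epsilon]/(\epsilon^{2})$ is, after normalization, of the form $\mu + \phi\epsilon$ with $\phi$ forced to be a cocycle, and any equivalence between two such deformations is degree-wise of the shape $1 + h\epsilon$, exhibiting $\phi$ and $\phi'$ as cohomologous. Composing the resulting bijection with the $A_{\infty}$-versus-dg equivalence from \cite{CanoStel}, \cite{Pascaleff} produces \eqref{eq - HHn dg neg}.

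The main obstacle — and the reason the approach works at all — is the curvature condition $(\mu + \phi\epsilon)_{0} = 0$. In general, Hochschild cohomology governs \emph{curved} $A_{\infty}$-deformations, so the hypotheses must genuinely be used: a would-be curvature term lives in cohomological degree $2$ (and, in the $\epsilon$-component, in degree $2 - \vert\epsilon\vert = n$) of the hom-complexes, both of which are positive and hence vanish because $\mf{a}$ is concentrated in nonpositive degrees; equivalently one uses $\vert\epsilon\vert = 2-n < 0$ for $n \geq 3$. Making this degree bookkeeping precise — and confirming that the $n = 2$ borderline, where $\vert\epsilon\vert = 0$, is exactly the case already treated in \cite{Wendy}[Proposition 4.11] — is the delicate point; the remaining verifications are the standard brace-calculus manipulations.
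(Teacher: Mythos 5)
Your proposal follows essentially the same route as the paper's proof: the reduction to strictly unital $A_{\infty}$-deformations via \cite{CanoStel} and \cite{Pascaleff}, the assignment $\phi \mapsto (\mf{a}[\epsilon], \mu + \phi\epsilon)$ with the brace-calculus verification of the $A_{\infty}$-relation, the degree argument killing curvature, and the coboundary-to-gauge-equivalence correspondence all match. Your added remarks on why the map is a bijection (normalization of an arbitrary deformation to the form $\mu + \phi\epsilon$ and the shape $1 + h\epsilon$ of equivalences) flesh out a step the paper leaves as "one can check," but the argument is the same.
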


We now treat the general case. To find a general $t$-dg-analogue for \Cref{thm - HH A DbA}, we use the following result\footnote{Note that \Cref{thm - Keller} was also used to prove \cite{LVdBHH}[Theorem 6.1].} from \cite{KellerDerivedHochschild}, stated more generally as in \cite{LVdBHH}[Theorem 4.1.1].

\begin{thm} \label{thm - Keller}
    Let $\mf{a}$ and $\mf{b}$ be cofibrant dg-$R$-categories. Given a cofibrant $\mf{a}-\mf{b}$-dg-bimodule $M: \mf{a}^{\op} \otimes \mf{b} \to \dgm(R)$, if
    \begin{align} 
        \begin{split} \label{eq - condition 1 Keller}
        &\forall b,b' \in \mf{b}: \quad \textcolor{white}{u} \RHom_{\dgm(\mf{a})}(M(-,b),M(-,b')) \cong \mf{b}(b,b'), 
        \end{split} \\
        \begin{split} \label{eq - condition 2 Keller}
        &\forall a,a' \in \mf{a}: \quad \textcolor{white}{i} \RHom_{\dgm(\mf{b}^{\tn{op}})}(M(a,-),M(a',-)) \cong \mf{a}(a',a),
        \end{split}
    \end{align}
    then the Hochschild complexes of $\mf{a}$ and $\mf{b}$ are isomorphic in $\tn{Ho}(B_{\infty})$:
    $$\Cc(\mf{a}) \cong \Cc(\mf{b}).$$
\end{thm}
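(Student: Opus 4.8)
The plan is to transport the Hochschild complex along the bimodule $M$ by working with the diagonal bimodules. First I recall that for a cofibrant dg-category $\mf{c}$ the Hochschild complex computes the derived endomorphisms $\Cc(\mf{c}) \sim \RHom_{\mf{c}^{\op} \otimes \mf{c}}(\mf{c},\mf{c})$ of the diagonal bimodule, and that its $B_{\infty}$-structure (the braces together with the cup product) is the one induced by the monoidal structure $\otimes_{\mf{c}}^{\mb{L}}$ on the derived category of $\mf{c}$-$\mf{c}$-bimodules. The conditions \eqref{eq - condition 1 Keller} and \eqref{eq - condition 2 Keller} are exactly what is needed to make $M$ a two-sided derived equivalence bimodule between $\mf{a}$ and $\mf{b}$: condition \eqref{eq - condition 1 Keller} says that the dg-functor $\mf{b} \to \dgm(\mf{a})$, $b \mapsto M(-,b)$, is quasi-fully faithful, and condition \eqref{eq - condition 2 Keller} says that the dual dg-functor $\mf{a}^{\op} \to \dgm(\mf{b}^{\op})$, $a \mapsto M(a,-)$, is quasi-fully faithful.

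Next I would make this Morita statement explicit by passing to a common enhancement. One convenient model is the gluing dg-category $\mf{g}$ with objects $\Ob(\mf{a}) \sqcup \Ob(\mf{b})$ and inner homs $\mf{g}(a,a') = \mf{a}(a,a')$, $\mf{g}(b,b') = \mf{b}(b,b')$, $\mf{g}(a,b) = M(a,b)$ and $\mf{g}(b,a) = 0$, with composition induced by the bimodule actions on $M$. The two inclusions $\iota_{\mf{a}}\colon \mf{a} \hookrightarrow \mf{g}$ and $\iota_{\mf{b}}\colon \mf{b} \hookrightarrow \mf{g}$ are then full dg-functors on the nose, and \eqref{eq - condition 1 Keller}, \eqref{eq - condition 2 Keller} translate into the assertion that the representables of the $\mf{a}$-objects and of the $\mf{b}$-objects span the same thick subcategory of $\Perf(\mf{g})$; consequently $\Perf(\iota_{\mf{a}})$ and $\Perf(\iota_{\mf{b}})$ are quasi-equivalences.

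With this in hand I would invoke the derived invariance of the Hochschild $B_{\infty}$-algebra: a full dg-functor whose image generates the perfect derived category induces an isomorphism of Hochschild complexes in $\tn{Ho}(B_{\infty})$. Applying this to $\iota_{\mf{a}}$ and $\iota_{\mf{b}}$ yields $\Cc(\mf{a}) \cong \Cc(\mf{g}) \cong \Cc(\mf{b})$ in $\tn{Ho}(B_{\infty})$, which is the claim.

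The main obstacle is precisely this last invariance step, i.e.\ upgrading a quasi-isomorphism of Hochschild \emph{complexes} to an isomorphism of $B_{\infty}$-\emph{algebras}. The restriction maps do not respect the braces and the cup product strictly, so one cannot conclude from a single dg-functor; instead one transports the $B_{\infty}$-structure along the induced lax monoidal equivalence of derived bimodule categories $\D(\mf{g}^{\op} \otimes \mf{g}) \to \D(\mf{a}^{\op} \otimes \mf{a})$, checking that the diagonal bimodule of $\mf{g}$ is sent to that of $\mf{a}$ and that the higher brace operations are matched up to coherent homotopy. This homotopy-coherent bookkeeping, rather than the elementary gluing construction, is where the real work lies, and it is carried out in \cite{KellerDerivedHochschild}.
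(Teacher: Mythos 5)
The paper does not prove this statement at all: it is imported verbatim from \cite{KellerDerivedHochschild} (in the formulation of \cite{LVdBHH}[Theorem 4.1.1]) and used as a black box, so there is no in-paper argument to compare yours against. Judged on its own terms, your sketch has the right overall architecture --- Keller's proof does proceed via the glued dg-category $\mf{g}$ with $\mf{g}(a,b)=M(a,b)$, $\mf{g}(b,a)=0$, and a zigzag $\Cc(\mf{a}) \leftarrow \Cc(\mf{g}) \rightarrow \Cc(\mf{b})$ of restriction maps --- but the mechanism you give for why this zigzag consists of quasi-isomorphisms is wrong. You claim that \eqref{eq - condition 1 Keller} and \eqref{eq - condition 2 Keller} force the representables of $\mf{a}$ and of $\mf{b}$ to span the same thick subcategory of $\Perf(\mf{g})$, so that $\Perf(\iota_{\mf{a}})$ and $\Perf(\iota_{\mf{b}})$ are quasi-equivalences and derived Morita invariance applies. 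This fails already for $\mf{a}=\mf{b}=k$ and $M=k$: both conditions hold trivially, but $\mf{g}$ is the path algebra of the $A_{2}$-quiver, and the representable of the $\mf{a}$-object is the simple projective, whose thick closure is a proper subcategory of $\Perf(\mf{g})$ not containing the other projective. So neither inclusion is a Morita equivalence, even though the conclusion $\Cc(\mf{a})\cong\Cc(\mf{g})\cong\Cc(\mf{b})$ does hold in this example. The actual proof does not go through $\Perf$ at all: one identifies $\Cc(\mf{g})$ with $\RHom_{\mf{g}^{\op}\otimes\mf{g}}(\mf{g},\mf{g})$, decomposes the diagonal $\mf{g}$-bimodule into the pieces $\mf{a}$, $\mf{b}$, $M$, and uses \eqref{eq - condition 1 Keller} and \eqref{eq - condition 2 Keller} directly to show that the two restriction maps are quasi-isomorphisms.

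Your closing paragraph also misplaces the difficulty. For a \emph{full} dg-subcategory, the restriction map on Hochschild complexes is a \emph{strict} morphism of $B_{\infty}$-algebras: the braces, cup product and differential are defined componentwise over tuples of objects, and restricting to tuples lying in the subcategory commutes with all of them. So no homotopy-coherent transport of the $B_{\infty}$-structure along a monoidal equivalence of bimodule categories is needed; the genuine content of \Cref{thm - Keller} is exactly the quasi-isomorphism statement for the two restrictions, which is the step your sketch replaces by an incorrect Morita argument.
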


\begin{thm} \label{thm - HH A DGInjInddgA agree}
    Let $\A$ be an essentially small strongly pretriangulated $t$-dg-$R$-category. Then there is an isomorphism in $\tn{Ho}(B_{\infty})$,
    $$\Cc(\A) \cong \Cc(\DGInj(\Ind^{\dg,Q,+}(\A))).$$
    In particular, the Hochschild cohomologies agree:
    $$\tn{HH}_{\dg}^{*}(\A) \cong \tn{HH}_{\dg}^{*}(\DGInj(\Ind^{\dg,Q,+}(\A))).$$
    \begin{proof}
        We wish to apply \Cref{thm - Keller}. Let $\J := \DGInj(\Ind^{\dg,Q,+}(\A))$. By replacing $\hproj^{+}(\A)$ by a cofibrant replacement, we can ensure that the full dg-subcategories $\Ind^{\dg,Q,+}(\A)$, $\J$ and $\A$ are cofibrant. As $\A$-$\J$-dg-bimodule we take
        $$M: \A^{\tn{op}} \otimes \J \to \dgm(R): (A,I) \mapsto \underline{\homom}_{\dgm(\A)}(\Y(A),I).$$
        It is cofibrant because it is essentially the inner hom in $\hproj^{+}(\A)$. Note that by Yoneda,
        \begin{equation} \label{eq - M def}
            M(A,I) \cong I(A).
        \end{equation}
        We have to show that conditions \eqref{eq - condition 1 Keller} and \eqref{eq - condition 2 Keller} hold, i.e. that
        \begin{align} 
        \begin{split} \label{eq - condition 1}
        &\forall I,I' \in \J: \quad \textcolor{white}{u} \RHom_{\dgm(\A)}(M(-,I),M(-,I')) \cong \J(I,I'),
        \end{split} \\
        \begin{split} \label{eq - condition 2}
        &\forall A,A' \in \A: \quad \RHom_{\dgm(\J^{\tn{op}})}(M(A,-),M(A',-)) \cong \A(A',A),
        \end{split}
    \end{align}
        We start with condition \eqref{eq - condition 1}. Let $I,I' \in \J$. By \eqref{eq - M def} and the fact that $\J \subseteq \hproj(\A)$, we have that 
        $$\RHom_{\dgm(\A)}(M(-,I),M(-,I')) = \underline{\homom}_{\dgm(\A)}(I,I') = \J(I,I').$$
        Now for condition \eqref{eq - condition 2}, consider $A,A' \in \A$. The idea is to resolve $\Y(A)$. By \Cref{prop - tlc in hqet+pi} and \Cref{thm - correspondence}, $\Ind^{\dg,Q,+}(\A) \cong \Tw^{+}(\J)$. By \cite{GLVdB1}[\S 4.1] there is then a left bounded twisted complex $(I,q_{I}) \in \Tw^{+}(\J)$ and a morphism $\Y(A) \to (I,q_{I})$ that induces a quasi-isomorphism $$\Y(A) \cong \tn{Tot}(I,q_{I}),$$ also called a derived injective resolution of $\Y(A)$. Then
        $$(X,q_{X}) := (\oplus_{i \in \mb{N}} \underline{\homom}_{\dgm(\A)}(I_{i},-)[-i], q)$$
        with
        $$q = \{q^{j}_{i}:= - \circ (q_{I})^{-i}_{-j}: \underline{\homom}_{\dgm(\A)}(I_{i},-)[-i] \to \underline{\homom}_{\dgm(\A)}(I_{j},-)[-j] \}_{i,j \in \mb{Z}}$$
        is a right bounded twisted complex\footnote{This is the dual of \Cref{def - lb twisted complex}, see also \cite{GLVdB1}[Definition 4.4].} in $\Tw^{-}(\dgm(\J^{\tn{op}}))$, and there is a quasi-isomorphism
        \begin{equation} \label{eq - hproj res}
            \tn{Tot}(X,q_{X}) \cong \underline{\homom}_{\dgm(\A)}(\Y(A),-) = M(A,-).
        \end{equation}
        Since $\dgm(\J^{\tn{op}})$ is closed under direct sums and cones, $\tn{Tot}$ is just the totalisation in $\dgm(\J^{\tn{op}})$ and we claim that it moreover lies in $\hproj(\J^{\tn{op}})$. To see this, observe first that every component of $(X,q_{X})$ lies in $\hproj(\J^{\tn{op}})$. By \cite{GLVdB1}[Proposition 4.13],
        $$\tn{Tot}(X,q_{X}) = \hocolim_{p \geq 0} \tn{Tot}(\sigma_{\geq -p} X),$$
        where $\sigma_{\geq -p}$ are the stupid truncations of \cite{GLVdB1}[\S 4.3]. The $\tn{Tot}(\sigma_{\geq -p} X)$ are h-projective as iterated extensions of the h-projective components. By \Cref{prop - classic hocolim} and \Cref{thm - homotopy dg completion closed} (4), this homotopy colimit remains h-projective. All in all, we found that \eqref{eq - hproj res} is a h-projective resolution of $M(A,-)$ in $\dgm(\J^{\tn{op}})$. We can then rewrite the left-hand side of \eqref{eq - condition 2} as
        \begin{align*}
            \RHom_{\dgm(\J^{\tn{op}})}(M(A,-),M(A',-)) &\cong \underline{\homom}_{\dgm(\J^{\tn{op}})}(\tn{Tot}(X,q_{X}),M(A',-)) \\
            &\cong \tn{Tot}(\oplus_{i \in \mb{N}} M(A',I_{i})[i],q') \\
            &\cong \tn{Tot}(\oplus_{i \in \mb{N}} I_{i}(A'),q'') \\
            &\cong \tn{Tot}(X,q_{X})(A') \\
            &\cong \Y(A)(A') \\
            &= \A(A',A),
        \end{align*}
        where $q',q''$ denote the induced families and where we used the fact that $\tn{Tot}: \Tw^{+}(\J) \to \hproj^{+}(\A)$ commutes with evaluation in the fourth line.
    \end{proof}
\end{thm}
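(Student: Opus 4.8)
The plan is to deduce the statement from Keller's $B_{\infty}$-comparison result \Cref{thm - Keller}, which turns a suitable bimodule linking two cofibrant dg-categories into an isomorphism of their Hochschild complexes in $\tn{Ho}(B_{\infty})$. Writing $\J := \DGInj(\Ind^{\dg,Q,+}(\A))$, I would take the two dg-categories to be $\A$ and $\J$, connected by the $\A$-$\J$-bimodule
$$M: \A^{\op} \otimes \J \to \dgm(R): (A,I) \mapsto \underline{\homom}_{\dgm(\A)}(\Y(A),I),$$
which by Yoneda satisfies $M(A,I) \cong I(A)$. Before applying \Cref{thm - Keller}, I would replace $\hproj^{+}(\A)$ by a cofibrant replacement so that the full dg-subcategories $\Ind^{\dg,Q,+}(\A)$, $\J$ and $\A$ all become cofibrant, and observe that $M$ is cofibrant since it is essentially the inner hom of $\hproj^{+}(\A)$.

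The bulk of the work is verifying the two hypotheses of \Cref{thm - Keller}. The first, $\RHom_{\dgm(\A)}(M(-,I),M(-,I')) \cong \J(I,I')$ for $I,I' \in \J$, I expect to be essentially immediate: since $M(-,I) \cong I$ as a dg-$\A$-module and the derived injectives live in $\hproj(\A)$, the derived hom is already computed by the strict inner hom $\underline{\homom}_{\dgm(\A)}(I,I') = \J(I,I')$, with no resolution needed.

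The second condition, $\RHom_{\dgm(\J^{\op})}(M(A,-),M(A',-)) \cong \A(A',A)$, is the hard part and is where I would concentrate. The strategy is to build an explicit h-projective resolution of $M(A,-)$ over $\J^{\op}$. Using $\Ind^{\dg,Q,+}(\A) \cong \Tw^{+}(\J)$ (from \Cref{prop - tlc in hqet+pi} and \Cref{thm - correspondence}) together with the derived injective resolutions of \cite{GLVdB1}[\S 4.1], I would choose a left bounded twisted complex $(I,q_{I}) \in \Tw^{+}(\J)$ with $\Y(A) \cong \tn{Tot}(I,q_{I})$, and dualize it into a right bounded twisted complex $(X,q_{X})$ in $\Tw^{-}(\dgm(\J^{\op}))$ with $X_{i} = \underline{\homom}_{\dgm(\A)}(I_{i},-)[-i]$, so that $\tn{Tot}(X,q_{X}) \cong M(A,-)$.

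The main obstacle is then to show that this totalization is genuinely h-projective, so that it computes the derived hom correctly. My plan is to write $\tn{Tot}(X,q_{X}) = \hocolim_{p \geq 0} \tn{Tot}(\sigma_{\geq -p}X)$ via \cite{GLVdB1}[Proposition 4.13], note that each stupid truncation $\tn{Tot}(\sigma_{\geq -p}X)$ is an iterated extension of the h-projective components $X_{i}$ and hence h-projective, and conclude that the homotopy colimit stays h-projective by \Cref{prop - classic hocolim} and \Cref{thm - homotopy dg completion closed} (4). With this resolution in hand the computation should unwind formally, using $M(A',I_{i}) \cong I_{i}(A')$ and the fact that evaluation at $A'$ commutes with $\tn{Tot}$, reducing $\RHom_{\dgm(\J^{\op})}(M(A,-),M(A',-))$ to $\tn{Tot}(X,q_{X})(A') \cong \Y(A)(A') = \A(A',A)$, as required. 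The concluding claim on Hochschild cohomology is then automatic, since a $B_{\infty}$-isomorphism induces an isomorphism on the underlying cohomology.
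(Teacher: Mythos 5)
Your proposal is correct and follows essentially the same route as the paper's proof: the same bimodule $M(A,I) = \underline{\homom}_{\dgm(\A)}(\Y(A),I)$, the same immediate verification of the first Keller condition, and the same construction of the h-projective resolution of $M(A,-)$ by dualizing a derived injective resolution of $\Y(A)$ into a right bounded twisted complex and controlling its totalization via stupid truncations and homotopy colimits. No substantive differences to report.
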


We can now combine \Cref{thm - HH A DGInjInddgA agree} and \Cref{lemma - HHn dg parametrization} with the deformation equivalences of \Cref{cor - recap defequiv} to show that $\tn{HH}^{n \geq 2}_{\dg}(\A)$ for an essentially small strongly pretriangulated dg-$k$-category $\A$ with a bounded $t$-structure, parametrizes the bounded $t$-deformations of $\A$ along $\theta_{2-n}$.

\begin{corollary} \label{cor - HHn bounded tdg}
    Let $\A$ be an essentially small strongly pretriangulated dg-$k$-category with a bounded $t$-structure. For $n \geq 2$,
    \begin{equation} \label{eq - HHn A small}
        \tn{HH}^{n}_{\dg}(\A) \cong \begin{Bmatrix} \tn{bounded $t$-deformations} \\ \tn{of $\A$ along } \theta_{2-n} \end{Bmatrix}/ \sim.
    \end{equation}
    \begin{proof}
        By \Cref{thm - HH A DGInjInddgA agree}, we have that
        $$\tn{HH}^{*}_{\dg}(\A) \cong \tn{HH}^{*}_{\dg}(\DGInj(\Ind^{\dg,Q,+}(\A))).$$
        For $n \geq 2$, the Hochschild cohomology group $\tn{HH}^{n}_{\dg}(\DGInj(\Ind^{\dg,Q,+}(\A)))$ on the right-hand side parametrizes dg-deformations along $\theta_{2-n}$ by \Cref{lemma - HHn dg parametrization}. Thus, we obtain
        $$\tn{HH}^{n}_{\dg}(\A) \cong \begin{Bmatrix} \tn{dg-deformations of} \\ \tn{$\DGInj(\Ind^{\dg,Q,+}(\A))$ along } \theta_{2-n} \end{Bmatrix}/ \sim.$$
        By \Cref{cor - recap defequiv}, this implies \eqref{eq - HHn A small}.
    \end{proof}
\end{corollary}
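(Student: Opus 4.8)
The plan is to assemble the corollary from three results already established: the $B_{\infty}$-comparison of Hochschild complexes, the deformation-theoretic reading of $\tn{HH}^{n}$ for nonpositively graded dg-categories, and the chain of deformation equivalences of Chapter~2. First I would apply \Cref{thm - HH A DGInjInddgA agree} to obtain a graded isomorphism
\[
    \tn{HH}^{*}_{\dg}(\A) \cong \tn{HH}^{*}_{\dg}(\J), \qquad \J := \DGInj(\Ind^{\dg,Q,+}(\A)),
\]
so that in particular $\tn{HH}^{n}_{\dg}(\A) \cong \tn{HH}^{n}_{\dg}(\J)$ for every $n$.

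Next I would observe that $\J$ is cohomologically concentrated in nonpositive degrees: this is precisely the remark following the definition of $\DGInj$, which itself follows from \Cref{prop - properties dginj}~(3). Hence \Cref{lemma - HHn dg parametrization} applies verbatim with $\mf{a} = \J$, yielding for $n \geq 2$ a bijection between $\tn{HH}^{n}_{\dg}(\J)$ and the set of dg-deformations of $\J$ along $\theta_{2-n}$ modulo equivalence. At this point one must check that $\theta_{2-n}\colon k[\epsilon]/(\epsilon^{2}) \to k$ with $\vert \epsilon \vert = 2-n$ is an admissible base-change morphism in the sense of the Conventions; for $n \geq 2$ one has $2-n \leq 0$, so $\epsilon$ sits in nonpositive degree and the verification carried out in \Cref{ex - example} applies.

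Finally I would transport this dg-deformation count across the deformation equivalences. Since $\Ind^{\dg,Q,+}(\A) \cong \hproj^{+}(\A)$ (see \S\ref{subsection - homotopy ind-dg-completion}), we have $\J = \DGInj(\hproj^{+}(\A))$, which is exactly the dg-category $\tb{J}$ appearing in \Cref{cor - recap defequiv}. That corollary supplies a commutative diagram of equivalences of deformation pseudofunctors whose composite is an equivalence $\Def_{\J} \cong \Def^{t,b}_{\A}$. Evaluating this pseudonatural equivalence at the object $\theta_{2-n}$ of $\mc{U}-\DGRng/k$ gives an equivalence of the corresponding groupoids, and therefore a bijection on their sets of isomorphism classes. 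Identifying these sets with dg-deformations modulo equivalence on the left and bounded $t$-deformations modulo equivalence on the right, and composing with the two bijections above, yields \eqref{eq - HHn A small}.

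The step I expect to require the most care is the last identification: one must confirm that the notion of equivalence of $t$-deformations packaged into the groupoid $\Def^{t,b}_{\A}(\theta_{2-n})$ matches the $\sim$ on the right-hand side of \eqref{eq - HHn A small}, and likewise that the equivalence relation produced by the $A_{\infty}$/$B_{\infty}$ machinery in \Cref{lemma - HHn dg parametrization} agrees with the morphisms of the groupoid $\Def_{\J}(\theta_{2-n})$. Granting that the pseudofunctors are built so that their objects are deformations with equivalences of deformations as morphisms, this is a formal consequence of the fact that an equivalence of groupoids induces a bijection on isomorphism classes; the only genuine content is the bookkeeping that all the intermediate equivalences respect the same equivalence relation.
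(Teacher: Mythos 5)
Your proposal follows exactly the paper's own argument: the $B_\infty$-isomorphism of Hochschild complexes from Theorem \ref{thm - HH A DGInjInddgA agree}, the parametrization of dg-deformations by $\tn{HH}^n$ via Lemma \ref{lemma - HHn dg parametrization} (applicable since $\DGInj$ is cohomologically concentrated in nonpositive degrees), and the transport across the deformation equivalences of Corollary \ref{cor - recap defequiv}. The extra bookkeeping you flag about matching equivalence relations is sound and implicit in the paper's formulation of the deformation pseudofunctors.
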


\begin{remark} 
    More generally, if $k$ is a field of characteristic zero, then one may deduce from \Cref{thm - HH A DGInjInddgA agree} and \Cref{cor - recap defequiv} that the Hochschild complex $\Cc(\A)[1]$ controls the $t$-dg-deformation theory of $\A$ for \ti{local artinian dg-$k$-algebras}\footnote{Here we mean augmented dg-$k$-algebras $R$ whose augmentation maps $\theta: R \to k$ satisfy the conditions set forth in the Conventions and appropriate maps between them, see also the reference cited there.} $R$ via the Maurer-Cartan formalism (cf. \Cite{LVdBHH}[Remark 3.5]), in the sense that we have a bijection
    \begin{equation} \label{eq - MC}
        \tn{Sk}(\Def^{t,b}(\A)) \cong \tn{MC}[\Cc(\A)[1] \otimes \Kernel(\theta)]/\sim,
    \end{equation}
     where the right-hand side denotes the Maurer-Cartan elements up to gauge equivalence \cite{Gauge} with $\theta: R \to k$ the augmentation map.
    This recovers \Cref{cor - HHn bounded tdg} since the right-hand side of \eqref{eq - MC} for $\theta_{2-n}$ equals $\tn{HH}^{n}_{\dg}(\A)$.
\end{remark}



\newpage
\printbibliography

\newpage
\printindex

\end{document}